\documentclass[english, usenames,dvipsnames,svgnames,table]{article}
\usepackage{preamble}

\title{The Gray Product of $(\infty, n)$-Categories via Lax Grids}

\begin{document}

\begin{titlepage}
    \maketitle
    \thispagestyle{empty}
    \begin{abstract}
        We introduce a new model for $(\infty,n)$-categories as Segal sheaves on lax grids, which are pasting diagrams of lax cubes. 
        This model allows for a direct construction of the Gray tensor product via Day convolution.      
        We show that this agrees with Campion's construction of the Gray tensor product.

        These results will be applied in future work to equip the higher categories of cobordisms with a Gray-algebra structure given by the cartesian product of manifolds.
    \end{abstract}

    \bigskip
    \bigskip
    
    \begin{figure}[h]
        \centering
        \includegraphics[width=0.6\linewidth]{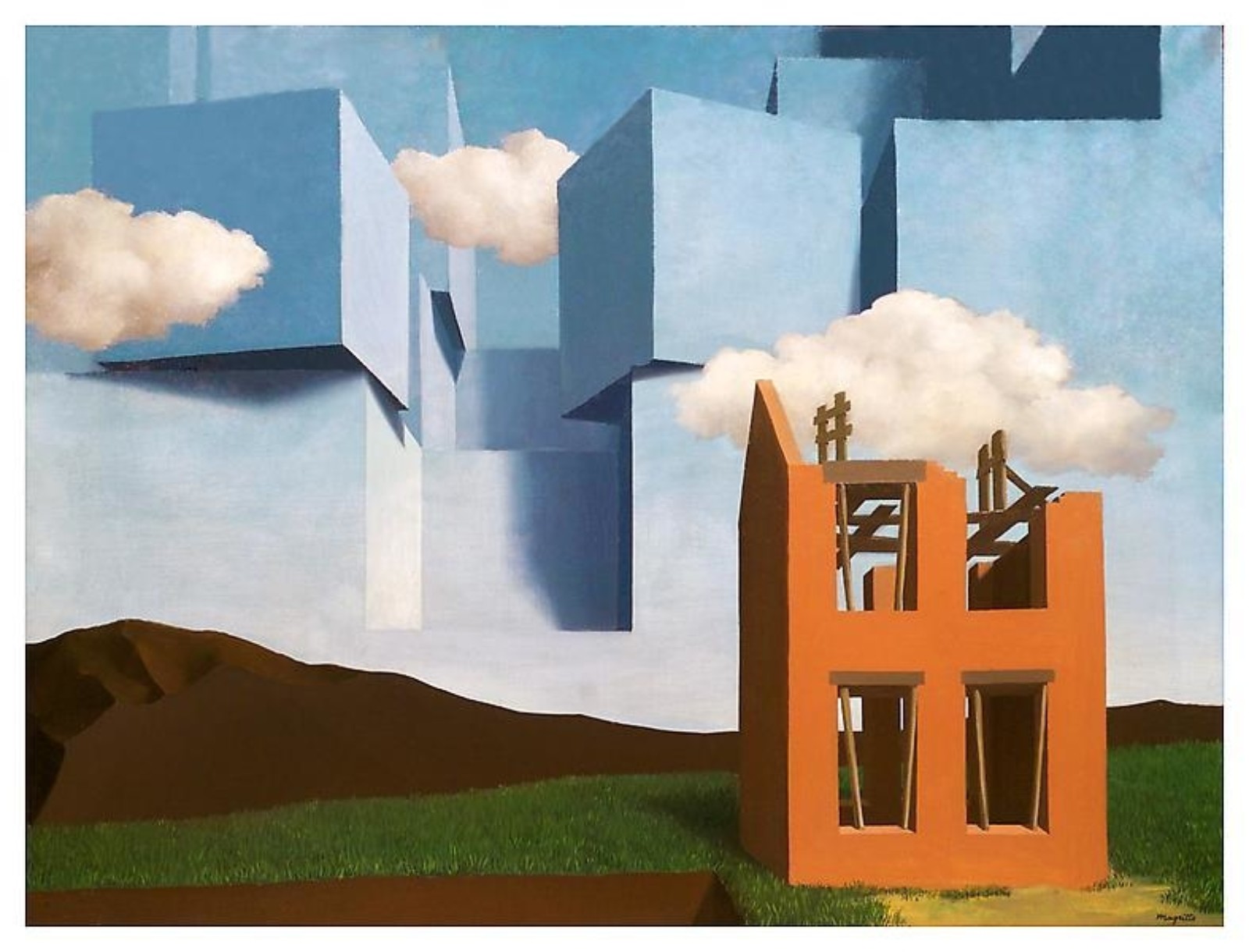}
        \caption*{ Ren\'e Magritte, \textit{L'Univers D\'emasqu\'e (The Universe Unmasked)}, 1932}
    \end{figure}

\end{titlepage}

\tableofcontents
\newpage

\section{Introduction}

Throughout the paper, we refer to $(\infty, n)$-categories as \emph{$n$-categories}, and to $1$-categories also simply as \emph{categories}.
In particular, by an \emph{$\infty$-category} we always mean an $(\infty,\infty)$-category.

\medskip\noindent     
\textbf{The Gray product.}
The Gray product\footnote{Commonly called the Gray \emph{tensor} product.} is a fundamental construction in higher category theory, serving as a lax analog of the cartesian product.
As a central example, the Gray product of two copies of the walking arrow is the \emph{lax square}:
\begin{equation*}
    \begin{tikzcd}
        \bullet & \bullet \\
        \bullet & \bullet.
        \arrow[from=1-1, to=1-2]
        \arrow[from=1-1, to=2-1]
        \arrow[between={0.3}{0.7}, Rightarrow, from=1-2, to=2-1]
        \arrow[from=1-2, to=2-2]
        \arrow[from=2-1, to=2-2]
    \end{tikzcd}
\end{equation*}

The Gray product was originally defined by Gray in~\cite{Gray-1974-Gray} as a closed monoidal structure on strict 2-categories.
Just as the cartesian product of categories induces an internal hom consisting of functors and natural transformations, the Gray product induces an internal hom consisting of functors and \emph{lax natural transformations}.
The latter then serves as a basis for many lax constructions such as lax (co)limits.

Subsequently, the Gray product was extended to strict $\infty$-categories in~\cite{Crans-1995-gray,Al-Agl-Brown-Steiner-2002-strict-globular-cubical,Steiner-2004-gray}, and to non-strict $\infty$-categories in~\cite{Verity-2008-complicial,Johnson-Freyd-Scheimbauer-2017-Funlax,Campion-2023-Gray,Loubaton-2024-infty-categories,Campion-Kapulkin-Maehara-2025-comical,Chanavat-2025-gray};
we refer to the introductions of~\cite{Campion-Maehara-2023-Gray,Loubaton-Ruit-2025-squares} for further review of the history and comparisons of the different models. 

We denote the Gray product of $\infty$-categories by 
\begin{equation*}
    \xlax \;\colon\; \Catoo \times \Catoo \;\too\; \Catoo.
\end{equation*}
In this unbounded setting, the Gray product is additive on categorical dimension: the Gray product of an $n$-category and a $k$-category (viewed as $\infty$-categories) is an $(n+k)$-category.
This induces a functor
\begin{equation*}
    \xlax \;\colon\; \Catn \times \Catn[k] \;\too\; \Catn[(n+k)].
\end{equation*}
As an important example, consider the \emph{lax $n$-cube}, roughly described as an $n$-dimensional cube in which all cells are non-invertible morphisms (see \cite{Al-Agl-Brown-Steiner-2002-strict-globular-cubical} for a precise definition).
The Gray product of the lax $n$-cube and the lax $k$-cube is the lax $(n+k)$-cube.

In~\cite{Campion-2023-Gray}, Campion gave a unique characterization of the Gray product on $\infty$-categories.
Denote by $\cubeCat$ the full subcategory of $\infty$-categories spanned by the lax $n$-cubes for $n<\infty$.
This subcategory is closed under the Gray product.

\begin{theorem}[{\cite[Theorem A]{Campion-2023-Gray}}]\label{thm:campion-characterization}
    There is a unique closed monoidal structure on $\infty$-categories that restricts to the Gray product on $\cubeCat$.
\end{theorem}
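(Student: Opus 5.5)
The plan is to get both existence and uniqueness from a density argument. The key input is that the inclusion $\cubeCat \hookrightarrow \Catoo$ is dense, and in fact exhibits $\Catoo$ as a localization of presheaves on $\cubeCat$. Concretely, I would show that the restricted Yoneda functor $\Catoo \to \mathrm{PSh}(\cubeCat)$ is fully faithful and has a left adjoint, with essential image a class of local objects for some set $S$ of maps. A natural choice for $S$ is the Segal-type conditions expressing that a lax cube is glued from its faces, together with completeness conditions. Fully faithfulness reduces to showing that every $\infty$-category is a colimit of lax cubes. For this I would use that globes $\mathbb{G}^n$ are retracts of, or colimits built from, lax cubes, and then invoke the known density of globes/$\Theta$ in $\Catoo$.

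\textbf{Existence.} The Gray product on $\cubeCat$ extends by Day convolution to a closed monoidal structure on $\mathrm{PSh}(\cubeCat)$. To descend it to the localization $\Catoo$, I must check that the Day convolution is compatible with the localization. Since Day convolution preserves colimits in each variable, it suffices to show the following: for each $f\in S$ and each lax cube $\square^k$, the map $f \otimes \square^k$ (and $\square^k \otimes f$) is again an $S$-local equivalence. This is the \emph{main obstacle}. It amounts to a pasting statement: gluing lax cubes along a decomposition and then taking the Gray product with $\square^k$ gives the same $\infty$-category as gluing the Gray products. I would first attempt it for the generating Segal maps $\square^{n-1}\cup_{\square^{n-2}}\dots\to\square^n$, which reduce to explicit strict computations with Steiner's directed complexes, where the Gray product is computed by tensoring chain complexes. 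Then I would transfer to $\infty$-categories using that these strict pushouts are also homotopy pushouts. Handling the completeness maps, the walking isomorphism $\otimes\, \square^k$, requires a separate argument showing it becomes invertible. I would use a 2-out-of-6 argument, since the Gray product with an equivalence is an equivalence after localization.

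\textbf{Uniqueness.} Let $\otimes'$ be any closed monoidal structure on $\Catoo$ restricting to the Gray product on $\cubeCat$. Closedness means $\otimes'$ preserves colimits in each variable. Since every object is a colimit of cubes, $\otimes'$ is the left Kan extension of its restriction to $\cubeCat\times\cubeCat$, and so it agrees with the localized Day convolution as a functor. To upgrade this to an equivalence of monoidal structures, I would use the universal property of Day convolution in presentably monoidal categories: colimit-preserving monoidal functors out of $\mathrm{PSh}(\cubeCat)$ correspond to monoidal functors out of $\cubeCat$. Applying this to the localization functor $\mathrm{PSh}(\cubeCat)\to(\Catoo,\otimes')$ shows that it is monoidal for the Day structure, which forces $\otimes'$ to be the induced localized structure.
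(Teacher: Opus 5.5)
The paper does not prove this statement. It is quoted from Campion, and the only argument the paper indicates is for uniqueness: density of $\cubeCat$ (imported from a separate paper of Campion) together with the fact that a closed monoidal structure preserves colimits in each variable. Granting density, your uniqueness paragraph is a correct and more detailed version of that sketch. The other two steps of your proposal have genuine gaps.

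\textbf{Existence.} Reducing compatibility of Day convolution with the localization to checking $f\otimes\laxcube{k}$ for $f\in S$ only helps if $S$ is an explicit, checkable generating set. An abstract $S$ just restates the problem.

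The $S$ you propose is not the kind of condition that is needed. A lax cube is not a colimit of its faces. The Segal-type conditions must instead say that pasting diagrams of cubes are colimits of their constituent cubes. These pasting diagrams are lax grids, which are not objects of $\cubeCat$, so the maps in $S$ have non-representable targets (restricted nerves of grids).

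Proving that such maps, together with completeness, cut out exactly the essential image of $\Catoo\into\PSh(\cubeCat)$ is an instance of the characterization problem. The introduction describes this as an open question of Campion, settled only recently by Gepner--Heine. You cannot adopt it as a ``natural choice''.

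The completeness step is also circular. ``The Gray product with an equivalence is an equivalence after localization'' is exactly the claim that $J\otimes\laxcube{k}\to\laxcube{k}$ is a local equivalence, and that is what needs proof. One would have to show, for example, that a lax grid in a complete object whose cells in one direction are invertible is degenerate, as in \cref{cor:equivalence-univalence}.

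This is precisely the difficulty the lax-grid model is built to sidestep. After enlarging $\cubeCat$ to $\grid[]$, the Segal conditions become part of a geometric pattern. Compatibility with the Gray product is then formal, because $\xlaxs$ is a Segal morphism: elementary slices of a Gray product of grids are products of elementary slices. This yields the monoidal structure of \cref{cor:gray-product} on flagged objects with no pasting computation. The real work moves to the equivalence $\Shv(\grid)\simeq\Shv(\globe)$ of \cref{thm:equivalence-n}, which uses the fold map, connections and the pasting theorem.

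\textbf{Density.} ``Fully faithfulness reduces to showing that every $\infty$-category is a colimit of lax cubes'' is false, because generation under colimits is much weaker than density. For instance, $[0]$ and $[1]$ generate $1$-categories under colimits. Yet the restricted Yoneda functor to presheaves on $\{[0],[1]\}$ only remembers the underlying reflexive graph, so it is not full. What you need is that each $X$ is the colimit of the canonical diagram $\cubeCat_{/X}\to\Catoo$.

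Knowing that globes are retracts of cubes (\cref{lem:collapse-is-retract}) does not give this, since the globes alone are not dense, for the same reason. You would need every globular sum in $\Theta$ to lie in the retract closure of $\cubeCat$; density of $\Theta$ would then transfer through the idempotent completion.

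The same issue affects your uniqueness step. The claim ``since every object is a colimit of cubes, $\otimes'$ is the left Kan extension of its restriction'' holds only for the canonical colimits that density provides.
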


The proof of \cref{thm:campion-characterization} relies on a previous result of Campion, that $\cubeCat$ is dense in $\Catoo$~\cite{Campion-2022-cubes-dense}.
Namely, every $\infty$-category is canonically a colimit of lax cubes, and the Gray product commutes with colimits in each variable, so computing it reduces to the case of lax cubes.
In principle, this lets us compute any Gray product, provided we can write the $\infty$-categories involved explicitly in terms of lax cubes, which raises the following question:
\begin{question}
\label{que:constructing-infty-categories-with-cubes}
    How does one explicitly construct an $\infty$-category from lax cubes?
\end{question}
A natural approach uses the restricted Yoneda functor along $\cubeCat\subseteq\Catoo$.
Since $\cubeCat$ is dense, this is a fully faithful embedding $\Catoo\into\PSh(\cubeCat)$, and answering \cref{que:constructing-infty-categories-with-cubes} amounts to constructing a presheaf on $\cubeCat$ that lies in its essential image.

This essential image was recently characterized by Gepner--Heine~\cite[Corollary~1.5.5]{Gepner-Heine-2026-Street}, resolving an open question of Campion.
Their characterization uses the fact that cubes are glued together from globes, i.e.\ walking $n$-morphisms.
For our purposes, most notably the construction of cobordism categories, a description purely in terms of cubes is preferable.

In this paper, we give a different answer to \cref{que:constructing-infty-categories-with-cubes}, based on \emph{lax grids}: pasting diagrams of lax cubes.
We model $\infty$-categories as presheaves on lax grids that satisfy the Segal condition with respect to these pasting diagrams.
Restricting to lax grids of categorical dimension $\le n$ gives a model for $n$-categories.
We first review the case $n=1$, where our model reduces to Rezk's \emph{complete Segal spaces}~\cite{Rezk-2001-Segal-spaces}.

\medskip\noindent     
\textbf{Complete Segal spaces.}
A 1-category consists of spaces of objects and morphisms, together with a coherently associative and unital composition.
This structure can be presented as a presheaf on the simplex category~$\deltaCat$.
The singleton $[0]$ represents objects, $[1]$ represents morphisms, and $[k]$ represents strings of $k$ composable morphisms.
The maps in $\deltaCat$ then coherently encode composition.
Formally, there is a fully faithful embedding $\Catn[1]\into \PSh(\deltaCat)$ defined by the restricted Yoneda functor along $\deltaCat\subseteq \Catn[1]$.
Moreover, we can explicitly describe its essential image.

A \emph{Segal space} is a presheaf $X\in \PSh(\deltaCat)$ satisfying the \emph{Segal condition}: the canonical map
\begin{equation*}
    X_k\;\too\; X_1\times_{X_0}\dots \times_{X_0} X_1
\end{equation*}
induced by selecting the $k$ segments of $[k]$ is an isomorphism.
A Segal space is \emph{univalent} (a.k.a.\ \emph{complete}) if paths in the space of objects are identified with invertible morphisms.
We denote by
\begin{equation*}
    \Shv(\deltaCat)\univ \;\subseteq\; \Shv(\deltaCat) \;\subseteq\; \PSh(\deltaCat)
\end{equation*}
the full subcategories of univalent Segal spaces and Segal spaces, respectively.

The essential image of the embedding $\Catn[1]\into \PSh(\deltaCat)$ is precisely the univalent Segal spaces, giving an equivalence
\begin{equation*}
    \Catn[1]\;\simeq\; \Shv(\deltaCat)\univ.
\end{equation*}

\subsection{Main result}
An $n$-category consists of spaces of $k$-morphisms for $0\le k\le n$, together with coherent compositions.
In our model, the $k$-morphisms are represented by lax $k$-cubes, and their compositions by lax grids, which are pasting diagrams of such cubes.

The category of lax grids contains the lax cubes, and like them is closed under the Gray product, thus similarly characterizing it.
Furthermore, lax grids allow for a direct identification of $n$-categories among their presheaves, generalizing the case $n=1$.

\begin{definition}
    The category of lax grids $\grid[]$ is the full subcategory of $\infty$-categories generated by $\deltaCat$ under the Gray product.
    We also write $\grid \subseteq \grid[]$ for the full subcategory of lax grids of categorical dimension $\le n$.
\end{definition}

Lax grids were constructed as strict $\infty$-categories by a combinatorial model in~\cite{Al-Agl-Brown-Steiner-2002-strict-globular-cubical}.
For example, the following diagrams describe objects in $\grid[2]$:
\begin{equation*}
    \begin{tikzcd}
        {} & { } & { } & \bullet & \bullet \\
        {} & {} & { } & \bullet & \bullet
        \arrow["{\large\bullet}"{marking, allow upside down}, draw=none, from=1-1, to=2-1]
        \arrow[""{name=0, anchor=center, inner sep=0}, "{{\large{\bullet}}}"{marking, allow upside down}, draw=none, from=1-2, to=2-2]
        \arrow[""{name=1, anchor=center, inner sep=0}, "{{\large{\bullet}}}"{marking, allow upside down}, draw=none, from=1-3, to=2-3]
        \arrow[from=1-4, to=1-5]
        \arrow[from=1-4, to=2-4]
        \arrow[between={0.3}{0.7}, Rightarrow, from=1-5, to=2-4]
        \arrow[from=1-5, to=2-5]
        \arrow[from=2-4, to=2-5]
        \arrow[between={0.2}{0.8}, from=0, to=1]
    \end{tikzcd}
\end{equation*}
\begin{equation*}
    \begin{tikzcd}
        \bullet & \bullet & \bullet & \bullet & \bullet & \bullet & \bullet & \bullet \\
        \bullet & \bullet & \bullet & \bullet & \bullet & \bullet & \bullet & \bullet \\
        &&& \bullet & \bullet & \bullet & \bullet & {\bullet.}
        \arrow[from=1-1, to=1-2]
        \arrow[from=1-1, to=2-1]
        \arrow[from=1-2, to=1-3]
        \arrow[between={0.3}{0.7}, Rightarrow, from=1-2, to=2-1]
        \arrow[from=1-2, to=2-2]
        \arrow[between={0.3}{0.7}, Rightarrow, from=1-3, to=2-2]
        \arrow[from=1-3, to=2-3]
        \arrow[from=1-4, to=1-5]
        \arrow[from=1-4, to=2-4]
        \arrow[between={0.3}{0.7}, Rightarrow, from=1-5, to=2-4]
        \arrow[from=1-5, to=2-5]
        \arrow[from=1-6, to=1-7]
        \arrow[from=1-6, to=2-6]
        \arrow[from=1-7, to=1-8]
        \arrow[between={0.3}{0.7}, Rightarrow, from=1-7, to=2-6]
        \arrow[from=1-7, to=2-7]
        \arrow[between={0.3}{0.7}, Rightarrow, from=1-8, to=2-7]
        \arrow[from=1-8, to=2-8]
        \arrow[from=2-1, to=2-2]
        \arrow[from=2-2, to=2-3]
        \arrow[from=2-4, to=2-5]
        \arrow[from=2-4, to=3-4]
        \arrow[between={0.3}{0.7}, Rightarrow, from=2-5, to=3-4]
        \arrow[from=2-5, to=3-5]
        \arrow[from=2-6, to=2-7]
        \arrow[from=2-6, to=3-6]
        \arrow[from=2-7, to=2-8]
        \arrow[between={0.3}{0.7}, Rightarrow, from=2-7, to=3-6]
        \arrow[from=2-7, to=3-7]
        \arrow[between={0.3}{0.7}, Rightarrow, from=2-8, to=3-7]
        \arrow[from=2-8, to=3-8]
        \arrow[from=3-4, to=3-5]
        \arrow[from=3-6, to=3-7]
        \arrow[from=3-7, to=3-8]
    \end{tikzcd}
\end{equation*}

In general, objects of $\grid$ are pasting diagrams of lax cubes, as in the examples above.
A \emph{Segal sheaf} on $\grid$ is a presheaf satisfying the Segal condition determined by these pasting diagrams (see~\cref{subsec:grid-spaces} for the precise definition).

\begin{definition}
    Denote by  
    \begin{equation*}
        \Shv(\grid)\univ\;\subseteq\; \Shv(\grid)\;\subseteq\; \PSh(\grid)
    \end{equation*}
    the reflective subcategories of univalent Segal sheaves and Segal sheaves, respectively.
\end{definition}

The category $\grid[]$ is closed under the Gray product.
The resulting monoidal structure induces a Day convolution monoidal structure on $\PSh(\grid[])$, which in turn induces a monoidal structure on $\Shv(\grid[])\univ$ by reflection.

Our main theorem identifies univalent Segal sheaves on $\grid[]$, equipped with the above monoidal structure, with $\infty$-categories, equipped with the Gray product characterized by~\cref{thm:campion-characterization}.

\begin{alphThm}[{\cref{thm:equivalence-infty}, \cref{cor:equivalence-univalence}}]
\label{alphthm:equivalence}
    There is an equivalence of monoidal categories
    \begin{equation*}
        \Catoo \;\simeq\; \Shv(\grid[])\univ.
    \end{equation*}
    Moreover, for $n<\infty$ there is an equivalence of categories
    \begin{equation*}
        \Catn \;\simeq\; \Shv(\grid)\univ.
    \end{equation*}
\end{alphThm}

The latter equivalence lets us describe the Gray product of $n$- and $k$-categories as the functor
\begin{equation*}
    \xlax\;\colon\; \Shv(\grid[n])\univ\;\times\; \Shv(\grid[k])\univ\;\too\; \Shv(\grid[n+k])\univ
\end{equation*}
induced by the Gray product on lax grids $\grid[n]\times\grid[k]\to\grid[n+k]$.

Our results extend to the non-univalent setting. Following \cite{Ayala-Francis-2018-flagged-categories}, we refer to non-univalent \mbox{$n$-categories} as \emph{flagged} $n$-categories, and denote their category by $\Catnfl$. 
Dropping univalence on both sides, we obtain
\begin{equation*}
    \Catoofl \;\simeq\; \Shv(\grid[]),\qquad \Catnfl \;\simeq\; \Shv(\grid).
\end{equation*}

\begin{remark}
    Our results and methods are inspired by~\cite{Al-Agl-Brown-Steiner-2002-strict-globular-cubical}.
    There, Al-Agl--Brown--Steiner introduce a model for strict $\infty$-categories, the \emph{strict cubical $\infty$-categories with connections}.
    This model carries a natural monoidal structure, which they use to define the strict Gray product.
    Our model generalizes theirs: a set-valued $\grid[]$-sheaf is precisely a strict cubical $\infty$-category with connections.
\end{remark}

\medskip\noindent
\textbf{Cobordism categories.}
A primary example motivating our work is that of cobordism categories, which are higher categories arising from geometry.
Denote by $\Bordn$ the $n$-category of fully extended cobordisms, defined originally in~\cite{Lurie-2009-cobordism,Calaque-Scheimbauer-2019-cobordism-category}.
Its objects are $0$-dimensional manifolds, its $1$-morphisms are cobordisms between them, and its higher morphisms are iterated cobordisms, given by manifolds with higher corners.

The disjoint union of manifolds equips $\Bordn$ with a symmetric monoidal structure.
In contrast, the cartesian product of manifolds currently lacks a categorical interpretation.

In future work, we realize $\Bordn$ as a Segal sheaf on $\grid$, where the space of lax $i$-cubes is given by \emph{$i$-cubical cobordisms}, which are certain $i$-manifolds with higher corners (see \cite{Calaque-Scheimbauer-2019-cobordism-category}).
Using our model of the Gray product, we then interpret the cartesian product of manifolds categorically, as a functor
\begin{equation*}
    \Bordn[n]\;\xlax\; \Bordn[k]\;\too\; \Bordn[(n+k)]\qin \Catn[(n+k)]
\end{equation*}
sending a pair of cubical cobordisms to their cartesian product.
This functor lifts to an algebra structure on $\Bordn[\infty]\in \Catoo$ with respect to the Gray product.
A similar construction applies to embedded cobordisms and to cobordisms with tangential structure.

\subsection{Proof idea}

The equivalence in~\cref{alphthm:equivalence} is given by the \emph{cubical nerve}, which is the restricted Yoneda functor along $\grid \subseteq \Catn$:
\begin{equation*}
    \cubicalNerve \;\colon\; \Catn \;\too\; \Shv(\grid)\univ.
\end{equation*}
It suffices to prove that the cubical nerve is an equivalence for $n<\infty$; the infinite case then follows by passing to colimits.
This equivalence is moreover monoidal by the uniqueness in~\cref{thm:campion-characterization}, since it identifies the lax cubes in $\Catoo$ with the representable cubes in $\Shv(\grid[])\univ$.

To prove the finite case, we construct an explicit inverse
\begin{equation*}
    \Fold \;\colon\; \Shv(\grid)\univ \;\too\; \Catn,
\end{equation*}
employing \emph{$n$-uple categories}.

\medskip\noindent     
\textbf{$n$-Uple categories.}
An $n$-uple category is a higher categorical structure in which there are $n$ different types of $1$-morphisms, and higher cells are cubes in which parallel edges have the same type.

As an example, for $n=2$ these are \emph{double categories}.
A double category consists of a space of objects, two spaces of morphisms called \emph{horizontal} and \emph{vertical}, and a space of 2-cells given by squares with horizontal and vertical edges:
\begin{equation*}
    \begin{tikzcd}
    	{} & {} & \bullet & \bullet & \bullet \\
    	{} & {} & \bullet & \bullet & \bullet.
    	\arrow[""{name=0, anchor=center, inner sep=0}, "{{\large\bullet}}"{description}, draw=none, from=1-1, to=2-1]
    	\arrow[""{name=1, anchor=center, inner sep=0}, "{{\large\bullet}}"{description}, draw=none, from=1-2, to=2-2]
    	\arrow["v"', color={rgb,255:red,214;green,92;blue,92}, from=1-3, to=2-3]
    	\arrow[""{name=2, anchor=center, inner sep=0}, "{{h_0}}", color={rgb,255:red,92;green,92;blue,214}, from=1-4, to=1-5]
    	\arrow["{{v_0}}"', color={rgb,255:red,214;green,92;blue,92}, from=1-4, to=2-4]
    	\arrow["{{v_1}}", color={rgb,255:red,214;green,92;blue,92}, from=1-5, to=2-5]
    	\arrow[""{name=3, anchor=center, inner sep=0}, "{{h_1}}"', color={rgb,255:red,92;green,92;blue,214}, from=2-4, to=2-5]
    	\arrow["h", color={rgb,255:red,92;green,92;blue,214}, between={0.2}{0.8}, from=0, to=1]
    	\arrow["\square"{description}, draw=none, from=2, to=3]
    \end{tikzcd}
\end{equation*}

We model $n$-uple categories as \emph{$n$-uple univalent Segal spaces} \cite{Haugseng-2017-Morita}, which are univalent\footnote{It is common to impose univalence in only one of the $n$ directions; here we impose it in all directions.} Segal sheaves on $\deltaCat^n$.
We denote the category of $n$-uple univalent Segal spaces by
\begin{equation*}
    \Catnuple \;\coloneqq\; \Shv(\deltaCat^n)\univ \;\subseteq\; \PSh(\deltaCat^n).
\end{equation*}

\medskip\noindent     
\textbf{From $n$-uple categories to $n$-categories.}
An $n$-uple category is similar to an $n$-category, except that there are multiple types of cells of each dimension instead of a single type.
There are two approaches to obtain $n$-categories from $n$-uple categories:
\begin{enumerate}
    \item require the cells of all types but one to be degenerate;
    \item identify the cells across the different types.
\end{enumerate}
The first approach models $n$-categories as \emph{$n$-fold univalent Segal spaces} \cite{Rezk-2010-n-categories}, which are $n$-uple univalent Segal spaces satisfying a constancy condition. 
For example, when $n=2$, a $2$-fold univalent Segal space is a double category in which all vertical morphisms are degenerate.

Realizing $\Catn$ as the $n$-fold univalent Segal spaces gives an embedding
\begin{equation*}
    \G^*\;\colon\; \Catn\;\intoo\; \Catnuple.
\end{equation*}
This embedding has a right adjoint 
\begin{equation*}
    \G_*\;\colon\; \Catnuple \;\too\; \Catn
\end{equation*}
which discards the non-degenerate cells in all but one of the types.

The second approach is implemented by $\Shv(\grid)\univ$.
Consider the functor
\begin{equation*}
     \square\;\colon\; \deltaCat^n\;\too\; \grid,\qquad ([a_1],\dots,[a_n])\mapsto [a_1]\xlax \cdots \xlax[a_n].
\end{equation*}
Restriction along $\square$ preserves univalent sheaves, giving a functor
\begin{equation*}
     \square^*\;\colon\; \Shv(\grid)\univ\;\too\; \Catnuple.
\end{equation*}
The essential image of this functor consists of $n$-uple categories whose different types of cells are identified.

\medskip\noindent     
\textbf{The equivalence.}
We define $\Fold\colon \Shv(\grid)\univ\to \Catn$ as the composition
\begin{equation*}
    \Shv(\grid)\univ \;\xtoo{\square^*}\; \Catnuple \;\xtoo{\G_*}\; \Catn.
\end{equation*}
Informally, $\Fold$ sends a univalent $\grid$-sheaf to a symmetric $n$-uple category, then discards the cells of all types but one.
This is inverse to the cubical nerve: intuitively, the nerve spreads the cells of a given dimension symmetrically across all types, and $\Fold$ recovers them in a single type.

\begin{remark}
    The composition
    \begin{equation*}
        \Catn \;\xtoo{\cubicalNerve}\; \Shv(\grid)\univ \;\xtoo{\square^*}\; \Catnuple
    \end{equation*}
    is known as the \emph{cubes functor}, or the \emph{squares functor} when $n=2$.
    The squares functor was conjectured to be fully faithful in~\cite{Gaitsgory-Rozenblyum-2019-DAG}, following earlier work of~\cite{Ehresmann-1963-squares,Grandis-Pare-2004-square}.
    This was proven in~\cite{Abellan-2023-squares}, and the essential image of the squares functor was identified in~\cite{Loubaton-2025-squares} (see also~\cite{Loubaton-Ruit-2025-squares}).
    In forthcoming work, we will use the lax grids model to extend these results to all $n$.
\end{remark}

\subsection{Conventions}
    We use the following terminology and notation:
    \begin{itemize}
        \item $\spc$ denotes the category of spaces (a.k.a.\ animae, or $\infty$-groupoids).
        \item $\Cat$ denotes the category of small categories.
        \item $\CAT$ denotes the (huge) category of large categories.
        \item $\Cat_{(1,1)}$ denotes the category of small $(1,1)$-categories.
        \item $\PrL$ and $\PrR$ denote the categories of presentable categories with left and right adjoint functors respectively.
        \item We denote adjunctions by $L \colon \cC \adj \cD \cocolon R$ to mean that $L$ is  left adjoint to $R$.
    \end{itemize}

\subsection{Acknowledgments}

    We are grateful to Tim Campion, Branko Juran, Shaul Ragimov, Jaco Ruit, Tomer Schlank, and Lior Yanovski for many extensive and helpful discussions.
    We thank Shay Ben-Moshe, Avital Binyamin, Beckham Myres, Shaul Ragimov and especially Lior Yanovski for helpful comments on previous drafts.
    Part of this work was carried out during visits to the University of Chicago, whose hospitality we gratefully acknowledge.
    
    We used LLMs to improve the writing; all mathematical content and any errors are our own.

\section{Models for higher categories}
\label{sec:models-for-higher-categories}
We review several well-established models for higher categories, namely $\globe$-spaces and $n$-fold Segal spaces. We also consider the related $n$-uple Segal spaces. All of these models, as well as the lax grids model developed later, are formulated within the framework of \emph{algebraic patterns} introduced by Chu and Haugseng in~\cite{Chu-Haugseng-2021-algebraic-patterns}.

Most of this section is expository; the reader familiar with the material is encouraged to skim or skip ahead and return as needed.

\subsection{Algebraic and geometric patterns}
    Algebraic patterns provide a general framework for describing algebraic and categorical structures in terms of Segal conditions.
    In~\cite{Chu-Haugseng-2021-algebraic-patterns}, \emph{Segal objects} over an algebraic pattern $\cO$ are defined as a localization of presheaves on $\cO\op$.  To avoid cumbersome notation, for this paper we will call $\cX \coloneqq \cO\op$ a \emph{geometric pattern}, and consider $\cO$-Segal objects as sheaves on $\cX$.
    \begin{definition}
        A \emph{geometric pattern} is a category $\cX$ together with
        \begin{enumerate}
            \item a factorization system $(\cX^{\act}, \cX^{\inrt})$ of \emph{active} and \emph{inert} maps, that is, every map in $\cX$ factors uniquely as a composition of an active map followed by an inert map;
            \item a full subcategory $\cX^{\el} \subseteq \cX^{\inrt}$ of \emph{elementary objects}.
        \end{enumerate}
    \end{definition}
    
    \begin{definition}
        A morphism of geometric patterns is a functor $\cX \to \cY$ that preserves inert and active maps and sends elementary objects to elementary objects. 
        We denote the category of geometric patterns by $\GPat$.
    \end{definition}
    The category of geometric patterns is equivalent to the category of algebraic patterns via $\cX \mapsto \cX\op$. See~\cite[\textsection~5]{Chu-Haugseng-2021-algebraic-patterns} for a formal construction of this category.

    Let $\cX$ be a geometric pattern.
    For $X\in \cX$, we denote by $\cX^\el_{/X}$ the full subcategory of $\cX^\inrt_{/X}$ on elementary objects.

    \begin{definition}
        Let $\cC \in \PrL$. A presheaf $F \colon \cX\op \to \cC$ is said to be a \emph{Segal sheaf}\footnote{The name Segal sheaf is common in the literature, but we note that they are usually not sheaves on a site.} if it is a Segal object as in~\cite{Chu-Haugseng-2021-algebraic-patterns}. That is, if for any $X \in \cX$
        \begin{equation*}
            F(X) \;\isotoo\; \lim_{E \in (\cX^{\el}_{/X})\op} F(E).
        \end{equation*}
        We will write $\Shv(\cX; \cC) \subseteq \PSh(\cX;\cC)$ for the full subcategory of Segal sheaves. In the case $\cC=\spc$, we will write $\Shv(\cX) \coloneqq \Shv(\cX; \spc)$.
    \end{definition}

    \begin{lemma}
    \label{lem:Shv(X;C)=Shv(X)@C}
        Let $\cC \in \PrL$. Then $\Shv(\cX; \cC) \simeq \Shv(\cX) \otimes \cC$.
    \end{lemma}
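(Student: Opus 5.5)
The plan is to identify both sides as the same full subcategory of presheaves. First, by the standard formula for the Lurie tensor product in $\PrL$, we have $\PSh(\cX)\otimes\cC\simeq\mathrm{Fun}^{\mathrm{R}}(\PSh(\cX)\op,\cC)\simeq\mathrm{Fun}^{\lim}(\PSh(\cX)\op,\cC)\simeq\PSh(\cX;\cC)$. The last step holds because $\PSh(\cX)$ is freely generated under colimits by $\cX$, so limit-preserving functors $\PSh(\cX)\op\to\cC$ correspond to functors $\cX\op\to\cC$.

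Next, $\Shv(\cX)\subseteq\PSh(\cX)$ is an accessible localization. It is the localization at the small set $S$ of maps
\[
s_X\colon \colim_{E\in\cX^{\el}_{/X}} y(E)\too y(X),\qquad X\in\cX,
\]
where $y$ is the Yoneda embedding. Indeed, by Yoneda a space-valued presheaf $F$ is $S$-local exactly when $F(X)\to\lim_{E}F(E)$ is an equivalence. Smallness is clear since $\cX$ is small. I will use the general fact (Lurie, HA 4.8.1.x) that tensoring an $S$-local localization $\PSh(\cX)^{S}$ with $\cC$ yields the full subcategory of $\mathrm{Fun}^{\lim}(\PSh(\cX)\op,\cC)$ consisting of those $G$ which send every $s\in S$ to an equivalence. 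Equivalently, $\Shv(\cX)\otimes\cC\simeq\mathrm{Fun}^{\mathrm R}(\Shv(\cX)\op,\cC)$, and a limit-preserving functor out of $\PSh(\cX)\op$ factors through the localization exactly when it inverts $S$.

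Finally, we translate this condition through the identification from the first step. If $G$ corresponds to $F=G\circ y\op$, then because $G$ preserves limits,
\[
G(s_X)\colon\; F(X)\too\lim_{E\in(\cX^{\el}_{/X})\op}F(E).
\]
So $G$ inverts $S$ exactly when $F$ is a Segal sheaf, and the two full subcategories of $\PSh(\cX;\cC)$ coincide.

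The main obstacle is the localization statement in the second step: checking that tensoring with $\cC$ commutes with the localization and has the expected description. I would handle it by writing $\Shv(\cX)$ as the full subcategory of $S$-local objects, and $\Shv(\cX)\otimes\cC$ as right adjoint functors $\Shv(\cX)\op\to\cC$. Precomposition with the localization functor $L\colon\PSh(\cX)\to\Shv(\cX)$ is fully faithful on such functors, and its image consists of the limit-preserving functors that invert $S$, by the universal property of localization. Everything else is formal.
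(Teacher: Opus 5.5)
Your argument is correct and is essentially the paper's proof. The paper cites Chu--Haugseng (Lemma~2.11) for $\Shv(\cX;\cC)$ being an accessible localization of $\PSh(\cX;\cC)\simeq\PSh(\cX)\otimes\cC$, then invokes that the Lurie tensor product commutes with accessible localizations. You unpack that last fact through $\cD\otimes\cC\simeq\mathrm{Fun}^{\mathrm R}(\cD\op,\cC)$ and the universal property of the localization at $S$, and you also make explicit a check the paper leaves implicit: the $S$-inverting functors are exactly the Segal sheaves, because $G(s_X)$ is the Segal map $F(X)\to\lim F(E)$.
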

    \begin{proof}
        By~\cite[Lemma~2.11]{Chu-Haugseng-2021-algebraic-patterns}, $\Shv(\cX; \cC)$ is an accessible localization of $\PSh(\cX; \cC) \simeq \PSh(\cX) \otimes \cC$. The result follows as the Lurie tensor product commutes with accessible localizations.
    \end{proof}

    \begin{definition}
        A geometric pattern $\cX$ is called \emph{saturated} if for every $X\in \cX$, the representable presheaf $\Map_{\cX}(-,X)$ is a Segal sheaf, or equivalently, $X$ is a colimit cocone of $\cX^{\el}_{/X}$.
        If $\cX$ is saturated, then the Yoneda embedding factors through sheaves
        \begin{equation*}
            \Yo \;\colon\; \cX \;\intoo\; \Shv(\cX).
        \end{equation*}
        In this case, we usually omit the notation $\Yo$ and identify $\cX \subseteq \Shv(\cX)$.
    \end{definition}
    While not required by definition, all examples of geometric patterns in this work are found to be saturated.        
    
    \begin{definition}
        A \emph{Segal} morphism of geometric patterns $f \colon \cX \to \cY$ is a morphism such that for any $X \in \cX$ and $F \in \Shv(\cY)$, the induced functor $\cX^{\el}_{/X} \to \cY^{\el}_{/f(X)}$ induces an isomorphism
        \begin{equation*}
            F(f(X)) \;\simeq\; \lim_{(\cY^{\el}_{/f(X)})\op} F \;\isotoo\; \lim_{(\cX^{\el}_{/X})\op} F \circ f.
        \end{equation*}
        Equivalently, by~\cite[Lemma~4.5]{Chu-Haugseng-2021-algebraic-patterns}, $f$ is Segal if and only if the restriction 
        \begin{equation*}
            f^*\;\colon\; \PSh(\cY)\;\too\; \PSh(\cX)
        \end{equation*}
        sends Segal sheaves to Segal sheaves.
        We denote the category of geometric patterns with Segal morphisms by $\GPatSeg$.
    \end{definition}

    If $\cX, \cY$ are geometric patterns, then by~\cite[Example~5.7]{Chu-Haugseng-2021-algebraic-patterns}, the category $\cX \times \cY$ admits a structure of a geometric pattern with both active and inert maps, and elementary objects, computed coordinate-wise. 
    Moreover, for any $\cC \in \PrL$
    \begin{equation*}
        \Shv(\cX \times \cY; \cC) \;\simeq\; \Shv(\cX; \Shv(\cY; \cC)).
    \end{equation*}

    \begin{corollary}
    \label{cor:product-of-geometric-patterns}
        The categories $\GPat$ and $\GPatSeg$ have finite products, and for any $\cX,\cY \in \GPat$
        \begin{equation*}
            \Shv(\cX \times \cY) \simeq \Shv(\cX) \otimes \Shv(\cY).
        \end{equation*}
    \end{corollary}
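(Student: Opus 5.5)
The corollary has two claims: $\GPat$ and $\GPatSeg$ have finite products, and $\Shv(\cX\times\cY)\simeq\Shv(\cX)\otimes\Shv(\cY)$. I will derive both from the coordinate-wise pattern structure of \cite[Example~5.7]{Chu-Haugseng-2021-algebraic-patterns}, the identification $\Shv(\cX\times\cY;\cC)\simeq\Shv(\cX;\Shv(\cY;\cC))$ stated just above, and \cref{lem:Shv(X;C)=Shv(X)@C}.

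\textbf{Products in $\GPat$ and $\GPatSeg$.} The terminal object is the point with its trivial pattern structure: its unique object is elementary and its only map is both active and inert. For binary products, take $\cX\times\cY$ with the coordinate-wise structure.
\begin{itemize}
\item Both projections preserve active maps, inert maps and elementary objects.
\item A pair of pattern morphisms $\cZ\to\cX$, $\cZ\to\cY$ induces $\cZ\to\cX\times\cY$. This functor is a pattern morphism precisely because each component is.
\item So the underlying product of categories has the universal property in $\GPat$. Formally, $\GPat$ is the non-full subcategory of $\Cat$ cut out by these conditions, and they are detected coordinate-wise.
\end{itemize}

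For $\GPatSeg$ I must check two further things.
\begin{itemize}
\item \emph{The projections are Segal.} Let $F\in\Shv(\cX)$. Then $\mathrm{pr}_1^*F(X,Y)=F(X)$, which is constant in $Y$. Using $\Shv(\cX\times\cY)\simeq\Shv(\cX;\Shv(\cY))$, it suffices to see that constant presheaves on $\cY$ are Segal sheaves. This is the main obstacle: it needs $(\cY^\el_{/Y})\op$ to be weakly contractible for every $Y$, which is not automatic. I would first try to verify it directly from the definition, or else assume it as a mild hypothesis satisfied by all patterns considered here, since for $\deltaCat$, $\globe$ and grids the category $\cY^\el_{/Y}$ is connected and contractible. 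Alternatively, the Segal condition may be checkable on elementary slices. Since $\cX^\el_{/X}\times\cY^\el_{/Y}=(\cX\times\cY)^\el_{/(X,Y)}$, the limit over the product reduces to an iterated limit, and for a constant functor in $Y$ this reduces again to contractibility.
\item \emph{The induced map is Segal.} Given Segal morphisms $f\colon\cZ\to\cX$ and $g\colon\cZ\to\cY$, I need $(f,g)^*$ to preserve sheaves. Since $\Shv(\cX\times\cY)$ is generated under limits and colimits by external products of sheaves, I would verify this on those generators, where it follows from $f$ and $g$ being Segal.
\end{itemize}

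\textbf{The tensor formula.} Set $\cC=\Shv(\cY)$, which is presentable by \cite[Lemma~2.11]{Chu-Haugseng-2021-algebraic-patterns}. Then
\begin{equation*}
\Shv(\cX\times\cY)\simeq\Shv(\cX;\Shv(\cY))\simeq\Shv(\cX)\otimes\Shv(\cY),
\end{equation*}
where the first equivalence is the stated identification and the second is \cref{lem:Shv(X;C)=Shv(X)@C}. This part is formal.
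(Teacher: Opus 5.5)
Your derivation of the tensor formula is the paper's argument. Your analysis of the projections is correct and sharper than the paper's. $p_X^*$ preserves Segal sheaves exactly when constant presheaves on $\cY$ are Segal, i.e.\ when every $\cY^{\el}_{/Y}$ is weakly contractible. This cannot be derived from the definition. If some $\cY^{\el}_{/Y}$ is empty, or discrete on two objects, then $(p_X^*F)(X,Y)=F(X)$ would have to agree with $\pt$, resp.\ $F(X)\times F(X)$, for every $F\in\Shv(\cX)$. The same hypothesis is needed for $\cX\to\pt$ to be Segal, i.e.\ for $\pt$ to be terminal in $\GPatSeg$. The paper's proof passes over this point by identifying $p_X^*$ with $\Shv(\cX)\otimes\spc\to\Shv(\cX)\otimes\Shv(\cY)$ in $\PrR$. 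That identification presupposes that the constant functor $\spc\to\PSh(\cY)$ lands in Segal sheaves. So the right move is to state contractibility as a hypothesis; it holds for $\deltaCat^n$, $\globe$ and $\grid$.

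The genuine gap is your second bullet: the claim that $(f,g)\colon\mathcal{Z}\to\cX\times\cY$ is Segal whenever $f$ and $g$ are. The paper does not attempt this step, and it is false. Your check does work on external products $G\boxtimes H$, since limits commute with products, and the good class is closed under limits. But it is not closed under colimits in $\Shv(\cX\times\cY)$: these are Segalified presheaf colimits, which do not commute with $(f,g)^*$. And external products need not generate $\Shv(\cX\times\cY)$ under limits alone.

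Concretely, the diagonal $\deltaCat\to\deltaCat\times\deltaCat$ is the pairing of $\id$ with $\id$. It sends a double Segal space $F$ to $[a]\mapsto F_{a,a}$, whose Segal map $F_{2,2}\to F_{1,1}\times_{F_{0,0}}F_{1,1}$ sends a $2\times 2$ grid of squares to its two diagonal squares. Take the double category generated by a horizontal arrow $h\colon a\to b$ and a vertical arrow $v\colon b\to c$, with only identity squares. The identity squares of $h$ and of $v$ meet at the corner $b$, but no grid has them on its diagonal: its lower-left square would need top edge $h$ and right edge $v$, and no such square exists. So $\cX\times\cY$ is not a categorical product in $\GPatSeg$, and this part of the statement cannot be proved as written.

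What survives is what is needed later to make $\GPatSeg$ symmetric monoidal under $\times$ with $\grid[]$ a monoid. Namely, $f\times g=(f\times\id)\circ(\id\times g)$ is Segal whenever $f$ and $g$ are. This follows from $\Shv(\cX\times\cY;\cC)\simeq\Shv(\cX;\Shv(\cY;\cC))$ together with the fact that Segal morphisms preserve $\cC$-valued Segal sheaves.
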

    \begin{proof}
        The equivalence follows from~\cref{lem:Shv(X;C)=Shv(X)@C}.
        It remains to prove that the projection maps $p_X \colon \cX \times \cY \to \cX$ and $p_Y \colon \cX \times \cY \to \cY$ are Segal, or equivalently, that $p_X^*$ and $p_Y^*$ preserve Segal sheaves. This follows, as under the above equivalence, $p_X^*$ and $p_Y^*$ are identified with the compositions
        \begin{equation*}
            \begin{split}
                & \Shv(\cX) \;\simeq\; \Shv(\cX) \otimes \spc \;\too\; \Shv(\cX) \otimes \Shv(\cY) \;\simeq\; \Shv(\cX\times\cY) \\
                & \Shv(\cY) \;\simeq\; \spc \otimes \Shv(\cY) \;\too\; \Shv(\cX) \otimes \Shv(\cY) \;\simeq\; \Shv(\cX\times\cY)
            \end{split}
            \qin \PrR.
        \end{equation*}
    \end{proof}

    \begin{definition}
        We call the equivalence from~\cref{cor:product-of-geometric-patterns} the \emph{external product}
        \begin{equation*}
            \boxtimes \;\colon\; \Shv(\cX)\otimes \Shv(\cY) \;\isotoo\; \Shv(\cX\times \cY).
        \end{equation*}
    \end{definition}

    Let $f \colon \cX \to \cY$ be a Segal morphism of geometric patterns and $\cC \in \PrL$. By~\cite[Proposition~4.7]{Chu-Haugseng-2021-algebraic-patterns}, there is an adjunction
    \begin{equation*}
        f_! \;\colon\; \Shv(\cX; \cC) \;\longadj\; \Shv(\cY; \cC) \;\cocolon\; f^*,
    \end{equation*}
    where $f^*$ is the restriction of presheaves and $f_!$ is given by left Kan extension followed by Segalification.

    \begin{lemma}
    \label{lem:sheavs-are-symmetric-monoidal}
        The Segal sheaves construction gives rise to a symmetric monoidal functor 
        \begin{equation*}
            \Shv \;\colon\; \GPatSeg \;\too\; \PrL,
        \end{equation*}
        with the $(-)_!$-functoriality.
    \end{lemma}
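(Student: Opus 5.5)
The plan is to obtain $\Shv$ as a symmetric monoidal functor by restricting a known one, the presheaf construction, along a symmetric monoidal localization. Recall that $\PSh \colon \Cat \to \PrL$, with left Kan extension functoriality $f_!$, is symmetric monoidal for the cartesian product on $\Cat$ and the Lurie tensor product on $\PrL$. Concretely, $\PSh(\cX\times\cY)\simeq \PSh(\cX)\otimes\PSh(\cY)$, and $\PSh(\ast)=\spc$.

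First I will construct $\Shv$ as a functor $\GPatSeg \to \PrL$. For a Segal morphism $f$, the restriction $f^*$ preserves Segal sheaves by definition. By \cite[Proposition~4.7]{Chu-Haugseng-2021-algebraic-patterns}, $f_!$ on sheaves is $L_\cY \circ f_!^{\PSh}$, where $L_\cY$ denotes Segalification. The assignment $\cX \mapsto \Shv(\cX)$ is then the localization of the functor
\begin{equation*}
    \GPatSeg \;\too\; \Cat \;\xtoo{\PSh}\; \PrL
\end{equation*}
at the Segal equivalences. Formally, I will realize it through the category of pairs $(\cC, S)$, where $\cC\in\PrL$ and $S$ is a class of morphisms, with morphisms being functors sending $S$ into $S$. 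Localization $(\cC,S)\mapsto \cC[S^{-1}]$ is a functor from this category to $\PrL$, and it is symmetric monoidal when $(\cC,S)\otimes(\cD,T)$ is equipped with the class generated by $S\boxtimes \cD$ and $\cC\boxtimes T$. This is a standard fact (Lurie, HA 4.1.7.4 and its presentable analogue). The lift $\cX\mapsto (\PSh(\cX), S_\cX)$, with $S_\cX$ the Segal maps $\colim_{\cX^\el_{/X}} \Yo(E)\to \Yo(X)$, is functorial in Segal morphisms precisely because $f_!^{\PSh}$ sends $S_\cX$ into the strongly saturated class generated by $S_\cY$. This follows from $f^*$ preserving Segal sheaves, by adjunction.

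Next I will check monoidality of the lift. The product of patterns must be sent to the tensor product of pairs, so I need that the Segal maps $S_{\cX\times\cY}$ and the class generated by $S_\cX\boxtimes \PSh(\cY)$ together with $\PSh(\cX)\boxtimes S_\cY$ have the same saturation. I will not prove this directly. Instead I will use \cref{cor:product-of-geometric-patterns}: both localizations of $\PSh(\cX\times\cY)$ have the same local objects, namely $\Shv(\cX\times\cY)\simeq\Shv(\cX;\Shv(\cY))\simeq \Shv(\cX)\otimes\Shv(\cY)$. Since a strongly saturated class is determined by its local objects, the comparison maps are equivalences. The unit is the terminal pattern, with $\Shv(\ast)=\spc$. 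The oplax structure maps are then invertible, so the functor is strongly symmetric monoidal.

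The main obstacle is bookkeeping rather than any single computation. One has to make the localization functor $(\cC,S)\mapsto\cC[S^{-1}]$ into a genuine symmetric monoidal functor of $\infty$-categories, rather than just checking equivalences objectwise. I would handle this by citing the presentable version of HA 4.1.7.4, or alternatively by presenting $\Shv$ as a symmetric monoidal subfunctor of $\PSh$ on the $\PrR$-side, using the $f^*$ functoriality. Passing to adjoints then gives the required $(-)_!$-functoriality.
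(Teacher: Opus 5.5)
Your argument is correct, but your main route differs from the paper's. The paper's proof is essentially the alternative you mention in your last sentence.

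\textbf{The paper's route.} It works on the $\PrR$-side. The assignment $\cX\mapsto\PSh(\cX)$ with $(-)^*$-functoriality is a symmetric monoidal functor $(\GPatSeg)\op\to\PrR$. Segal sheaves form a full subfunctor, because Segal morphisms are exactly those whose $f^*$ preserves Segal sheaves. By \cref{cor:product-of-geometric-patterns} this subfunctor is compatible with the tensor product: $\Shv(\cX)\otimes\Shv(\cY)\simeq\Shv(\cX\times\cY)$ inside $\PSh(\cX\times\cY)$. Since lying in a full subfunctor is a property, the symmetric monoidal structure restricts, and one passes to left adjoints.

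\textbf{Your route.} You build $\Shv$ on the $\PrL$-side as a localization of $\PSh$ through marked presentable categories. This has the advantage that the description $f_!=L_\cY\circ f_!^{\PSh}$ is built in. It costs an extra formal input: the localization functor from marked presentable categories to $\PrL$ must be symmetric monoidal. This amounts to the identification
\begin{equation*}
    \cC[S^{-1}]\otimes\cD[T^{-1}]\;\simeq\;(\cC\otimes\cD)[(S\boxtimes\cD\cup\cC\boxtimes T)^{-1}].
\end{equation*}
Your citation of HA 4.1.7.4 is not quite this statement, since that result concerns Dwyer--Kan localizations of monoidal $\infty$-categories. The needed fact is nonetheless standard.

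Your comparison of the two saturated classes through their local objects is right. Both have as local objects the bi-Segal presheaves, which form $\Shv(\cX;\Shv(\cY))=\Shv(\cX\times\cY)$. This is exactly the key input the paper uses.
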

    \begin{proof}
        The functor $\PSh \colon (\GPatSeg)\op \to \PrR$ with the $(-)^*$-functoriality is symmetric monoidal. The subfunctor $\Shv \colon (\GPatSeg)\op \to \PrR$ is therefore symmetric monoidal by~\cref{cor:product-of-geometric-patterns}. The result follows by passing to left adjoints.
    \end{proof}

    \subsubsection*{Colimits of geometric patterns}
 
    By~\cite[Corollary~5.5]{Chu-Haugseng-2021-algebraic-patterns}, $\GPat$ has all filtered colimits, and they are preserved by the forgetful functor $\GPat\to \Cat$. 
    This does not seem to be the case for $\GPatSeg$ (\cite[Remark~5.6]{Chu-Haugseng-2021-algebraic-patterns}). However, certain colimits in $\GPatSeg$ can still be computed by imposing a stronger condition on the morphisms.

    \begin{definition}
        A morphism of geometric patterns $f \colon \cX \to \cY$ is called \emph{strong Segal} if for every $X\in \cX$, the induced functor $\cX^{\el}_{/X} \to \cY^{\el}_{/f(X)}$ is cofinal.
    \end{definition}
    
    \begin{proposition}
    \label{prop:gpat-colimit-segal-sheaves}
        Let $\cX_\bullet \colon I \to \GPatSeg$ be a diagram. Assume that:
        \begin{enumerate}
            \item the colimit $\cX \coloneqq \colim_{i \in I} \cX_i \in \GPat$ exists,
            \item this colimit is preserved by the forgetful functor $\GPat\to \Cat$, and
            \item the structure maps $f_i \colon \cX_i \to \cX$ are strong Segal.
        \end{enumerate} 
        Then $\cX$ is also a colimit of $\cX_\bullet$ in $\GPatSeg$, and this colimit is preserved by $\Shv \colon \GPatSeg \to \PrL$.
    \end{proposition}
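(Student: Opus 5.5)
The plan is to first show that $\Shv$ sends the cocone $(f_i\colon\cX_i\to\cX)$ to a colimit cocone in $\PrL$. Then I would deduce the universal property in $\GPatSeg$ from this.

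\textbf{Step 1: the structure maps are Segal.} Let $X\in\cX_i$ and $F\in\Shv(\cX)$. Since $\cX^{\el}_{/X}\to\cX^{\el}_{/f_i(X)}$ is cofinal, its opposite is initial. Hence restriction identifies $\lim_{(\cX^{\el}_{/f_i(X)})\op}F$ with $\lim_{((\cX_i)^{\el}_{/X})\op}F\circ f_i$. The former is $F(f_i(X))$, so $f_i^*F$ is a Segal sheaf, and each $f_i$ is Segal. Thus the cocone lives in $\GPatSeg$.

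\textbf{Step 2: colimits in $\PrL$.} Colimits in $\PrL$ are computed as limits in $\PrR$ along the right adjoints. So I must show that
\[
f^*\colon\Shv(\cX)\to\lim_{i}\Shv(\cX_i)
\]
is an equivalence, the limit being taken along restrictions. By hypothesis (2), $\cX\simeq\colim_i\cX_i$ in $\Cat$, so $\PSh(\cX)\simeq\lim_i\PSh(\cX_i)$. It therefore suffices to show that a presheaf $F$ on $\cX$ is Segal if and only if each $f_i^*F$ is Segal. The forward direction is Step 1. For the converse, note that every object $Y\in\cX$ is of the form $f_i(X)$ for some $i$, because objects of a colimit in $\Cat$ are jointly hit by the $\cX_i$. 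Then the cofinality argument of Step 1, run backwards, shows that the Segal condition for $F$ at $Y$ is equivalent to the Segal condition for $f_i^*F$ at $X$. This gives $\Shv(\cX)\simeq\lim_i\Shv(\cX_i)$ in $\PrR$, hence $\Shv(\cX)\simeq\colim_i\Shv(\cX_i)$ in $\PrL$.

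\textbf{Step 3: universal property in $\GPatSeg$.} Let $g_i\colon\cX_i\to\cY$ be a compatible family of Segal morphisms. By (1) there is a unique morphism of geometric patterns $g\colon\cX\to\cY$ restricting to the $g_i$. It remains to check that $g$ is Segal, i.e.\ that $g^*$ preserves Segal sheaves. For $F\in\Shv(\cY)$, we have $f_i^*g^*F=g_i^*F$, which is Segal for every $i$. By the criterion of Step 2, $g^*F$ is Segal. Hence $g$ lies in $\GPatSeg$, and uniqueness is inherited from $\GPat$. (I treat this as a strict universal property; if one wants the $\infty$-categorical mapping spaces, the same argument shows that $\Map_{\GPatSeg}(\cX,\cY)\subseteq\Map_{\GPat}(\cX,\cY)$ is the full union of components cut out by a condition checked on each $\cX_i$.)

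\textbf{Main obstacle.} The step needing most care is Step 2: the essential surjectivity of $\coprod_i\Ob(\cX_i)\to\Ob(\cX)$, and the compatibility of the identification $\PSh(\cX)\simeq\lim_i\PSh(\cX_i)$ with restrictions. Both follow from (2), since the colimit is computed in $\Cat$. The cofinality argument itself is formal.
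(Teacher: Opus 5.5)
Your proof is correct and follows the paper's argument essentially step for step. You identify $\PSh(\cX)\simeq\lim_i\PSh(\cX_i)$ using hypothesis (2), then show a presheaf is Segal iff all its restrictions are, via essential surjectivity on objects and cofinality of $(\cX_i)^{\el}_{/X}\to\cX^{\el}_{/f_i(X)}$, and finally deduce that the induced map to any cocone is Segal; your Step 1 also usefully makes explicit that strong Segal implies Segal, which the paper uses without comment.
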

    \begin{proof}
        We will first prove that $\Shv(\cX)$ is a colimit of $\Shv(\cX_\bullet)$ in $\PrL$, or equivalently, by passing to right adjoints, that it is the limit in $\CAT$.
        Since $\cX\simeq \colim_{i\in I} \cX_i\in \Cat$, we have
        \begin{equation*}
            \PSh(\cX)\;\isotoo\; \lim_{i\in I\op}\PSh(\cX_i)\qin \CAT.
        \end{equation*}
        It then suffices to show that $F\in \PSh(\cX)$ is Segal if and only if $F|_{\cX_i}$ is Segal for all $i$. The \quotes{only if} direction follows from the fact that each $\cX_i\to \cX$ is a Segal morphism. 
        For the \quotes{if} direction, let $X\in \cX$. Since $\cX$ is the colimit of $\cX_\bullet$ as categories, $X \simeq f_i(X_i)$ for some $i\in I$ and $X_i\in \cX_i$. 
        As $f_i$ is strong Segal, the functor $(\cX_i)^{\el}_{/X_i}\to \cX^{\el}_{/X}$ is cofinal, so
        \begin{equation*}
            \lim_{(\cX^{\el}_{/X})\op} F \;\simeq\; \lim_{((\cX_i)^{\el}_{/X_i})\op} F\circ f_i \;\simeq\; F(f_i(X_i)) \;\simeq\; F(X),
        \end{equation*}
        where the second isomorphism uses that $\restrict{F}{\cX_i}$ is Segal.

        It remains to show that $\cX$ is the colimit of $\cX_\bullet$ in $\GPatSeg$.
        Let $\cY$ be a cocone under $\cX_\bullet$ in $\GPatSeg$.
        Since $\cX$ is a colimit in $\GPat$, we get a morphism of geometric patterns $\cX\to \cY$ which we want to prove is Segal. 
        Notice that a Segal sheaf on $\cY$ restricts to a Segal sheaf on $\cX_i$ for every $i\in I$.
        Thus, from the above paragraph, the restriction $\PSh(\cY)\to \PSh(\cX)$ preserves Segal sheaves.
    \end{proof}
    
    \begin{corollary}
    \label{cor:gpat-filtered-colimit-strong-segal}
        Let $\cX_{\bullet} \colon I \to \GPatSeg$ be a filtered diagram consisting of strong Segal morphisms. Then this diagram admits a colimit in $\GPatSeg$ and
        \begin{equation*}
            \colim_i \Shv(\cX_i) \;\isotoo\; \Shv(\colim_i \cX_i) \qin \PrL.
        \end{equation*}
    \end{corollary}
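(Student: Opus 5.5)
We will deduce the corollary directly from \cref{prop:gpat-colimit-segal-sheaves}, so the task is to check its three hypotheses for a filtered diagram $\cX_\bullet\colon I\to\GPatSeg$ of strong Segal morphisms. By \cite[Corollary~5.5]{Chu-Haugseng-2021-algebraic-patterns}, the composite diagram $I\to\GPatSeg\to\GPat$ has a colimit $\cX\coloneqq\colim_i\cX_i$ in $\GPat$, and the forgetful functor $\GPat\to\Cat$ preserves it. This gives hypotheses (1) and (2). Concretely, $\cX$ is the filtered colimit of the underlying categories. The inert and active subcategories of $\cX$ are the filtered colimits of the $\cX_i^{\inrt}$ and $\cX_i^{\act}$, and $\cX^{\el}$ is the union of the images of the $\cX_i^{\el}$.

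The main work is hypothesis (3): each structure map $f_i\colon\cX_i\to\cX$ is strong Segal. Fix $X_i\in\cX_i$, write $X=f_i(X_i)$, and let $J=I_{i/}$, which is again filtered. For $j\in J$ write $X_j$ for the image of $X_i$ in $\cX_j$. Since filtered colimits of categories commute with the slice, inert and elementary constructions, we expect
\begin{equation*}
    \cX^{\el}_{/X}\;\simeq\;\colim_{j\in J}(\cX_j)^{\el}_{/X_j}\qin\Cat.
\end{equation*}
To justify this, we would compute mapping spaces and use that mapping spaces in a filtered colimit of categories are filtered colimits of mapping spaces. We also need inertness of a map in $\cX$ to be detected at some finite stage; this holds because the factorization system on $\cX$ is the colimit one. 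The transition functors $(\cX_j)^{\el}_{/X_j}\to(\cX_k)^{\el}_{/X_k}$ are cofinal, since the transition maps $\cX_j\to\cX_k$ are strong Segal.

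Cofinality of a functor $A\to B$ means that $\colim_A$ of the restriction agrees with $\colim_B$ for all diagrams on $B$; equivalently, the functor induces an equivalence on localizations of all left fibrations. We then use the following fact: a filtered colimit of categories along cofinal transition maps has cofinal inclusions of each stage. To prove it, we would use the straightening characterization. A functor $g\colon A\to B$ is cofinal iff for every copresheaf $F$ on $B$ the map $\colim_A g^*F\to\colim_B F$ is an isomorphism. For $B=\colim_j B_j$, we have $\colim_B F\simeq\colim_j\colim_{B_j}F|_{B_j}$, since left Kan extension is functorial and colimits over a colimit of categories decompose. Every transition map in this filtered system is an equivalence, so the first term already computes the colimit. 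I expect this step to be the main technical obstacle: the colimit decomposition of $\colim_B F$ and the identification of elementary slices must be made precise.

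Once (3) holds, \cref{prop:gpat-colimit-segal-sheaves} shows that $\cX$ is the colimit in $\GPatSeg$ and that $\Shv$ preserves it, giving $\colim_i\Shv(\cX_i)\isotoo\Shv(\colim_i\cX_i)$ in $\PrL$.
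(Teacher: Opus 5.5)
Your proposal is correct and is essentially the paper's argument: you reduce to \cref{prop:gpat-colimit-segal-sheaves} via \cite[Corollary~5.5]{Chu-Haugseng-2021-algebraic-patterns}, write $\cX^{\el}_{/f_i(X)}$ as the filtered colimit over $I_{i/}$ of the slices $(\cX_j)^{\el}_{/X_j}$, and deduce cofinality of the first stage from cofinality of the strong Segal transition functors. The only difference is that you sketch proofs of the two inputs the paper cites from \cite{kerodon}: that filtered colimits of categories commute with slices, and that a filtered colimit of cofinal functors is cofinal; your argument for the latter, via $\colim_B F\simeq\colim_j\colim_{B_j}F|_{B_j}$ and weak contractibility of $I_{i/}$, is sound.
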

    \begin{proof}
        Let $\cX \coloneqq \colim_{i \in I} \cX_i$ be the colimit computed in $\GPat$. As filtered colimits in $\GPat$ commute with the forgetful functor to $\Cat$, by~\cref{prop:gpat-colimit-segal-sheaves}, it is enough to prove that the structure maps $f_i \colon \cX_i \to \cX$ are strong Segal. That is, for any $X\in \cX_i$, the functor $(\cX_i)^{\el}_{/X} \to \cX^{\el}_{/f_i(X)}$ is cofinal.
        Since filtered colimits in $\Cat$ commute with overcategories~(\cite[Lemma~9.1.9.7]{kerodon}), and the structure maps preserve inert maps and elementary objects, we have
        \begin{equation*}
            \cX^{\el}_{/f_i(X)} \;\simeq\; \colim_{i\xto{\alpha} j \;\in\; I_{i/}} (\cX_j)^{\el}_{/f_{\alpha}(X)}
        \end{equation*}
        where $f_\alpha\colon \cX_i\to \cX_j$ are the transition functors. 
        Each transition functor in this filtered diagram is cofinal by the strong Segal assumption, so the map from the initial term $(\cX_i)^{\el}_{/X} \to \cX^{\el}_{/f_i(X)}$ is cofinal, as a filtered colimit of cofinal functors is cofinal~(\cite[Corollary~7.2.1.17]{kerodon}).
    \end{proof}


    \subsubsection*{Boundary objects}

    Given a geometric pattern $\cX$, we may consider a preorder on its elementary objects where $E_1\le E_2$ if there exists some inert map $E_1 \to E_2$.
    In many cases of interest, this preorder will have a maximum. We use this maximum object to define a boundary geometric pattern.

    \begin{definition}
    \label{def:maximum-elemenary}
        Let $\cX$ be a geometric pattern. We say that $\cX$ admits a \emph{maximum elementary} $M \in \cX^{\el}$ if for any $E \in \cX^{\el}$:
        \begin{itemize}
            \item there exists an inert map $E \to M$;
            \item if $M \to E$ is inert then it is an isomorphism $M \isoto E$.
        \end{itemize}
    \end{definition}

    \begin{definition}
        Let $\cX$ be a geometric pattern with a maximum elementary $M$. We say that $X \in \cX$ is \emph{boundary} if there is no inert map $M \to X$. We let $\cX_{\partial} \subseteq \cX$ be the full subcategory spanned by boundary objects. 
    \end{definition}

    \begin{remark}
        A maximum elementary $M$ is unique up to (non-canonical) isomorphism, but the space of maxima need not be contractible. Nevertheless, the full subcategory $\cX_{\partial}\subseteq \cX$ of boundary objects does not depend on the choice of $M$.
    \end{remark}

    \begin{lemma}
    \label{lem:boundary-inclusion-is-strong-segal}
        If $\cX$ has a maximum elementary, then the geometric pattern of $\cX$ restricts to a geometric pattern on $\cX_{\partial}$, and the inclusion map $i \colon \cX_{\partial} \into \cX$ is a strong Segal morphism.
    \end{lemma}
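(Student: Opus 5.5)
We have to show two things: that $\cX_{\partial}$ carries an induced geometric pattern, and that for every $X\in\cX_{\partial}$ the functor $(\cX_{\partial})^{\el}_{/X}\to\cX^{\el}_{/X}$ is cofinal. The key observation is that $\cX_{\partial}$ is closed under passing along inert maps into its objects. If $Y\to X$ is inert with $X$ boundary, then $Y$ is boundary. Indeed, an inert $M\to Y$ would compose with $Y\to X$ to give an inert $M\to X$, contradicting that $X$ is boundary.

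\textbf{Step 1: the induced pattern.} We declare a map in $\cX_{\partial}$ active or inert if it is so in $\cX$, and take the elementary objects to be $\cX^{\el}\cap\cX_{\partial}$. To check that the factorization system restricts, let $f\colon X'\to X$ be a map in $\cX_{\partial}$. It factors in $\cX$ as $X'\xrightarrow{a}Y\xrightarrow{i}X$ with $a$ active and $i$ inert. By the closure observation, $Y\in\cX_{\partial}$, so the factorization lives in the full subcategory $\cX_{\partial}$. Uniqueness of the factorization is inherited from $\cX$. The inclusion then preserves active maps, inert maps and elementary objects, so it is a morphism of geometric patterns.

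\textbf{Step 2: the slice categories agree.} Fix $X\in\cX_{\partial}$. Let $E\to X$ be an object of $\cX^{\el}_{/X}$, i.e.\ an inert map from an elementary object. By the closure observation $E$ is boundary, so $E\in(\cX_{\partial})^{\el}$. Since $\cX_{\partial}\subseteq\cX$ is full, the functor $(\cX_{\partial})^{\el}_{/X}\to\cX^{\el}_{/X}$ is fully faithful. It is also essentially surjective, hence an equivalence, and in particular cofinal. This makes the inclusion strong Segal, and therefore Segal: for $F\in\Shv(\cX)$, cofinality identifies $\lim_{(\cX^{\el}_{/X})\op}F$ with the limit over $(\cX_{\partial})^{\el}_{/X}$.

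\textbf{Main obstacle.} There is no real difficulty. The one point needing care is that the definition of boundary quantifies over inert maps from a fixed maximum $M$, which is only defined up to isomorphism. Any other maximum $M'$ is isomorphic to $M$: there are inert maps $M\to M'$ and $M'\to M$, and the second condition of \cref{def:maximum-elemenary} forces them to be isomorphisms. So the notion of boundary, and hence the closure argument, is independent of the choice of $M$, as noted in the preceding remark. Only the first condition of \cref{def:maximum-elemenary}, together with the fact that inert maps compose, is actually used in the proof.
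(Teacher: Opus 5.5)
Your proof is correct and follows essentially the same route as the paper: the closure of $\cX_{\partial}$ under inert maps into its objects (compose with a putative inert $M\to Y$) gives both the restriction of the active–inert factorization and the identification $\cX^{\el}_{/X}\simeq(\cX_{\partial})^{\el}_{/X}$, hence cofinality. One small correction to your closing remark: the closure argument uses neither condition in the definition of a maximum elementary, only that inert maps compose, while your independence-of-$M$ check uses both conditions.
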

    
    \begin{proof}
        We must show that the active-inert factorization system restricts to $\cX_{\partial}$. Let $f \colon X \to Y$ be a map of boundary objects. As a map in $\cX$, it splits uniquely as a composition of an active $X \to Z$ and an inert $Z \to Y$. If $Z$ were not boundary, then there would exist an inert map $M \to Z$, and the composite $M \to Z \to Y$ would be inert, contradicting $Y \in \cX_\partial$.
    
        To see that the inclusion is strong Segal, note that by the same argument, for any boundary object $X \in \cX_{\partial}$
        \begin{equation*}
            \cX^{\el}_{/X} \;\simeq\; (\cX_{\partial})^{\el}_{/X}.
        \end{equation*}
    \end{proof}
    
    \begin{definition}
    \label{def:boundary-and-inclusion}
        Let $\cX$ be a geometric pattern with a maximum elementary. For any $\cC\in \PrL$, we denote the adjunction corresponding to $i\colon \cX_\partial\into \cX$ by
        \begin{equation*}
            \iota \coloneqq i_! \;\colon\; \Shv(\cX_{\partial};\cC) \;\longadj\; \Shv(\cX;\cC) \;\cocolon\; i^* \eqqcolon \partial.
        \end{equation*}
        We call $\partial$ the \emph{boundary} functor.
    \end{definition}

    \begin{remark}
        When the geometric pattern in question models a kind of $n$-category, the boundary functor $\partial$ is also commonly called the \emph{core} functor, since it returns the underlying $(n-1)$-categorical part.
    \end{remark}

    \begin{lemma}
    \label{lem:boundary-of-product}
        Let $\cX$ and $\cY$ be geometric patterns with maximum elementaries, then $\cX\times \cY$ also has a maximum elementary.
        Moreover, the following is a pushout square in $\GPatSeg$:
        \begin{equation*}
            \begin{tikzcd}
                {\cX_\partial\times\cY_\partial} & {\cX\times\cY_\partial} \\
                {\cX_\partial\times\cY} & {(\cX\times\cY)_\partial}
                \arrow[hook, from=1-1, to=1-2]
                \arrow[hook, from=1-1, to=2-1]
                \arrow[hook, from=1-2, to=2-2]
                \arrow[hook, from=2-1, to=2-2]
                \arrow["\lrcorner"{anchor=center, pos=0.125, rotate=180}, draw=none, from=2-2, to=1-1]
            \end{tikzcd}
        \end{equation*}
        and this pushout is preserved by $\Shv\colon \GPatSeg\to \PrL$.
    \end{lemma}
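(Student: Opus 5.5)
The plan is to take $(M_\cX,M_\cY)$ as the maximum elementary of $\cX\times\cY$, identify boundary objects of the product, and then deduce the pushout statements from \cref{prop:gpat-colimit-segal-sheaves}.

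\textbf{Maximum elementary.} Inert maps and elementary objects in $\cX\times\cY$ are computed coordinatewise. Hence for any elementary $(E,E')$ there is an inert map $(E,E')\to(M_\cX,M_\cY)$. If $(M_\cX,M_\cY)\to(E,E')$ is inert, each component is inert, and therefore each is an isomorphism by \cref{def:maximum-elemenary}.

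\textbf{Boundary objects.} An object $(X,Y)$ is boundary exactly when there is no inert map $(M_\cX,M_\cY)\to(X,Y)$. Since $\Map((M_\cX,M_\cY),(X,Y))$ is a product and inertness is checked coordinatewise, such a map exists iff inert maps $M_\cX\to X$ and $M_\cY\to Y$ both exist. Thus $(X,Y)$ is boundary iff $X\in\cX_\partial$ or $Y\in\cY_\partial$. Consequently $(\cX\times\cY)_\partial$ is the union of the two full subcategories $\cX_\partial\times\cY$ and $\cX\times\cY_\partial$, and their intersection is $\cX_\partial\times\cY_\partial$.

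\textbf{The pushout in $\Cat$ and in $\GPat$.} Both legs are inclusions of full subcategories, and they are sieve-like in the following sense. I would show that a morphism $(X,Y)\to(X',Y')$ between objects of the two different pieces factors through the intersection. Suppose $X\in\cX_\partial$ and $Y'\in\cY_\partial$. The map $(X,Y)\to(X',Y')$ factors as $(X,Y)\to(X,Y')\to(X',Y')$, and $(X,Y')$ lies in $\cX_\partial\times\cY_\partial$. More precisely, I would show that the mapping space out of $(X,Y)$ into $(X',Y')$ is the colimit over factorizations through the intersection. This would be done via the standard criterion that a union of two full subcategories $A,B$ of $\cC$ with $A\cup B=\cC$ is the pushout $A\amalg_{A\cap B}B$ in $\Cat$ precisely when every map between the pieces factors through $A\cap B$ in a contractible way. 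Here the category of factorizations has an evident terminal or initial object coming from the product decomposition, which gives contractibility.

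I expect this step to be the main obstacle, since pushouts of categories along full inclusions are not in general unions. The product structure is what makes the factorization category have the required terminal or initial object, and I would verify this carefully.

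Once the $\Cat$-pushout is established, the pattern structure on $(\cX\times\cY)_\partial$, restricted from $\cX\times\cY$ by \cref{lem:boundary-inclusion-is-strong-segal}, agrees on each piece with the product pattern. This is because $\cX_\partial$ and $\cY_\partial$ carry the restricted structure. Hence $(\cX\times\cY)_\partial$ is the pushout in $\GPat$, and it is preserved by the forgetful functor.

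\textbf{Strong Segal and conclusion.} It remains to check that the inclusions $\cX_\partial\times\cY\into(\cX\times\cY)_\partial$ and $\cX\times\cY_\partial\into(\cX\times\cY)_\partial$, as well as the map from the corner, are strong Segal. For $(X,Y)$ with $X\in\cX_\partial$, the elementary slice is $\cX^\el_{/X}\times\cY^\el_{/Y}$, computed coordinatewise. By the argument of \cref{lem:boundary-inclusion-is-strong-segal} this equals $(\cX_\partial)^\el_{/X}\times\cY^\el_{/Y}$, so the comparison functor is an equivalence and in particular cofinal. The same argument covers the other inclusion and the corner. Applying \cref{prop:gpat-colimit-segal-sheaves} then shows that the square is a pushout in $\GPatSeg$ and that $\Shv$ preserves it.
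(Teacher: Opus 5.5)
Your proposal is correct and follows the paper's proof. It takes $(M_\cX,M_\cY)$ as maximum elementary, observes that $(X,Y)$ is boundary iff $X\in\cX_\partial$ or $Y\in\cY_\partial$, gets strong Segal from $\cX^{\el}_{/X}\times\cY^{\el}_{/Y}\simeq(\cX_\partial)^{\el}_{/X}\times\cY^{\el}_{/Y}$, and concludes with \cref{prop:gpat-colimit-segal-sheaves}. The one addition is that you justify the pushout in $\Cat$ via contractible factorization categories, where the paper simply asserts it from the description of boundary objects. This works, with one correction: for $(f,g)\colon(X,Y)\to(X',Y')$ with $X\in\cX_\partial$ and $Y'\in\cY_\partial$, the factorization category is a product of one with an initial object ($X\xto{\id}X\xto{f}X'$) and one with a terminal object ($Y\xto{g}Y'\xto{\id}Y'$). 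It is therefore weakly contractible, but in general has neither a terminal nor an initial object itself.
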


    \begin{proof}
        Let $M\in \cX^\el$ and $N\in \cY^\el$ be maximum elementary objects.
        Then $(M,N)\in (\cX\times\cY)^\el$ is a maximum elementary, since inert maps in $\cX\times \cY$ are coordinate-wise inert.
        To prove the pushout, we will show that the conditions of~\cref{prop:gpat-colimit-segal-sheaves} hold.

        Note that $(X,Y)\in \cX\times \cY$ is boundary if and only if either $X$ or $Y$ is boundary; this gives us the above pushout square in $\Cat$. Moreover, as elementary, active and inert are restricted from the product, it is also a pushout square in $\GPat$.
        It remains to prove that $\cX_\partial\times \cY \to (\cX\times \cY)_\partial$ and $\cX\times \cY_\partial \to (\cX\times \cY)_\partial$ are strong Segal.
        We consider the first; the second is symmetric, swapping the roles of $\cX$ and $\cY$.
        For every $(X,Y)\in \cX_\partial\times \cY$, we have isomorphisms by the proof of~\cref{lem:boundary-inclusion-is-strong-segal}
        \begin{equation*}
            ((\cX\times \cY)_\partial)^\el_{/(X,Y)} 
            \;\simeq\; (\cX\times \cY)^\el_{/(X,Y)}
            \;\simeq\; \cX^\el_{/X}\times \cY^\el_{/Y}
            \;\simeq\; (\cX_\partial)^\el_{/X}\times \cY^\el_{/Y}
            \;\simeq\; (\cX_\partial\times \cY)^\el_{/(X,Y)}.
        \end{equation*}
    \end{proof}

    \begin{corollary}
    \label{cor:boundary-of-external-product}
        Let $\cX$ and $\cY$ be geometric patterns with maximum elementaries, and let $X\in \Shv(\cX)$ and $Y\in \Shv(\cY)$. 
        Then the following is a pushout square in $\Shv(\cX\times\cY)$:
        \begin{equation*}
            \begin{tikzcd}
                {\iota\partial X\boxtimes \iota\partial Y} & {X\boxtimes \iota\partial Y} \\
                {\iota\partial X\boxtimes Y} & {\iota\partial (X\boxtimes Y).}
                \arrow[from=1-1, to=1-2]
                \arrow[from=1-1, to=2-1]
                \arrow[from=1-2, to=2-2]
                \arrow[from=2-1, to=2-2]
                \arrow["\lrcorner"{anchor=center, pos=0.125, rotate=180}, draw=none, from=2-2, to=1-1]
            \end{tikzcd}
        \end{equation*}
    \end{corollary}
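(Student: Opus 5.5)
We deduce the corollary from \cref{lem:boundary-of-product} by applying $\Shv$ and evaluating on $X\boxtimes Y$. Write $i_\cX\colon \cX_\partial\into\cX$, $i_\cY\colon\cY_\partial\into\cY$, and $j\colon(\cX\times\cY)_\partial\into\cX\times\cY$. By \cref{lem:boundary-of-product}, $\Shv((\cX\times\cY)_\partial)$ is the pushout in $\PrL$ of
\begin{equation*}
    \Shv(\cX_\partial\times\cY)\;\longleftarrow\;\Shv(\cX_\partial\times\cY_\partial)\;\too\;\Shv(\cX\times\cY_\partial).
\end{equation*}
The structure maps are the $(-)_!$ functors of the inclusions in the square.

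\textbf{Step 1: restate the square as a limit.} Passing to right adjoints, the square becomes a pullback in $\CAT$ along the restriction functors. So $\Shv((\cX\times\cY)_\partial)$ is the pullback of the restrictions to the three sub-patterns. For a cocartesian square in $\PrL$ of this form (a pushout of two "inclusions"), every object $Z$ of the pushout is the pushout of the images of its restrictions. Concretely, let $a,b,c$ be the inclusions of $\cX_\partial\times\cY_\partial$, $\cX_\partial\times\cY$, $\cX\times\cY_\partial$ into $(\cX\times\cY)_\partial$. Then
\begin{equation*}
    Z\;\simeq\;b_!b^*Z\;\amalg_{a_!a^*Z}\;c_!c^*Z.
\end{equation*}
The cleanest way to prove this is to check it after applying each of $a^*,b^*,c^*$, which are jointly conservative and colimit preserving. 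This requires computing $b^*c_!$, $b^*b_!$, and so on. That is the delicate point, and I expect it to be the main obstacle.

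\textbf{Step 2: the base-change computations.} I would avoid the Step 1 computation by using the tensor decomposition. By \cref{lem:sheavs-are-symmetric-monoidal}, the square of $\PrL$ categories is $\Shv(\cX_\partial)\otimes\Shv(\cY_\partial)\to\dots$, with all functors of the form $(i_\cX)_!\otimes\mathrm{id}$ and so on. The external products in the statement are then the images of $X$, $Y$, $\iota\partial X=(i_\cX)_!i_\cX^*X$, and $\iota\partial Y$ under $\boxtimes$ followed by the relevant $(-)_!$. Applying $j_!j^*$ to $X\boxtimes Y$ gives $\iota\partial(X\boxtimes Y)$. Restriction commutes with external products, so
\begin{equation*}
    b^*j^*(X\boxtimes Y)\simeq \partial X\boxtimes Y,\qquad c^*j^*(X\boxtimes Y)\simeq X\boxtimes\partial Y,\qquad a^*j^*(X\boxtimes Y)\simeq\partial X\boxtimes\partial Y.
\end{equation*}
This holds because $\boxtimes$ of sheaves is computed pointwise as $(X\boxtimes Y)(A,B)=X(A)\times Y(B)$ by \cref{cor:product-of-geometric-patterns}, and restriction along a product of Segal morphisms is pointwise.

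\textbf{Step 3: assemble the square.} Apply the colimit-preserving functor $j_!$ to the decomposition of Step 1 with $Z=j^*(X\boxtimes Y)$. Using $j_!b_!=(i_\cX\times\mathrm{id})_!$ and $(i_\cX\times\mathrm{id})_!(\partial X\boxtimes Y)\simeq\iota\partial X\boxtimes Y$ (the functor $(-)_!$ is monoidal for $\boxtimes$ by \cref{lem:sheavs-are-symmetric-monoidal}), and similarly for $a$ and $c$, we get the claimed pushout in $\Shv(\cX\times\cY)$.

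For the key fact of Step 1, I would argue as follows. A pushout $\cC=\cB\amalg_\cA\cD$ in $\PrL$ is equivalently a pullback of right adjoints. The canonical map $b_!b^*Z\amalg_{a_!a^*Z}c_!c^*Z\to Z$ is then tested by mapping into an arbitrary $W$. This gives $\Map(b^*Z,b^*W)\times_{\Map(a^*Z,a^*W)}\Map(c^*Z,c^*W)$, which equals $\Map(Z,W)$ by the description of mapping spaces in a pullback of categories. This is valid provided $a^*$ factors as restriction through both $b^*$ and $c^*$, which holds since these are restrictions along inclusions. This sidesteps base change entirely.
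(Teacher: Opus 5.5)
Your proposal is correct and follows the paper's proof: you identify $\Shv((\cX\times\cY)_\partial)$ with the pullback $\Shv(\cX_\partial\times\cY)\times_{\Shv(\cX_\partial\times\cY_\partial)}\Shv(\cX\times\cY_\partial)$ via \cref{lem:boundary-of-product}, compute $\partial(X\boxtimes Y)$ as the triple $(\partial X\boxtimes Y,\ \partial X\boxtimes\partial Y,\ X\boxtimes\partial Y)$, and use that $\iota$ sends a triple to the pushout of its images under $\iota\otimes\id$, $\iota\otimes\iota$ and $\id\otimes\iota$. Your mapping-space argument in the last paragraph justifies this pushout formula for $\iota$, which the paper states without proof, and your pointwise formula for $\boxtimes$ is legitimate here: \cref{cor:product-of-geometric-patterns} makes the projections Segal, so $(A,B)\mapsto X(A)\times Y(B)$ is already a Segal sheaf.
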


    \begin{proof}
        Taking sheaves with the $(-)^*$-functoriality,~\cref{lem:boundary-of-product} implies the following pullback:
        \begin{equation*}
            \Shv((\cX\times\cY)_\partial) \;\isotoo\; \Shv(\cX_\partial\times \cY)\times_{\Shv(\cX_\partial\times \cY_\partial)}\Shv(\cX\times \cY_\partial) \qin \CAT.
        \end{equation*}
        Under this identification, the functor $\partial \colon \Shv(\cX\times\cY)\to \Shv((\cX\times\cY)_\partial)$ sends $X\boxtimes Y$ to the element of the pullback given by the triple 
        \begin{equation*}
            (\partial X\boxtimes Y,\; \partial X\boxtimes \partial Y,\; X\boxtimes\partial Y),
        \end{equation*}
        while the left adjoint $\iota\colon  \Shv((\cX\times\cY)_\partial)\to \Shv(\cX\times\cY)$ sends a triple $(X', Z', Y')$ to 
        \begin{equation*}
            (\iota \otimes \id) X' \sqcup_{(\iota \otimes \iota) Z'} (\id \otimes \iota) Y',
        \end{equation*}
        using the identification of~\cref{cor:product-of-geometric-patterns}.
    \end{proof}
    
     In the next proposition, we give sufficient conditions for $\iota\colon \Shv(\cX_\partial;\cC)\to \Shv(\cX;\cC)$ to be fully faithful. These conditions are in no way \emph{necessary}; see~\cref{rem:inclusion-of-grids} for example.

    \begin{proposition}
    \label{prop:core-is-localization}
        Let $\cX$ be a geometric pattern with a maximum elementary such that for any active morphism $X\to Y\in\cX$, if $X$ is boundary then so is $Y$.
        Then for any $\cC\in \PrL$, 
        \begin{equation*}
            \iota \;\colon\; \Shv(\cX_\partial;\cC) \;\too\; \Shv(\cX;\cC)
        \end{equation*}
        is fully faithful.
    \end{proposition}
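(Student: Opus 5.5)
To show that $\iota$ is fully faithful, it is enough to show that the unit $F \to \partial\iota F = i^* i_! F$ is an isomorphism for every $F\in\Shv(\cX_\partial;\cC)$. We will do this by showing that the plain presheaf left Kan extension $i_!^{\mathrm{pre}}F$ along $i\colon\cX_\partial\into\cX$ is already a Segal sheaf. Then no Segalification is needed, so $\iota F = i_!^{\mathrm{pre}}F$. Since $i$ is fully faithful, the unit $F\to i^*i_!^{\mathrm{pre}}F$ is an isomorphism, and the claim follows.

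To analyse the Kan extension, fix $X\in\cX$. The pointwise formula gives
\begin{equation*}
    (i_!^{\mathrm{pre}}F)(X) \;\simeq\; \colim_{(X\to B)\in ((\cX_\partial)_{X/})\op} F(B).
\end{equation*}
If $X$ is boundary, then $\id_X$ is initial in $(\cX_\partial)_{X/}$, so this colimit is just $F(X)$. Now suppose $X$ is not boundary. Factor a map $X\to B$ with $B$ boundary as $X\to Z\to B$, an active map followed by an inert one. We claim $Z$ is boundary: otherwise there is an inert map $M\to Z$, and composing with $Z\to B$ gives an inert map $M\to B$, which is impossible. So every map from $X$ into $\cX_\partial$ factors through an active map $X\to Z$ with $Z$ boundary. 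The hypothesis that active maps preserve boundary objects points the wrong way to rule such maps out directly. Instead, the plan is to use it to show that $\cX_\partial$ is closed under the relevant operations, so that the comma category $(\cX_\partial)_{X/}$ is controlled by the active maps out of $X$.

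The cleaner route is to verify the Segal condition for $G := i_!^{\mathrm{pre}}F$ directly:
\begin{equation*}
    G(X)\;\simeq\;\lim_{E\in(\cX^\el_{/X})\op} G(E).
\end{equation*}
By definition of the maximum elementary, each elementary $E$ is either boundary, in which case $G(E)=F(E)$, or admits an inert map from $M$ and is then isomorphic to $M$. So the key computation is $G(M)$. We want to show that $G(M)$ is the value forced by the Segal condition on the boundary part, namely
\begin{equation*}
    G(M)\;\simeq\;\lim_{E\in((\cX_\partial)^{\el}_{/M})\op}F(E),
\end{equation*}
where $(\cX_\partial)^\el_{/M}$ consists of the proper elementary faces of $M$. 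Once this holds, the limit over $\cX^\el_{/X}$ for general $X$ can be rewritten as a limit over the boundary elementaries alone, because the copies of $M$ contribute nothing new. Both sides are then computed from $F$ on the boundary inert faces of $X$, and the Segal condition for $F$ on its own pattern closes the argument.

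I expect the main obstacle to be exactly this computation of the Kan extension colimit at non-boundary objects, in particular at $M$. There the indexing category $(\cX_\partial)_{X/}$ consists of maps from $X$ into boundary objects, which must be non-active, and I would need a cofinality argument comparing this category with the inert boundary faces of $X$. My first attempt would use the active–inert factorization to construct a right adjoint, or a cofinal inclusion, from (the opposite of) $(\cX^{\el}_{/X})\cap\cX_\partial$ into $(\cX_\partial)_{X/}$, using the hypothesis on active maps to keep the factorizations inside $\cX_\partial$. If that fails, the fallback is to prove fully faithfulness only for $\cC=\spc$ by a representable check. That would reduce to showing that sheafification of $i_!$ of representables is computed correctly; the general case would then follow by tensoring with $\cC$ via \cref{lem:Shv(X;C)=Shv(X)@C}, since $\iota$ for $\cC$ is $\iota\otimes\cC$.
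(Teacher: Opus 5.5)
Your approach has a genuine gap: the identity you aim for is false. $G=i_!^{\mathrm{pre}}F$ is computed by a colimit. The formula you want,
\[
G(M)\simeq\lim_{E\in((\cX_\partial)^{\el}_{/M})\op}F(E),
\]
is instead the value at $M$ of the \emph{right} Kan extension $i_*F$. A counterexample already occurs for $\cX=\deltaCat$, where $\cX_\partial=\{[0]\}$ and $F$ is a space $S$. Your own colimit formula gives $G([a])=S$ for all $a$, the constant Segal space, which is the correct $\iota S$. The limit over the two vertices of $[1]$ is $S\times S$, the value of the chaotic Segal space $[a]\mapsto S^{a+1}=i_*S$. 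So no cofinality argument can match the colimit over $((\cX_\partial)_{M/})\op$ with that limit. Likewise, the picture in which copies of $M$ ``contribute nothing new'' describes $i_*F$, not $\iota F$.

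The broader claim, that $i_!^{\mathrm{pre}}F$ is already Segal, also fails within the hypotheses of the proposition. Take $\cX=\deltaCat^2$. A boundary sheaf is then a pair of Segal spaces $A=F([\bullet,0])$ and $B=F([0,\bullet])$ with $A_0=B_0=S$, and one computes $G([a,b])=A_a\sqcup_S B_b$. Choose $S=\pt$, let $A$ be the bar construction $[a]\mapsto P^a$ of $P=K(\ZZ,2)$, and let $B$ be the nerve of $Q=\ZZ/2$. Look at the fibers over the base point of $G([1,0])=P$:
\begin{itemize}
\item the fiber of $G([1,2])=P\sqcup_{\pt}Q^{2}$ is $\pt\sqcup S^1\sqcup S^1\sqcup S^1$;
\item the fiber of $G([1,1])\times_{G([1,0])}G([1,1])$ is $(\pt\sqcup S^1)^{2}$, which has a torus component.
\end{itemize}
So Segalification genuinely changes $G$, and no version of your computation can succeed.

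Note also that you never use the hypothesis on active maps. It is exactly what makes the right Kan extension work. It says that $i$ has unique lifting of active morphisms, so by \cite[Proposition~6.3]{Chu-Haugseng-2021-algebraic-patterns} $i_*$ preserves Segal sheaves and is right adjoint to $\partial=i^*$. Since $i$ is fully faithful, so is $i_*$. In the adjoint triple $\iota\dashv\partial\dashv i_*$, the left adjoint is fully faithful exactly when the right one is. This is the paper's proof, and your limit computation is essentially the check that $i_*F$ is Segal.

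Your fallback does not repair the argument. It gives no mechanism for the case $\cC=\spc$. Moreover, passing to general $\cC$ via $-\otimes\cC$ preserves full faithfulness of $\iota$ only once you know that $\partial$ preserves colimits, and that again comes from $i_*$.
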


    \begin{proof}
        It suffices to prove that the right adjoint $\partial$ has a further right adjoint which is fully faithful (see e.g.~\cite[Lemma~1.2]{Haine-Ramzi-Steinebrunner-2025-fully-faithful}).
        The assumption on $\cX$ implies that the inclusion $i\colon X_\partial \into X$ satisfies \emph{unique lifting of active morphisms}, as per~\cite[Definition~6.1]{Chu-Haugseng-2021-algebraic-patterns}.
        Thus, by~\cite[Proposition~6.3]{Chu-Haugseng-2021-algebraic-patterns}, right Kan extension along $i$ restricts to a functor on Segal sheaves
        \begin{equation*}
            i_* \;\colon\; \Shv(\cX_\partial;\cC) \;\too\; \Shv(\cX;\cC)
        \end{equation*}
        which is right adjoint to $i^*=\partial$.
        Since $i$ is fully faithful, the proof of \cite[Proposition~4.3.2.17]{Lurie-HTT} implies that $i_*$ is also fully faithful.
    \end{proof}

\subsection{$n$-Uple Segal spaces}

    In \cite{Rezk-2001-Segal-spaces}, Rezk introduced \emph{Segal spaces} as $\spc$-valued sheaves on the simplex category $\deltaCat$.
    
    \begin{definition}[{\cite[Example~3.3]{Chu-Haugseng-2021-algebraic-patterns}}]
        Equip $\deltaCat$ with the following geometric pattern:
        \begin{itemize}
            \item An order-preserving map $f \colon [a] \to [b]$ is active if it preserves the endpoints.
            \item An order-preserving map $f \colon [a] \to [b]$ is inert if it is the inclusion of an interval, i.e.\ $f(i) = f(0) + i$.
            \item The elementary objects are $[0]$ and $[1]$.
        \end{itemize}
    \end{definition}
    
    \begin{example}
        $\sCat \coloneqq \Shv(\deltaCat; \Set)$ is the $(1,1)$-category of strict categories.
    \end{example}

    By~\cref{cor:product-of-geometric-patterns}, $\deltaCat^{n} = \deltaCat \times \cdots \times \deltaCat$ is a geometric pattern. 
    Segal sheaves on $\deltaCat^n$ are called \emph{$n$-uple Segal spaces}.
    We now spell out the structure of this geometric pattern.
    
    \begin{notation}
        We denote elements in $\ZZ_{\ge 0}^{n}$ as vectors $\vec{a} = (a_1, \dots, a_n)$. We similarly denote the objects of $\deltaCat^n$ by 
        \begin{equation*}
            [\vec{a}] \;=\; [a_1,\dots,a_n] \;\coloneqq\; ([a_1],\dots,[a_{n}]).
        \end{equation*}

        For $X \in \PSh(\deltaCat^{n})$, we usually denote its evaluation at $[\vec{a}]$ by $X_{\vec{a}} \coloneq X([\vec{a}])$.
    \end{notation}

    \begin{definition}
    \label{def:simplicial-cubes}
        For $I \subseteq \{1,\dots, n\}$, define $[1]^I \coloneqq [\mbbm{1}^I] \in \deltaCat^{n}$ by
        \begin{equation*}
            \mbbm{1}^I_i \;=\; \begin{cases}
                1, & i \in I, \\
                0, & i \notin I.
            \end{cases}
        \end{equation*}
        We also denote $[1]^k \coloneqq [1]^{\{1,\dots,k\}}$.
    \end{definition}

    \begin{remark}
        Using the external product $\Shv(\deltaCat)^{\otimes n}\isoto \Shv(\deltaCat^n)$, we can express 
        \begin{equation*}
            [1]^n\simeq [1]\boxtimes\dots\boxtimes[1].
        \end{equation*}
        More generally, $[1]^I$ is given by the appropriate external product of $[1]$'s and $[0]$'s.
    \end{remark}

    The geometric pattern on $\deltaCat^{n}$ is described as follows:
    \begin{itemize}
        \item A morphism $f \colon [\vec{a}] \to [\vec{b}]$ is active if it is active in each coordinate, i.e.\ 
        \begin{equation*}
            \big(f(i_1,\dots, i_{j-1}, 0, i_{j+1}, \dots, i_{n})\big)_j \;=\; 0, 
            \qquad 
            \big(f(i_1,\dots, i_{j-1}, a_j, i_{j+1}, \dots, i_{n})\big)_j \;=\; b_j. 
        \end{equation*}
        \item A morphism $f \colon [\vec{a}] \to [\vec{b}]$ is inert if it is the inclusion of a grid, i.e.\ $f(\vec{i}) = f(\vec{0}) + \vec{i}$.
        \item The elementary objects are $[1]^I$ for $I \subseteq \{1,\dots, n\}$.
    \end{itemize}

    By~\cite[Example~14.21]{Chu-Haugseng-2021-algebraic-patterns}, the geometric pattern $\deltaCat^n$ is saturated.
        
    \subsubsection*{Boundary}
    The geometric pattern $\deltaCat^n$ has a maximum elementary $[1]^n$. 
    The boundary objects $\deltaCat^n_\partial\subseteq \deltaCat^n$ are those $[\vec{a}]\in \deltaCat^n$ such that there exists $1\le j\le n$ with $a_j = 0$.
    In particular, we get an adjunction as in~\cref{def:boundary-and-inclusion}:
    \begin{equation*}
        \iota \;\colon\; \Shv(\deltaCat^n_{\partial};\cC) \;\longadj\; \Shv(\deltaCat^n;\cC) \;\cocolon\; \partial.
    \end{equation*}

    \begin{corollary}
    \label{cor:uple-inclusion-fully-faithful}
        For any $\cC\in \PrL$, the functor     
        \begin{equation*}
            \iota \;\colon\; \Shv(\deltaCat^n_\partial;\cC) \;\too\; \Shv(\deltaCat^{n};\cC)
        \end{equation*}
        is fully faithful.
    \end{corollary}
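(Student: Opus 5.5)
The plan is to apply \cref{prop:core-is-localization} to the geometric pattern $\cX=\deltaCat^n$ with maximum elementary $[1]^n$. Everything then reduces to its single hypothesis: whenever $f\colon[\vec a]\to[\vec b]$ is active in $\deltaCat^n$ and $[\vec a]$ is boundary, $[\vec b]$ is also boundary.

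First, I would record the description of boundary objects given above. Since inert maps $[1]^n\to[\vec b]$ are inclusions of subgrids, such a map exists if and only if $b_j\ge 1$ for all $j$. Hence $[\vec b]$ is boundary exactly when some $b_j=0$.

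Second, suppose $[\vec a]$ is boundary, so $a_j=0$ for some $j$. Active maps in $\deltaCat^n$ are active coordinatewise, so the $j$-th component $f_j\colon[0]\to[b_j]$ is an active map in $\deltaCat$. It therefore preserves both endpoints, which means $0=f_j(0)=b_j$. So $b_j=0$, and $[\vec b]$ is boundary.

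The hypothesis of \cref{prop:core-is-localization} is thus satisfied, and that proposition gives full faithfulness of $\iota$ for every $\cC\in\PrL$. No step is a genuine obstacle. The only point needing care is that a map in $\deltaCat^n$ really is a tuple of maps in $\deltaCat$, so that activeness can be checked one coordinate at a time. This holds because the pattern on $\deltaCat^n$ is the product pattern of \cref{cor:product-of-geometric-patterns}.
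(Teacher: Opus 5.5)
Your proposal is correct and matches the paper's proof. Both apply \cref{prop:core-is-localization} to $\deltaCat^n$ and verify its hypothesis by noting that if $f\colon[\vec a]\to[\vec b]$ is active with $a_j=0$, then the $j$-th coordinate, which preserves both endpoints of $[0]$, forces $b_j=0$.
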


    \begin{proof}
        It suffices to check that $\deltaCat^n$ satisfies the assumptions of~\cref{prop:core-is-localization}.
        Indeed, if $f \colon [\vec{a}] \to [\vec{b}]$ is active and $a_j = 0$, then
        \begin{equation*}
            0 \;=\; \big(f(i_1,\dots,i_{j-1},0,i_{j+1}, \dots,i_{n})\big)_j \;=\; b_j.
        \end{equation*}
    \end{proof}

    \begin{example}
        When $n=1$, the boundary $\partial \colon \Shv(\deltaCat)\to \spc$ is given by evaluation at $[0]$.
        The left adjoint $\iota\colon \spc\to \Shv(\deltaCat)$ is the constant Segal space functor.
    \end{example}

    \subsubsection*{Univalence}
    A Segal space $X$ behaves much like a category, with space of objects $X_0$ and space of arrows $X_1$.
    The degeneracy $X_0\to X_1$ provides identity arrows, while the Segal condition determines composition
    \begin{equation*}
        X_1\times_{X_0} X_1 \;\xlongleftarrow{\sim}\; X_2\;\xtoo{d_1} \; X_1.
    \end{equation*}
    In particular, one obtains a notion of isomorphism in $X$ as arrows with left and right inverses.
    On the other hand, there is also a notion of isomorphism given by paths in the space $X_0$.
    For a Segal space to model a category, one requires that these two notions coincide. 

    \begin{definition}
        Let $X \in \Shv(\deltaCat)$. We say that $X$ is \emph{univalent} (a.k.a.\ complete) if the following is a pullback 
        \begin{equation*}
            \begin{tikzcd}
                {X_0} & {X_3} \\
                {X_0 \times X_0} & {X_2 \times X_2.}
                \arrow[from=1-1, to=1-2]
                \arrow[from=1-1, to=2-1]
                \arrow["\lrcorner"{anchor=center, pos=0.125}, draw=none, from=1-1, to=2-2]
                \arrow["{d_3 \times d_0}", from=1-2, to=2-2]
                \arrow[from=2-1, to=2-2]
            \end{tikzcd}
        \end{equation*}
        We denote the full subcategory of univalent sheaves by $\Shv(\deltaCat)\univ$.
        Equivalently, it is the localization of $\Shv(\deltaCat)$ at the map $J\to \pt$, where $J\in \Shv(\deltaCat)$ is the walking invertible arrow.
    \end{definition}

    Univalent Segal spaces are a model for categories, $\Shv(\deltaCat)\univ\simeq \Cat$.

    \begin{definition}
        Let $X\in \Shv(\deltaCat^n)$. We say that $X$ is \emph{univalent} if 
        \begin{equation*}
            X_{a_1,\dots,a_{j-1},\bullet,a_{j+1},\dots,a_{n}}\qin \Shv(\deltaCat)
        \end{equation*}
        is a univalent Segal space for any choice of
        $a_i \in \ZZ_{\ge 0}$ and $1 \le j \le n$.
        We denote the full subcategory of univalent $n$-uple Segal spaces by $\Shv(\deltaCat^n)\univ$
    \end{definition}

    Univalent $n$-uple Segal spaces are a model for $n$-uple categories, 
    \begin{equation*}
        \Shv(\deltaCat^n)\univ \;\simeq\; (\Shv(\deltaCat)\univ)^{\otimes n} \;\simeq\; \Cat^{\otimes n}.
    \end{equation*}
    
\subsection{$\globe$-Spaces}
    Denote by $\globe  \in \Cat_{(1,1)}$
    Joyal's category of globular sums. 
    It was first defined in an unpublished work of Joyal~\cite{Joyal-1997-Theta}, and later, using an iterated categorical wreath product with $\deltaCat$, by Berger~\cite{Berger-2007-Theta-wreath}. 
    
    \begin{definition}
        Let $\cC \in \Cat_{(1,1)}$. The wreath product $\deltaCat \wr \cC \in \Cat_{(1,1)}$ is defined as follows:
        \begin{itemize}
            \item Objects are tuples $([a]; x_1, \dots, x_a)$ for $[a] \in \deltaCat$ and $x_i \in \cC$.
            \item Morphisms $([a]; x_1, \dots, x_a) \to ([b]; y_1, \dots, y_b)$ consist of a map $\alpha \colon [a] \to [b]$ in $\deltaCat$ together with morphisms $f_{i,j} \colon x_i \to y_j$ in $\cC$ for every pair $(i,j)$ such that $\alpha(i-1) < j \le \alpha(i)$.
        \end{itemize}
    \end{definition}
    
    We now inductively define $\globe[-1] \coloneqq \emptyset$ to be the empty category and $\globe \coloneqq \deltaCat \wr \globe[n-1]$. In particular, $\globe[0] = \pt$ is the terminal category and $\globe[1] = \deltaCat$.
    The inclusion $\emptyset \into \pt$ induces a fully faithful embedding $\globe[n-1]\into \globe[n]$, which we view as a subcategory.

    We define the \emph{$n$-globe} $\laxdisk{n}\in \globe$ (also known as the walking $n$-morphism, $n$-cell, or $n$-disk) inductively by
    \begin{equation*}
        \laxdisk{0}\coloneqq [0]\in \globe[0],\qquad \laxdisk{n} \coloneqq ([1]; \laxdisk{n-1})\in\globe.
    \end{equation*}
    Considering  $\laxdisk{k},\laxdisk{k+1}\in \globe$ for $k<n$, there are two \emph{boundary} maps $\delta^-, \delta^+\colon \laxdisk{k}\to \laxdisk{k+1}$, induced from the two maps $[0] \to [1]$, and one \emph{degeneracy} map $\sigma \colon \laxdisk{k+1} \to \laxdisk{k}$, induced from $[1] \to [0]$. The full subcategory of globes in $\globe$ can be described diagrammatically as
    \begin{equation*}
        \begin{tikzcd}
            {\laxdisk{0}} & {\laxdisk{1}} & \cdots & {\laxdisk{n}.}
            \arrow[shift right=2, from=1-1, to=1-2]
            \arrow[shift left=2, from=1-1, to=1-2]
            \arrow[from=1-2, to=1-1]
            \arrow[shift left=2, from=1-2, to=1-3]
            \arrow[shift right=2, from=1-2, to=1-3]
            \arrow[from=1-3, to=1-2]
            \arrow[shift left=2, from=1-3, to=1-4]
            \arrow[shift right=2, from=1-3, to=1-4]
            \arrow[from=1-4, to=1-3]
        \end{tikzcd}
    \end{equation*}

    Chu and Haugseng defined an algebraic pattern on $\globe \op$, with a factorization system originally due to Berger~\cite{Berger-2002-Theta}, which we consider as a geometric pattern.
    \begin{definition}[{\cite[Example~3.5]{Chu-Haugseng-2021-algebraic-patterns}}]
        Equip $\globe $ with the following inductively defined geometric pattern:
        \begin{itemize}
            \item A map $(\alpha; (f_{i,j})) \colon ([a]; x_1, \dots, x_a) \to ([b]; y_1, \dots, y_{b})$ is active (resp.\ inert) if $\alpha$ and $f_{i,j}$ are all active (resp.\ inert).
            \item The elementary objects are the globes $\laxdisk{0}, \dots, \laxdisk{n}$.
        \end{itemize}
    \end{definition}

    By~\cite[Example~14.21]{Chu-Haugseng-2021-algebraic-patterns}, the geometric pattern $\globe$ is saturated.

    \begin{example}
        $\sCatn\coloneqq \Shv(\globe;\Set)$ is the category of \emph{strict $n$-categories} (\cite{Berger-2002-Theta}).
    \end{example}

    \begin{example}
    \label{exm:Theta_n-spaces}
        We define $\globe$-spaces to be $\spc$-valued sheaves on $\globe$. By~\cite{Ayala-Francis-2018-flagged-categories}, they can be identified with \emph{flagged $n$-categories}.
    \end{example}

    \begin{lemma}
    \label{lem:globe-is-a-weak-generator}
        The $n$-globe $\laxdisk{n}$ is a weak generator of $\Shv(\globe)$; that is, it generates $\Shv(\globe)$ under colimits.
    \end{lemma}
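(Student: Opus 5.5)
Here "weak generator" is read in the stated sense: every object of $\Shv(\globe)$ lies in the smallest full subcategory that contains $\laxdisk{n}$ and is closed under colimits. The plan is to reduce in three steps: from all sheaves to representables, from representables to globes, and from globes to the single top globe $\laxdisk{n}$.

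First, $\Shv(\globe)$ is a localization of $\PSh(\globe)$, so it is generated under colimits by the (Segalified) representables. Since $\globe$ is saturated, these are just the representables $\Theta \in \globe$ themselves. Saturation also gives that each representable is a colimit in $\Shv(\globe)$ of its elementary inert subobjects, $\Theta \simeq \colim_{E\in \globe^{\el}_{/\Theta}} E$. Every elementary object is a globe $\laxdisk{k}$ with $k\le n$. So the sheaves are generated under colimits by the globes $\laxdisk{0},\dots,\laxdisk{n}$.

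Second, show that each $\laxdisk{k}$ with $k<n$ lies in the colimit closure of $\laxdisk{k+1}$; by induction, all globes then lie in the closure of $\laxdisk{n}$. The key is the retraction $\laxdisk{k}\xrightarrow{\delta^-}\laxdisk{k+1}\xrightarrow{\sigma}\laxdisk{k}$, which composes to the identity. Hence $\laxdisk{k}$ is a retract of $\laxdisk{k+1}$, and in particular it is the colimit (coequalizer, or more precisely the idempotent splitting) of the idempotent $e=\delta^-\sigma$ on $\laxdisk{k+1}$. Idempotent splittings in an $\infty$-category are sequential colimits, namely the colimit of $\laxdisk{k+1}\xrightarrow{e}\laxdisk{k+1}\xrightarrow{e}\cdots$, once $e$ is promoted to a coherent idempotent. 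Because $\laxdisk{k}$ is a genuine retract, the idempotent comes with full coherence from the retraction data (\cite[\S 4.4.5]{Lurie-HTT}). So $\laxdisk{k}$ lies in the closure of $\laxdisk{k+1}$ under colimits.

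The main subtlety is this retract step: one must check that "closed under colimits" includes closure under retracts. In a presentable setting this holds because a retract is the colimit of the associated idempotent diagram, indexed by $\mathrm{Idem}$, and this colimit is computed as a sequential colimit. Everything else is formal from saturation and the description of the elementary objects.
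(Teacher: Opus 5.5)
Your proof is correct and follows essentially the same route as the paper. Saturation and the Segal condition reduce generation to the elementary globes $\laxdisk{0},\dots,\laxdisk{n}$, and each $\laxdisk{k}$ is a retract of $\laxdisk{n}$ via $\sigma\delta^-\simeq\id$, so it lies in the colimit closure. Your added care about retracts is fine, but the appeal to coherent idempotents is unnecessary: the sequential colimit of $\laxdisk{k+1}\xtoo{e}\laxdisk{k+1}\xtoo{e}\cdots$ already recovers $\laxdisk{k}$ directly from the homotopy $\sigma\delta^-\simeq\id$.
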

    
    \begin{proof}
        By the Segal condition, $\Shv(\globe)$ is generated under colimits by the elementary objects $\laxdisk{0}, \dots, \laxdisk{n}$. Since every $\laxdisk{k}$ is a retract of $\laxdisk{n}$ for $k \le n$, the single object $\laxdisk{n}$ suffices to generate the category.
    \end{proof}

    \subsubsection*{Inclusion and core}

    The geometric pattern $\globe$ has a maximum elementary $\laxdisk{n}$, and the boundary inclusion
    $(\globe)_\partial \into \globe$ agrees with the inclusion $\globe[n-1]\into \globe$.
    In particular, this inclusion is a strong Segal morphism by~\cref{lem:boundary-inclusion-is-strong-segal} (see also~\cite[Example~4.11]{Chu-Haugseng-2021-algebraic-patterns}).
    Consider the adjunction as in~\cref{def:boundary-and-inclusion}
    \begin{equation*}
        \iota \;\colon\; \Shv(\globe[n-1];\cC) \;\longadj\; \Shv(\globe;\cC) \;\cocolon\; \partial.
    \end{equation*}
    
    The following lemma is well known. We include the proof for completeness.
    \begin{lemma}
    \label{lem:inclusion-of-fold}
        For any $\cC\in \PrL$, the functor
        \begin{equation*}
            \iota \;\colon\; \Shv(\globe[n-1]; \cC) \;\too\; \Shv(\globe; \cC)
        \end{equation*}
        is fully faithful. 
    \end{lemma}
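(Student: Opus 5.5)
The plan is to apply \cref{prop:core-is-localization} to the geometric pattern $\globe$ with maximum elementary $\laxdisk{n}$. Since the boundary subcategory $(\globe)_\partial$ coincides with $\globe[n-1]$, it suffices to verify the hypothesis of that proposition: for every active map $f\colon X\to Y$ in $\globe$, if $X$ is boundary then so is $Y$. Equivalently, if $X\in\globe[n-1]$ and $f\colon X\to Y$ is active, then $Y\in\globe[n-1]$. \Cref{prop:core-is-localization} then gives full faithfulness of $\iota$ for any $\cC\in\PrL$.

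I would argue by induction on $n$, using the wreath product description $\globe=\deltaCat\wr\globe[n-1]$.

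\textbf{Base case $n=1$.} We have $\globe[0]=\{[0]\}\subseteq\deltaCat$. An active map $[0]\to[b]$ preserves endpoints, so $0=b$. This is exactly the argument of \cref{cor:uple-inclusion-fully-faithful} for $n=1$.

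\textbf{Inductive step.} Take $X=([a];x_1,\dots,x_a)$ with all $x_i\in\globe[n-2]$, and an active map $(\alpha;(f_{i,j}))\colon X\to Y=([b];y_1,\dots,y_b)$. Then $\alpha$ is active in $\deltaCat$, so it is surjective onto endpoints: $\alpha(0)=0$ and $\alpha(a)=b$. Hence every $j\in\{1,\dots,b\}$ satisfies $\alpha(i-1)<j\le\alpha(i)$ for some $i$, and so there is a component $f_{i,j}\colon x_i\to y_j$. This component is active in $\globe[n-1]$, and $x_i\in\globe[n-2]$, so the induction hypothesis gives $y_j\in\globe[n-2]$. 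Therefore $Y\in\deltaCat\wr\globe[n-2]=\globe[n-1]$.

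The case $a=0$ needs a separate check. Then $\alpha\colon[0]\to[b]$ is active, forcing $b=0$, so $Y=([0])$, which lies in $\globe[n-1]$.

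\textbf{Expected obstacle.} The only subtle point is identifying the boundary objects of $\globe$ (those receiving no inert map from $\laxdisk{n}$) with $\globe[n-1]$. The paper already asserts this identification, so I would take it as given. If needed, it can be checked inductively: an inert map $\laxdisk{n}=([1];\laxdisk{n-1})\to Y$ picks out a single $y_j$ receiving an inert map from $\laxdisk{n-1}$.
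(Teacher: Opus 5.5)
Your proposal is correct and follows essentially the same route as the paper: reduce to \cref{prop:core-is-localization}, then show by induction through the wreath product $\globe=\deltaCat\wr\globe[n-1]$ that any active map out of an object of $\globe[n-1]$ lands in $\globe[n-1]$. Your indexing via $\alpha(i-1)<j\le\alpha(i)$ for $1\le j\le b$, together with the separate check of the case $a=0$, is slightly more careful than the paper's write-up, which quantifies over $0\le j\le b$ and leaves that degenerate case implicit.
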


    \begin{proof}
        It suffices to check that $\globe$ satisfies the assumptions of~\cref{prop:core-is-localization}.
        We prove inductively that whenever $f\colon x\to y\in \globe$ is an active map and $x \in \globe[n-1]$, then also $y\in \globe[n-1]$.
        For $n=0$, it is trivial.
        Write $x=([a];x_1,\dots,x_a)$, $y=([b];y_1,\dots,y_b)$, and $f=(\alpha,(f_{ij}))$. 
        Since $\alpha\colon [a]\to [b]$ is active, for every $0\le j\le b$, we can find some $1 \le i\le a$ such that $\alpha(i-1)\le j\le \alpha(i)$.
        Thus, we have an active map $f_{ij}\colon x_i\to y_j$, and since $x_i\in \globe[n-2]$, it follows by induction that also $y_j\in \globe[n-2]$.
    \end{proof}

    \begin{example}
        For $\cC = \Set$, the functor $\partial \colon \sCatn \to \sCatn[(n-1)]$ returns the $(n-1)$-core, i.e., the maximal strict $(n-1)$-subcategory. Its left adjoint is the inclusion of strict $(n-1)$-categories into strict $n$-categories.
    \end{example}

    Since the inclusions
    \begin{equation*}
        \globe[-1] \;\intoo\; \globe[0] \;\intoo\; \globe[1] \;\intoo\; \globe[2] \;\intoo\; \dots
    \end{equation*}
    are all strong Segal morphisms, by~\cref{cor:gpat-filtered-colimit-strong-segal} they have a colimit in $\GPatSeg$.

    \begin{definition}
        Define the geometric pattern
        \begin{equation*}
            \globe[] \;=\; \globe[\infty] \;\coloneqq\; \colim_n \globe[n] \qin \GPatSeg.
        \end{equation*}
    \end{definition}

    In particular,~\cref{cor:gpat-filtered-colimit-strong-segal} implies that
    \begin{equation*}
        \colim_n \Shv(\globe;\cC) \;\isotoo\; \Shv(\globe[];\cC) \qin \PrL,
    \end{equation*}
    or equivalently, by passing to right adjoints, 
    \begin{equation*}
        \Shv(\globe[];\cC) \;\isotoo\; \lim_n \Shv(\globe;\cC) \qin \CAT.
    \end{equation*}
    \begin{example}
        $\sCatoo \coloneqq \Seg(\globe[];\Set)$ is the category of strict $\infty$-categories.
    \end{example}
    
    \begin{example}
        Following \cref{exm:Theta_n-spaces}, we define a $\globe[]$-space to be a sheaf on $\globe[]$. The category of $\globe[]$-spaces is identified with the category of flagged $\infty$-categories.
    \end{example}

    \subsubsection*{The suspension functor}

    Consider the functor $\globe[n-1]\to \globe$ given by $x\mapsto ([1];x)$. 
    Post-composition with the Yoneda embedding
    \begin{equation*}
        \globe[n-1]\;\too\; \globe\;\intoo\; \PSh(\globe)
    \end{equation*}
    can be canonically lifted to bi-pointed presheaves
    \begin{equation*}
        \globe[n-1]\;\too\; \PSh(\globe)_{*,*},
    \end{equation*}
    where $([1];x)$ is pointed by the two maps from $[0]$ selecting the two faces $[0]\to [1]$.

    \begin{definition}
        The \emph{bi-pointed suspension} functor
        \begin{equation*}
            \suspensionbi\colon \PSh(\globe[n-1])\;\too\; \PSh(\globe)_{*,*}
        \end{equation*}
        is the left Kan extension of $\globe[n-1]\to\PSh(\globe)_{*,*}$.
        The \emph{suspension} functor is then given by further forgetting the bi-pointing 
        \begin{equation*}
            \suspension\colon \PSh(\globe[n-1])\;\xtoo{\suspensionbi}\; \PSh(\globe)_{*,*}\;\too\; \PSh(\globe).
        \end{equation*}
    \end{definition}

    \begin{proposition}[{\cite[Theorem~2.25]{Campion-2023-pasting}}]
        Suspension preserves Segal sheaves, as well as \mbox{$\Set$-valued} Segal sheaves, descending to functors
        \begin{equation*}
            \suspension \colon \Shv(\globe[n-1])\;\too\; \Shv(\globe),
            \qquad
            \suspension \colon \Shv(\globe[n-1];\Set)\;\too\; \Shv(\globe;\Set).
        \end{equation*}
    \end{proposition}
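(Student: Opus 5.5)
We reduce to the Segal condition one object $y\in\globe$ at a time. We then compute the value of $\suspension X$ at $y$ and identify it with the corresponding limit over elementaries.

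\textbf{Reduction.} The Segal condition on $\globe$ is closed under limits, but $\suspension$ is a left Kan extension, so the first task is a pointwise formula for it. For $y=([b];y_1,\dots,y_b)\in\globe$ and $x\in\globe[n-1]$, a map $([1];x)\to y$ consists of:
\begin{itemize}
    \item a map $\alpha\colon[1]\to[b]$, i.e.\ a pair $0\le \alpha(0)\le\alpha(1)\le b$;
    \item maps $x\to y_j$ for every $\alpha(0)<j\le\alpha(1)$.
\end{itemize}
Hence
\begin{equation*}
    \Map(([1];x),y)\;\simeq\;\coprod_{0\le p\le q\le b}\ \prod_{p<j\le q}\Map(x,y_j).
\end{equation*}
Each summand, as a functor of $x$, is a finite product of representables. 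Left Kan extending and evaluating (in the presheaf sense, via the coend) gives
\begin{equation*}
    (\suspension X)(y)\;\simeq\;\coprod_{0\le p\le q\le b} X(y_{p+1}\times\cdots\times y_q),
\end{equation*}
where products of representables are interpreted via the Segal/wreath structure. Concretely, $\Map(y_{p+1}\times\dots\times y_q,-)$ is corepresented by the globular sum $([q-p];y_{p+1},\dots,y_q)$ restricted appropriately. Summands with $p=q$ contribute a point each, coming from the two bi-pointings and the degeneracies.

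\textbf{Main step.} With this formula, the Segal condition for $\suspension X$ at $y$ amounts to the following. The limit over $\globe^{\el}_{/y}$ decomposes as an iterated fiber product: over $X$-values on the globes sitting inside each $y_j$, glued along points. We must check that the coproduct over $(p,q)$ interacts correctly with this fiber product; that is, the coproduct over intervals $[p,q]\subseteq[b]$ commutes with the gluing. This uses that each inert globe $\laxdisk{k+1}\to y$ lands in a single column $j$, so it lies in the summand with $q=p+1$ or in a degenerate summand. We also use that coproducts in $\spc$ are universal and disjoint, so fibers of a coproduct over a discrete set of components split.

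\textbf{Remaining obstacle and the $\Set$ case.} The hardest point is making the indexing bookkeeping rigorous: identifying $\globe^{\el}_{/y}$ with a wreath-type assembly of the categories $(\globe[n-1])^{\el}_{/y_j}$ glued along copies of $\laxdisk{0}$, and showing that the relevant functors are cofinal. We handle this by induction on $n$, exactly mirroring the inductive proof of \cref{lem:inclusion-of-fold}. The $\Set$-valued statement follows by the same computation, since the formula involves only coproducts and products, which preserve $0$-truncated objects.
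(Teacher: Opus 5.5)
There is a genuine gap, and it is in your first step. (The paper gives no argument for this statement; it simply cites Campion.)

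\textbf{The pointwise formula has the wrong variance.} The functor $\suspensionbi$ is the left Kan extension of the Yoneda image of $x\mapsto([1];x)$. Its value at $y\in\globe$ is therefore the coend $\int^{x}X(x)\times\Map_{\globe}(y,([1];x))$. You need maps \emph{from} $y$ \emph{into} $([1];x)$, but you computed $\Map(([1];x),y)$. Your functor $x\mapsto\prod_{p<j\le q}\Map(x,y_j)$ is contravariant in $x$, so it cannot be paired with the presheaf $X$ in a coend. Likewise $X(y_{p+1}\times\cdots\times y_q)$ has no meaning: $\globe[n-1]$ lacks these products, and co-Yoneda does not apply with this variance.

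\textbf{The bi-pointing is ignored.} The Kan extension is formed in bi-pointed presheaves. The forgetful functor $\PSh(\globe)_{*,*}\to\PSh(\globe)$ preserves only colimits indexed by weakly contractible categories, and the category of elements of $X$ is usually not one. So one must also push out along $\partial[1]\times|X|\to\partial[1]$, where $\partial[1]=\laxdisk{0}\sqcup\laxdisk{0}$ and $|X|=\colim X$. A sanity check confirms the formula is off: at $y=\laxdisk{0}$ it gives one point, but $\suspension X$ has exactly two objects.

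\textbf{What the correct argument needs.} The correct computation is easier than yours. For $y=([b];y_1,\dots,y_b)$, a map $y\to([1];x)$ is a monotone $\alpha\colon[b]\to[1]$, together with one map $y_i\to x$ if $\alpha$ jumps at $i$. After the pushout this gives
\[
(\suspension X)(y)\;\simeq\;\pt\sqcup\pt\sqcup\coprod_{i=1}^{b}X(y_i).
\]
The Segal condition then rests on two inputs that your ``main step'' never isolates.
\begin{enumerate}
\item $\globe^{\el}_{/y}$ is the gluing, along the vertices $v_0,\dots,v_b$, of the joins $\{v_{i-1},v_i\}\star(\globe[n-1])^{\el}_{/y_i}$.
\item Each $(\globe[n-1])^{\el}_{/y_i}$ is connected (indeed weakly contractible), and the three-piece splitting $(\suspension X)([1];e)\simeq\pt\sqcup\pt\sqcup X(e)$ is natural in $e$.
\end{enumerate}
Since $(\suspension X)(\laxdisk{0})$ is discrete, these show that the limit over the $i$-th join is $\pt\sqcup\pt\sqcup\lim_e X(e)$. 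By the Segal condition on $X$ this is $\pt\sqcup\pt\sqcup X(y_i)$. Taking the fibre product of these over $\pt\sqcup\pt$ for $i=1,\dots,b$ recovers the displayed formula.

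By contrast, your proposal only asserts that the coproduct ``interacts correctly'' with the gluing. The induction you propose, mirroring \cref{lem:inclusion-of-fold}, does not help, since that lemma concerns active maps rather than limits over elementary slices. Your treatment of the $\Set$-valued case is fine once the correct formula is in place, because that formula is visibly $0$-truncated.
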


    \begin{example}
    \label{exm:suspension-of-globe}
        On a representable $x\in \globe[n-1]$, the suspension is given by $\suspension x\simeq([1];x)$. In particular, $\suspension \laxdisk{k}\simeq\laxdisk{k+1}$.
    \end{example}

    \begin{example}
    \label{exm:suspension-of-a-boundary-of-a-globe}
        The suspension of the boundary of the $k$-globe is the boundary of the $(k+1)$-globe, $\suspension \iota \partial \laxdisk{k}\simeq \iota\partial\laxdisk{k+1}$.
    \end{example}

    \begin{definition}
        The \emph{hom functor}
        \begin{equation*}
            \hom\colon \PSh(\globe)_{*,*}\;\too\; \PSh(\globe[n-1])
        \end{equation*}
        is the right adjoint of the bi-pointed suspension.
        For $(X,s,t) \in \PSh(\globe)_{*,*}$, we denote $\hom_X(s,t)\coloneqq \hom(X,s,t)$.
    \end{definition}

     Explicitly, for $(X,s,t)\in \PSh(\globe)_{*,*}$, the hom is given by the pullback
    \begin{equation*}
        \begin{tikzcd}
            { \hom_X(s,t)} & \pt \\
            {X_{([1];\bullet)}} & {X_{[0]}^{\times 2}.}
            \arrow[from=1-1, to=1-2]
            \arrow[from=1-1, to=2-1]
            \arrow["\lrcorner"{anchor=center, pos=0.125}, draw=none, from=1-1, to=2-2]
            \arrow["{{(s,t)}}", from=1-2, to=2-2]
            \arrow[from=2-1, to=2-2]
        \end{tikzcd}
    \end{equation*}

    \begin{corollary}
    \label{cor:map-from-suspension}
        Let $Y\in \PSh(\globe[n-1])$ and $X\in\PSh(\globe)$. 
        Then the following is a pullback square:
        \begin{equation*}
            \begin{tikzcd}
                {\Map_{\PSh(\globe)}(\suspension Y,X)} & {X_{[0]}^{\times 2}} \\
                {\Map_{\PSh(\globe[n-1])}(Y,X_{([1];\bullet)})} & {\Map_{\PSh(\globe[n-1])}(Y,X_{[0]}^{\times 2}).}
                \arrow[from=1-1, to=1-2]
                \arrow[from=1-1, to=2-1]
                \arrow[from=1-2, to=2-2]
                \arrow[""{name=0, anchor=center, inner sep=0}, from=2-1, to=2-2]
                \arrow["\lrcorner"{anchor=center, pos=0.125}, draw=none, from=1-1, to=0]
            \end{tikzcd}
        \end{equation*}
    \end{corollary}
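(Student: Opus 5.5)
We want to compute $\Map(\suspension Y, X)$. The plan is to combine two facts. First, $\suspension$ factors as $\suspensionbi$ followed by forgetting the bi-pointing. Second, $\suspensionbi$ is left adjoint to $\hom$.

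The basic relation between pointed and unpointed mapping spaces is as follows. For $(Z,s',t')\in \PSh(\globe)_{*,*}$, the pointed objects are $[0]\sqcup[0]\downarrow \PSh(\globe)$. Hence the forgetful map fits into a fiber sequence
\begin{equation*}
    \Map_{*,*}\big((Z,s',t'),(X,s,t)\big)\;\too\;\Map(Z,X)\;\too\;\Map([0]\sqcup[0],X)\simeq X_{[0]}^{\times 2},
\end{equation*}
where the fiber is taken over the point $(s,t)$. Applying this with $Z=\suspensionbi Y$ and using the adjunction $\suspensionbi\dashv\hom$, the fiber of $\Map(\suspension Y,X)\to X_{[0]}^{\times 2}$ over $(s,t)$ is $\Map_{\PSh(\globe[n-1])}(Y,\hom_X(s,t))$.

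Next we identify the same fibers on the other side of the square. By the explicit pullback description, $\hom_X(s,t)$ is the fiber of $X_{([1];\bullet)}\to X_{[0]}^{\times 2}$ over $(s,t)$, where $X_{[0]}^{\times 2}$ is regarded as a constant presheaf on $\globe[n-1]$. The functor $\Map(Y,-)$ preserves limits, so $\Map(Y,\hom_X(s,t))$ is the fiber of
\begin{equation*}
    \Map(Y,X_{([1];\bullet)})\to\Map(Y,X_{[0]}^{\times 2})
\end{equation*}
over the point determined by the constant map $Y\to\pt\xrightarrow{(s,t)}X_{[0]}^{\times2}$. That point is exactly the image of $(s,t)$ under the right vertical map $X_{[0]}^{\times2}\to\Map(Y,X_{[0]}^{\times 2})$, which is precomposition with $Y\to\pt$. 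So for every $(s,t)$ the fibers of the two horizontal maps in the square agree.

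To conclude, we need the square itself, with all the maps natural, and then compare fibers. The left vertical map $\Map(\suspension Y,X)\to \Map(Y,X_{([1];\bullet)})$ and the commutativity of the square should be made precise without pointwise choices. The cleanest way is to reduce to representables. Both corners involving $Y$ send colimits in $Y$ to limits: $\suspension$ is a left Kan extension followed by forgetting bi-pointings, so it preserves weakly contractible colimits (in particular colimits over categories with an initial object) and sends colimits to pushouts over $[0]\sqcup[0]$. The maps $X_{[0]}^{\times 2}\to\Map(Y,X_{[0]}^{\times2})$ behave compatibly under these colimits, so the square is natural in $Y$. For a representable $Y=x$, \cref{exm:suspension-of-globe} gives $\suspension x\simeq([1];x)$. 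The square then reads
\begin{equation*}
    X_{([1];x)}\;\simeq\; X_{([1];x)}\times_{X_{[0]}^{\times2}}X_{[0]}^{\times2},
\end{equation*}
which holds trivially. Here $\Map(x,X_{[0]}^{\times 2})=X_{[0]}^{\times 2}$ because a constant presheaf evaluates to its value.

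The main obstacle is this colimit-compatibility step. Forgetting a bi-pointing does not preserve arbitrary colimits, so the limit passage needs care. The fiberwise argument circumvents it: a map of spaces over a common base $X_{[0]}^{\times2}$ that is an equivalence on all fibers is an equivalence. So once the square exists, the fiber identification above already finishes the proof, and the representable computation serves only to construct the natural square.
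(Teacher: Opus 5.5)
Your fiberwise argument is essentially the paper's proof: the paper writes $\Map_{\PSh(\globe)}(\suspension Y,X)\simeq\colim_{(s,t)\in X_{[0]}^{\times 2}}\Map_{\PSh(\globe[n-1])}(Y,\hom_X(s,t))$, which is your fiber sequence combined with $\suspensionbi\dashv\hom$, assembled over $X_{[0]}^{\times 2}$, and then concludes from the pullback description of $\hom_X(s,t)$ just as your fiber identification does. The detour through representables is unnecessary (and, as you note, awkward since $\suspension$ does not preserve all colimits), because the square can be written down directly: the top map is restriction along the bipoint $[0]\sqcup[0]\to\suspension Y$, and the left map applies $(-)_{([1];\bullet)}$ and precomposes with $Y\to\hom(\suspensionbi Y)\to(\suspension Y)_{([1];\bullet)}$ (the unit followed by the defining map of $\hom$); with these choices the induced map on fibers over $(s,t)$ is exactly the adjunction equivalence, so your fiberwise criterion finishes the proof.
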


    \begin{proof}
        A map $\suspension Y\to X$ is given by a choice of two points $(s,t)\in X_{[0]}^{\times 2}$ and a bi-pointed map $\suspensionbi Y\to (X,s,t)$.
        More precisely, we have isomorphisms
        \begin{align*}
            \Map_{\PSh(\globe)}(\suspension Y,X)
            &\simeq \colim_{(s,t)\in X_{[0]}^{\times 2}} \Map_{\PSh(\globe)_{*,*}}(\suspensionbi Y,(X,s,t))\\
            &\simeq \colim_{(s,t)\in X_{[0]}^{\times 2}} \Map_{\PSh(\globe[n-1])}(Y,\hom_X(s,t)).
        \end{align*}
        The result then follows from the above pullback description of $\hom_X(s,t)$.
    \end{proof}

    \subsection{$n$-Fold Segal spaces}

    In \cite{Barwick-2005-n-categories}, Barwick introduced a model for $n$-categories using \emph{$n$-fold Segal spaces}, defined as a full subcategory of $n$-uple Segal spaces. 
    This subcategory can be identified with the essential image of an embedding of $\globe$-spaces.

    \begin{definition}
        Given $\cC \in \Cat_{(1,1)}$, there exists a functor $\G_{\cC} \colon \deltaCat \times \cC \to \deltaCat \wr \cC$ defined by
        \begin{equation*}
            \G_{\cC}([a], x) \;\coloneqq\; ([a]; x, x, \dots, x).
        \end{equation*}
    \end{definition}

    By iteration, this induces a functor 
    \begin{equation*}
        \G \colon \deltaCat^{n} \;\too\; \globe.
    \end{equation*}
    Explicitly, $[\vec{a}]\in \deltaCat^n$ is sent to the globular sum with $(a_1+1)$ objects, between each two consecutive objects $(a_2+1)$ $1$-morphisms, between each two consecutive $1$-morphisms $(a_3+1)$ $2$-morphisms, and so on.

    \begin{example}
        The globes are described by $\G[1]^k\simeq \laxdisk{k}$.
    \end{example}

    By~\cite[Examples~4.10 and~4.11]{Chu-Haugseng-2021-algebraic-patterns}, $\G \colon \deltaCat^{n} \to \globe$ is a Segal morphism of geometric patterns.
    The induced functor
    \begin{equation*}
        \G^* \;\colon\; \Shv(\globe) \;\too\; \Shv(\deltaCat^{n})
    \end{equation*}
    is fully faithful by~\cite[Corollary~4.2]{Haugseng-2018-equivalence}, with essential image the \emph{$n$-fold Segal spaces}.
    \begin{definition}
        Let $X\in \Seg(\deltaCat^n)$ be an $n$-uple Segal space. We call $X$ an \emph{$n$-fold Segal space} if it satisfies the following conditions:
        \begin{enumerate}
            \item $X_{a, \bullet, \dots, \bullet} \in \Shv(\deltaCat^{n-1})$ is an $(n-1)$-fold Segal space for all $a$,
            \item $X_{0,\dots,0}\to X_{0, \bullet, \dots, \bullet}$ is an isomorphism.
        \end{enumerate}
    \end{definition}

    By~\cite[Proposition~4.12]{Haugseng-2018-iterated}, $\G^*$ admits a right adjoint
    \begin{equation*}
        \G_* \;\colon\; \Shv(\deltaCat^{n}) \;\too\; \Shv(\globe)
    \end{equation*}
    which morally discards all cells that are non-degenerate in the constancy directions.
    
    We can extract an inductive description of $\G_*X$ for $X\in \Shv(\deltaCat^n)$ from the proof of loc.\ cit.\ (see also~\cite[Remark~4.13]{Haugseng-2018-iterated}):
    \begin{lemma}
    \label{lem:inductive-G_*}
        Let $X \in \Shv(\deltaCat^{n})$. Then
        \begin{equation*}
            \begin{tikzcd}
                {(\G_*X)_{([a];\bullet)}} & {X_{0,\dots,0}^{\times (a+1)}} \\
                {\G_*(X_{a,\bullet,\dots,\bullet})} & {\G_*(X_{0,\bullet,\dots,\bullet})^{\times (a+1)}}
                \arrow[from=1-1, to=1-2]
                \arrow[from=1-1, to=2-1]
                \arrow[from=1-2, to=2-2]
                \arrow[""{name=0, anchor=center, inner sep=0}, from=2-1, to=2-2]
                \arrow["\lrcorner"{anchor=center, pos=0.125}, draw=none, from=1-1, to=0]
            \end{tikzcd}
            \qin \Shv(\globe[n-1])
        \end{equation*}
        where $(\G_*X)_{([a];\bullet)}$ is the preasheaf $x\mapsto (\G_*X)_{([a];x,\dots,x)}$.
    \end{lemma}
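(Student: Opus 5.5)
The plan is to compute $\G_*X$ by evaluating on representables and using the adjunction $\G^*\dashv\G_*$. For $y=([a];x_1,\dots,x_a)\in\globe$ we have $(\G_*X)_y\simeq\Map_{\Shv(\deltaCat^n)}(\G^*y,X)$. We evaluate only at $y=([a];x,\dots,x)$, and we do this naturally in $x\in\globe[n-1]$. The key is then to decompose $\G^*([a];x,\dots,x)$ in $\Shv(\deltaCat^n)$ as a pushout. In $\globe$ the object $([a];x,\dots,x)$ is, by the Segal condition, the iterated pushout of $a$ copies of $([1];x)\simeq\suspension x$ glued along $[0]$'s. Equivalently, it is the pushout of $\coprod_{a+1}[0]\to\suspension(\coprod_a x)$ in an appropriate sense; more invariantly, it is the bi-pointed-type gluing of $[a]$ with $x$ placed on each segment.

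The main step is to identify $\G^*$ of this object with an external product. I would show
\begin{equation*}
\G^*([a];x,\dots,x)\;\simeq\;\big([a]\boxtimes\G^*x\big)\sqcup_{\coprod_{a+1}[0]\boxtimes\G^*x}\coprod_{a+1}[0],
\end{equation*}
where $\G^*x\in\Shv(\deltaCat^{n-1})$ and the map $\coprod_{a+1}[0]\boxtimes\G^*x\to\coprod_{a+1}[0]$ collapses $\G^*x$ to the point. Here $[0]$ means the terminal object $[0,\dots,0]$. This encodes exactly the constancy condition: the vertices are not spread out in the other directions. To prove it, I would check it levelwise on $[\vec b]\in\deltaCat^n$ using the explicit formula $\G([b_1],\vec b')=([b_1];\G\vec b',\dots)$ and the description of maps in the wreath product. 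A map $([b_1];\G\vec b',\ldots)\to([a];x,\ldots,x)$ is a map $\alpha\colon[b_1]\to[a]$ together with maps $\G\vec b'\to x$ for each segment crossed by $\alpha$. When $\alpha$ is constant, no such data is needed, which is precisely the collapsed part. Naturality in $x$ is clear from this description. I expect this identification to be the main obstacle: it must hold in sheaves rather than just presheaves, and one must make sure the pushout is not altered by Segalification. I would check the presheaf-level statement first and then note that both sides are colimits of representables, which $\G^*$, as a restriction, preserves only up to sheafification. Because of this, I would instead prove the statement directly on mapping spaces, so that no colimit in $\Shv$ is needed.

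Granting this, I map into $X$. For the external product, using $\Shv(\deltaCat\times\deltaCat^{n-1})\simeq\Shv(\deltaCat;\Shv(\deltaCat^{n-1}))$, we get $\Map([a]\boxtimes\G^*x,X)\simeq\Map_{\Shv(\deltaCat^{n-1})}(\G^*x,X_{a,\bullet,\dots,\bullet})\simeq(\G_*X_{a,\bullet,\dots,\bullet})_x$ by adjunction. Similarly, $\Map(\coprod_{a+1}[0]\boxtimes\G^*x,X)\simeq(\G_*X_{0,\bullet,\dots,\bullet})_x^{\times(a+1)}$, and $\Map(\coprod_{a+1}[0],X)\simeq X_{0,\dots,0}^{\times(a+1)}$. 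The pushout turns into the claimed pullback, naturally in $x$. This gives the square in $\PSh(\globe[n-1])$, and hence in $\Shv(\globe[n-1])$, since limits of sheaves are computed pointwise.
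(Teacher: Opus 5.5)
Your outer framework is sound. You use $(\G_*X)_y\simeq\Map(\G^*y,X)$, and once you know
\[
\G^*([a];x,\dots,x)\;\simeq\;\big([a]\boxtimes\G^*x\big)\sqcup_{\coprod_{a+1}[0]\boxtimes\G^*x}\coprod_{a+1}[0]
\]
as a pushout \emph{in $\Shv(\deltaCat^n)$}, naturally in $x$, mapping into $X$ gives the lemma exactly as in your last paragraph. (The paper gives no argument of its own here; it extracts the statement from the proof of Haugseng's Proposition~4.12.)

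The gap is in how you prove this decomposition. The levelwise check you propose is false for $a\ge 2$. By your own description, over a non-constant $\alpha\colon[b_1]\to[a]$ the left side carries one map $\G\vec b'\to x$ for \emph{each} segment crossed by $\alpha$. By contrast, $[a]\boxtimes\G^*x$ carries a single map $\G\vec b'\to x$, used for all crossed segments at once, and the collapse only affects constant $\alpha$. Concretely, take $n=2$, $a=2$, $x=[1]$, and evaluate at $[2,0]$ over $\alpha=\id_{[2]}$: the left side is $\mathrm{Hom}([0],[1])^{2}$, while the presheaf pushout is $\mathrm{Hom}([0],[1])$. So the presheaf pushout is a proper, non-Segal subpresheaf, and the identification only holds after Segalification. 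Your fallback of proving the statement ``directly on mapping spaces'' does not get around this. Asking that $\Map(-,X)$ turn the square into a pullback for every Segal $X$ is, after the adjunctions you use, literally the statement of the lemma, so the argument is circular.

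The missing ingredient is that $\G^*\colon\Shv(\globe)\to\Shv(\deltaCat^n)$ is a left adjoint (the very adjunction $\G^*\dashv\G_*$ you start from). So, contrary to your remark, it preserves colimits of Segal sheaves, and this lets you reduce to $a=1$:
\begin{itemize}
\item By the Segal condition (saturation of $\globe$), $([a];x,\dots,x)\simeq([1];x)\sqcup_{[0]}\cdots\sqcup_{[0]}([1];x)$ in $\Shv(\globe)$, naturally in $x$, and $\G^*[0]\simeq[0,\dots,0]$.
\item Likewise $[a]\simeq[1]\sqcup_{[0]}\cdots\sqcup_{[0]}[1]$ in $\Shv(\deltaCat)$, and $\boxtimes$ preserves colimits in each variable.
\item For $a=1$ your levelwise computation is correct, since a non-constant $\alpha\colon[b_1]\to[1]$ crosses exactly one segment.
\item Since $\G^*([1];x)$ is already a Segal sheaf (the restriction of a representable along the Segal morphism $\G$), the presheaf pushout is also the pushout in $\Shv(\deltaCat^n)$.
\end{itemize}

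Alternatively, prove only the case $a=1$ and obtain general $a$ by pasting pullbacks. For this, use the Segal conditions $(\G_*X)_{([a];x,\dots,x)}\simeq(\G_*X)_{([1];x)}\times_{X_{0,\dots,0}}\cdots\times_{X_{0,\dots,0}}(\G_*X)_{([1];x)}$ and $X_{a,\bullet,\dots,\bullet}\simeq X_{1,\bullet,\dots,\bullet}\times_{X_{0,\bullet,\dots,\bullet}}\cdots\times_{X_{0,\bullet,\dots,\bullet}}X_{1,\bullet,\dots,\bullet}$, together with the fact that $\G_*$ preserves limits.

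This repaired route proves the representable case of \cref{lem:suspension-n-fold} first and deduces the present lemma from it, which is the reverse of the paper's order.
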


    For $Y\in \Shv(\globe[n-1])$, the counit of the bi-pointed suspension induces a map
    \begin{equation*}
        Y\;\isotoo\; \hom(\suspensionbi Y) \;\too\; (\suspension Y)_{([1];\bullet)}\qin \Shv(\globe[n-1]).
    \end{equation*}
    After applying $\G^*$, we get a map $\G^*Y\to (\G^*\suspension Y)_{1,\bullet,\dots,\bullet}$, which corresponds to
    \begin{equation*}
        [1]\boxtimes\G^* Y\;\too\; \G^*\suspension Y\qin \Shv(\deltaCat^n).
    \end{equation*}
    
    \begin{lemma}
    \label{lem:suspension-n-fold}
        Let $Y\in \Shv(\globe[n-1])$, then
        the following is a pushout square in $\Shv(\deltaCat^n)$:
        \begin{equation*}
            \begin{tikzcd}
                {\iota\partial[1]\boxtimes\G^*Y} & {\iota\partial[1]\boxtimes\pt} \\
                {[1]\boxtimes\G^*Y} & {\G^*\suspension Y.}
                \arrow[""{name=0, anchor=center, inner sep=0}, from=1-1, to=1-2]
                \arrow[from=1-1, to=2-1]
                \arrow[from=1-2, to=2-2]
                \arrow[from=2-1, to=2-2]
                \arrow["\lrcorner"{anchor=center, pos=0.125, rotate=180}, draw=none, from=2-2, to=0]
            \end{tikzcd}
        \end{equation*}
    \end{lemma}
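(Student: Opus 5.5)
We prove the square is a pushout by evaluating both sides levelwise. Colimits in $\Shv(\deltaCat^n)$ are Segalifications of presheaf colimits, so we compute the presheaf pushout $P$ of the span and show that $P$ is already a Segal sheaf and agrees with $\G^*\suspension Y$. Throughout we use the external product description. The object $\iota\partial[1]\boxtimes Z$ is the presheaf $[a,\vec b]\mapsto \partial[1]_a\times Z_{\vec b}$ (after Segalification in the first variable). Here $\iota\partial[1]$ is the constant Segal space on two points, $\{0,1\}$ in every degree, so $\iota\partial[1]\boxtimes Z$ is levelwise $Z_{\vec b}\sqcup Z_{\vec b}$. This is already a Segal sheaf, since $\iota$ of a space is constant in the first direction and Segal conditions are checked coordinatewise.

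\textbf{Step 1: compute the presheaf pushout.} In first-coordinate degree $a$, $([1]\boxtimes \G^*Y)_{a,\vec b}$ is a coproduct over $\Hom([a],[1])$. There are $a+2$ such maps: two constant ones and $a$ nonconstant ones. The constant summands are exactly the image of $\iota\partial[1]\boxtimes \G^*Y$. The pushout $P$ therefore collapses those two copies of $(\G^*Y)_{\vec b}$ to points, giving
\[
P_{a,\vec b}\simeq \pt\sqcup\pt\sqcup\coprod_{0<j\le a}(\G^*Y)_{\vec b}.
\]
Here $j$ records the jump of the map $[a]\to[1]$. The simplicial structure in the first coordinate is the evident one.

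\textbf{Step 2: compute $\G^*\suspension Y$ directly.} We have $(\G^*\suspension Y)_{a,\vec b}=(\suspension Y)_{([a];\G\vec b,\dots,\G\vec b)}$. Since $\suspension Y$ is a Segal sheaf on $\globe$, the Segal condition for $([a];x,\dots,x)$ reduces this to maps $[a]\to[1]$ on objects, together with, for each jump, a value in $Y$. We make this precise by using \cref{cor:map-from-suspension} in the representable case, or equivalently the inductive formula of \cref{lem:inductive-G_*} read dually. The suspension $\suspension Y$ has object space $\pt\sqcup\pt$, and its only nontrivial hom is $\hom(0,1)=Y$, with $\hom(0,0)$ and $\hom(1,1)$ trivial and $\hom(1,0)$ empty. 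The Segal decomposition then yields
\[
\coprod_{f\colon[a]\to[1]}\ \prod_{\text{steps of }f}\hom\simeq \pt\sqcup\pt\sqcup\coprod_{0<j\le a} (\G^*Y)_{\vec b},
\]
because a nonconstant $f$ has exactly one nontrivial step. The comparison map from $P$ induced by the square respects this decomposition, so it is an equivalence levelwise. It follows that $P$ is Segal and equals the pushout in $\Shv(\deltaCat^n)$.

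\textbf{Main obstacle.} The hard step is justifying the hom-description of $\suspension Y$ for general, non-representable $Y$. In particular we need $\hom_{\suspension Y}(0,0)\simeq\pt$, $\hom_{\suspension Y}(1,0)\simeq\emptyset$, and the description of $(\suspension Y)_{([a];x,\dots,x)}$ as a coproduct over $[a]\to[1]$ of products of homs. Our first approach is to reduce to representables. Both sides of the square commute with colimits in $Y$: $\suspension$ and $[1]\boxtimes-$ are left adjoints, and $\G^*$ preserves colimits because it has the right adjoint $\G_*$. Pushouts commute with colimits, and the constant corner $\iota\partial[1]\boxtimes\pt$ is handled by working with bi-pointed objects, where colimits of the bi-pointed suspension are computed over the pointing. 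This reduces the claim to $Y=x\in\globe[n-1]$, where $\suspension x=([1];x)$ by \cref{exm:suspension-of-globe}. For representables the levelwise computation above is a direct combinatorial check on the wreath product $\deltaCat\wr\globe[n-1]$, using that maps $([a];\dots)\to([1];x)$ are exactly a map $[a]\to[1]$ together with one map into $x$ per jump.
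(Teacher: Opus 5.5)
Your argument is correct, but it runs in the opposite direction from the paper's. The paper never computes $\G^*\suspension Y$ or the pushout itself. Instead it tests against an arbitrary $X\in\Shv(\deltaCat^n)$ and rewrites $\Map(\G^*\suspension Y,X)\simeq\Map(\suspension Y,\G_*X)$. It then uses \cref{cor:map-from-suspension} to express this as a pair of points of $X_{0,\dots,0}$ together with a map $Y\to(\G_*X)_{([1];\bullet)}$, and substitutes the pullback of \cref{lem:inductive-G_*}. Finally it re-adjoints each term (e.g.\ $\Map(Y,\G_*(X_{1,\bullet,\dots,\bullet}))\simeq\Map([1]\boxtimes\G^*Y,X)$) to recognise the corepresented pushout.

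You instead compute the objects. The presheaf pushout is $\{0,1\}\sqcup\coprod_{j=1}^{a}(\G^*Y)_{\vec b}$ in degree $(a,\vec b)$. The wreath-product fact that a non-constant $[a]\to[1]$ has exactly one jump then identifies it with $\G^*\suspension Y$.

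Each route has its advantages. Yours gives a slightly stronger and explicit statement: the square is already a pushout of presheaves, so no Segalification is involved. It also avoids the description of $\G_*$ that the paper extracts from Haugseng's proof. The paper's route is purely formal and fits the adjunction-based arguments used later, e.g.\ in \cref{prop:unit-is-iso}.

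Like the paper, you rely on Campion's result that $\suspension Y$ is Segal; you need it to conclude that your $P$ is Segal. Identifying $[1]\boxtimes\G^*Y$ with the levelwise product uses that the elementary slices of $\deltaCat^k$ are weakly contractible, which does hold.

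Two small remarks. First, your ``main obstacle'' is not really one. Since $\suspensionbi$ is by definition a left Kan extension into bi-pointed presheaves, evaluation at $([a];z_1,\dots,z_a)$ commutes with the colimit over ${\globe[n-1]}_{/Y}$, corrected by the pushout along the base points. The wreath-product formula then gives $(\suspension Y)([a];z_1,\dots,z_a)\simeq\{0,1\}\sqcup\coprod_{j=1}^{a}Y(z_j)$ for every presheaf $Y$ at once. Your reduction to representables is also fine, provided you regard $\G^*\suspension$ and the pushout as colimit-preserving functors into the under-category of $\iota\partial[1]\boxtimes\pt$, as you indicate; the unpointed $\suspension$ is not a left adjoint.

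Second, the pointer to \cref{cor:map-from-suspension} in Step 2 is misplaced. That corollary describes maps out of $\suspension Y$, whereas you need the values of $\suspension Y$, i.e.\ maps into it.
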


    \begin{proof}
        Let $X\in \Shv(\deltaCat^n)$ and consider
        \begin{equation*}
            \Map_{\Shv(\deltaCat^n)}(\G^*\suspension Y,X)
            \;\simeq\; \Map_{\Shv(\globe)}(\suspension Y,\G_*X).
        \end{equation*}
        By~\cref{cor:map-from-suspension}, this mapping space is given by the pullback
        \begin{equation*}
            \begin{tikzcd}
                {\Map_{\PSh(\globe)}(\suspension Y,\G_*X)} & {X_{0,\dots,0}^{\times 2}} \\
                {\Map_{\PSh(\globe[n-1])}(Y,(\G_* X)_{([1];\bullet)})} & {\Map_{\PSh(\globe[n-1])}(Y,X_{0,\dots,0}^{\times 2}).}
                \arrow[from=1-1, to=1-2]
                \arrow[from=1-1, to=2-1]
                \arrow[from=1-2, to=2-2]
                \arrow[""{name=0, anchor=center, inner sep=0}, from=2-1, to=2-2]
                \arrow["\lrcorner"{anchor=center, pos=0.125}, draw=none, from=1-1, to=0]
            \end{tikzcd}
        \end{equation*}
        Combined with the pullback describing $(\G_* X)_{[1];\bullet)}$, we get 
        \begin{equation*}
            \begin{tikzcd}
                {\Map_{\PSh(\globe)}(\suspension Y,\G_*X)} & {X_{0,\dots,0}^{\times 2}} \\
                {\Map_{\PSh(\globe[n-1])}(Y,\G_*(X_{1,\bullet,\dots,\bullet}))} & {\Map_{\PSh(\globe[n-1])}(Y,\G_*(X_{0,\bullet,\dots,\bullet})^{\times 2})}.
                \arrow[from=1-1, to=1-2]
                \arrow[from=1-1, to=2-1]
                \arrow[from=1-2, to=2-2]
                \arrow[""{name=0, anchor=center, inner sep=0}, from=2-1, to=2-2]
                \arrow["\lrcorner"{anchor=center, pos=0.125, rotate=0}, draw=none, from=1-1, to=0]
            \end{tikzcd}
        \end{equation*}
        This gives us the required pushout after adjunctions.
    \end{proof}
    
    Consider the following commuting squares of geometric patterns:
    \begin{equation*}
        \begin{tikzcd}
            {\deltaCat^{n-1}} & {\globe[n-1]} && {\deltaCat^{n}_\partial} & {\globe[n-1]} \\
            {\deltaCat^n} & \globe && {\deltaCat^n} & \globe
            \arrow["\G", from=1-1, to=1-2]
            \arrow["j"', hook, from=1-1, to=2-1]
            \arrow[hook, from=1-2, to=2-2]
            \arrow["\G", from=1-4, to=1-5]
            \arrow[hook, from=1-4, to=2-4]
            \arrow[hook, from=1-5, to=2-5]
            \arrow["\G", from=2-1, to=2-2]
            \arrow["\G", from=2-4, to=2-5]
        \end{tikzcd}
    \end{equation*}
    where $j \colon \deltaCat^{n-1}\into \deltaCat^n$ is given by $[a_1,\dots ,a_{n-1}]\mapsto [a_1,\dots ,a_{n-1},0]$.
    Taking sheaves with the $(-)^*$-functoriality, we get commuting squares
    \begin{equation*}
        \begin{tikzcd}
            {\Shv(\deltaCat^{n-1})} & {\Shv(\globe[n-1])} && {\Shv(\deltaCat^{n}_\partial)} & {\Shv(\globe[n-1])} \\
            {\Shv(\deltaCat^n)} & {\Shv(\globe)} && {\Shv(\deltaCat^n)} & {\Shv(\deltaCat^n).}
            \arrow["{{{\G^*}}}"', from=1-2, to=1-1]
            \arrow["{{\G^*}}"', from=1-5, to=1-4]
            \arrow["{j^*}", from=2-1, to=1-1]
            \arrow["\partial"', from=2-2, to=1-2]
            \arrow["{{{\G^*}}}"', from=2-2, to=2-1]
            \arrow["\partial", from=2-4, to=1-4]
            \arrow["\partial"', from=2-5, to=1-5]
            \arrow["{{\G^*}}"', from=2-5, to=2-4]
        \end{tikzcd}
    \end{equation*}

    \begin{lemma}
    \label{lem:globe-commute-boundary}
        The following squares commute by the Beck--Chevalley condition:
        \begin{equation*}
            \begin{tikzcd}[sep=scriptsize]
                {\Shv(\deltaCat^{n-1})} & {\Shv(\globe[n-1])} \\
                {\Shv(\deltaCat^n)} & {\Shv(\globe)}
                \arrow["{{{{{\G_*}}}}}", from=1-1, to=1-2]
                \arrow["{{{j^*}}}", from=2-1, to=1-1]
                \arrow["{{{{{\G_*}}}}}", from=2-1, to=2-2]
                \arrow["\partial"', from=2-2, to=1-2]
            \end{tikzcd}
            \quad
            \begin{tikzcd}[sep=scriptsize]
                {\Shv(\deltaCat^{n-1})} & {\Shv(\globe[n-1])} \\
                {\Shv(\deltaCat^n)} & {\Shv(\globe)}
                \arrow["{{{j_!}}}"', from=1-1, to=2-1]
                \arrow["{{{{\G^*}}}}"', from=1-2, to=1-1]
                \arrow["\iota", from=1-2, to=2-2]
                \arrow["{{{{\G^*}}}}"', from=2-2, to=2-1]
            \end{tikzcd}
            \quad
            \begin{tikzcd}[sep=scriptsize]
                {\Shv(\deltaCat^{n}_\partial)} & {\Shv(\globe[n-1])} \\
                {\Shv(\deltaCat^{n-1})} & {\Shv(\globe).}
                \arrow["\iota"', from=1-1, to=2-1]
                \arrow["{{{\G^*}}}"', from=1-2, to=1-1]
                \arrow["\iota", from=1-2, to=2-2]
                \arrow["{{{\G^*}}}"', from=2-2, to=2-1]
            \end{tikzcd}
        \end{equation*}
    \end{lemma}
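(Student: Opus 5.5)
The three squares are Beck--Chevalley (mate) squares of the commuting squares of restriction functors displayed just before the statement. I would prove the first directly, deduce the second by passing to left adjoints, and deduce the third from the second using the fully faithful inclusions. I read the bottom-left corner of the third square as $\Shv(\deltaCat^n)$.

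\textbf{First square.} The claim is that the Beck--Chevalley map $\partial\G_*X\to\G_*j^*X$ is an isomorphism for $X\in\Shv(\deltaCat^n)$. This map is the mate of the isomorphism $j^*\G^*\simeq\G^*\partial$, formed using the adjunctions $\G^*\dashv\G_*$ on both levels.

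I would argue by induction on $n$ using \cref{lem:inductive-G_*}.
\begin{itemize}
\item For $n=1$, $\partial\G_*X$ is evaluation at $[0]$, namely $X_0$. Also $j^*X=X_0\in\Shv(\deltaCat^0)=\spc$, and $\G_*$ is the identity there.
\item For general $n$, fix $[a]$ and evaluate on $([a];\bullet)$. Since $\partial$ on $\Shv(\globe[n-1])$ is a restriction, it commutes with pullbacks. Applying it to the pullback square of \cref{lem:inductive-G_*} expresses $(\partial\G_*X)_{([a];\bullet)}$ as a pullback of $\partial\G_*(X_{a,\bullet,\dots,\bullet})$ over $\partial\G_*(X_{0,\bullet,\dots,\bullet})^{\times(a+1)}$, together with $X_{0,\dots,0}^{\times(a+1)}$.
\item The induction hypothesis identifies $\partial\G_*(X_{a,\bullet,\dots,\bullet})$ with $\G_*\big((j^*X)_{a,\bullet,\dots,\bullet}\big)$, and likewise for index $0$. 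The result is exactly the pullback that \cref{lem:inductive-G_*} gives for $(\G_*j^*X)_{([a];\bullet)}$.
\end{itemize}
One must check that these identifications are induced by the Beck--Chevalley map. They are, because the map is built from units and counits that are compatible with the recursive description. I expect this naturality bookkeeping to be the only real obstacle.

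\textbf{Second square.} Every functor in the second square is left adjoint to the corresponding functor in the first: $j_!\dashv j^*$, $\iota\dashv\partial$, and $\G^*\dashv\G_*$ on both levels. So the second square is the total left mate of the first. An isomorphism of right adjoints $\partial\G_*\simeq\G_*j^*$ therefore gives, by uniqueness of adjoints, an isomorphism $j_!\G^*\simeq\G^*\iota$.

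\textbf{Third square.} Factor $j$ as $\deltaCat^{n-1}\xrightarrow{j'}\deltaCat^n_\partial\hookrightarrow\deltaCat^n$. This gives $j_!\simeq\iota\, j'_!$, and $\G$ restricted along $j'$ is $\G$. By the second square, for $Y\in\Shv(\globe[n-1])$,
\begin{equation*}
\G^*\iota Y\;\simeq\;\iota\, j'_!\G^*Y.
\end{equation*}
Apply $\partial$ to both sides.
\begin{itemize}
\item On the left, use the commuting restriction square $\partial\G^*\simeq\G^*\partial$. Then $\partial\iota\simeq\id$ on $\Shv(\globe[n-1])$ by \cref{lem:inclusion-of-fold}, so the left side becomes $\G^*Y$.
\item On the right, $\partial\iota\simeq\id$ by \cref{cor:uple-inclusion-fully-faithful}, so the right side becomes $j'_!\G^*Y$.
\end{itemize}
Hence $j'_!\G^*Y\simeq\G^*Y$ in $\Shv(\deltaCat^n_\partial)$. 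Substituting this back gives $\G^*\iota Y\simeq\iota\G^*Y$, which is the third square. It again remains to check that this composite isomorphism is the Beck--Chevalley map; this follows by pasting the mates of the squares used.
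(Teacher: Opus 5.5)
Your proposal is correct and follows essentially the same route as the paper. You prove the first square by induction via \cref{lem:inductive-G_*}, checking on all $([a];\bullet)$ instead of on the weak generator $\laxdisk{n-1}$ (equivalent, since both sides are Segal sheaves); you get the second by passing to left adjoints; and you get the third by showing, via $j_!\simeq\iota j'_!$, that $\G^*\iota Y$ lies in the essential image of the fully faithful $\iota$. Your explicit appeal to \cref{lem:inclusion-of-fold} for the unit $Y\to\partial\iota Y$ fills in a step the paper leaves implicit, and your reading of the bottom-left corner of the third square as $\Shv(\deltaCat^n)$ is the correct fix of a typo.
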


    \begin{proof}
        For the leftmost square, let $X\in\Shv(\deltaCat^n)$.
        It is enough to prove that the Beck--Chevalley comparison map
        \begin{equation*}
            \partial \G_* X\;\too\; \G_*j^*X \qin \Shv(\globe[n-1])
        \end{equation*}
        is an isomorphism after evaluating at the weak generator $\laxdisk{n-1}$.
        This follows inductively from the description of $(\G_* X)_{([1];\bullet)}$ in~\cref{lem:inductive-G_*}.
        
        The second square follows from the first square by taking left adjoints.
        
        For the third square, let $Y\in \Shv(\globe[n-1])$. 
        Since $\iota\colon \Shv(\deltaCat^n_\partial)\into \Shv(\deltaCat^n)$ is fully faithful
        (\cref{cor:uple-inclusion-fully-faithful}), it is enough to check that $\G^*\iota Y$ is in its essential image.
        This follows from the second square, since $j_!$ factors through $\iota$.
    \end{proof}
    
    Consider the map $[1]^n \to \G^*\laxdisk{n}$ which selects the identity cell $\laxdisk{n}\to \laxdisk{n}$.
    Taking the boundary, we get a map $\iota\partial[1]^n \to \iota\partial\G^*\laxdisk{n}\simeq \G^*\iota\partial\laxdisk{n}$.

    \begin{lemma}
    \label{lem:n-uple-globe-pushout}
        The following is a pushout square in $\Seg(\deltaCat^n)$:
        \begin{equation*}\begin{tikzcd}
            {\iota\partial[1]^n } & {\G^*\iota\partial\laxdisk{n}} \\
            {[1]^n } & {\G^*\laxdisk{n}}
            \arrow[from=1-1, to=1-2]
            \arrow[from=1-1, to=2-1]
            \arrow[from=1-2, to=2-2]
            \arrow[from=2-1, to=2-2]
            \arrow["\lrcorner"{anchor=center, pos=0.125, rotate=180}, draw=none, from=2-2, to=1-1]
        \end{tikzcd}\end{equation*}
    \end{lemma}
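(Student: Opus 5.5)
We prove the statement by induction on $n$, building the square out of iterated uses of \cref{lem:suspension-n-fold} and \cref{cor:boundary-of-external-product}. For $n=1$ the functor $\G$ is the identity, $\G^*\laxdisk{1}=[1]$, and both horizontal maps are isomorphisms, so the square is trivially a pushout. For the inductive step we write $\laxdisk{n}=\suspension\laxdisk{n-1}$ (\cref{exm:suspension-of-globe}). \cref{lem:suspension-n-fold} with $Y=\laxdisk{n-1}$ gives
\begin{equation*}
    \G^*\laxdisk{n}\;\simeq\; [1]\boxtimes \G^*\laxdisk{n-1}\;\sqcup_{\iota\partial[1]\boxtimes\G^*\laxdisk{n-1}}\; \iota\partial[1]\boxtimes\pt.
\end{equation*}
By \cref{exm:suspension-of-a-boundary-of-a-globe}, the same lemma with $Y=\iota\partial\laxdisk{n-1}$ gives
\begin{equation*}
    \G^*\iota\partial\laxdisk{n}\;\simeq\; [1]\boxtimes \G^*\iota\partial\laxdisk{n-1}\;\sqcup_{\iota\partial[1]\boxtimes\G^*\iota\partial\laxdisk{n-1}}\; \iota\partial[1]\boxtimes\pt.
\end{equation*}
Both identifications are natural in $Y$.

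Comparing the two pushouts, the map $\G^*\iota\partial\laxdisk{n}\to\G^*\laxdisk{n}$ is a cobase change of the map
\begin{equation*}
    [1]\boxtimes\G^*\iota\partial\laxdisk{n-1}\;\sqcup_{\iota\partial[1]\boxtimes\G^*\iota\partial\laxdisk{n-1}}\;\iota\partial[1]\boxtimes\G^*\laxdisk{n-1}\;\too\;[1]\boxtimes\G^*\laxdisk{n-1}.
\end{equation*}
This is the pushout-product of $\iota\partial[1]\to[1]$ with $\G^*\iota\partial\laxdisk{n-1}\to\G^*\laxdisk{n-1}$. The identification is a formal pasting of pushouts: the common corner $\iota\partial[1]\boxtimes\pt$ cancels, since both expressions map to it from $\iota\partial[1]\boxtimes(-)$.

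By the induction hypothesis, $\G^*\iota\partial\laxdisk{n-1}\to\G^*\laxdisk{n-1}$ is a cobase change of $\iota\partial[1]^{n-1}\to[1]^{n-1}$. We need to know that the external product $\boxtimes$ commutes with colimits in each variable; this holds because it is the functor $\Shv(\deltaCat)\otimes\Shv(\deltaCat^{n-1})\to\Shv(\deltaCat^n)$ of \cref{cor:product-of-geometric-patterns}. Given this, pushout-products preserve cobase changes in each variable. Hence the map above is a cobase change of the pushout-product of $\iota\partial[1]\to[1]$ with $\iota\partial[1]^{n-1}\to[1]^{n-1}$. By \cref{cor:boundary-of-external-product}, applied with $X=[1]$ and $Y=[1]^{n-1}$, that pushout-product is exactly $\iota\partial[1]^n\to[1]^n$. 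Composing cobase changes, $\G^*\iota\partial\laxdisk{n}\to\G^*\laxdisk{n}$ is a cobase change of $\iota\partial[1]^n\to[1]^n$.

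It remains to check that the resulting square is the one in the statement, i.e.\ that the induced map $[1]^n\to\G^*\laxdisk{n}$ selects the identity cell. Both maps correspond to the canonical $n$-cell, tracing through the map $[1]\boxtimes\G^*Y\to\G^*\suspension Y$, which picks out the counit. The attaching map is then forced to be the boundary restriction, because $\iota\partial$ is functorial. The main obstacle is the bookkeeping of naturality in this last step, together with the pasting-of-pushouts identification of the pushout-product. Neither involves a genuine homotopical difficulty: once the suspension pushout is known to be natural in $Y$, everything reduces to formal manipulations of colimits in $\Shv(\deltaCat^n)$.
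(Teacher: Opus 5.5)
Your proof is correct and is essentially the paper's argument. The paper also inducts on $n$ and factors the square through $[1]\boxtimes\G^*\laxdisk{n-1}$. It gets the right-hand square by pasting the two instances of \cref{lem:suspension-n-fold} (for $\laxdisk{n-1}$ and $\iota\partial\laxdisk{n-1}$) against \cref{cor:boundary-of-external-product}. It gets the left-hand square from the inductive hypothesis via a cube of pushouts, which is exactly your step that a pushout-product with a cobase change is a cobase change of the pushout-product. Your phrasing is a more compact repackaging of these diagram chases, and it avoids having to identify the source of the pushout-product with $\iota\partial([1]\boxtimes\G^*\laxdisk{n-1})$, which the paper does using $\G^*\iota\partial\simeq\iota\partial\G^*$.
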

    
    \begin{proof}
        We proceed by induction on $n$, the case $n = 0$ being trivial.
        Consider the decomposition of the square above as the composite of the following two squares:
        \begin{equation*}
            \begin{tikzcd}
                {\iota\partial[1]^n } & {\iota\partial([1]\boxtimes \G^*\laxdisk{n-1})} & {\iota\partial\G^*\laxdisk{n}} \\
                {[1]^n } & {[1]\boxtimes \G^*\laxdisk{n-1}} & {\G^*\laxdisk{n}.}
                \arrow[""{name=0, anchor=center, inner sep=0}, from=1-1, to=1-2]
                \arrow[from=1-1, to=2-1]
                \arrow[""{name=1, anchor=center, inner sep=0}, from=1-2, to=1-3]
                \arrow[from=1-2, to=2-2]
                \arrow[from=1-3, to=2-3]
                \arrow[from=2-1, to=2-2]
                \arrow[from=2-2, to=2-3]
            \end{tikzcd}
        \end{equation*}
        The right square appears as the bottom square in the following commutative diagram:
        \begin{equation*}
            \begin{tikzcd}
                {\iota\partial[1]\boxtimes\iota\partial\G^*\laxdisk{n-1}} & {\iota\partial[1]\boxtimes\G^*\laxdisk{n-1}} & {\iota\partial[1]\boxtimes\pt} \\
                {[1]\boxtimes\iota\partial\G^*\laxdisk{n-1}} & {\iota\partial([1]\boxtimes \G^*\laxdisk{n-1})} & {\iota\partial\G^*\laxdisk{n}} \\
                & {[1]\boxtimes \G^*\laxdisk{n-1}} & {\G^*\laxdisk{n}.}
                \arrow[from=1-1, to=1-2]
                \arrow[from=1-1, to=2-1]
                \arrow[from=1-2, to=1-3]
                \arrow[from=1-2, to=2-2]
                \arrow[from=1-3, to=2-3]
                \arrow[from=2-1, to=2-2]
                \arrow[from=2-2, to=2-3]
                \arrow[from=2-2, to=3-2]
                \arrow[from=2-3, to=3-3]
                \arrow[from=3-2, to=3-3]
            \end{tikzcd}
        \end{equation*}
        Both the right and the top rectangles are pushouts by \cref{lem:suspension-n-fold,exm:suspension-of-globe,exm:suspension-of-a-boundary-of-a-globe}. Moreover, the leftmost square is a pushout by \cref{cor:boundary-of-external-product}. Therefore, the bottom square is a pushout.
    
        It remains to show that the original left square is a pushout. To see this, consider the following diagram:
        \begin{equation*}
            \begin{tikzcd}[sep=scriptsize]
                {\iota\partial[1]\boxtimes\iota\partial[1]^{n-1}} && {\iota\partial[1]\boxtimes\iota\partial\G^*\laxdisk{n-1}} & \\
                & {[1]\boxtimes\iota\partial[1]^{n-1}} && {[1]\boxtimes\iota\partial\G^*\laxdisk{n-1}} \\
                {\iota\partial[1]\boxtimes[1]^{n-1}} && {\iota\partial[1]\boxtimes \G^*\laxdisk{n-1}} \\
                & {\iota\partial[1]^n } && {\iota\partial([1]\boxtimes \G^*\laxdisk{n-1})} \\
                \\
                & {[1]^n } && {[1]\boxtimes \G^*\laxdisk{n-1}}
                \arrow[from=1-1, to=1-3]
                \arrow[from=1-1, to=2-2]
                \arrow[from=1-1, to=3-1]
                \arrow[from=1-3, to=2-4]
                \arrow[from=1-3, to=3-3]
                \arrow[from=2-4, to=4-4]
                \arrow[from=3-1, to=3-3]
                \arrow[from=3-1, to=4-2]
                \arrow[from=3-3, to=4-4]
                \arrow[from=4-2, to=4-4]
                \arrow[from=4-2, to=6-2]
                \arrow[from=4-4, to=6-4]
                \arrow[from=6-2, to=6-4]
                \arrow[from=2-2, to=2-4, crossing over]
                \arrow[from=2-2, to=4-2, crossing over]
            \end{tikzcd}
        \end{equation*}
        The back face and the composite of the front faces are pushouts by the inductive hypothesis. Furthermore, the left and right faces are pushouts by \cref{cor:boundary-of-external-product}, which implies that the top-front face is also a pushout. Consequently, the bottom-front face is a pushout as required.
    \end{proof}

    \subsubsection*{Univalence}

    \begin{definition}
        Let $X \in \Shv(\globe)$. We say that $X$ is \emph{univalent} if $\G^* X\in \Shv(\deltaCat^n)$ is a univalent $n$-uple Segal space. We denote the full subcategory of univalent sheaves by $\Shv(\globe)\univ$.
    \end{definition}

    Univalent $\globe$-spaces are a model for $n$-categories, $\Shv(\globe)\univ\simeq \Catn$.

    The univalence condition can also be phrased inductively.
    \begin{lemma}[{\cite[Lemma~7.22]{Haugseng-2018-iterated}}]
    \label{lem:univalent-inductive}
        A $\globe$-space $X$ is univalent if and only if:
        \begin{enumerate}
            \item $X_{([\bullet];[0])}\in \Shv(\deltaCat)$ is univalent, and
            \item $X_{([1];\bullet)}\in \Shv(\globe[n-1])$ is univalent.
        \end{enumerate}
    \end{lemma}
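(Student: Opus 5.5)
The plan is to unwind the definition: $X$ is univalent exactly when $\G^*X\in\Shv(\deltaCat^n)$ is univalent. By definition this means that for every $1\le j\le n$ and every choice of the other indices, the Segal space $(\G^*X)_{a_1,\dots,a_{j-1},\bullet,a_{j+1},\dots,a_n}$ is univalent. We will sort these conditions according to whether $j=1$ or $j>1$, and match them with conditions (1) and (2) respectively, arguing by induction on $n$. The base case $n=1$ is tautological, since then $\G^*X = X_{([\bullet];[0])}$.

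\textbf{Directions $j\ge 2$.} Fix $a_1=a$. We will use the explicit description of $\G$ together with the pullback of \cref{lem:inductive-G_*}, applied through the adjunction. This should identify $(\G^*X)_{a,\bullet,\dots,\bullet}$ with $\G^*$ of the $\globe[n-1]$-space $X_{([a];\bullet)}$. By the Segal condition, this object is the iterated fiber product
\begin{equation*}
X_{([1];\bullet)}\times_{X_{[0]}}\cdots\times_{X_{[0]}}X_{([1];\bullet)},
\end{equation*}
where $X_{[0]}$ is viewed as a constant sheaf. Univalence of $n$-uple Segal spaces is a limit condition that is stable under fiber products. It also holds automatically for constant sheaves, because the constant Segal space on a space is univalent. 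Hence univalence in the directions $j\ge2$ for every $a$ is equivalent to univalence in those directions for $a=1$, which by definition is univalence of the $\globe[n-1]$-space $X_{([1];\bullet)}$. For $a=0$ the object is constant, so the condition holds trivially. The reverse implication is immediate, since $a=1$ is one of the cases. This reduction is exactly condition (2).

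\textbf{Direction $j=1$.} Here we need the Segal spaces $(\G^*X)_{\bullet,a_2,\dots,a_n}$ to be univalent for all $a_2,\dots,a_n$. When all $a_i=0$, this Segal space is $X_{([\bullet];[0])}$, which gives condition (1). The main obstacle is to show that the remaining cases follow from (1) and (2). I would use the characterization of univalence as locality with respect to $J\to\pt$ in the first coordinate. By adjunction, univalence of $(\G^*X)_{\bullet,\vec a'}$ amounts to $X$ being local for the maps $\G_!(J\boxtimes[\vec a'])\to\G_!(\pt\boxtimes[\vec a'])$.

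We will show that each such map lies in the strongly saturated class generated by $J\to\pt$ in the first direction, where $\pt$ denotes $[0]$, together with the suspensions of the univalence maps for $\globe[n-1]$. The point is that $[1]\boxtimes[\vec a']$ maps to $\suspension\G_![\vec a']$, and an invertible arrow in the first direction together with its higher cells is governed by hom-objects, which are univalent by (2). Concretely, an equivalence in $X$ with endpoints $x,y$ between which there is an $(n-1)$-cell of data gives a map $\hom_X(x,x)\simeq\hom_X(x,y)$ induced by composition. I expect this composition argument to be the delicate step. If it becomes too unwieldy, I would instead cite \cite[Lemma~7.22]{Haugseng-2018-iterated} directly, since the statement is taken from there.
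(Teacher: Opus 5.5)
Your treatment of the directions $j\ge 2$ is correct and complete. $(\G^*X)_{a,\bullet,\dots,\bullet}\simeq\G^*(X_{([a];\bullet)})$ holds by definition of $\G$. The Segal condition writes $X_{([a];\bullet)}$ as an iterated fiber product of copies of $X_{([1];\bullet)}$ over the constant sheaf $X_{[0]}$. Since $\G^*$ preserves limits, univalent Segal spaces are closed under limits, and constant Segal spaces are univalent, these directions amount exactly to condition (2). The ``only if'' direction is equally fine. The paper gives no argument of its own here; it only cites Haugseng.

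The gap is direction $j=1$ with $\vec a'\neq\vec 0$, which is where all the content of the lemma lies. Saying that $\G_!(J\boxtimes[\vec a'])\to[0]$ lies in the strongly saturated class generated by $J\to\pt$ and the suspended univalence maps of $\globe[n-1]$ only restates the claim. The remark about $\hom_X(x,x)\simeq\hom_X(x,y)$ is one ingredient, not an argument. Falling back on the lemma being proved is circular.

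What is missing is an invertibility criterion. Put $A\coloneqq\G[\vec a']\in\globe[n-1]$, $Y\coloneqq(\G^*X)_{\bullet,\vec a'}$ and $Z\coloneqq X_{([\bullet];[0])}$. Then:
\begin{itemize}
\item $Y_0\simeq X_{[0]}$, and the fiber of $Y_1\to X_{[0]}^{\times 2}$ over $(x,y)$ is the space of $A$-shaped diagrams in $\hom_X(x,y)$.
\item Composition in $Y$ is pointwise composition in $X$, so horizontal composition on higher cells.
\item Evaluation at a vertex of $A$ gives a map of Segal spaces $Y\to Z$ retracting the degeneracy $Z\to Y$ induced by $A\to[0]$.
\end{itemize}
You need the following: an arrow of $Y$ is invertible in $Y$ if and only if its vertices are equivalences in $X$ and all its cells are invertible in $\hom_X(x,y)$.

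Given this, the proof finishes as follows. Condition (2) makes each $\hom_X(x,y)$ univalent, since it is a fiber of $X_{([1];\bullet)}$ over a point of the constant sheaf $X_{[0]}^{\times 2}$. So a diagram with all cells invertible factors through the space of objects of $\hom_X(x,y)$, and is determined by one vertex because $A$ is weakly contractible. Hence $Y_1^{\simeq}\to Z_1^{\simeq}$ is an equivalence, and (1) concludes.

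The ``if'' half of the criterion is easy: such a diagram is the image of an invertible arrow of $Z$ under $Z\to Y$. The ``only if'' half is the genuinely non-formal step. Already for $A=\laxdisk{1}$, you must use interchange to turn $\beta*\alpha\simeq\id$ and $\alpha*\beta\simeq\id$ into section and retraction identities for whiskerings of $\alpha$ by the boundary $1$-morphisms of $\beta$. You then use that whiskering with an equivalence is an equivalence of hom $(n-1)$-categories to see that $\alpha$ is both split mono and split epi in $\hom_X(x,y)$.

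For higher cells this must be iterated. It helps to first reduce to $A$ a globe, using the Segal condition in coordinates $2,\dots,n$ and closure of univalent Segal spaces under limits. Without this step, hypothesis (2) is never brought to bear on direction $1$.
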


    \begin{corollary}
    \label{cor:univalent-fold}
        If $X \in \Shv(\deltaCat^n)$ is univalent, then $\G_* X \in \Shv(\globe)$ is also univalent.
    \end{corollary}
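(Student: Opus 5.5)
We argue by induction on $n$, using the inductive criterion for univalence of \cref{lem:univalent-inductive}. For $n=0$ there is nothing to prove, since $\Shv(\globe[0])\simeq\spc$ and every object is univalent. For $n\ge 1$, let $X\in\Shv(\deltaCat^n)$ be univalent and set $Y\coloneqq \G_*X\in\Shv(\globe)$. By \cref{lem:univalent-inductive} it suffices to show two things: that the Segal space $Y_{([\bullet];[0])}$ is univalent, and that the $\globe[n-1]$-space $Y_{([1];\bullet)}$ is univalent.

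\textbf{The first condition.} Evaluate the pullback of \cref{lem:inductive-G_*} at the object $[0]\in\globe[n-1]$. We have $\G_*(Z)_{[0]}\simeq Z_{0,\dots,0}$ for $Z\in\Shv(\deltaCat^{n-1})$; this holds, for instance, by \cref{lem:globe-commute-boundary} iterated down to $\globe[0]$. Using this, the pullback becomes
\begin{equation*}
    Y_{([a];[0])}\;\simeq\; X_{a,0,\dots,0}\times_{X_{0,\dots,0}^{\times(a+1)}} X_{0,\dots,0}^{\times(a+1)}\;\simeq\; X_{a,0,\dots,0},
\end{equation*}
naturally in $[a]\in\deltaCat$. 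Hence $Y_{([\bullet];[0])}\simeq X_{\bullet,0,\dots,0}$, which is univalent by the definition of univalence for $n$-uple Segal spaces.

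\textbf{The second condition.} By \cref{lem:inductive-G_*} with $a=1$, the object $Y_{([1];\bullet)}$ is the pullback of
\begin{equation*}
    \G_*(X_{1,\bullet,\dots,\bullet})\;\too\;\G_*(X_{0,\bullet,\dots,\bullet})^{\times 2}\;\longleftarrow\; X_{0,\dots,0}^{\times 2}.
\end{equation*}
Here $X_{0,\dots,0}^{\times 2}$ is regarded as a constant $\globe[n-1]$-space, i.e.\ via $\iota$ from $\spc$. The sheaves $X_{1,\bullet,\dots,\bullet}$ and $X_{0,\bullet,\dots,\bullet}$ in $\Shv(\deltaCat^{n-1})$ are univalent, because univalence of $X$ is checked direction by direction with the other coordinates fixed. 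So by the induction hypothesis the two $\G_*$-terms are univalent. The constant term is univalent too: a constant space is univalent, since $\Shv(\globe[0])\univ\subseteq\Shv(\globe[n-1])\univ$ via $\iota$, equivalently since $\G^*$ of a constant sheaf is constant and constant Segal spaces are complete.

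Univalence is a limit-closed condition. Indeed, $\Shv(\globe[n-1])\univ\subseteq\Shv(\globe[n-1])$ is a reflective localization, being the local objects for a set of maps (the maps $J\to\pt$ in each direction, transported along the left adjoint of $\G^*$). Equivalently, $\G^*$ preserves limits and univalent $(n-1)$-uple Segal spaces are closed under limits, since the defining pullback squares are. Therefore the pullback $Y_{([1];\bullet)}$ is univalent.

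\textbf{Main obstacle.} The delicate step is the naturality and identification in the first condition: one must check that the pullback of \cref{lem:inductive-G_*}, evaluated at $[0]$, really identifies the simplicial structure maps of $Y_{([\bullet];[0])}$ with those of $X_{\bullet,0,\dots,0}$, not just the individual spaces. If the lemma is only stated levelwise, I would instead compute via adjunction. Namely, $\Map(\G^*([a];[0]),X)$ with $\G^*([a];[0])\simeq[a]\boxtimes\pt$ gives $Y_{([a];[0])}\simeq \Map([a]\boxtimes\pt, X)\simeq X_{a,0,\dots,0}$, which is manifestly natural in $[a]$. Here we use that $\G^*$ of the representable $([a];[0])=\G[a,0,\dots,0]$ is the representable $[a,0,\dots,0]$ in the Segal sense, which follows from $\G$ being Segal and the first square of \cref{lem:globe-commute-boundary}.
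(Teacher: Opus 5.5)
Your proposal is correct and follows the paper's own argument. You reduce via \cref{lem:univalent-inductive}, identify $(\G_*X)_{([\bullet];[0])}\simeq X_{\bullet,0,\dots,0}$, and obtain univalence of $(\G_*X)_{([1];\bullet)}$ from the pullback description in \cref{lem:inductive-G_*}, additionally making explicit the induction on $n$, the univalence of the constant term, and the closure of univalent sheaves under limits, all of which the paper leaves implicit (minor point: in your fallback, representability of $\G^*\G[a,0,\dots,0]$ follows from the second square of \cref{lem:globe-commute-boundary} or a direct computation, not the first).
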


    \begin{proof}
        First, $(\G_*X)_{([\bullet];[0])} \simeq X_{\bullet,0,\dots,0}$ is univalent by the univalency of $X$. The univalency of $(\G_* X)_{([1];\bullet)}$ then follows from its inductive description in~\cref{lem:inductive-G_*}.
    \end{proof}

    The boundary functor sends univalent sheaves to univalent sheaves, and therefore gives rise to a functor
    \begin{equation*}
        \partial \;\colon\; \Shv(\globe)\univ \;\too\; \Shv(\globe[n-1])\univ.
    \end{equation*}
    This functor corresponds to the $(n-1)$-core, which is the right adjoint to the inclusion of $(n-1)$-categories in $n$-categories.
    Taking the limit, we acquire a model for $\infty$-categories:
    \begin{equation*}
        \Shv(\globe[])\univ \;\coloneqq\; \lim_{n}\Shv(\globe)\univ \;\simeq\; \Catn[\infty].
    \end{equation*}

\section{Lax grids}

Given $[\vec{a}]\in \deltaCat^n$, the \emph{lax grid} $\square[\vec{a}]\in \sCatn$ was constructed in~\cite[Theorem~1.3]{Al-Agl-Brown-Steiner-2002-strict-globular-cubical} as the strict $n$-category which we roughly describe as follows:
\begin{itemize}
    \item the objects are tuples $0\le \vec{b}\le \vec{a}$,
    \item the 1-morphisms are freely generated by arrows 
    \begin{equation*}
        \vec{b} \;\too\; \vec{b}+e_i,
    \end{equation*}
    \item the 2-morphisms are freely generated by lax squares
    \begin{equation*}
        \begin{tikzcd}
            {\vec{b}} & {\vec{b}+e_i} \\
            {\vec{b}+e_j} & {\vec{b}+e_i+e_j,}
            \arrow[from=1-1, to=1-2]
            \arrow[from=1-1, to=2-1]
            \arrow[between={0.3}{0.7}, Rightarrow, from=1-2, to=2-1]
            \arrow[from=1-2, to=2-2]
            \arrow[from=2-1, to=2-2]
        \end{tikzcd}
        \qquad i < j,
    \end{equation*}
    \item and so on up to dimension $n$.
\end{itemize}
In particular, $\laxcube{n}\coloneqq\square[1]^n$ is the lax $n$-cube.
This construction extends to a functor
\begin{equation*}
    \square\colon \deltaCat^n\;\too\; \sCatn,\qquad [\vec{a}]\;\mapsto\; \square[\vec{a}].
\end{equation*}

\begin{definition}
    The \emph{$n$-grid category} $\grid\subseteq \sCatn$ is the essential image of $\square\colon \deltaCat^n\to \sCatn$. 
\end{definition}

\begin{remark}
\label{rem:maps-of-grids-order}
    Not every map $\varphi \colon \square[\vec{a}] \to \square[\vec{b}]\in \grid$ is induced from a map $[\vec{a}]\to [\vec{b}]\in\deltaCat^n$.
    For instance, this fails for the connections that are introduced in \cref{subsec:maps-of-lax-cubes}, or for the isomorphism $\square[0,1] \simeq \square[1,0]$.
    Nevertheless, any such $\varphi$ necessarily preserves the partial order on objects: if $0 \le \vec{c} \le \pvec{c}' \le \vec{a}$, then $\varphi(\vec{c}) \le \varphi(\pvec{c}')$.
\end{remark}

In this section, we study lax grids and define a geometric pattern on $\grid$.
We also show that the corresponding sheaves acquire a monoidal structure, which we call the \emph{Gray product}.

\subsection{The strict Gray product}

    The \emph{strict Gray product} is a presentably monoidal structure on $\sCatoo$, which we denote $\xlaxs$, defined in~\cite{Crans-1995-gray,Al-Agl-Brown-Steiner-2002-strict-globular-cubical,Steiner-2004-gray}.
    If $\cC$ is a strict $n$-category and $\cD$ is a strict $k$-category, then $\cC\xlaxs\cD$ is a strict $(n+k)$-category.
    Moreover, if either $\cC$ or $\cD$ is a strict 0-category (i.e.\ a set), then $\cC\xlaxs\cD\simeq \cC\times \cD$.
    A defining feature of the strict Gray product is that for any $[\vec{a}]\in \deltaCat^n$ and $[\vec{b}]\in\deltaCat^k$
    \begin{equation*}
        \square[\vec{a}]\xlaxs\square[\vec{b}]\simeq \square[\vec{a},\vec{b}].
    \end{equation*}
    In particular, $\laxcube{n}\xlaxs\laxcube{k}\simeq \laxcube{n+k}$.
    \begin{corollary}
        The functor $\square\colon \deltaCat^n\to \sCatn$ is given by the strict Gray product 
        \begin{equation*}
            \square[\vec{a}]\simeq [a_1]\xlaxs [a_2]\xlaxs\dots\xlaxs [a_n].
        \end{equation*}
    \end{corollary}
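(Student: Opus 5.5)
The plan is to induct on $n$, using the defining property $\square[\vec{a}]\xlaxs\square[\vec{b}]\simeq\square[\vec{a},\vec{b}]$ together with the degenerate case $\square[a]\simeq[a]$. Here we view $[a]\in\deltaCat$ as a strict $1$-category, the poset.

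For $n=1$ there is nothing to prove. The lax grid $\square[a]$ has objects $0\le b\le a$ and $1$-morphisms freely generated by the arrows $b\to b+1$. It has no higher cells, since there are no pairs $i<j$ of directions. This is exactly the poset $[a]$ regarded as a strict category.

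For the inductive step, write $[\vec{a}]=([a_1,\dots,a_{n-1}],[a_n])$ with $[a_1,\dots,a_{n-1}]\in\deltaCat^{n-1}$ and $[a_n]\in\deltaCat^1$. Applying the defining feature with $\vec{a}'=(a_1,\dots,a_{n-1})$ and $\vec{b}=(a_n)$ gives
\begin{equation*}
    \square[\vec{a}]\;\simeq\;\square[\vec{a}']\xlaxs\square[a_n].
\end{equation*}
By the induction hypothesis, $\square[\vec{a}']\simeq[a_1]\xlaxs\cdots\xlaxs[a_{n-1}]$. By the base case, $\square[a_n]\simeq[a_n]$. Substituting both gives the claim.

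Since $\xlaxs$ is monoidal, the iterated product is well defined up to canonical isomorphism, so no bracketing issues arise. If one also wants the isomorphism to be natural in $[\vec{a}]\in\deltaCat^n$ (so that the two functors $\deltaCat^n\to\sCatn$ agree), the defining isomorphism must be taken to be natural in both variables. The base identification $\square\simeq\mathrm{id}$ on $\deltaCat\subseteq\sCatn[1]$ is also functorial.

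The only genuine point is this naturality, that is, reading the defining feature as an isomorphism of functors $\deltaCat^n\times\deltaCat^k\to\sCatn[(n+k)]$ rather than merely an objectwise one. I would take this from the construction of \cite{Al-Agl-Brown-Steiner-2002-strict-globular-cubical}, where the isomorphism comes from the tensor product of the associated chain complexes and is natural in each variable.
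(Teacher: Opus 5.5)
Your proposal is correct and follows the paper's approach. The paper states this corollary without a separate proof, as an immediate consequence of iterating the defining property $\square[\vec{a}]\xlaxs\square[\vec{b}]\simeq\square[\vec{a},\vec{b}]$ starting from $\square[a]=[a]$. Your explicit attention to naturality in $[\vec{a}]$ is a point the paper leaves implicit.
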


    The suspension of strict categories can be described in terms of the strict Gray product, in an analogous way to \cref{lem:suspension-n-fold}.

    \begin{lemma}[{\cite[Corollary~B.6.6]{Ara-Maltsiniotis-2016-joins-and-slices}}]
    \label{lem:inductive-globe}
        There is a map $[1]\xlaxs\cC\to \suspension\cC$, natural in $\cC\in \sCatn[(n-1)]$, which is part of a pushout square
         \begin{equation*}
            \begin{tikzcd}
                {\partial[1]\times\cC} & {\partial[1]} \\
                {[1]\xlaxs\cC} & {\suspension\cC}
                \arrow[""{name=0, anchor=center, inner sep=0}, from=1-1, to=1-2]
                \arrow[from=1-1, to=2-1]
                \arrow[from=1-2, to=2-2]
                \arrow[from=2-1, to=2-2]
                \arrow["\lrcorner"{anchor=center, pos=0.125, rotate=180}, draw=none, from=2-2, to=0]
            \end{tikzcd}\qin \sCatn.
        \end{equation*}
    \end{lemma}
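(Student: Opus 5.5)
We construct the map $[1]\xlaxs\cC\to\suspension\cC$ and verify the pushout first on generators and then in general by a colimit argument. The key structural inputs are that both sides of the square preserve colimits in $\cC$ and that $\sCatn[(n-1)]$ is generated under colimits by globes.

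\textbf{Step 1: reduction to representables.} The functor $\cC\mapsto[1]\xlaxs\cC$ preserves colimits, because the strict Gray product is presentably monoidal; so do $\cC\mapsto\partial[1]\times\cC=\cC\sqcup\cC$ and the constant functor at $\partial[1]$, in the appropriate relative sense. The cleanest formulation is to compare the two functors
\begin{equation*}
    \cC\;\mapsto\;[1]\xlaxs\cC\sqcup_{\partial[1]\times\cC}\partial[1]
    \qquad\text{and}\qquad
    \cC\;\mapsto\;\suspension\cC,
\end{equation*}
viewed as functors into the bi-pointed category $(\sCatn)_{*,*}$. Here the first is pointed through $\partial[1]$, and the second through the two objects of $\suspension\cC$. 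In bi-pointed objects, pushouts along $\partial[1]$ become colimit-preserving operations. The bi-pointed suspension $\suspensionbi$ preserves colimits, being left adjoint to $\hom$; the set-valued analogue of \cref{cor:map-from-suspension} gives this. Since every strict $(n-1)$-category is a canonical colimit of objects of $\globe[n-1]$, it suffices to build a natural comparison map and show that it is an isomorphism on $\cC=x\in\globe[n-1]$.

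\textbf{Step 2: construction of the natural map.} The map $[1]\xlaxs\cC\to\suspension\cC$ is constructed first on globular sums and then extended by left Kan extension. Concretely, the map should send the two copies $\{0\}\times\cC$ and $\{1\}\times\cC$ to the two objects of $\suspension\cC$. It should send the generating cells $[1]\xlaxs c$ (the "cylinder" on a cell $c$) to the cell $c$, regarded as a cell one dimension higher in $\suspension\cC$. Naturality and compatibility with the Gray interchange cells need to be checked. The interchange cells become trivial because all cells of $\{i\}\times\cC$ collapse to identities of the two points.

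\textbf{Step 3: the globe case.} This is the main obstacle. For $x\in\globe[n-1]$ we must show that collapsing both ends of $[1]\xlaxs x$ to points yields $\suspension x=([1];x)$. The cleanest route is a universal-property argument. A functor $[1]\xlaxs x\to\cD$ corresponds to a lax transformation, that is, a functor $x\to$ lax-$\mathrm{Fun}([1],\cD)$ (cylinders in $\cD$). The condition that it factors through the pushout means that both ends are constant at chosen objects $s,t$. Cylinders with constant ends are exactly cells of $\hom_\cD(s,t)$, so functors out of the pushout correspond to pairs $(s,t)$ together with a functor $x\to\hom_\cD(s,t)$, which is the universal property of $\suspension x$. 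The technical heart is identifying the sub-$\omega$-category of cylinders with degenerate ends with $\hom_\cD(s,t)$. This can be done using the Steiner/ABS description of cylinders via the Gray closed structure. Alternatively, it is the content of Ara--Maltsiniotis' comparison of joins and slices, which is where \cite{Ara-Maltsiniotis-2016-joins-and-slices} would be invoked directly. Once this is established for representables, Step 1 finishes the proof.
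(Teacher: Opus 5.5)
The paper gives no argument for this lemma beyond importing it from Ara--Maltsiniotis (Corollary~B.6.6). Your outline is consistent with that: a sound reduction to $\globe[n-1]$ via colimit-preservation in bi-pointed objects, then a universal-property reformulation whose only substantive step is deferred to that same reference or to an unexecuted Steiner computation. That step is that $k$-cylinders in $\cD$ with both ends degenerate at $s,t$ correspond, via the collapse map, to $(k+1)$-cells $s\to t$. If you carry it out yourself, note that the collapse of $[1]\xlaxs\cC$ lands in $\suspension\cC$ only because of the paper's orientation convention, in which the $2$-cell of $\laxcube{2}$ runs from the first-then-second composite to the second-then-first one; with the opposite convention it lands in the suspension of a dual of $\cC$.
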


    In particular, for $[\vec{b}]\in \deltaCat^{n-1}$, we get a natural map
    \begin{equation*}
        \hat{\pi} \;\colon\; [1] \xlaxs \G[\vec{b}] \;\too\; \suspension \G[\vec{b}] \;\simeq\; \G[1,\vec{b}]\qin \sCatn.
    \end{equation*}
    Writing $[a]=[1]\sqcup_{[0]}\dots\sqcup_{[0]}[1]$, we get an induced map 
    \begin{equation*}
        \hat{\pi} \;\colon\; [a] \xlaxs \G[\vec{b}] \;\too\; \G[a,\vec{b}]\qin \sCatn
    \end{equation*}
    which is also natural in $[a]\in \deltaCat$.

    \begin{definition}
        Consider $\square$ and $\G$ as functors $\deltaCat^n\to \sCatn$.
        The \emph{collapse map} is the natural transformation
        \begin{equation*}
            \pi \;\colon\; \square \;\Too\; \G
        \end{equation*}
        defined inductively on $[\vec{a}]\in \deltaCat^n$ as the composition
        \begin{equation*}
            \square[\vec{a}] \;\simeq\; [a_1] \xlaxs \square[a_2, \dots, a_{n}] \;\xtoo{\id \xlax_s \pi}\; [a_1] \xlaxs \G[a_2,\dots,a_n] \;\xtoo{\hat{\pi}}\; \G[a_1,\dots,a_n].
        \end{equation*}
    \end{definition}  

    Denote the collapse map of the lax $n$-cube by $\pi_n\coloneqq \pi_{[1]^n}\colon \laxcube{n}\to\laxdisk{n}$.

    \begin{lemma}
    \label{lem:collapse-is-retract}
        The map $\laxdisk{n}\to \laxcube{n}$ selecting the top-dimensional cell is a section of
        $\pi_{n}$.
    \end{lemma}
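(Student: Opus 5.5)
We argue by induction on $n$, following the inductive definition of the collapse map. Write $s_n\colon \laxdisk{n}\to\laxcube{n}$ for the map selecting the top-dimensional cell; we must show $\pi_n\circ s_n=\id_{\laxdisk{n}}$. In $\sCatn$, a map out of $\laxdisk{n}$ is exactly an $n$-cell of the target, and $\laxdisk{n}$ has a single non-identity $n$-cell. So the claim reduces to checking that $\pi_n$ sends the top generating $n$-cell of $\laxcube{n}$ to the unique top $n$-cell of $\laxdisk{n}$. Any endomorphism of $\laxdisk{n}$ that fixes this cell is the identity. The base case $n=0$ is trivial, since $\pi_0$ and $s_0$ are both $\id_{[0]}$.

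For the inductive step, use $\laxcube{n}\simeq[1]\xlaxs\laxcube{n-1}$ and the factorization
\begin{equation*}
    \pi_n \;\colon\; [1]\xlaxs\laxcube{n-1}\;\xtoo{\id\xlax_s\pi_{n-1}}\;[1]\xlaxs\laxdisk{n-1}\;\xtoo{\hat\pi}\;\suspension\laxdisk{n-1}\simeq\laxdisk{n}.
\end{equation*}
Naturality of the strict Gray product shows that the top cell of $[1]\xlaxs\cC$ coming from the generating $1$-cell of $[1]$ and an $(n-1)$-cell $c$ of $\cC$ is sent by $\id\xlax_s f$ to the corresponding cell built from $f(c)$. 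By the inductive hypothesis, $\pi_{n-1}$ sends the top cell of $\laxcube{n-1}$ to the top cell of $\laxdisk{n-1}$. Hence $\id\xlax_s\pi_{n-1}$ sends the top cell of $\laxcube{n}$ to the top cell $[1]\otimes\iota_{n-1}$ of $[1]\xlaxs\laxdisk{n-1}$, where $\iota_{n-1}$ denotes the top cell of $\laxdisk{n-1}$.

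It remains to see that $\hat\pi$ sends $[1]\otimes\iota_{n-1}$ to the top cell of $\suspension\laxdisk{n-1}=\laxdisk{n}$. By the pushout of \cref{lem:inductive-globe}, $\hat\pi$ only collapses $\partial[1]\times\laxdisk{n-1}$ to two points. The cell $[1]\otimes\iota_{n-1}$ is a non-degenerate $n$-cell not contained in that subobject, so its image is a non-identity $n$-cell of $\laxdisk{n}$. Since $\laxdisk{n}$ has only one such cell, the image is the top cell.

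The main obstacle is making this last step rigorous: we must show that the image is not an identity $n$-cell. The pushout gives $\suspension\laxdisk{n-1}$ as a strict $n$-category generated by the images of $n$-cells of $[1]\xlaxs\laxdisk{n-1}$, and $\laxdisk{n}$ is nontrivial in degree $n$. So some generating $n$-cell must map to the top cell. The only generating $n$-cell of $[1]\xlaxs\laxdisk{n-1}$ outside $\partial[1]\times\laxdisk{n-1}$ is $[1]\otimes\iota_{n-1}$, since the other top-dimensional candidates involve lower cells of $\laxdisk{n-1}$ and have dimension less than $n$. If one prefers to avoid generators, one can instead compare with the description of $\hat\pi$ in \cite{Ara-Maltsiniotis-2016-joins-and-slices}, where it is constructed as the quotient identifying $\{0\}\times\cC$ and $\{1\}\times\cC$ to points; this visibly preserves the cell $[1]\otimes c$ as $\suspension c$.
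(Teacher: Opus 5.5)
Your proof is correct and follows the paper's route. You induct via $\pi_n=\hat\pi\circ(\id\xlaxs\pi_{n-1})$, which reduces the claim to showing that the top-cell map $\laxdisk{n}\to[1]\xlaxs\laxdisk{n-1}$ is a section of $\hat\pi$, and you then use the pushout of \cref{lem:inductive-globe}. For that last step the paper only notes that the restriction to $\partial[1]$ is a section of the projection $\partial[1]\times\laxdisk{n-1}\to\partial[1]$, whereas your generation argument makes explicit why the top cell is not sent to an identity: the pushout is generated by the image of $\hat\pi$, and $[1]\otimes\iota_{n-1}$ is the only $n$-dimensional generator.
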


    \begin{proof}
        It is enough to prove that the map $\laxdisk{n}\to [1]\xlaxs \laxdisk{n-1}$ selecting the top-dimensional cell is a section of $\hat{\pi} \colon [1]\xlaxs \laxdisk{n-1}\to \laxdisk{n}$.
        This follows from~\cref{lem:inductive-globe}, by noticing that its restriction $\partial[1]\to \partial[1]\times\laxdisk{n-1}$ is a section of $\partial[1]\times\laxdisk{n-1}\to \partial[1]$.
    \end{proof}

    \begin{corollary}
    \label{cor:pushout-square-and-globe}
        The following are pushout squares in $\Seg(\globe)$:
        \begin{equation*}\begin{tikzcd}
            {\iota\partial\laxdisk{n}} & {\iota\partial\laxcube{n}} && {\iota\partial\laxcube{n}} & {\iota\partial\laxdisk{n}} \\
            {\laxdisk{n}} & {\laxcube{n}} && {\laxcube{n}} & {\laxdisk{n}}
            \arrow[from=1-1, to=1-2]
            \arrow[from=1-1, to=2-1]
            \arrow[from=1-2, to=2-2]
            \arrow["{\iota\partial\pi_{n}}", from=1-4, to=1-5]
            \arrow[from=1-4, to=2-4]
            \arrow[from=1-5, to=2-5]
            \arrow[from=2-1, to=2-2]
            \arrow["\lrcorner"{anchor=center, pos=0.125, rotate=180}, draw=none, from=2-2, to=1-1]
            \arrow["{\pi_{n}}", from=2-4, to=2-5]
            \arrow["\lrcorner"{anchor=center, pos=0.125, rotate=180}, draw=none, from=2-5, to=1-4]
        \end{tikzcd}\end{equation*}
    \end{corollary}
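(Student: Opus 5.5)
The left square is the substantive one; the right square will follow from it by a retract argument, using \cref{lem:collapse-is-retract}. Throughout, $\laxcube{n}$ and $\laxdisk{n}$ are regarded as objects of $\Seg(\globe)$ via the nerve of strict $n$-categories. Here $\laxdisk{n}$ is representable, and $\laxcube{n}$ is the strict $n$-category $\square[1]^n$. The boundary $\iota\partial$ is the $(n-1)$-core.

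For the left square, let $s\colon \laxdisk{n}\to\laxcube{n}$ select the top cell. I would argue that the pushout $P:=\laxdisk{n}\sqcup_{\iota\partial\laxdisk{n}}\iota\partial\laxcube{n}$ is exactly $\laxcube{n}$. Heuristically, $\laxcube{n}$ is obtained from its $(n-1)$-core by freely attaching a single $n$-cell along its source and target pasting diagrams. The $(n-1)$-core $\iota\partial\laxcube{n}$ is the strict $(n-1)$-category generated by the faces of the cube, and its $n$-cells are all identities.

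To make this rigorous, I would proceed by induction on $n$, as in \cref{lem:n-uple-globe-pushout}, using $\laxcube{n}\simeq[1]\xlaxs\laxcube{n-1}$ together with the pushout of \cref{lem:inductive-globe}. Concretely, I would test the square against a Segal sheaf $X\in\Seg(\globe)$. A map $\laxcube{n}\to X$ must be identified with a map on the core together with an $n$-cell of $X$ whose boundary is the composite of the core boundary. The Segal condition for the pasting shapes $\iota\partial\laxdisk{n}$ should exactly express this identification.

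The main obstacle is that $\laxcube{n}$ is not a globular sum. Its nerve in $\Seg(\globe)$ is therefore not obviously the colimit of its cells, and one has to know that the strict freeness (Steiner's loop-free basis) persists in the $\infty$-setting. I would first try to invoke Campion's pasting results (\cite[Theorem~2.25]{Campion-2023-pasting}), which say that such strict pasting diagrams are Segal sheaves computed as colimits of their cells. Granting that, $\laxcube{n}$ is the colimit of its atoms, and the only atom not lying in the core is the top cell $\laxdisk{n}$. This yields the left pushout.

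For the right square, I would paste it horizontally after the left one. The composite of the two squares is
\[
  \iota\partial\laxdisk{n}\to\iota\partial\laxcube{n}\to\iota\partial\laxdisk{n}
  \quad\text{over}\quad
  \laxdisk{n}\to\laxcube{n}\to\laxdisk{n},
\]
which by \cref{lem:collapse-is-retract} is the identity square, and the identity square is a pushout. However, the pasting lemma only lets us deduce that the right square is a pushout given that the left one is and the composite is, which is exactly this situation. Equivalently, the right square is the pushout of the left pushout along $\iota\partial\pi_n$: we have $\laxcube{n}\sqcup_{\iota\partial\laxcube{n}}\iota\partial\laxdisk{n}\simeq\laxdisk{n}\sqcup_{\iota\partial\laxdisk{n}}\iota\partial\laxdisk{n}\simeq\laxdisk{n}$. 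It remains to check that the induced map is $\pi_n$, which holds because $\pi_n\circ s=\id$ and $\pi_n$ restricts to $\iota\partial\pi_n$ on the core.
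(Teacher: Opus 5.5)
Your proposal takes the same approach as the paper's proof. The left square comes from Campion's pasting theorem, and the right square then follows from the pasting lemma for pushouts, because by \cref{lem:collapse-is-retract} the composite of the two squares is the identity (degenerate) square.

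One citation slip: the result you need is \cite[Corollary~5.12]{Campion-2023-pasting}, whereas Theorem~2.25 there is the statement that suspension preserves Segal sheaves. What that corollary gives you is that attaching the top globe along its boundary is a pushout in $\Seg(\globe)$. It is not literally that $\laxcube{n}$ is a ``colimit of its atoms''.
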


    \begin{proof}
        It is enough to show that the left square and their composition are pushouts.
        The left square follows from the pasting theorem~\cite[Corollary~5.12]{Campion-2023-pasting}, and the composition  is degenerate by~\cref{lem:collapse-is-retract}.
    \end{proof}

\subsection{Maps of lax cubes}
\label{subsec:maps-of-lax-cubes}
In this subsection, we describe the structure maps between lax cubes that we will use throughout the paper. 

    \subsubsection*{Faces and degeneracies} 
    The simplest maps of lax cubes are the \emph{faces} and \emph{degeneracies}, which are induced from maps of simplices. 
    Explicitly, the \emph{basic faces} are
    \begin{equation*}
        \delta^-,\delta^+ \;\colon\; \laxcube{0} \;\too\; \laxcube{1}
    \end{equation*}
    choosing the source and target respectively, and the \emph{basic degeneracy} is the unique map 
    \begin{equation*}
        \sigma \;\colon\; \laxcube{1} \;\too\; \laxcube{0}.
    \end{equation*}

    \begin{definition}
        Let $I\subseteq \{1,\dots,n\}$ be a subset of cardinality $k$ and let $s\colon I\to \{-,+\}$.
        \begin{enumerate}
            \item The \emph{$(I,s)$-face} $
            \delta_I^s\colon \laxcube{n-k}\to \laxcube{n}$ is given by $\delta_I^s=\delta_{I,1}^{s} \xlaxs \dots \xlaxs \delta_{I,n}^{s}$, where
            \begin{equation*}
                \delta_{I,i}^{s} \;=\; \begin{cases}
                    \delta^{s(i)} \;\colon\; \laxcube{0} \;\too\; \laxcube{1}, & i \in I, \\
                    \id \;\colon\; \laxcube{1} \;\too\; \laxcube{1}, & i \notin I.
                \end{cases}
            \end{equation*}
            \item The \emph{I-degeneracy} $\sigma_I \colon \laxcube{n} \to \laxcube{n-k} $ is given by $\sigma_I= \sigma_{I,1} \xlaxs \dots \xlaxs \sigma_{I,n}$, where
            \begin{equation*}
                \sigma_{I,i} \;=\;
                \begin{cases}
                    \sigma \;\colon\; \laxcube{1} \;\too\; \laxcube{0}, & i\in I,\\
                    \id \;\colon\; \laxcube{1} \;\too\; \laxcube{1}, & i\notin I.
                \end{cases}
            \end{equation*}
        \end{enumerate}
    When $I=\{i\}$ is a singleton, we denote
    \begin{equation*}
        \delta_i^{s(i)} \coloneqq \delta_{\{i\}}^{s} \colon \laxcube{n-1} \to \laxcube{n},\qquad \sigma_i \coloneqq \sigma_{\{i\}} \colon \laxcube{n} \to \laxcube{n-1}.
    \end{equation*}
    \end{definition}

    \subsubsection*{Connections}

    The cubes admit additional degeneracies, relating the different directions, called \emph{connections}.
    The \emph{basic} connections are 
    \begin{equation*}
        \gamma^-,\gamma^+ \;\colon\; \laxcube{2} \;\too\; \laxcube{1}
    \end{equation*}
    which are represented by the following diagrams:
    \begin{equation*}
        \gamma^-:
        \begin{tikzcd}
            {-} & {+} \\
            {+} & {+}
            \arrow[from=1-1, to=1-2]
            \arrow[from=1-1, to=2-1]
            \arrow[equals, from=1-2, to=2-2]
            \arrow[equals, from=2-1, to=2-2]
        \end{tikzcd}\;\too\; \begin{tikzcd}
            {-} \\
            {+,}
            \arrow[from=1-1, to=2-1]
        \end{tikzcd}
        \qquad
        \gamma^+:
        \begin{tikzcd}
            {-} & {-} \\
            {-} & {+}
            \arrow[equals, from=1-1, to=1-2]
            \arrow[equals, from=1-1, to=2-1]
            \arrow[from=1-2, to=2-2]
            \arrow[from=2-1, to=2-2]
        \end{tikzcd}\;\too\;
        \begin{tikzcd}
            {-} \\
            {+.}
            \arrow[from=1-1, to=2-1]
        \end{tikzcd}
    \end{equation*}
    The horizontal and vertical compositions of the two basic connections result in vertical and horizontal degeneracies respectively.
    Namely, the compositions
    \begin{equation*}
        \laxcube{2} \;\too\; \laxcube{2} \sqcup_{\laxcube{1}} \laxcube{2} \;\xtoo{\gamma^+ \sqcup_{\id} \gamma^-}\; \laxcube{1} \sqcup_{\laxcube{1}} \laxcube{1} \;\simeq\; \laxcube{1},
    \end{equation*}
    represented by the following diagrams, are degeneracy maps:
    \begin{equation*}
        \begin{tikzcd}
            - && + \\
            - && +
            \arrow[from=1-1, to=1-3]
            \arrow[from=1-1, to=2-1]
            \arrow[from=1-3, to=2-3]
            \arrow[from=2-1, to=2-3]
        \end{tikzcd}
        \;\too\;
        \begin{tikzcd}
            {-} & {-} & {+} \\
            {-} & {+} & {+}
            \arrow[equals, from=1-1, to=1-2]
            \arrow[equals, from=1-1, to=2-1]
            \arrow[from=1-2, to=1-3]
            \arrow[from=1-2, to=2-2]
            \arrow[equals, from=1-3, to=2-3]
            \arrow[from=2-1, to=2-2]
            \arrow[equals, from=2-2, to=2-3]
        \end{tikzcd}
        \;\too\;
        \begin{tikzcd}
            {-} \\
            {+,}
            \arrow[from=1-1, to=2-1]
        \end{tikzcd}
    \end{equation*}
    \begin{equation*}
        \begin{tikzcd}
            - & - \\
            \\
            + & +
            \arrow[from=1-1, to=1-2]
            \arrow[from=1-1, to=3-1]
            \arrow[from=1-2, to=3-2]
            \arrow[from=3-1, to=3-2]
        \end{tikzcd}
        \;\too\;
        \begin{tikzcd}
            {-} & {-} \\
            {-} & {+} \\
            {+} & {+}
            \arrow[equals, from=1-1, to=1-2]
            \arrow[equals, from=1-1, to=2-1]
            \arrow[from=1-2, to=2-2]
            \arrow[from=2-1, to=2-2]
            \arrow[from=2-1, to=3-1]
            \arrow[equals, from=2-2, to=3-2]
            \arrow[equals, from=3-1, to=3-2]
        \end{tikzcd}
        \;\too\;
        \begin{tikzcd}
            {-} & {+.}
            \arrow[from=1-1, to=1-2]
        \end{tikzcd}
    \end{equation*}
    
    For every $1 \le i \le n-1$ and $s \in \{+, -\}$, we define the connection  $\gamma_i^s \colon \laxcube{n} \to \laxcube{n-1}$ by 
    \begin{equation*}
        \gamma^s_{i} \coloneqq \id \xlaxs \gamma^s \xlaxs \id \;\colon\; \laxcube{i-1} \xlaxs \laxcube{2} \xlaxs \laxcube{n-i-1} \;\too\; \laxcube{i-1} \xlaxs \laxcube{1} \xlaxs \laxcube{n-i-1}.
    \end{equation*}

    \subsubsection*{The fold map}
    
    To relate the cubes, which are direction-symmetric, to the direction-asymmetric $n$-fold Segal spaces, we use the connections to ``fold'' all the faces away from the constancy directions.
    For that purpose, we define the \emph{basic fold map}
    $\psi\colon \laxcube{2}\to \laxcube{2}$
    as the composition
    \begin{equation*}
        \laxcube{2} \;\too\; \laxcube{2} \sqcup_{\laxcube{1}} \laxcube{2} \sqcup_{\laxcube{1}} \laxcube{2} \;\xtoo{\gamma^+ \sqcup_{\id} \id \sqcup_{\id} \gamma^-}\; \laxcube{1} \sqcup_{\laxcube{1}} \laxcube{2} \sqcup_{\laxcube{1}} \laxcube{1} \;\simeq\; \laxcube{2},
    \end{equation*}
    \begin{equation*}
        \begin{tikzcd}
            \bullet &&& \bullet \\
            \bullet &&& \bullet
            \arrow[from=1-1, to=1-4]
            \arrow[from=1-1, to=2-1]
            \arrow[from=1-4, to=2-4]
            \arrow[from=2-1, to=2-4]
        \end{tikzcd}
        \;\too\;
        \begin{tikzcd}
            \bullet & \bullet & \bullet & \bullet \\
            \bullet & \bullet & \bullet & \bullet
            \arrow[equals, from=1-1, to=1-2]
            \arrow[equals, from=1-1, to=2-1]
            \arrow[from=1-2, to=1-3]
            \arrow[from=1-2, to=2-2]
            \arrow[from=1-3, to=1-4]
            \arrow[from=1-3, to=2-3]
            \arrow[equals, from=1-4, to=2-4]
            \arrow[from=2-1, to=2-2]
            \arrow[from=2-2, to=2-3]
            \arrow[equals, from=2-3, to=2-4]
        \end{tikzcd}
        \;\too\;
        \begin{tikzcd}
            \bullet & \bullet \\
            \bullet & \bullet.
            \arrow[from=1-1, to=1-2]
            \arrow[from=1-1, to=2-1]
            \arrow[from=1-2, to=2-2]
            \arrow[from=2-1, to=2-2]
        \end{tikzcd}
    \end{equation*}
    For $1\le i\le n-1$, let $\psi_i\colon \laxcube{n}\to \laxcube{n}$ be 
    \begin{equation*}
        \psi_{i} \coloneqq \id \xlaxs \psi \xlaxs \id \;\colon\;
        \laxcube{i-1} \xlaxs \laxcube{2} \xlaxs \laxcube{n-i-1} \;\too\;
        \laxcube{i-1} \xlaxs \laxcube{2} \xlaxs \laxcube{n-i-1}.
    \end{equation*}
    
    \begin{definition}
        The \emph{fold map} $\Phi_{n}\colon \laxcube{n}\to \laxcube{n}$ is defined as the composition
        \begin{equation*}
            \Phi_{n} \;=\; \psi_{n-1}(\psi_{n-2}\psi_{n-1})\cdots(\psi_1\psi_2\cdots\psi_{n-1}).
        \end{equation*}
    \end{definition}

    \begin{remark}
        We use a slightly different definition of $\psi_i$ and $\Phi_{n}$ than in~\cite{Al-Agl-Brown-Steiner-2002-strict-globular-cubical}, amounting to a different choice of constancy directions.
    \end{remark}

    In~\cite{Al-Agl-Brown-Steiner-2002-strict-globular-cubical}, an explicit description of $\Phi_{n}$ is given on the globular cells of $\laxcube{n}$.
    Every cell in $\laxcube{n}$ is either degenerate or a composite of \emph{elementary cells}, 
    which are the maximal cells of the various faces.
    \begin{definition}
        Given $I\subseteq \{1,\dots,n\}$ and $s\colon I\to \{-,+\}$, the \emph{elementary $(I,s)$-cell} of $\laxcube{n}$ is the maximal cell of the $(I,s)$-face: 
        \begin{equation*}
            \laxdisk{n-|I|} \;\too\; \laxcube{n-|I|} \;\xtoo{\delta_I^s}\; \laxcube{n}.
        \end{equation*}
        When $I=\emptyset$, the corresponding elementary cell is the top-dimensional cell of $\laxcube{n}$.
    \end{definition}

    By~\cite[Proposition~3.2]{Al-Agl-Brown-Steiner-2002-strict-globular-cubical}, the fold map $\Phi_n\colon \laxcube{n}\to \laxcube{n}$ acts as follows on the elementary $(I,s)$-cell:
        \begin{itemize}
            \item The top-dimensional cell ($I = \emptyset$) is sent to itself.
            \item If $I\neq \emptyset$, let $m=\min(I)$. 
            The elementary $(I,s)$-cell is sent to the $(m-1)$-dimensional source (if $s(m)=-$) or target (if $s(m)=+)$ of the top-dimensional cell
            \begin{equation*}
                \laxdisk{m-1} \;\too\; \laxdisk{n} \;\too\; \laxcube{n}.
            \end{equation*}
        \end{itemize}
    
    \begin{proposition}
    \label{lem:endomorphism-equal-fold-map}
        The idempotent $\laxcube{n}\xto{\pi_{n}} \laxdisk{n}\to \laxcube{n}$ is equal to the fold map $\Phi_{n}$.
    \end{proposition}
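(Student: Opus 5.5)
Both maps are endomorphisms of the strict $n$-category $\laxcube{n}$. This category is freely generated (as a strict $n$-category, or via the Steiner/ABS loop-free basis) by its elementary $(I,s)$-cells. So it suffices to show that the idempotent $e \coloneqq \sigma\circ\pi_n$ agrees with $\Phi_n$ on each elementary $(I,s)$-cell, where $\sigma\colon\laxdisk{n}\to\laxcube{n}$ is the section of \cref{lem:collapse-is-retract}. We already know from \cite[Proposition~3.2]{Al-Agl-Brown-Steiner-2002-strict-globular-cubical} what $\Phi_n$ does on these cells. The proof therefore reduces to computing $\pi_n$ on each elementary cell. The claim to establish is: $\pi_n$ sends the top cell to the top cell of $\laxdisk{n}$, and for $I\neq\emptyset$ with $m=\min(I)$ it sends the elementary $(I,s)$-cell to the $(m-1)$-dimensional source or target of $\laxdisk{n}$, according to $s(m)$. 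Composing with $\sigma$, which sends the top cell of $\laxdisk{n}$ to the top cell of $\laxcube{n}$ and hence its iterated sources and targets to those of the top cell, then recovers exactly the ABS description of $\Phi_n$.

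I would compute $\pi_n$ by induction on $n$, using its inductive definition $\pi_n=\hat\pi\circ(\id_{[1]}\xlaxs\pi_{n-1})$, where $\hat\pi\colon[1]\xlaxs\laxdisk{n-1}\to\suspension\laxdisk{n-1}=\laxdisk{n}$ is the quotient of \cref{lem:inductive-globe}. An elementary $(I,s)$-cell of $\laxcube{n}=[1]\xlaxs\laxcube{n-1}$ has one of two forms.

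\textbf{Case $1\in I$.} The cell is $\{s(1)\}\times c'$, where $c'$ is an elementary cell of $\laxcube{n-1}$ and $\{s(1)\}$ is an endpoint of $[1]$. Its image under $\id\xlaxs\pi_{n-1}$ lies in $\partial[1]\times\laxdisk{n-1}$. The pushout of \cref{lem:inductive-globe} collapses this to the $0$-dimensional source or target of $\laxdisk{n}$, as required since $m=1$.

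\textbf{Case $1\notin I$.} The cell is the Gray product $\iota_1\otimes c'$ of the generating arrow of $[1]$ with the elementary $(I-1,s)$-cell $c'$ of $\laxcube{n-1}$. By induction, $\pi_{n-1}(c')$ is the $(m-2)$-dimensional source or target of $\laxdisk{n-1}$ (or its top cell if $I=\emptyset$). So we need to understand $\hat\pi(\iota_1\otimes x)$ for $x$ a cell of $\laxdisk{n-1}$. In the Gray product, $\iota_1\otimes x$ is a cell of dimension $\dim x+1$. The map $\hat\pi$ is the suspension quotient, which sends it to $\suspension x$. Finally, suspension carries the $(m-2)$-source or target to the $(m-1)$-source or target, and the top cell to the top cell. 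This closes the induction.

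\textbf{Main obstacle.} The step I expect to be delicate is justifying $\hat\pi(\iota_1\otimes x)=\suspension x$. One must check that the natural map of \cref{lem:inductive-globe} (from \cite{Ara-Maltsiniotis-2016-joins-and-slices}) really acts this way on cells of the form $\iota_1\otimes x$, rather than producing composites with the degenerate end pieces. I would argue via naturality of $\hat\pi$ in $\cC$: apply it to the map $\laxdisk{\dim x}\to\laxdisk{n-1}$ classifying $x$. This reduces the question to the top cell of $[1]\xlaxs\laxdisk{k}$. There it is exactly the statement already used in \cref{lem:collapse-is-retract}, namely that the top cell maps to the top cell of $\laxdisk{k+1}$. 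Together with the freeness of $\laxcube{n}$ on elementary cells, this completes the comparison.
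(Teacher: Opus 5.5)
Your proposal is correct and is essentially the paper's argument. It reduces to elementary cells via the ABS description of $\Phi_n$, handles $\min(I)=1$ by the collapse $\partial[1]\times\laxdisk{n-1}\to\partial[1]$ in the pushout of \cref{lem:inductive-globe}, and handles $\min(I)>1$ by naturality together with the top-cell section from \cref{lem:collapse-is-retract}. The only difference is bookkeeping: the paper splits $\laxcube{n}\simeq\laxcube{m-1}\xlaxs\laxcube{n-m+1}$ in one step and uses naturality of $\pi$ with the retraction $\pi_{m-1}$, whereas you peel off one coordinate at a time by induction on $n$ and use naturality of $\hat\pi$ in $\cC$.
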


    \begin{proof}
        It is enough to check that the idempotent agrees with $\Phi_n$ on the elementary cells, as described above.
        For the top-dimensional cell, it follows from $\pi_{n}$ being a retract (\cref{lem:collapse-is-retract}).
        Let $\emptyset\ne I\subseteq\{1,\dots, n\}$ of cardinality $k$ and $s\colon I\to \{-,+\}$, and denote $m=\min(I)$.
        We need to show that the composition
        \begin{equation*}
            \laxdisk{n-k} \;\too\; \laxcube{n-k} \;\xtoo{\delta_I^s}\; \laxcube{n} \;\xtoo{\pi_n}\; \laxdisk{n}
        \end{equation*}
        factors as the degeneracy $\laxdisk{n-k}\to \laxdisk{m-1}$ followed by the inclusion to the $(m-1)$-dimensional source or target.

        First, assume that $m=1$. 
        In particular, the $(I,s)$-face factors through $\partial[1]\times\laxcube{n-1}\to \laxcube{n}$.
        Recall that $\pi_{n}$ is defined as 
        \begin{equation*}
            \laxcube{n} \;\simeq\; [1] \xlaxs \laxcube{n-1} \;\xtoo{\id\xlaxs \pi_{n-1}}\;  [1] \xlaxs \laxdisk{n-1} \;\xtoo{\hat{\pi}}\; \laxdisk{n},
        \end{equation*}
        and $\hat{\pi}$ collapses the faces $\partial[1]\times \laxdisk{n-1}$ to $\partial[1]$ (\cref{lem:inductive-globe}).
        In particular, the $(I,s)$-face is sent to the 0-dimensional source or target, as needed.

        If $m>1$, denote 
        \begin{equation*}
            \tilde{I} \;\coloneqq\; I-(m-1) \;\subseteq\; \{1,\dots, n-m+1\},
        \end{equation*}
        and
        \begin{align*} 
            \tilde{s} \;\colon\; \tilde{I} &\;\too\; \{+,-\} \\
            i & \;\mapsto\;  s(i+m-1).
        \end{align*}
        We can decompose $\delta_I^s$ as
        \begin{equation*}
            \id \xlaxs \delta_{\tilde{I}}^{\tilde{s}} \;\colon\; {\laxcube{m-1} \xlaxs \laxcube{n-k-m+1}} \;\too\; {\laxcube{m-1} \xlaxs \laxcube{n-m+1}}.
        \end{equation*}
        Moreover, $\pi_{n}$ factors as 
        \begin{equation*}
            \laxcube{n} \;\simeq\; {\laxcube{m-1} \xlaxs \laxcube{n-m+1}} \;\xtoo{\id\xlaxs \pi_{n-m+1}}\; {\laxcube{m-1} \xlaxs \laxdisk{n-m+1}} \;\too\; {\laxdisk{n}}.
        \end{equation*}
        Thus, the result follows from the commuting diagram
        \begin{equation*}\begin{tikzcd}
            {\laxdisk{n-k}} & {\laxcube{m-1}\xlaxs \laxdisk{n-k-m+1}} & {\laxcube{m-1}\xlaxs \laxcube{n-k-m+1}} \\
            && {\laxcube{m-1}\xlaxs \laxcube{n-m+1}} \\
            {\laxdisk{m-1}} & {\laxcube{m-1}\xlaxs \pt} & {\laxcube{m-1}\xlaxs \laxdisk{n-m+1}} \\
            & {\laxdisk{m-1}} & {\laxdisk{n}}
            \arrow[from=1-1, to=1-2]
            \arrow[from=1-1, to=3-1]
            \arrow[from=1-2, to=1-3]
            \arrow[from=1-2, to=3-2]
            \arrow["{\id\xlaxs \delta_{\tilde{I}}^{\tilde{s}}}", from=1-3, to=2-3]
            \arrow["{\id\xlaxs\pi_{n-m+1}}", from=2-3, to=3-3]
            \arrow[from=3-1, to=3-2]
            \arrow[equals, from=3-1, to=4-2]
            \arrow[from=3-2, to=3-3]
            \arrow["{\pi_{m-1}}", from=3-2, to=4-2]
            \arrow[from=3-3, to=4-3]
            \arrow[from=4-2, to=4-3]
        \end{tikzcd}\end{equation*}
        where the top row selects the top-dimensional cell, the top-right square commutes from the case $m=1$, the bottom-left triangle commutes by~\cref{lem:collapse-is-retract}, and the rest commute by naturality.            
    \end{proof}  

\subsection{$\grid$-Spaces}
\label{subsec:grid-spaces}

    Recall that the $n$-grid category $\grid$ was defined as the essential image of $\square\colon \deltaCat^n\to \sCatn$.
    We now define a geometric pattern on $\grid$.

    \begin{definition} 
        We define a geometric pattern on $\grid$ as follows:
        \begin{enumerate}
            \item Inert morphisms $\varphi \colon \square[\vec{a}] \to \square[\vec{b}]$ are those of the form $\varphi = \square(f)$, where $f \colon [\vec{a}] \to [\vec{b}]$ is an inert morphism in $\deltaCat^{n}$.
            \item Active morphisms $\varphi \colon \square[\vec{a}] \to \square[\vec{b}]$ are those that preserve the minimal and maximal corners. Namely, $\varphi(\vec{0}) = \vec{0}$ and $\varphi(\vec{a}) = \vec{b}$.
            \item The elementary objects are the lax cubes $\laxcube{0}, \dots, \laxcube{n}$.
        \end{enumerate}
    \end{definition}

    \begin{proposition}
        The above defines a geometric pattern on $\grid$.
    \end{proposition}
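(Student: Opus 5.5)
To see that this is a geometric pattern, we must check three things. Active and inert maps should each form subcategories containing all isomorphisms. Every map should factor uniquely as an active map followed by an inert one. Finally, the elementaries should lie in the inert subcategory, which holds by definition.

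\textbf{Closure.} Active maps are closed under composition and contain the isomorphisms. This is because, by \cref{rem:maps-of-grids-order}, every map preserves the partial order on objects. So an isomorphism $\square[\vec a]\to\square[\vec b]$ is an order isomorphism of the posets of objects, and must send the unique minimum $\vec 0$ to $\vec 0$ and the unique maximum $\vec a$ to $\vec b$.

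Inert maps are closed under composition because $\square$ is a functor and inert maps of $\deltaCat^n$ compose. One subtlety is that an inert map is only defined as one \emph{of the form} $\square(f)$ up to the chosen presentation $\square[\vec a]$. For isomorphisms such as $\square[0,1]\simeq\square[1,0]$ to count as inert, we should interpret the definition up to precomposition and postcomposition with isomorphisms. That is, inert maps are those isomorphic in the arrow category to some $\square(f)$. I would state this convention explicitly and check that composition is still closed: the isomorphisms of lax grids permute coordinates with entry $0$ among themselves, and conjugating an inert map by such a permutation gives an inert map again.

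\textbf{Factorization.} Given $\varphi\colon\square[\vec a]\to\square[\vec b]$, set $\vec c=\varphi(\vec 0)\le\vec d=\varphi(\vec a)$, which is possible by order preservation. Every object $\vec x\le\vec a$ satisfies $\vec 0\le\vec x\le\vec a$, so $\vec c\le\varphi(\vec x)\le\vec d$.

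The key claim is that $\varphi$ factors through the inert sub-grid $\square(\iota)\colon\square[\vec d-\vec c]\to\square[\vec b]$, where $\iota$ is the grid inclusion with offset $\vec c$. By \cite{Al-Agl-Brown-Steiner-2002-strict-globular-cubical}, cells of $\square[\vec b]$ are described by Steiner's loop-free bases (chains on the product of intervals). A cell whose source and target $0$-cells lie in the interval $[\vec c,\vec d]$ has all its basis elements supported in that sub-box. Equivalently, the sub-grid $\square[\vec d-\vec c]\subseteq\square[\vec b]$ is closed under the relevant subcategory condition: it is a ``convex'' full sub-$n$-category on objects between $\vec c$ and $\vec d$.

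Since $\square(\iota)$ is a monomorphism of strict $n$-categories, the factorization $\varphi=\square(\iota)\circ\alpha$ exists and is unique. The map $\alpha$ is active because it sends $\vec 0\mapsto\vec 0$ and $\vec a\mapsto\vec d-\vec c$.

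Uniqueness of the whole factorization follows because any active-then-inert factorization must have an inert part sending its minimum and maximum to $\vec c$ and $\vec d$. An inert map $\square(f)$ is determined by its image box, so it is determined up to the isomorphism ambiguity. The contractible choice of factorization then follows since $\grid\subseteq\sCatn$ is a $(1,1)$-category, so factorization systems can be checked on objects and morphisms.

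\textbf{Main obstacle.} The hard step is the convexity claim: a cell of $\square[\vec b]$ whose $0$-dimensional boundary lies in $[\vec c,\vec d]$ lies entirely in the sub-grid. I would prove it from the Steiner chain-complex description. The chain complex of $\square[\vec b]$ is a tensor product of interval complexes. Its augmentation and $\partial$ are coordinatewise order-respecting, so a cell's support is contained in the box spanned by its source and target vertices.

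Alternatively, one can argue by induction using $\square[\vec b]=[b_1]\xlaxs\square[b_2,\dots]$ and the same statement for $[b_1]$, where it is obvious.
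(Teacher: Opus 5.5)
Your proposal is correct and follows essentially the paper's argument. Both read off the corners $\varphi(\vec 0)\le\varphi(\vec a)$, factor $\varphi$ through the inert sub-grid on that box to obtain the active part, and get uniqueness because an inert map is a monomorphism determined by its corners. The differences are minor: the paper writes the active part explicitly as $\square[g]\circ\varphi$ using a retraction $g$, rather than corestricting along the mono, and it uses your convexity step (cells whose vertices lie in the box lie in the sub-grid) and the up-to-isomorphism reading of ``inert'' only tacitly, so your version is if anything more careful. One small correction: isomorphisms of grids slide the $0$-coordinates past the nonzero ones while keeping the order of the nonzero ones, as in $\square[0,1]\simeq\square[1,0]$; they do not merely permute the $0$-coordinates among themselves. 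Your closure argument still goes through with this description.
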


    \begin{proof}
        We need to check that the inert and active morphisms form a factorization system.

        Let $\varphi \colon \square[\vec{a}] \to \square[\vec{b}]$ be a morphism in $\grid \subseteq \sCatn$. Let
        \begin{equation*}  
            (m_1, \dots ,m_{n}) \;\coloneqq\; \varphi(\vec{0}), \qquad (M_1, \dots, M_{n}) \;\coloneqq\; \varphi(\vec{a}),
        \end{equation*}
        Note that $m_i\le M_i$ (\cref{rem:maps-of-grids-order}), and define $c_i \coloneqq M_i - m_i \ge 0$.
        Define the simplicial maps
        \begin{align*}
            & g \colon [\vec{M}] \too [\vec{c}], && h \colon [\vec{c}] \too [\vec{b}] && \in && \deltaCat^{n}.\\
            & g(\pvec{i}) = \begin{cases}
                \vec{0}, & \vec{m} \not\le \pvec{i} \\
                \pvec{i} - \vec{m}, & \vec{m} \le \pvec{i}
            \end{cases} && h(\pvec{i}) = \vec{m} + \pvec{i}
        \end{align*}
        Noting that $\varphi$ factors through the embedding $\square[\vec{M}] \into \square[\vec{b}]$, we define the functors
        \begin{align*}
            & \psi \coloneqq \square[g] \circ \varphi \colon \square[\vec{a}] \too \square[\vec{c}], && \xi \coloneqq \square[h] \colon \square[\vec{c}] \too \square[\vec{b}].
        \end{align*}
        Then $\psi$ is active and $\xi$ is inert. To prove that $\varphi = \xi \circ \psi$, it is enough to notice that $\varphi(\pvec{i}) \ge \vec{m}$ for all $\pvec{i} \in \square[\vec{a}]$, so that on its image, $h$ is left inverse to $g$.

        Assume we have  another such decomposition into active and inert morphisms
        \begin{equation*}
            \square[\vec{a}] \;\xtoo{\psi'}\; \square[\pvec{c}'] \;\xtoo{\xi'}\; \square[\vec{b}].
        \end{equation*}
        As $\xi'$ is inert, it is of the form $\xi' = \square[h']$ and $h'(\vec{i}) = h'(\vec{0}) + \vec{i}$.
        Since $\psi'$ is active, $\psi'(\vec{0}) = \vec{0}$, so 
        \begin{equation*}
            \vec{m} \;=\; \varphi(\vec{0}) \;=\;\xi'(\psi'(\vec{0})) \;=\;\xi'(\vec{0}) \;=\; h'(\vec{0}).
        \end{equation*}
        Again using that $\psi'$ is active, $\psi'(\vec{a}) = \pvec{c}'$, so we get
        \begin{equation*}
            \vec{M} \;=\; \varphi(\vec{a}) \;=\;\xi'(\psi'(\vec{a})) \;=\;\xi'(\pvec{c}') \;=\; h'(\pvec{c}') \;=\; \vec{m} + \pvec{c}'.
        \end{equation*}
        So $\pvec{c}' = \vec{c}$ and $\xi' = \xi$.
        The result now follows, as $\xi$ is a monomorphism and
        \begin{equation*}
            \xi \circ \psi \;=\; \varphi \;=\;\xi \circ \psi'.
        \end{equation*}
    \end{proof}

    The functor $\square\colon\deltaCat^n\to \grid$ is a morphism of geometric patterns: inert and active maps are preserved by definition, and the elementary object $[1]^I\in \deltaCat^n$ is mapped to the elementary object $\laxcube{|I|}\in \grid$.

    \begin{lemma}
    \label{lem:elementary-in-grid-and-delta}
        For every $[\vec{a}]\in \deltaCat^n$, the functor 
        \begin{equation*}
            (\deltaCat^n)^\el_{/[\vec{a}]} \;\too\; (\grid)^\el_{/\square[\vec{a}]},
        \end{equation*}
        induced from $\square\colon \deltaCat^n\to \grid$, is an equivalence of categories.
    \end{lemma}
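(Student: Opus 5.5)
The plan is to compare the two slice categories directly. By definition, the inert maps in $\grid$ are exactly the maps of the form $\square(f)$ with $f$ inert in $\deltaCat^n$. The elementary objects on both sides match: $[1]^I\mapsto\laxcube{|I|}$. So the issue is to show that $\square$ induces a bijection on objects up to isomorphism and an equivalence on mapping spaces of the two slice categories. Both slices are $(1,1)$-categories, since $\grid\subseteq\sCatn$ is a $(1,1)$-category. Hence it suffices to show that the functor is essentially surjective and fully faithful as an ordinary functor.

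\textbf{Faithfulness.} First I show that $\square$ is faithful on inert maps. An inert map $f\colon[\vec{c}]\to[\vec{a}]$ is the grid inclusion $\vec{i}\mapsto f(\vec{0})+\vec{i}$. It is therefore determined by the offset $f(\vec{0})$, which can be read off from $\square(f)$ on objects. Since an inert map is a monomorphism, any morphism in a slice category is unique when it exists. Both slices are thus posets, or at least equivalent to them.

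\textbf{Essential surjectivity.} An object of $(\grid)^\el_{/\square[\vec{a}]}$ is an inert map $\laxcube{k}\to\square[\vec{a}]$. By definition it is $\square(f)$ for some inert $f\colon[\vec{c}]\to[\vec{a}]$ with $\square[\vec{c}]\simeq\laxcube{k}$. Here is the one subtlety: $\square[\vec{c}]\simeq\laxcube{k}$ does not by itself force $[\vec{c}]=[1]^I$ in $\deltaCat^n$. I would argue as follows. The objects of $\square[\vec{c}]$ are the tuples $0\le\vec{b}\le\vec{c}$, so there are $\prod(c_i+1)$ of them. The lax $k$-cube has $2^k$ objects and dimension $k$. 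Counting objects and top dimension, that is, the number of $i$ with $c_i\ge1$, forces every $c_i\in\{0,1\}$. Hence $[\vec{c}]=[1]^I$ with $|I|=k$. The object of the slice is then the image of $f$, up to precomposition with the isomorphism $\square[1]^I\simeq\laxcube{k}$, which is absorbed since slice objects are taken up to isomorphism over $\square[\vec{a}]$.

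\textbf{Fullness.} Take a morphism in the $\grid$-slice between $\square(f)$ and $\square(f')$, with $f\colon[1]^I\to[\vec{a}]$ and $f'\colon[1]^{I'}\to[\vec{a}]$. It is an inert map $\varphi\colon\square[1]^I\to\square[1]^{I'}$ with $\square(f')\varphi=\square(f)$. Being inert, $\varphi=\square(g)$ for some inert $g$ of $\deltaCat^n$, and this $g$ is unique by faithfulness. Then $\square(f'g)=\square(f)$, and faithfulness gives $f'g=f$. So $g$ is a morphism of the $\deltaCat^n$-slice.

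\textbf{Main obstacle.} I expect the main obstacle to be bookkeeping around the non-canonical isomorphisms in $\grid$ noted in \cref{rem:maps-of-grids-order}, such as $\square[0,1]\simeq\square[1,0]$. An inert map is only required to equal $\square(f)$ for some $f$ between specified representatives. I would handle this by fixing, for each object of $\grid$, the representatives $\square[\vec{c}]$, so that inertness is a property of maps between these representatives. The counting argument then shows that only the representatives $[1]^I$ occur over $\square[\vec{a}]$.
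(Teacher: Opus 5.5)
Your proposal is correct and is essentially the paper's argument. The paper proves the slightly stronger statement that inert maps into $\square[\vec{a}]$ lift uniquely along $\square$: existence comes from the definition of inert maps, and uniqueness comes from reading off the offset and size from the images of the minimal and maximal corners. Your faithfulness and fullness steps use exactly these ingredients, and your counting argument (that $\square[\vec{c}]\simeq\laxcube{k}$ forces $[\vec{c}]=[1]^I$) makes explicit a point the paper leaves implicit when passing from inert slices to elementary slices.
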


    \begin{proof}
        This will follow from the stronger claim that $\square\colon \deltaCat^n\to \grid$ admits unique liftings of inert maps.
        Namely, for every $Q\in\grid$ and an inert $\varphi\colon Q\to \square[\vec{a}]$, there exists a unique inert $f\colon [\vec{b}]\to [\vec{a}]$ such that $\square(f)=\varphi$.

        Existence follows from the definition of inert maps in $\grid$.
        For uniqueness, suppose that $f\colon [\vec{b}]\to[\vec{a}]$ lifts $\varphi$.
        Denote by
        $0\le \vec{m}\le \vec{M}\le \vec{a}$ the images under $\varphi$ of the minimal and maximal corners of $Q$ respectively.
        In particular, we have
        \begin{equation*}
            f(\vec{0})=\vec{m},\qquad f(\vec{b})=\vec{M}.
        \end{equation*}
        However, since $f$ is inert, we also have $f(\vec{b})=\vec{m}+\vec{b}$, thus $\vec{b}=\vec{M}-\vec{m}$ and $[\vec{b}]$ is unique.
        Moreover, for all $0\le \vec{c}\le \vec{a}$, $f(\vec{c})=\vec{m}+\vec{c}$,
        hence $f$ is unique.
    \end{proof}
    
    \begin{corollary}            
        The functor $\square\colon \deltaCat^{n}\to \grid$ is a strong Segal morphism of geometric patterns.
    \end{corollary}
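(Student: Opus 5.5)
The corollary follows from \cref{lem:elementary-in-grid-and-delta} and the definition of strong Segal, once we know that $\square$ is a morphism of geometric patterns. That fact was noted just before the lemma: inert maps are preserved by definition of the inert maps in $\grid$; active maps in $\deltaCat^n$ preserve the minimal and maximal corners, so their images are active in $\grid$; and the elementary objects $[1]^I$ go to the lax cubes $\laxcube{|I|}$.

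To show that $\square$ is strong Segal, we must check that for every $[\vec{a}]\in\deltaCat^n$ the induced functor
\begin{equation*}
    (\deltaCat^n)^\el_{/[\vec{a}]} \;\too\; (\grid)^\el_{/\square[\vec{a}]}
\end{equation*}
is cofinal. By \cref{lem:elementary-in-grid-and-delta}, this functor is an equivalence of categories. An equivalence is cofinal, since restriction along it preserves and reflects colimits. This gives the claim.

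The only step needing care is the one already handled in the lemma. There, an inert map in $\grid$ out of a lax cube lifts uniquely to an inert map in $\deltaCat^n$, which makes the comparison functor fully faithful as well as essentially surjective. The object $Q$ in the lemma ranges over all lax grids, so it covers the elementary cubes $\laxcube{k}$ in particular. Here we are only assembling the pieces.

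A Segal morphism is in particular needed for $\square$ to be a morphism in $\GPatSeg$, and this also follows at once. Given $F\in\Shv(\grid)$ and $[\vec{a}]\in\deltaCat^n$, the cofinal (indeed equivalent) comparison of elementary slices identifies
\begin{equation*}
    \lim_{(\grid^\el_{/\square[\vec{a}]})\op} F \;\simeq\; \lim_{((\deltaCat^n)^\el_{/[\vec{a}]})\op} F\circ\square .
\end{equation*}
Composing with the Segal condition $F(\square[\vec{a}])\simeq \lim_{(\grid^\el_{/\square[\vec{a}]})\op} F$ gives the isomorphism required in the definition of a Segal morphism. So $\square$ is strong Segal, and hence Segal.
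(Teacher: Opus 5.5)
Your proof is correct and matches the paper's: the corollary follows immediately from \cref{lem:elementary-in-grid-and-delta}, since an equivalence of elementary slice categories is in particular cofinal. Your extra paragraph showing that strong Segal implies Segal is also correct, but it is not needed for the statement.
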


    \begin{lemma}            
        The geometric pattern $\grid$ is saturated.
    \end{lemma}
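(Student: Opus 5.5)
To show that $\grid$ is saturated, I will show that for every $\square[\vec{a}]\in\grid$ the canonical map
\begin{equation*}
    \colim_{Q\in (\grid)^\el_{/\square[\vec{a}]}} \Map_{\grid}(-,Q) \;\too\; \Map_{\grid}(-,\square[\vec{a}])
\end{equation*}
is an isomorphism of presheaves on $\grid$. Equivalently, every representable presheaf on $\grid$ satisfies the Segal condition. The first step is to reduce the indexing category to the simplicial one. By \cref{lem:elementary-in-grid-and-delta}, $(\grid)^\el_{/\square[\vec{a}]}\simeq(\deltaCat^n)^\el_{/[\vec{a}]}$. So the claim becomes that $\square[\vec{a}]$ is the colimit, as seen by maps out of lax grids, of the lax cubes $\square[1]^I$ sitting in $\square[\vec{a}]$ as subgrids $\square(f)$ for inert $f\colon[1]^I\to[\vec{a}]$.

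The second step is to move the question into strict $n$-categories. Since $\grid\subseteq\sCatn$ is full, it suffices to show that for each lax grid $Q$ the space, in fact the set, $\Map_{\sCatn}(Q,\square[\vec{a}])$ is the limit of $\Map_{\sCatn}(Q,\square[1]^I)$. Because $\sCatn$ is a $1$-category, these are sets, so the limit over the elementary slice is an ordinary limit of sets. The homotopy colimit of the representables agrees with the strict one once we know the diagram is a colimit in $\sCatn$ that is detected by mapping in from $Q$.

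This is the main step, and I expect it to be the main obstacle. One might hope to show directly that $\square[\vec{a}]\simeq\colim_{(\deltaCat^n)^\el_{/[\vec{a}]}}\square[1]^I$ in $\sCatn$, which is the statement that lax grids are pasting diagrams of lax cubes. The difficulty is that mere colimit-ness in $\sCatn$ is not enough: the Segal condition asks that $\Map(Q,-)$ send this to a limit, and representable functors on $\sCatn$ turn colimits into limits only contravariantly. Here we need the covariant version, namely that a functor from $Q$ into $\square[\vec{a}]$ is uniquely determined by compatible functors into the cubes. This is false for general $Q$, since a cell of $Q$ may land on a composite spanning several cubes. So the correct reading of saturation is the statement for the representable $\Map(-,\square[\vec{a}])$: $\square[\vec{a}]$ should be the colimit cocone of its elementaries in $\grid$, and by Yoneda this is the claim that $\Map(\square[\vec{a}],Z)\simeq\lim\Map(\square[1]^I,Z)$ for $Z\in\grid$. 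It suffices to prove this for all strict $Z$. This is exactly the pasting colimit in $\sCatn$, which follows from the Al-Agl--Brown--Steiner description of $\square[\vec{a}]$ as freely generated by the cells $\vec{b}\to\vec{b}+e_i$, the lax squares, and so on. Concretely, $\square[\vec{a}]$ is freely generated by the elementary cells of its unit subcubes, and the boundary relations among these cells are exactly those imposed by the overlaps of the subcubes.

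To justify this pasting colimit I would induct on $\vec{a}$. Writing $[a_1]=[a_1-1]\sqcup_{[0]}[1]$, I would use that $\xlaxs$ is closed monoidal on $\sCatoo$ and therefore preserves colimits in each variable, together with $\square[\vec{a}]\simeq[a_1]\xlaxs\square[a_2,\dots,a_n]$. This gives pushouts
\begin{equation*}
    \square[a_1,\dots]\simeq\square[a_1-1,\dots]\sqcup_{\square[0,\dots]}\square[1,\dots].
\end{equation*}
Iterating these pushouts expresses $\square[\vec{a}]$ as the iterated pushout of unit cubes, which in turn is the colimit over $(\deltaCat^n)^\el_{/[\vec{a}]}$ by the usual cofinality argument, as for the saturation of $\deltaCat^n$. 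The remaining subtlety is that $(\deltaCat^n)^\el_{/[\vec{a}]}$ also contains lower-dimensional faces $[1]^I$ with $I\neq\{1,\dots,n\}$. These are handled by observing that the colimit over the full elementary slice agrees with the iterated pushout, exactly as in the proof for $\deltaCat^n$ in \cite[Example~14.21]{Chu-Haugseng-2021-algebraic-patterns}.
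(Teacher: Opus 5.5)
Your final argument is the paper's proof. You reduce to the elementary slice of $\deltaCat^n$ via \cref{lem:elementary-in-grid-and-delta}. You use fullness of $\grid\subseteq\sCatn$ to reduce to $\square[\vec a]$ being the colimit of the cubes $\laxcube{|I|}$ in $\sCatn$. You then get that colimit from the decomposition of $[a]$ in $\sCat$ together with cocontinuity of $\xlaxs$ in each variable.

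Your first two paragraphs, however, should be deleted rather than kept as an equivalent reformulation.
\begin{itemize}
\item The claim that $\colim_{E}\Map_{\grid}(-,E)\to\Map_{\grid}(-,\square[\vec a])$ is an isomorphism of presheaves is not equivalent to saturation. It is strictly stronger, and false. Already for $n=1$, where $\grid[1]=\deltaCat$, the identity of $[2]$ does not factor through any inert map out of $[0]$ or $[1]$, so the map is not surjective at $[2]$.
\item The ``limit of $\Map(Q,\laxcube{|I|})$'' in your second paragraph has the wrong variance, as you later notice yourself.
\end{itemize}
The correct condition is the one you eventually write down: $\Map(\square[\vec a],Z)\simeq\lim_I\Map(\laxcube{|I|},Z)$ for every $Z$. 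That is, $\Map(-,Z)$ is Segal for all $Z$; it is not a condition on the single representable $\Map(-,\square[\vec a])$.

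Finally, your ``remaining subtlety'' about lower-dimensional faces disappears once you note that $(\deltaCat^n)^\el_{/[\vec a]}\simeq\prod_i\deltaCat^\el_{/[a_i]}$. By Fubini and separate cocontinuity, the colimit over this product of $[e_1]\xlaxs\cdots\xlaxs[e_n]$ is directly $[a_1]\xlaxs\cdots\xlaxs[a_n]=\square[\vec a]$. No iterated pushouts or cofinality argument are needed, and this is how the paper phrases it.
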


    \begin{proof}
        Note that, for every $[a]\in \deltaCat$, we have
        \begin{equation*}
            \colim_{[i]\in \deltaCat^\el_{/[a]}} [i] \;\isotoo\; [a] \qin \sCat.
        \end{equation*}
        As the strict Gray product commutes with colimits in each variable, we get that for every $[\vec{a}] \in \deltaCat^n$
        \begin{equation*}
            \colim_{[1]^I \in (\deltaCat^n)^\el_{/[\vec{a}]}} \laxcube{|I|} \;\isotoo\; \square[\vec{a}] \qin \sCatn.
        \end{equation*}
        The result then follows from~\cref{lem:elementary-in-grid-and-delta}.
    \end{proof}

    \begin{lemma}
        The lax $n$-cube $\laxcube{n}$ is a weak generator of $\Shv(\grid)$.
    \end{lemma}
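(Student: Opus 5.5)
We follow the pattern of \cref{lem:globe-is-a-weak-generator}. Since $\grid$ is saturated, every representable $\square[\vec{a}]\in\Shv(\grid)$ is the colimit of the elementary objects over it, by the Segal condition. Every object of $\Shv(\grid)$ is a colimit of representables, since $\Shv(\grid)$ is a localization of $\PSh(\grid)$ and the localization preserves colimits. Hence $\Shv(\grid)$ is generated under colimits by the elementary objects $\laxcube{0},\dots,\laxcube{n}$.

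It then remains to show that each $\laxcube{k}$ with $k\le n$ is a retract of $\laxcube{n}$ inside $\grid$, and hence inside $\Shv(\grid)$ via the Yoneda embedding. Any retract of an object lies in the closure of that object under colimits, since it is the colimit of the idempotent. We use the face $\delta_I^s\colon\laxcube{k}\to\laxcube{n}$ with $I=\{k+1,\dots,n\}$ and any sign $s$, together with the degeneracy $\sigma_I\colon\laxcube{n}\to\laxcube{k}$. Both are Gray products of maps in $\deltaCat$. Coordinatewise we have $\sigma\circ\delta^{\pm}=\id_{\laxcube{0}}$, and otherwise the maps are identities, so by functoriality of $\xlaxs$ we get $\sigma_I\circ\delta_I^s=\id_{\laxcube{k}}$. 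Equivalently, these maps are the images under $\square$ of the evident section–retraction pair $[1]^k\to[1]^n\to[1]^k$ in $\deltaCat^n$.

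There is no real obstacle here. The only point that needs care is that the Segal condition expresses each sheaf as a colimit of elementaries. This is exactly saturation applied to representables, combined with colimit-generation by representables after sheafification.
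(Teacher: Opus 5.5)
Your proposal is correct and follows the paper's proof: the Segal condition shows that $\Shv(\grid)$ is generated under colimits by the elementary cubes $\laxcube{0},\dots,\laxcube{n}$, and each $\laxcube{k}$ with $k\le n$ is a retract of $\laxcube{n}$. You additionally make the retraction explicit as $\sigma_I\circ\delta_I^s=\id$ with $I=\{k+1,\dots,n\}$, and you justify why retracts lie in the colimit closure; the paper leaves both points implicit.
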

    \begin{proof}
        By the Segal condition, $\Shv(\grid)$ is generated under colimits by the elementary objects $\laxcube{0}, \dots, \laxcube{n}$. Since every $\laxcube{k}$ is a retract of $\laxcube{n}$ for $k \le n$, the single object $\laxcube{n}$ suffices to generate the category.
    \end{proof}

    \subsubsection*{Boundary} 

    The geometric pattern $\grid$ has a maximum elementary $\laxcube{n}$, and the boundary inclusion
    $(\grid)_\partial \into \grid$ agrees with the inclusion $\grid[n-1]\into \grid$.
    In particular, this inclusion is a strong Segal morphism by~\cref{lem:boundary-inclusion-is-strong-segal}.
    Consider the adjunction as in~\cref{def:boundary-and-inclusion}
    \begin{equation*}
        \iota \;\colon\; \Shv(\grid[n-1];\cC) \;\longadj\; \Shv(\grid;\cC) \;\cocolon\; \partial.
    \end{equation*}

    \begin{remark}
    \label{rem:inclusion-of-grids}
        The inclusion $\grid[n-1]\into\grid$ does not satisfy the assumptions of~\cref{prop:core-is-localization}.
        For example, there are active maps $\laxcube{1}\to \laxcube{n}$ given by spine compositions.
        Nevertheless, we will eventually learn that  $\iota\colon \Shv(\grid[n-1])\to\Shv(\grid)$ is fully faithful, as by~\cref{thm:equivalence-n} it is equivalent to $\iota\colon \Shv(\globe[n-1])\into \Shv(\globe)$.
    \end{remark}
    
    Consider the following commuting square of geometric patterns, and the corresponding commuting square of sheaves with the $(-)^*$-functoriality:
    \begin{equation*}
        \begin{tikzcd}
            {\deltaCat^n_\partial} & {\grid[n-1]} \\
            {\deltaCat^n} & \grid
            \arrow["\square", from=1-1, to=1-2]
            \arrow[hook, from=1-1, to=2-1]
            \arrow[hook, from=1-2, to=2-2]
            \arrow["\square", from=2-1, to=2-2]
        \end{tikzcd}
        \quad\mapsto\quad 
        \begin{tikzcd}
            {\Shv(\deltaCat^n_\partial)} & {\Shv(\grid[n-1]).} \\
            {\Shv(\deltaCat^n)} & {\Shv(\grid).}
            \arrow["{{{{{\square^*}}}}}"', from=1-2, to=1-1]
            \arrow["\partial", from=2-1, to=1-1]
            \arrow["\partial"', from=2-2, to=1-2]
            \arrow["{{{{{\square^*}}}}}"', from=2-2, to=2-1]
        \end{tikzcd}
    \end{equation*}
    
    The latter square satisfies the horizontal Beck--Chevalley condition on representables.
    \begin{lemma}
    \label{lem:square-bc-on-representables}
        Let $[\vec{a}]\in \deltaCat^n$, then the Beck--Chevalley comparison map
        \begin{equation*}
            \square_! \partial [\vec{a}] \;\too\; \partial\square_![\vec{a}] \qin \Shv(\grid[n-1])
        \end{equation*}
        is an isomorphism.
    \end{lemma}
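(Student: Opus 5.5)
Both sides of the comparison map are Segal sheaves on $\grid[n-1]$, and the strategy is to identify each with the same colimit of representables. Since $\square[\vec{a}]$ is representable and $\grid$ is saturated, $\square_![\vec{a}]\simeq\square[\vec{a}]$, so the right-hand side is $\partial\square[\vec{a}]$, i.e.\ the restriction of the representable $\Map_{\grid}(-,\square[\vec{a}])$ to $\grid[n-1]$. For the left-hand side, I would first compute $\partial[\vec{a}]\in\Shv(\deltaCat^n_\partial)$. Since $\deltaCat^n_\partial\into\deltaCat^n$ is strong Segal (\cref{lem:boundary-inclusion-is-strong-segal}), the Segal condition for $[\vec{a}]$ gives
\begin{equation*}
    \iota\partial[\vec{a}] \;\simeq\; \colim_{[1]^I\in(\deltaCat^n)^{\el}_{/[\vec{a}]},\, I\neq\{1,\dots,n\}} [1]^I,
\end{equation*}
and on the level of $\deltaCat^n_\partial$ itself $\partial[\vec{a}]$ is the Segalification of the corresponding presheaf colimit over non-maximal elementary cells. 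Applying the left adjoint $\square_!$, which commutes with colimits and sends $[1]^I$ to $\laxcube{|I|}$, yields
\begin{equation*}
    \square_!\partial[\vec{a}] \;\simeq\; \colim_{E\in(\deltaCat^n_\partial)^{\el}_{/[\vec{a}]}} \square E \qin \Shv(\grid[n-1]),
\end{equation*}
where this colimit is taken over the elementary inerts into $[\vec{a}]$ that are not the top cube.

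The main step is then to show that $\partial\square[\vec{a}]$ is given by the same colimit. Evaluating both sides at the weak generator $\laxcube{n-1}$ of $\Shv(\grid[n-1])$ would reduce this to a statement about the space $\Map_{\sCatn}(\laxcube{n-1},\square[\vec{a}])$. However, maps out of $\laxcube{n-1}$ need not land inside a single cube of the grid, because of active spine-type maps (\cref{rem:inclusion-of-grids}), so the evaluation does not decompose naively. I would therefore argue through active--inert factorization. Any map $Q\to\square[\vec{a}]$ with $Q\in\grid[n-1]$ factors uniquely as an active map followed by an inert map $\square[\vec{c}]\to\square[\vec{a}]$. By \cref{lem:elementary-in-grid-and-delta}, the inerts into $\square[\vec{a}]$ correspond to the inerts into $[\vec{a}]$, so this factorization writes the representable restricted to $\grid[n-1]$ as a colimit indexed by sub-grids $[\vec{c}]\subseteq[\vec{a}]$. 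Using saturation of each $\square[\vec{c}]$ together with the Segal condition on $\grid[n-1]$, I expect this to collapse to the colimit over non-top elementary sub-cubes. The crucial input is that a map from an object of dimension $\le n-1$ cannot hit the top cells of any $n$-cube except through degeneracies and connections, and these factor through the boundary $\iota\partial\laxcube{n}$.

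A cleaner route is to avoid the evaluation and instead show that the square
\begin{equation*}
    \iota\partial\laxcube{n}\to\laxcube{n},\qquad \iota\partial[1]^n\to[1]^n
\end{equation*}
is compatible under $\square_!$, i.e.\ that the comparison map is an isomorphism for $[\vec{a}]=[1]^I$. Since both sides of the comparison map commute with the Segal colimit decomposition of $[\vec{a}]$ into elementary cubes, which $\square$ preserves by \cref{lem:elementary-in-grid-and-delta} and saturation, the general case then follows. For $I\ne\{1,\dots,n\}$ the object is already boundary and both sides equal $\laxcube{|I|}$. For $I=\{1,\dots,n\}$ the claim becomes $\square_!\partial[1]^n\simeq\partial\laxcube{n}$, meaning that the boundary of the lax $n$-cube, as a sheaf on $\grid[n-1]$, is the gluing of its $2n$ faces along their common faces. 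I expect this to be the main obstacle. I would attack it by induction on $n$ with the strict Gray product: write $\laxcube{n}=[1]\xlaxs\laxcube{n-1}$ and decompose its boundary, in analogy with \cref{cor:boundary-of-external-product}, as the pushout of $\partial[1]\times\laxcube{n-1}$ and $[1]\xlaxs\partial\laxcube{n-1}$. The needed fact about maps from $\laxcube{n-1}$ into $\laxcube{n}$ avoiding the top cell would come from the explicit Al-Agl--Brown--Steiner description of cells of $\laxcube{n}$.
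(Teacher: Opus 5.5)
\paragraph{Comparison with the paper.} Your outline matches the paper's proof. Both express $\partial[\vec{a}]$ and $\partial\square[\vec{a}]$ as colimits over the non-top elementary cubes, push the first through the colimit-preserving $\square_!$, and match the indexing categories by \cref{lem:elementary-in-grid-and-delta}. The paper states both colimit formulas in one line, ``by the Segal condition''.

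\paragraph{The gap.} You correctly sense that the formula for $\partial\square[\vec{a}]$ is not formal. But your proposal never proves it, and it is the step that carries the content: both of your routes end in ``I expect''.

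For $\partial[\vec{a}]$ the formula holds because active maps out of boundary objects of $\deltaCat^n$ stay in the boundary. Hence every map from a boundary object into $[\vec{a}]$ factors through a boundary sub-grid. This, rather than the strong Segal property you cite, is the real input, and it is also what makes $\partial$ a left adjoint in \cref{prop:core-is-localization}.

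For grids this property fails (\cref{rem:inclusion-of-grids}). A map out of some $Q\in\grid[n-1]$ can be active onto $\laxcube{n}$. Examples are the spine $\laxcube{1}\to\laxcube{n}$, or $\laxcube{n-1}\to\laxdisk{n-1}\to\laxcube{n}$ picking out the source of the top cell; such maps factor through no proper face. This has three consequences for your proposal:
\begin{itemize}
\item $\partial\colon\Shv(\grid)\to\Shv(\grid[n-1])$ is not known to preserve colimits. Your reduction from $\square[\vec{a}]$ to the cubes $[1]^I$ assumes exactly that $\partial$ commutes with the Segal decomposition of $\square[\vec{a}]$, so it is unjustified.
\item The Gray-product decomposition of $\iota\partial\laxcube{n}$ that you propose to induct with is derived in the paper from this very lemma (\cref{lem:pushout-fold-map}).
\item The Al-Agl--Brown--Steiner description of cells only lists the strict maps $Q\to\laxcube{n}$. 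What you need is that the colimit of the faces in $\Shv(\grid[n-1])$ takes the value $\Map_{\sCatn}(Q,\square[\vec{a}])$ at each $Q$, and that is a pasting theorem.
\end{itemize}

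\paragraph{How to close it.} The missing input is available if you prove the lemma by induction on $n$ together with \cref{thm:equivalence-n}. This is harmless, since everything that uses the lemma at stage $n$ is itself a stage-$n$ statement. At stage $n$, the level-$(n-1)$ cubical nerve $\cubicalNerve\colon\Shv(\globe[n-1])\to\Shv(\grid[n-1])$ is then an equivalence that fixes lax grids (\cref{rem:yoneda-and-inclusion-of-grids}), and
\begin{equation*}
    \partial\square[\vec{a}]
    \;\simeq\; \partial\cubicalNerve\square[\vec{a}]
    \;\simeq\; \cubicalNerve\partial\square[\vec{a}]
    \;\simeq\; \cubicalNerve\bigl(\colim_{\laxcube{k}}\laxcube{k}\bigr)
    \;\simeq\; \colim_{\laxcube{k}}\laxcube{k}
    \qin \Shv(\grid[n-1]).
\end{equation*}
Here both colimits run over $\laxcube{k}\in(\grid)^{\el}_{/\square[\vec{a}]}$ with $k<n$, and the first $\cubicalNerve$ is the level-$n$ nerve. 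The four steps are justified as follows:
\begin{enumerate}
\item the first is \cref{rem:yoneda-and-inclusion-of-grids};
\item the second is the remark following \cref{cor:cubical-nerve-is-segal};
\item the third is \cref{lem:boundary-of-grid-in-fold} with $r=n-1$, i.e.\ Campion's pasting theorem;
\item the last holds because the level-$(n-1)$ nerve is an equivalence that fixes the $\laxcube{k}$.
\end{enumerate}
This treats all $[\vec{a}]$ at once, so no reduction to cubes is needed. Your matching of indexing categories then finishes the proof.
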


    \begin{proof}
        By the Segal condition, 
        \begin{equation*}
                \partial[\vec{a}] \;\simeq\; \colim_{[1]^I \in (\deltaCat^n)^\el_{/[\vec{a}]},\ |I|<n} [1]^I,\qquad
                \partial \square[\vec{a}] \;\simeq\; \colim_{\laxcube{k} \in (\grid)^\el_{/\square[\vec{a}]},\ k<n} \laxcube{k},
            \end{equation*}
            and $\square_!$ commutes with colimits and sends $[1]^I\mapsto \laxcube{|I|}$. 
            It follows that $ \square_!\partial [\vec{a}]\to\partial\square_![\vec{a}]$ is an isomorphism by~\cref{lem:elementary-in-grid-and-delta}.
    \end{proof}
    Note that the inclusions
    \begin{equation*}
        \emptyset = \grid[-1] \;\intoo\; \grid[0] \;\intoo\; \grid[1] \;\intoo\; \grid[2] \;\intoo\; \cdots
    \end{equation*}
    are all strong Segal morphisms, so by~\cref{cor:gpat-filtered-colimit-strong-segal} they have a colimit in $\GPatSeg$.
    \begin{definition}
        Define the geometric pattern
        \begin{equation*}
            \grid[] \;=\; \grid[\infty] \;\coloneqq\; \colim_{n} \grid[n] \qin \GPatSeg.
        \end{equation*}
    \end{definition}
    Alternatively, $\grid[]$ is the full subcategory of $\sCatoo$ on all lax grids.
   \Cref{cor:gpat-filtered-colimit-strong-segal} also implies that
    \begin{equation*}
        \colim_{n} \Shv(\grid) \;\isotoo\; \Shv(\grid[]) \qin \PrL,
    \end{equation*}
    or equivalently, by passing to right adjoints, 
    \begin{equation*}
        \Shv(\grid[];\cC) \;\isotoo\; \lim_n \Shv(\grid;\cC) \qin \CAT.
    \end{equation*}
    
    \begin{remark}
    \label{rem:grid-sets-are-cubical-categories-with-connections}
        In~\cite{Al-Agl-Brown-Steiner-2002-strict-globular-cubical}, a model for strict $\infty$-categories was given in terms of \emph{strict cubical $\infty$-categories with connections}.
        A sheaf of sets $X\in\Shv(\grid[];\Set)$ has a canonical structure of this kind:
        faces, degeneracies, and connections are induced from the corresponding maps of cubes, and composition is determined by the Segal condition.
    \end{remark}

    \subsubsection*{Univalence}

    \begin{definition}
        Let $X \in \Shv(\grid)$. We say that $X$ is \emph{univalent} if $\square^* X\in \Shv(\deltaCat^n)$ is a univalent $n$-uple Segal space.
        We denote the full subcategory of univalent sheaves by $\Shv(\grid)\univ$.
    \end{definition}

    The boundary sends univalent sheaves to univalent sheaves and therefore gives rise to a functor
    \begin{equation*}
        \partial \;\colon\; \Shv(\grid)\univ \;\too\; \Shv(\grid[n-1])\univ.
    \end{equation*}
    Taking the limit, we define
    \begin{equation*}
        \Shv(\grid[])\univ \;\coloneq\; \lim_n \Shv(\grid)\univ.
    \end{equation*}

    \subsection{The Gray product} 

    The subcategory $\grid[]\subseteq\sCatoo$ is closed under the strict Gray product, so it inherits a monoidal structure.
    The following lemma implies that this monoidal structure lifts to a monoid structure on $\grid[]$ in $\GPatSeg$.

    \begin{lemma}
        For every $0\le n,k\le \infty$,
        the strict Gray product 
        \begin{equation*}
            \xlaxs \;\colon\; \grid[n]\times\grid[k] \;\too\; \grid[n+k]
        \end{equation*}
        is a Segal morphism of geometric patterns.
    \end{lemma}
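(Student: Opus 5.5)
We must show two things. First, that $\xlaxs\colon \grid[n]\times\grid[k]\to\grid[n+k]$ is a morphism of geometric patterns, i.e.\ it preserves inert maps, active maps and elementary objects. Second, that it is Segal.

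\textbf{Morphism of patterns.} We use the defining identity $\square[\vec a]\xlaxs\square[\vec b]\simeq\square[\vec a,\vec b]$, which is natural in maps coming from $\deltaCat^n\times\deltaCat^k$.

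Elementary objects are preserved since $\laxcube{i}\xlaxs\laxcube{j}\simeq\laxcube{i+j}$.

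For inert maps, write $\varphi=\square(f)$ and $\varphi'=\square(f')$ with $f,f'$ inert in $\deltaCat^n$ and $\deltaCat^k$. Then $\varphi\xlaxs\varphi'=\square(f\times f')$, and $f\times f'$ is inert in $\deltaCat^{n+k}$ because inertness is coordinatewise.

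For active maps, we need that $\varphi\xlaxs\varphi'$ sends the minimal and maximal corners $(\vec 0,\vec 0)$ and $(\vec a,\vec a')$ to the corresponding corners. On objects, $\square[\vec a]\xlaxs\square[\vec a']$ has object set the product of the object sets, since the Gray product agrees with the cartesian product on $0$-categories. The map $\varphi\xlaxs\varphi'$ acts on objects as $\varphi\times\varphi'$ by functoriality of $\xlaxs$ and its compatibility with $\times$ on objects. So corners go to corners.

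The infinite case follows by passing to the colimit, since $\grid[]$ is the union of the $\grid[n]$ and every object lies in some finite stage.

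\textbf{Segal.} Here we use strongness rather than checking sheaves directly. We show that for every $(\square[\vec a],\square[\vec a'])$ the functor
\begin{equation*}
    (\grid[n])^\el_{/\square[\vec a]}\times(\grid[k])^\el_{/\square[\vec a']}\;\too\;(\grid[n+k])^\el_{/\square[\vec a,\vec a']}
\end{equation*}
is an equivalence, which is more than cofinal. By \cref{lem:elementary-in-grid-and-delta}, both sides are identified with the corresponding categories for $\deltaCat$. The left side becomes $(\deltaCat^n)^\el_{/[\vec a]}\times(\deltaCat^k)^\el_{/[\vec a']}$ and the right side becomes $(\deltaCat^{n+k})^\el_{/[\vec a,\vec a']}$. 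These agree because the pattern on $\deltaCat^{n+k}=\deltaCat^n\times\deltaCat^k$ is the product pattern (\cref{cor:product-of-geometric-patterns}), so elementary objects and inert maps are computed coordinatewise.

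The step needing care is that these identifications are compatible with $\xlaxs$. This amounts to the commutativity of
\begin{equation*}
    \square\circ(\text{product}) \;\simeq\; \xlaxs\circ(\square\times\square)\colon \deltaCat^n\times\deltaCat^k\to\grid[n+k],
\end{equation*}
which is the naturality of the defining isomorphism. Given cofinality, a Segal sheaf $F$ on $\grid[n+k]$ satisfies
\begin{equation*}
    F(\square[\vec a]\xlaxs\square[\vec a'])\simeq\lim F
\end{equation*}
over the product of the elementary slices, so $F\circ\xlaxs$ is Segal. Hence $\xlaxs$ is Segal, and in fact strong Segal.

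\textbf{Main obstacle.} The subtle point is the active-map argument. Maps in $\grid$ need not come from $\deltaCat$, for instance connections, and one must justify that $\varphi\xlaxs\varphi'$ acts on objects as $\varphi\times\varphi'$. I would deduce this from functoriality of $\xlaxs$ applied to the inclusions of objects $\mathrm{Ob}\to\square[\vec a]$, using $S\xlaxs T\simeq S\times T$ for sets $S$ and $T$.
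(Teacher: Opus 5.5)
Your proof is correct and matches the paper's: the paper also proves the Segal property by identifying both elementary slices with $(\deltaCat^n)^\el_{/[\vec a]}\times(\deltaCat^k)^\el_{/[\vec a']}\simeq(\deltaCat^{n+k})^\el_{/[\vec a,\vec a']}$ via \cref{lem:elementary-in-grid-and-delta}, observes that this composite is the functor induced by $\xlaxs$, and handles the infinite case by passing to finite stages. The paper leaves implicit your check that $\xlaxs$ preserves elementary objects, inert maps and active maps (the last using that it acts on objects as $\varphi\times\varphi'$), so that part of your write-up is a welcome addition rather than a deviation.
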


    \begin{proof}
        It is enough to prove for $n,k<\infty$, the infinite case follows by taking colimits.
        Let $\square[\vec{a}]\in \grid[n]$ and $\square[\vec{b}]\in \grid[k]$.
        By~\cref{lem:elementary-in-grid-and-delta}, we have equivalences
        \begin{equation*}
            (\grid[n]\times\grid[k])^\el_{/(\square[\vec{a}],\square[\vec{b}])} 
            \;\simeq\; (\grid[n])^\el_{/\square[\vec{a}]}\times (\grid[k])^\el_{/\square[\vec{b}]} 
            \;\simeq\;  (\deltaCat^n)^\el_{/[\vec{a}]}\times (\deltaCat^k)^\el_{/[\vec{b}]} 
            \;\simeq\; (\deltaCat^{n+k})^\el_{/[\vec{a},\vec{b}]}
            \;\simeq\; (\grid[n+k])^\el_{/\square[\vec{a},\vec{b}]}.
        \end{equation*}
        The above composition agrees with the map induced from the strict Gray product. 
    \end{proof}

    \begin{corollary}
    \label{cor:gray-product}
        The category $\Shv(\grid[])$ admits a presentably monoidal structure given by Day convolution followed by Segalification. 
    \end{corollary}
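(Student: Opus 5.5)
The plan is to obtain the monoidal structure formally from the previous lemma together with the functoriality of $\Shv$ in \cref{lem:sheavs-are-symmetric-monoidal}. By the preceding lemma (case $n=k=\infty$), the strict Gray product $\xlaxs\colon \grid[]\times\grid[]\to\grid[]$ is a Segal morphism of geometric patterns. The unit $\laxcube{0}\colon \pt\to\grid[]$ is also one, since $\pt$ has the trivial pattern and $\laxcube{0}$ is elementary. The associativity and unitality data come from the monoidal structure of $\xlaxs$ on $\sCatoo$, restricted to the full subcategory $\grid[]$. Because the product in $\GPatSeg$ is computed on underlying categories (\cref{cor:product-of-geometric-patterns}), and inert, active and elementary data are coordinatewise, a monoidal structure on the category $\grid[]$ whose tensor and unit functors are Segal morphisms is the same as an $\mathbb{E}_1$-algebra (monoid) structure on $\grid[]$ in $\GPatSeg$. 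Concretely, I would write the monoid as the functor $\deltaCat\op\to\GPatSeg$, $[m]\mapsto\grid[]^{m}$. Its face maps are built from $\xlaxs$ and projections, and these are Segal by the lemma and by \cref{cor:product-of-geometric-patterns}; the Segal (product) condition holds because it already holds in $\Cat$.

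Next I would apply the symmetric monoidal functor $\Shv\colon\GPatSeg\to\PrL$ of \cref{lem:sheavs-are-symmetric-monoidal}. Symmetric monoidal functors send monoids to monoids, so $\Shv(\grid[])$ becomes an algebra in $\PrL$, that is, a presentably monoidal category. Under the identification $\Shv(\grid[])\otimes\Shv(\grid[])\simeq\Shv(\grid[]\times\grid[])$, its tensor product is the composite
\begin{equation*}
\Shv(\grid[])\otimes\Shv(\grid[])\;\xtoo{\boxtimes}\;\Shv(\grid[]\times\grid[])\;\xtoo{(\xlaxs)_!}\;\Shv(\grid[]).
\end{equation*}

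It remains to identify this tensor with Day convolution followed by Segalification. By the description after \cref{cor:product-of-geometric-patterns}, $(\xlaxs)_!$ is left Kan extension followed by Segalification $L$. Also, $X\boxtimes Y$ is the Segalification of the external presheaf product. Since $L$ is a left adjoint and $(\xlaxs)_!$ on presheaves preserves $L$-equivalences, we get
\begin{equation*}
X\otimes Y\simeq L\big(\mathrm{Lan}_{\xlaxs}(X\boxtimes_{\PSh}Y)\big)=L(X\ast_{\mathrm{Day}}Y).
\end{equation*}
The fact that $(\xlaxs)_!$ preserves $L$-equivalences holds because its right adjoint $(\xlaxs)^*$ preserves Segal sheaves, which is exactly the Segal-morphism property. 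Equivalently, Day convolution on $\PSh(\grid[])$ is compatible with the localization, so $L$ is monoidal.

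The main obstacle is the first step: making the monoid structure in $\GPatSeg$ rigorous at the level of $\infty$-categorical coherence, rather than just on objects. To handle it, I would note that $\GPat\to\Cat$ creates finite products. A monoid in $\GPatSeg$ can then be checked to be a monoid in $\Cat$ whose structure maps lie in the non-full subcategory $\GPatSeg$, and membership there is a property of individual morphisms. The preceding lemma verifies this property for every face map, since each is a product of iterated $\xlaxs$'s and projections.
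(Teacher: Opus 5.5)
Your proposal is correct and follows essentially the paper's route: the preceding lemma makes $\grid[]$ a monoid in $\GPatSeg$, and the symmetric monoidal functor $\Shv\colon\GPatSeg\to\PrL$ of \cref{lem:sheavs-are-symmetric-monoidal} turns it into a presentably monoidal category. The paper leaves two points implicit that you spell out, and both are sound: the coherence of the lift, obtained by checking the Segal property on each structure map of the monoid in $\Cat$, and the identification of $(\xlaxs)_!\circ\boxtimes$ with Segalified Day convolution, which holds because $(\xlaxs)^*$ preserves Segal sheaves.
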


    \begin{proof}
        The functor $\Shv \colon \GPatSeg \to \PrL$ is symmetric monoidal by \cref{lem:sheavs-are-symmetric-monoidal}. In particular, it sends the monoid object $\grid[]$ in $\GPatSeg$ to a presentably monoidal category.
    \end{proof}
    
    \begin{definition}
    \label{def:Gray-product}
        The resulting monoidal structure on $\Shv(\grid[])$ is called the \emph{Gray product} and is denoted by $\xlax$.
        Explicitly, the binary operation is given by the composition
        \begin{equation*}
            \xlax \;\colon\; \Shv(\grid[]) \otimes \Shv(\grid[]) \;\simeq\; \Shv(\grid[] \times \grid[]) \;\xtoo{(\xlaxs)_!}\; \Shv(\grid[]).
        \end{equation*}
    \end{definition}
    
    Similarly, we use the same symbol to denote the composition
    \begin{equation*}
        \xlax \;\colon\; \Shv(\grid[n]) \otimes \Shv(\grid[k]) \;\simeq\; \Shv(\grid[n] \times \grid[k]) \;\xtoo{(\xlaxs)_!}\; \Shv(\grid[n+k]).
    \end{equation*}

    \begin{example}
    \label{exm:gray-is-strict-on-rep}
        The Gray product agrees with the strict Gray product on representables,
        \begin{equation*}
            \square[\vec{a}] \xlax \square[\vec{b}] \;\simeq\; \square[\vec{a}] \xlaxs \square[\vec{b}] \;\simeq\; \square[\vec{a},\vec{b}]\;\simeq\; \square([\vec{a}]\boxtimes[\vec{b}]).
        \end{equation*} 
        More generally, the functor $\square_!$ sends external products to Gray products.
        This can be seen by applying $\Shv\colon \GPatSeg\to \PrL$ to the following commuting square:
        \begin{equation*}
            \begin{tikzcd}
                {\deltaCat^n\times \deltaCat^k} & {\deltaCat^{n+k}} \\
                {\grid[n]\times\grid[k]} & {\grid[n+k].}
                \arrow["\sim", from=1-1, to=1-2]
                \arrow["{\square\times \square}"', from=1-1, to=2-1]
                \arrow["\square", from=1-2, to=2-2]
                \arrow["\xlaxs", from=2-1, to=2-2]
            \end{tikzcd}
        \end{equation*}
    \end{example}

    \begin{remark}
        Tensoring with $\Set$, we see that $\Shv(\grid[];\Set)$ has a monoidal structure
        \begin{equation*}
            \pi_0(- \xlax - ) \;\colon\; \Shv(\grid[];\Set) \otimes \Shv(\grid[];\Set) \;\too\; \Shv(\grid[];\Set)
        \end{equation*}
        given by the Gray product followed by $\pi_0$.
        This agrees with the strict Gray product defined in~\cite{Al-Agl-Brown-Steiner-2002-strict-globular-cubical} on strict cubical $\infty$-categories with connections (\cref{rem:grid-sets-are-cubical-categories-with-connections}).
    \end{remark}

\section{Comparing lax grids and globular sums}
    
Consider the following adjunctions:
\begin{equation*}\begin{tikzcd}[column sep=large]
    {\Shv(\globe )} & {\Shv(\deltaCat^{n})} & {\Shv(\grid).}
    \arrow[""{name=0, anchor=center, inner sep=0}, "{{{\G^*}}}", from=1-1, to=1-2]
    \arrow[""{name=1, anchor=center, inner sep=0}, "{{{\G_*}}}", shift left=3, from=1-2, to=1-1]
    \arrow[""{name=2, anchor=center, inner sep=0}, "{{{\square_!}}}", shift left=3, from=1-2, to=1-3]
    \arrow[""{name=3, anchor=center, inner sep=0}, "{{{\square^*}}}", from=1-3, to=1-2]
    \arrow["\dashv"{anchor=center, rotate=-90}, draw=none, from=0, to=1]
    \arrow["\dashv"{anchor=center, rotate=-90}, draw=none, from=2, to=3]
\end{tikzcd}\end{equation*}

\begin{definition}
    Denote the composition of the above adjoints by
    \begin{equation*}
        \Fold^\L \coloneqq \square_!\G^* \;\colon\; \Shv(\globe) \;\longadj\; \Shv(\grid) \;\cocolon\; \G_* \square^* \eqqcolon \Fold.
    \end{equation*}
\end{definition}

The goal of this section is to prove that $\Fold$ is an equivalence of categories.
In particular, the inverse of $\Fold$ will be its left adjoint $\Fold^\L$.
We will also give an explicit description of the inverse as the \emph{cubical nerve}.

Our proof strategy is inductive.
Consider the commuting diagram 
\begin{equation*}\begin{tikzcd}
    \grid & {\deltaCat^n} & \globe \\
    {\grid[n-1]} & {\deltaCat^{n-1}} & {\globe[n-1].}
    \arrow["\square"', from=1-2, to=1-1]
    \arrow["\G",from=1-2, to=1-3]
    \arrow[hook, from=2-1, to=1-1]
    \arrow["j", hook, from=2-2, to=1-2]
    \arrow["\square"', from=2-2, to=2-1]
    \arrow["\G",from=2-2, to=2-3]
    \arrow[hook, from=2-3, to=1-3]
\end{tikzcd}\end{equation*}
Taking sheaves with the $(-)^*$-functoriality, and using the Beck--Chevalley condition from~\cref{lem:globe-commute-boundary}, we see that $\Fold=\G_*\square^*$ commutes with $\partial$.
Passing to left adjoints, it also follows that $\Fold^\L$ commutes with $\iota$.

\subsection{The cubical nerve}
    \begin{definition}
        Define the \emph{cubical nerve}
        \begin{equation*}
            \cubicalNerve \;\colon\; \Shv(\globe) \;\too\; \PSh(\grid).
        \end{equation*}
        as the restricted Yoneda functor corresponding to the full subcategory $\grid \subseteq \Shv(\globe)$.
    \end{definition}
    Explicitly, for $X \in \Shv(\globe)$ and $Q \in \grid\op$,
    \begin{equation*}
        \cubicalNerve (X)_Q \;=\; \Map_{\Shv(\globe)}(Q, X).
    \end{equation*}

    \begin{lemma}
    \label{lem:boundary-of-grid-in-fold}
        For every $Q\in\grid$ and $0\le r\le n$, the map
         \begin{equation*}
            \colim_{\laxcube{k}\in (\grid[r]^{\el})_{/Q}} \laxcube{k} \;\too\; \partial^{n-r}Q \qin \Shv(\globe[r]).
        \end{equation*}
        is an isomorphism, where $(\grid[r]^{\el})_{/Q} \coloneqq \grid[r]^{\el} \times_{\grid^{\el}} (\grid^{\el})_{/Q}$.
    \end{lemma}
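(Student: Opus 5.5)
The plan is to reduce everything to the case $r=n$ together with a statement about boundaries of a single cube. The reduction uses two formal ingredients: $\partial$ preserves colimits, and iterated left Kan extensions compose. The geometric input is Campion's pasting theorem, used in the same way as in \cref{cor:pushout-square-and-globe}.

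\emph{Step 1: the case $r=n$.} By the saturation argument for $\grid$, $Q\simeq\square[\vec a]$ is the colimit in $\sCatn$ of the cubes $\laxcube{k}\to Q$ indexed by $(\grid^{\el})_{/Q}$. I want this colimit to also be a colimit in $\Shv(\globe)$. The lax grid $\square[\vec a]$ is a loop-free pasting diagram whose atoms are exactly the elementary cubes (Al-Agl--Brown--Steiner / Steiner). Campion's pasting theorem~\cite[Corollary~5.12]{Campion-2023-pasting} says that such a strict pasting colimit of atoms is preserved by the nerve into $\Shv(\globe)$. If the indexing needs adjusting, I would first reduce to the colimit of the top-dimensional atoms glued along their common faces. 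The diagram over $(\grid^{\el})_{/Q}$ is the left Kan extension of that one, so the two colimits agree.

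\emph{Step 2: $\partial$ commutes with colimits in $\Shv(\globe)$.} \Cref{lem:inclusion-of-fold} is proved by verifying the hypothesis of \cref{prop:core-is-localization}. That proof shows that $\partial=i^*$ has a further right adjoint $i_*$. Hence $\partial$, and so $\partial^{n-r}$, preserves colimits. Applying $\partial^{n-r}$ to Step 1 gives
\begin{equation*}
    \partial^{n-r}Q\;\simeq\;\colim_{\laxcube{k}\in(\grid^{\el})_{/Q}}\partial^{n-r}\laxcube{k}.
\end{equation*}
For $k\le r$ we have $\partial^{n-r}\laxcube{k}\simeq\laxcube{k}$ by full faithfulness of $\iota$. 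For $k>r$ I would apply the cube case of the lemma, namely $\partial^{k-r}\laxcube{k}\simeq\colim_{(\grid[r]^{\el})_{/\laxcube{k}}}\laxcube{j}$, after first using $\partial^{n-k}\laxcube{k}\simeq\laxcube{k}$.

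\emph{Step 3: reindexing.} The functor $\laxcube{k}\mapsto \colim_{(\grid[r]^{\el})_{/\laxcube{k}}}\laxcube{j}$ on $(\grid^{\el})_{/Q}$ is precisely the left Kan extension of its restriction along $(\grid[r]^{\el})_{/Q}\into(\grid^{\el})_{/Q}$. This holds because the overcategory of $\laxcube{k}\to Q$ in $(\grid[r]^{\el})_{/Q}$ is $(\grid[r]^{\el})_{/\laxcube{k}}$, since inert maps compose. By transitivity of colimits along Kan extensions, the total colimit is $\colim_{(\grid[r]^{\el})_{/Q}}\laxcube{j}$, which is the claim.

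\emph{Step 4: the cube case, and the main obstacle.} It remains to show $\partial\laxcube{k}\simeq\colim_{(\grid[k-1]^{\el})_{/\laxcube{k}}}\laxcube{j}$ in $\Shv(\globe[k-1])$; iterating with Steps 2--3 then gives all $\partial^{k-r}$. The strict boundary of the lax $k$-cube is again a loop-free pasting diagram, with atoms its proper faces. So I would again invoke Campion's pasting theorem. The main obstacle is exactly this step: I must check that the $\globe$-nerve of the strict cube has boundary $\partial$ equal to the nerve of the strict $(k-1)$-skeleton, and that the hypotheses of the pasting theorem hold for $\partial\laxcube{k}$. For the first point, $\partial$ is restriction to $\globe[k-1]$, so this amounts to the fact that maps from a globular sum of dimension $<k$ into $\laxcube{k}$ land in the $(k-1)$-skeleton, which holds strictly. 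For the second, I would try the Steiner-complex description of lax cubes, where loop-freeness is inherited by sub-complexes.
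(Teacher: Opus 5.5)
Your proof is correct in outline, but it is organized differently from the paper's.

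\textbf{The paper's route.} The paper inducts on $r$ from the bottom up. It writes $\colim_{(\grid[r]^{\el})_{/Q}}\laxcube{k}$ as the pushout of $\iota\partial^{n-r+1}Q$ (the $(r-1)$-level colimit, by induction) and $\bigsqcup\laxcube{r}$ along $\bigsqcup\iota\partial\laxcube{r}$. It then observes that $\partial^{n-r}Q$ is the strict pushout of the same span. It concludes with the pasting theorem in the form ``a strict pushout attaching cells along their boundaries is also a pushout in $\Shv(\globe[r])$'', which is also how it is used in \cref{cor:pushout-square-and-globe}.

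\textbf{Your route.} You go top-down instead. You apply the pasting theorem only to the whole grid and to the boundary of a single cube. Every intermediate skeleton is then obtained formally, from two facts:
\begin{itemize}
\item $\partial$ on $\Shv(\globe)$ has the further right adjoint $i_*$ (\cref{prop:core-is-localization}, with the hypothesis checked in \cref{lem:inclusion-of-fold}), and hence preserves colimits;
\item your Step 3 reindexing, which is right: the slice of $(\grid[r]^{\el})_{/Q}$ over $\laxcube{k}\to Q$ is $(\grid[r]^{\el})_{/\laxcube{k}}$, and this has a terminal object when $k\le r$.
\end{itemize}

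\textbf{Trade-off.} What this buys you is that the skeletal filtration becomes pure formalism. What it costs is that Step 1 needs the pasting theorem in the form ``the strict colimit over all atoms is a colimit in $\Shv(\globe)$''. If you only have the cell-attachment form, Step 1 has to be built one skeleton at a time, attaching the $r$-cubes along their boundaries via the cube case in lower dimension. That is exactly the paper's induction, and it already proves the lemma for every $r$, so your Steps 2--3 would become superfluous. Likewise, once you are willing to apply the pasting theorem to a truncation such as $\partial\laxcube{k}$ in Step 4, you could apply it directly to $\partial^{n-r}Q$. That is the strict $r$-core of $Q$, whose atoms are exactly the elementary cubes of dimension at most $r$.

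\textbf{Two small corrections.}
\begin{itemize}
\item The hedge at the end of Step 1 is a cofinality statement, not a left Kan extension. Consider a corner vertex lying in only one top cell: the left Kan extension to it from the top atoms and their common faces is the initial object. The inclusion is nonetheless cofinal, so the colimits do agree.
\item For $k\le r$, $\partial^{n-r}\laxcube{k}\simeq\laxcube{k}$ holds simply because restricting the strict nerve to $\globe[r]$ gives the nerve of the $r$-core. No appeal to full faithfulness of $\iota$ is needed.
\end{itemize}
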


    \begin{proof}
        We will prove this by induction on $r$, starting with $r=-1$, where both sides agree with $\pt$.
        The above colimit can be computed as a pushout
        \begin{equation*}
            \begin{tikzcd}
                {\bigsqcup\limits_{\laxcube{k} \in (\grid[r]^{\el})_{/Q}} \iota\partial \laxcube{k}} & {\bigsqcup\limits_{\laxcube{k} \in (\grid[r]^{\el})_{/Q}} \laxcube{k}} \\
                {\colim\limits_{\laxcube{k} \in (\grid[r-1]^{\el})_{/Q}} \iota\laxcube{k}} & {\colim\limits_{\laxcube{k} \in (\grid[r]^{\el})_{/Q}} \laxcube{k}}
                \arrow[from=1-1, to=1-2]
                \arrow[from=1-1, to=2-1]
                \arrow[from=1-2, to=2-2]
                \arrow[from=2-1, to=2-2]
                \arrow["\lrcorner"{anchor=center, pos=0.125, rotate=180}, draw=none, from=2-2, to=1-1]
            \end{tikzcd} \qin \Shv(\globe[r]),
        \end{equation*}
        which by induction is
        \begin{equation*}
            \begin{tikzcd}
                {\bigsqcup\limits_{\laxcube{k} \in (\grid[r]^{\el})_{/Q}} \iota\partial \laxcube{k}} & {\bigsqcup\limits_{\laxcube{k} \in (\grid[r]^{\el})_{/Q}} \laxcube{k}} \\
                {\iota \partial^{n-r+1} Q} & {\colim\limits_{\laxcube{k} \in (\grid[r]^{\el})_{/Q}} \laxcube{k}}
                \arrow[from=1-1, to=1-2]
                \arrow[from=1-1, to=2-1]
                \arrow[from=1-2, to=2-2]
                \arrow[from=2-1, to=2-2]
                \arrow["\lrcorner"{anchor=center, pos=0.125, rotate=180}, draw=none, from=2-2, to=1-1]
            \end{tikzcd} \qin \Shv(\globe[r]).
        \end{equation*}
        $\partial^{n-r} Q$ is the \emph{strict} pushout of the above cospan, i.e.\ the pushout in $\Shv(\globe[r];\Set)$. Hence, the claim follows from the pasting theorem~\cite[Corollary~5.12]{Campion-2023-pasting}.
    \end{proof}
    
    \begin{corollary}
    \label{cor:cubical-nerve-is-segal}
        The cubical nerve lands in Segal sheaves 
        \begin{equation*}
            \cubicalNerve \;\colon\; \Shv(\globe) \;\too\; \Shv(\grid).
        \end{equation*}
    \end{corollary}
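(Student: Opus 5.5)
The plan is to verify the Segal condition for $\cubicalNerve(X)$ at every $Q\in\grid$ directly, by rewriting it as a statement about colimits in $\Shv(\globe)$. Since $\cubicalNerve(X)_Q=\Map_{\Shv(\globe)}(Q,X)$, the limit over $(\grid^{\el}_{/Q})\op$ of $\cubicalNerve(X)$ is
\begin{equation*}
    \lim_{\laxcube{k}\in(\grid^{\el}_{/Q})\op}\Map_{\Shv(\globe)}(\laxcube{k},X)\;\simeq\;\Map_{\Shv(\globe)}\Big(\colim_{\laxcube{k}\in\grid^{\el}_{/Q}}\laxcube{k},\,X\Big).
\end{equation*}
The comparison map $\cubicalNerve(X)_Q\to\lim\cubicalNerve(X)$ is then precomposition with the canonical map $\colim_{\grid^{\el}_{/Q}}\laxcube{k}\to Q$ in $\Shv(\globe)$. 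So it suffices to show that this canonical map is an isomorphism in $\Shv(\globe)$, for every $Q$ and independently of $X$.

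This is exactly the case $r=n$ of \cref{lem:boundary-of-grid-in-fold}. For $r=n$ we have $(\grid[n]^{\el})_{/Q}=\grid^{\el}_{/Q}$, and $\partial^{0}Q=Q$. The lemma therefore gives that $\colim_{\laxcube{k}\in\grid^{\el}_{/Q}}\laxcube{k}\to Q$ is an isomorphism in $\Shv(\globe)$. Here $Q$ and the $\laxcube{k}$ are regarded as objects of $\Shv(\globe)$ through the inclusion of strict $n$-categories, which is how $\grid\subseteq\Shv(\globe)$ is understood in the definition of the nerve.

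One bookkeeping point needs checking: the cocone in the lemma should be the one induced by the inert maps $\laxcube{k}\to Q$, so that the induced map on mapping spaces coincides with the Segal comparison map of $\cubicalNerve(X)$. This holds because $\cubicalNerve$ is restricted Yoneda, hence functorial in $Q$, and the diagram $\grid^{\el}_{/Q}\to\Shv(\globe)$ is the same one used in the lemma.

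I expect no real obstacle beyond this. All the substantive content, the comparison of strict and weak pushouts via Campion's pasting theorem, has already been absorbed into \cref{lem:boundary-of-grid-in-fold}.
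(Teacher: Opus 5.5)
Your proposal is correct and is essentially the paper's proof. You reduce the Segal condition for $\cubicalNerve X$ at $Q$ to the canonical map $\colim_{\laxcube{k}\in(\grid^{\el})_{/Q}}\laxcube{k}\to Q$ being an isomorphism in $\Shv(\globe)$, and then obtain this as the case $r=n$ of \cref{lem:boundary-of-grid-in-fold}, exactly as the paper does.
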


    \begin{proof}
        Equivalently, we need to show that for every $Q \in \grid$
        \begin{equation*}
            \colim_{\laxcube{k} \in (\grid^{\el})_{/Q}} \laxcube{k} \;\isotoo\; Q \qin \Shv(\globe),
        \end{equation*}
        which follows from~\cref{lem:boundary-of-grid-in-fold}.
    \end{proof}

    \begin{remark}
        The cubical nerve commutes with $\partial$, as realized for $X\in \Shv(\globe)$ and $Q\in \grid[n-1]\op$ by the isomorphism
        \begin{equation*}
            (\partial \cubicalNerve X)_Q
            \;\simeq\;
            \Map_{\Shv(\globe)}(Q,X)
            \;\simeq\;
            \Map_{\Shv(\globe[n-1])}(Q,\partial X)
            \;\simeq\;
            (\cubicalNerve\partial X)_Q.
        \end{equation*}
    \end{remark}

    Pre-composition with the collapse map $\pi_{[\vec{a}]} \colon \square[\vec{a}] \to \G[\vec{a}]$
    induces a map
    \begin{equation*}
         - \circ \pi_{[\vec{a}]} \;\colon\; \Map_{\Shv(\globe)}(\G[\vec{a}], X) \;\too\; \Map_{\Shv(\globe)}(\square[\vec{a}], X)
    \end{equation*}
    natural in $[\vec{a}] \in (\deltaCat^n)\op$ and $X \in \Shv(\globe)$. 
    This corresponds to a natural transformation \mbox{$\G^* \To \square^*\cubicalNerve$}, which by adjunction provides a natural transformation
    
    \begin{equation*}
        \alpha \;\colon\; \id_{\Shv(\globe)} \;\Too\; \Fold\cubicalNerve.
    \end{equation*}
    
    \begin{proposition}
    \label{prop:unit-is-iso}
        The above natural transformation is an isomorphism
        \begin{equation*}
            \alpha \;\colon\; \id_{\Shv(\globe)} \;\isoToo\; \Fold\cubicalNerve.
        \end{equation*}
    \end{proposition}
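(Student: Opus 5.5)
We argue by induction on $n$, with $n=0$ trivial, since then $\grid[0]=\globe[0]=\deltaCat^0=\pt$ and all functors are the identity. The transformation $\alpha$ commutes with $\partial$: the cubical nerve commutes with $\partial$ by the remark above, $\Fold$ commutes with $\partial$ by \cref{lem:globe-commute-boundary}, and $\pi$ is compatible with restriction along $\deltaCat^{n-1}\into\deltaCat^n$. Hence, by induction, $\partial\alpha_X$ is an isomorphism for every $X$. Both $\id$ and $\Fold\cubicalNerve$ are right adjoints evaluated pointwise, so it suffices to check that $\alpha_X$ is an isomorphism after evaluating at the weak generator $\laxdisk{n}$. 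Equivalently, we must show that the map
\begin{equation*}
    \Map(\laxdisk{n},X)\;\too\;\Map(\laxdisk{n},\G_*\square^*\cubicalNerve X)\;\simeq\;\Map(\square_!\G^*\laxdisk{n},\cubicalNerve X)
\end{equation*}
is an isomorphism.

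To compute the right-hand side, we use \cref{lem:n-uple-globe-pushout}. Applying the colimit-preserving functor $\square_!$ turns it into a pushout in $\Shv(\grid)$:
\begin{equation*}
    \square_!\G^*\laxdisk{n}\;\simeq\;\laxcube{n}\sqcup_{\square_!\iota\partial[1]^n}\square_!\G^*\iota\partial\laxdisk{n}.
\end{equation*}
The middle term is identified by \cref{lem:square-bc-on-representables} as $\square_!\iota\partial[1]^n\simeq\iota\partial\laxcube{n}$, the latter after the Beck--Chevalley identification passed to left adjoints. The right term is $\square_!\G^*\iota\partial\laxdisk{n}\simeq\iota\Fold^\L\partial\laxdisk{n}$, because $\Fold^\L$ commutes with $\iota$. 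Mapping into $\cubicalNerve X$ and using $\partial\cubicalNerve\simeq\cubicalNerve\partial$ together with the inductive hypothesis, $\Map(\iota\Fold^\L\partial\laxdisk{n},\cubicalNerve X)\simeq\Map(\Fold^\L\partial\laxdisk n,\cubicalNerve\partial X)$ becomes $\Map(\partial\laxdisk{n},\partial X)$. The same reasoning applies to the boundary term of the cube. We therefore get a pullback
\begin{equation*}
    \Map(\square_!\G^*\laxdisk{n},\cubicalNerve X)\;\simeq\;\Map(\laxcube{n},X)\times_{\Map(\iota\partial\laxcube{n},X)}\Map(\iota\partial\laxdisk{n},X),
\end{equation*}
where all mapping spaces are now computed in $\Shv(\globe)$, the last using the Yoneda property of the nerve on $\laxcube{n}\in\grid$ together with \cref{lem:boundary-of-grid-in-fold} for the boundary.

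The transition map $\iota\partial\laxcube{n}\to\iota\partial\laxdisk{n}$ in this pullback should be $\iota\partial\pi_n$, and the map $\alpha$ should correspond to precomposition with $\pi_n$. Under these identifications, the claim reduces exactly to the right pushout square of \cref{cor:pushout-square-and-globe}, which expresses $\laxdisk{n}$ as $\laxcube{n}\sqcup_{\iota\partial\laxcube{n}}\iota\partial\laxdisk{n}$ along $\pi_n$. This gives that $\alpha_X$ is an isomorphism on $\laxdisk{n}$-points.

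The main obstacle is the bookkeeping in the previous step. We must verify that the inductive identification of the boundary terms is compatible with the maps in the two pushouts. Concretely, the map $\laxcube{n}\to\square_!\G^*\laxdisk{n}$ coming from $[1]^n\to\G^*\laxdisk{n}$ must correspond, after adjunction, to $\pi_n$. This requires unwinding the adjunction unit through the definition of $\alpha$ via $-\circ\pi$. The boundary comparison must likewise be $\alpha$ for $\partial X$, so that naturality of $\pi$ makes the whole cube of identifications commute. I would handle this by checking compatibility on the generating map $[1]^n\to\G^*\laxdisk{n}$ and invoking naturality of $\pi$ in $[\vec a]$.
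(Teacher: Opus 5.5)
Your proposal is correct and follows essentially the same route as the paper. Both arguments induct on $n$, test against the generator $\laxdisk{n}$, decompose $\G^*\laxdisk{n}$ via \cref{lem:n-uple-globe-pushout}, identify the three resulting mapping spaces using the inductive hypothesis for $\partial X$ and \cref{lem:boundary-of-grid-in-fold}, and conclude from the right pushout square of \cref{cor:pushout-square-and-globe}. You transport the decomposition to $\Shv(\grid)$ with $\square_!$, while the paper stays in $\Shv(\deltaCat^n)$; this is equivalent by adjunction. For the bookkeeping you flag, the paper observes that the vertical maps are the counits of $\iota\dashv\partial$ and the bottom map is $\pi_n$, which forces the top map to be $\iota\partial\pi_n$, rather than invoking naturality of $\pi$ directly.
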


    \begin{proof}
        We will prove this by induction, the case $n=0$ being trivial.
        Let $X\in \Seg(\globe)$.
        To prove that $\alpha_X\colon X\to \Fold\cubicalNerve X$ is an isomorphism, it is enough to verify after mapping from the weak generator $\laxdisk{n}$:
        \begin{align*}
            \Map_{\Shv(\globe)}(\laxdisk{n}, X)\;\too\; & \Map_{\Shv(\globe)}(\laxdisk{n},\Fold\cubicalNerve X)\\
            \simeq\; & \Map_{\Shv(\deltaCat^n)}(\G^*\laxdisk{n},\square^*\cubicalNerve X)
        \end{align*}
        By~\cref{lem:n-uple-globe-pushout}, the last mapping space is given by the pullback
        \begin{equation*}
            \Map_{\Shv(\deltaCat^n)}(\G^*\iota\partial\laxdisk{n},\square^*\cubicalNerve X)\times_{\Map_{\Shv(\deltaCat^n)}(\iota\partial[1]^n ,\square^*\cubicalNerve X)}\Map_{\Shv(\deltaCat^n)}([1]^n ,\square^*\cubicalNerve X).
        \end{equation*}
        
        We consider each part of the above pullback separately:
        \paragraph{(1) $\G^*\iota\partial\laxdisk{n}$.} 
        We have isomorphisms
        \begin{align*}
             \Map_{\Shv(\deltaCat^n)}(\G^*\iota\partial\laxdisk{n},\square^*\cubicalNerve X) 
             \;\simeq\;&
             \Map_{\Shv(\globe[n-1])}(\partial \laxdisk{n},\partial \Fold\cubicalNerve X)\\
             \;\simeq\;&
             \Map_{\Shv(\globe[n-1])}(\partial \laxdisk{n}, \Fold\cubicalNerve\partial X).
        \end{align*}
        where the last isomorphism uses the fact that $\Fold$ and $\cubicalNerve$ commute with $\partial$.
        By induction, the map $\alpha_{\partial X}\colon \partial X\to \Fold\cubicalNerve \partial X$ is an isomorphism, so we get 
        \begin{align*}
             \Map_{\Shv(\globe[n-1])}(\partial \laxdisk{n}, \Fold\cubicalNerve\partial X) 
             \;\simeq\;&
             \Map_{\Shv(\globe[n-1])}(\partial \laxdisk{n},\partial X)\\
             \;\simeq\;&
             \Map_{\Shv(\globe)}(\iota\partial \laxdisk{n}, X).
        \end{align*}

        The resulting natural map
        \begin{equation*}
            \Map_{\globe}(\laxdisk{n}, X) \;\too\; \Map_{\globe}(\iota \partial \laxdisk{n}, X)
        \end{equation*}
        is given by the composition along the top and right arrows, followed by the inverse of the bottom arrow, in the following commutative square:
        \begin{equation*}
            \begin{tikzcd}
                { \Map_{\Shv(\globe)}(\laxdisk{n}, X)} & {\Map_{\Shv(\deltaCat^n)}(\G^*\laxdisk{n},\square^*\cubicalNerve X)} \\
                { \Map_{\Shv(\globe)}(\iota\partial\laxdisk{n}, X)} & {\Map_{\Shv(\deltaCat^n)}(\G^*\iota\partial\laxdisk{n},\square^*\cubicalNerve X)}
                \arrow[from=1-1, to=1-2]
                \arrow[from=1-1, to=2-1]
                \arrow[from=1-2, to=2-2]
                \arrow["\sim", from=2-1, to=2-2]
            \end{tikzcd}
        \end{equation*}
        where the vertical maps are induced from the counit of the adjunction $\iota \dashv \partial$. Consequently, the map $\iota\partial\laxdisk{n} \to \laxdisk{n}$ corresponding to this transformation is the counit itself.

        \paragraph{(2) $[1]^n$.} 
        By definition,
        \begin{equation*}
            \Map_{\Shv(\deltaCat^n)}([1]^n ,\square^*\cubicalNerve X) \;\simeq\; \Map_{\Shv(\globe)}(\laxcube{n},X).
        \end{equation*} 
        
        The resulting natural map
        \begin{equation*}
            \Map_{\Shv(\globe)}(\laxdisk{n}, X) \;\too\; \Map_{\Shv(\globe)}(\laxcube{n}, X)
        \end{equation*}
        is given by the composition along the top and right arrows, followed by the inverse of the bottom arrow, in the following square:
        \begin{equation*}
            \begin{tikzcd}
                { \Map_{\Shv(\globe)}(\laxdisk{n}, X)} & {\Map_{\Shv(\deltaCat^n)}(\G^*\laxdisk{n},\square^*\cubicalNerve X)} \\
                { \Map_{\Shv(\globe)}(\laxcube{n}, X)} & {\Map_{\Shv(\deltaCat^n)}([1]^n,\square^*\cubicalNerve X).}
                \arrow[from=1-1, to=1-2]
                \arrow["{-\circ \pi_{n}}"', from=1-1, to=2-1]
                \arrow[from=1-2, to=2-2]
                \arrow["\sim", from=2-1, to=2-2]
            \end{tikzcd}
        \end{equation*}
        This square commutes, as by the definition of $\alpha$, both sides send a map $\laxdisk{n}\to X$ to the $[1]^n$-cell of $\square^*\cubicalNerve X$ given by 
        \begin{equation*}
            \laxcube{n} \;\xtoo{\pi_{n}}\; \laxdisk{n}\;\too\; X.
        \end{equation*}
        Therefore, the corresponding map $\laxcube{n} \to \laxdisk{n}$ is identified with $\pi_{n}$.

        \paragraph{(3) $\iota\partial[1]^n$.}
        Write
        \begin{equation*}
            \iota \partial [1]^n \;\simeq\; \colim_{[1]^I \in (\deltaCat^n)^\el_{/[1]^n},\ |I| < n} [1]^I,\qquad
            \iota \partial \laxcube{n} \;\simeq\; \colim_{\laxcube{k}\in (\grid)^\el_{/\laxcube{n}},\ k<n} \laxcube{k},
        \end{equation*}
        where the second isomorphism uses~\cref{lem:boundary-of-grid-in-fold}.
        There is an isomorphism between the two indexing categories given by $[1]^I\mapsto \laxcube{|I|}$ (\cref{lem:elementary-in-grid-and-delta}), so we get 
        \begin{equation*}
            \Map_{\Shv(\deltaCat^n)}(\iota\partial[1]^n ,\square^*\cubicalNerve X) \;\simeq\; \Map_{\Shv(\globe)}(\iota\partial\laxcube{n},X).
        \end{equation*} 

        The resulting natural map
        \begin{equation*}
            \Map_{\Shv(\globe)}(\laxcube{n}, X) \;\too\; \Map_{\Shv(\globe)}(\iota \partial \laxcube{n}, X)
        \end{equation*}
        corresponds to the map
        \begin{equation*}
            \iota\partial \laxcube{n} \;\simeq\; \colim_{\laxcube{k}\in (\grid)^\el_{/\laxcube{n}},\ k<n} \laxcube{k} \;\too\; \laxcube{n},
        \end{equation*}
        which is identified with the counit map.
        
        Combining the above, we obtain a natural map
        \begin{equation*}
            \Map_{\Shv(\globe)}(\laxdisk{n}, X) \;\too\; \Map_{\Shv(\globe)}(\iota \partial \laxdisk {n}, X) \times_{\Map_{\Shv(\globe)}(\iota \partial \laxcube{n}, X)} \Map_{\Shv(\globe)}(\laxcube{n},X)
        \end{equation*}
        which corresponds to the commutative square
        \begin{equation*}
            \begin{tikzcd}
                {\iota\partial\laxcube{n}} & {\iota\partial\laxdisk{n}} \\
                {\laxcube{n}} & {\laxdisk{n}}
                \arrow[from=1-1, to=1-2]
                \arrow[from=1-1, to=2-1]
                \arrow[from=1-2, to=2-2]
                \arrow["\pi_n", from=2-1, to=2-2]
            \end{tikzcd}
        \end{equation*}
        where the vertical maps are the counit maps of the adjunction $\iota \dashv \partial$. It follows that the top horizontal map is $\iota\partial \pi_n$. 
        Finally, this square is a pushout in $\Shv(\globe)$ by \cref{lem:pushout-fold-map}.
    \end{proof}

    \begin{remark}
    \label{rem:yoneda-and-inclusion-of-grids}
        For $Q\in \grid$, we can either consider $Q \in \Shv(\grid)$ by the Yoneda embedding or $Q\in \Shv(\globe)$ by the inclusion $\grid\subseteq \Shv(\globe)$. 
        These two perspectives are related by $\cubicalNerve$ and $\Fold$:
        \begin{equation*}
            \cubicalNerve Q \;\simeq\; Q \qin \Shv(\grid),
        \end{equation*}
        as the restricted Yoneda is simply the Yoneda embedding when restricted to $\grid$, and
        \begin{equation*}
            Q \;\simeq\; \Fold Q \qin \Shv(\globe),
        \end{equation*}
        by applying $\Fold$ to the above isomorphism and using~\cref{prop:unit-is-iso}.
    \end{remark}

\subsection{Reconstructing cubes}        
        
    In this subsection, we produce a pushout square 
    \begin{equation*}
        \begin{tikzcd}
            {\Fold^\L\iota\laxdiskb{n}} & {\iota\partial\laxcube{n}} \\
            {\Fold^\L\laxdisk{n}} & {\laxcube{n}}
            \arrow[from=1-1, to=1-2]
            \arrow[from=1-1, to=2-1]
            \arrow[from=1-2, to=2-2]
            \arrow[from=2-1, to=2-2]
            \arrow["\lrcorner"{anchor=center, pos=0.125, rotate=180}, draw=none, from=2-2, to=1-1]
        \end{tikzcd}
        \qin \Shv(\grid),
    \end{equation*}
    which is a $\grid$-analog of~\cref{cor:pushout-square-and-globe}.
    
    Consider the map $[1]^n \to \G^*\laxdisk{n}$ selecting the identity cell $\laxdisk{n}\to \laxdisk{n}$.
    Applying $\square_!$, we get a map
    \begin{equation*}
        \laxcube{n} \;\too\; \Fold^\L \laxdisk{n} \qin \Shv(\grid).
    \end{equation*}
    In the other direction, consider the composition
    \begin{equation*}
        \laxdisk{n} \;\too\; \laxcube{n}\xtoo{\alpha} \Fold \cubicalNerve \laxcube{n} \;\simeq\; \Fold \laxcube{n},
    \end{equation*}
    where the first map selects the top-dimensional cell and the last isomorphism keeps the Yoneda embedding implicit (see~\cref{rem:yoneda-and-inclusion-of-grids}).
    This corresponds by adjunction to
    \begin{equation*}
        \Fold^\L \laxdisk{n} \;\too\; \laxcube{n} \qin \Shv(\grid).
    \end{equation*}
    
    \begin{lemma}
    \label{lem:fold-map-in-grids}
        The composition of the above maps
        \begin{equation*}
            \laxcube{n} \;\too\; \Fold^\L \laxdisk{n} \;\too\; \laxcube{n}\qin \Shv(\grid)
        \end{equation*}
        is equal to the fold map $\Phi_{n}\colon \laxcube{n}\to\laxcube{n}$.
    \end{lemma}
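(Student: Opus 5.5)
Since $\laxcube{n}$ is representable in $\Shv(\grid)$, by Yoneda it suffices to identify the composite as a point of $\Map_{\Shv(\grid)}(\laxcube{n},\laxcube{n})$. The plan is to transport the whole composite across the adjunction $\Fold^\L\dashv\Fold$ and the adjunction $\square_!\dashv\square^*$, reducing it to a statement in $\Shv(\globe)$ about $\laxdisk{n}\to\laxcube{n}$ composed with a collapse, and then to invoke \cref{lem:endomorphism-equal-fold-map}.

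\textbf{Adjoint form of the two maps.} The first map $\laxcube{n}\to\Fold^\L\laxdisk{n}=\square_!\G^*\laxdisk{n}$ is $\square_!$ of $u\colon[1]^n\to\G^*\laxdisk{n}$, which selects the identity cell. Precomposing the second map $\Fold^\L\laxdisk{n}\to\laxcube{n}$ with $\square_!u$ therefore corresponds, under $\square_!\dashv\square^*$, to the composite
\begin{equation*}
    [1]^n\;\xtoo{u}\;\G^*\laxdisk{n}\;\xtoo{\G^*(\beta)}\;\G^*\Fold\laxcube{n}\;\too\;\square^*\laxcube{n},
\end{equation*}
where $\beta\colon\laxdisk{n}\to\Fold\laxcube{n}=\G_*\square^*\laxcube{n}$ is the adjunct, namely the top cell followed by $\alpha_{\laxcube{n}}$, and the last arrow is the counit of $\G^*\dashv\G_*$. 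By naturality of $u$, since $u$ is the Yoneda point of $\G^*$ at $\laxdisk{n}$, this composite is the $[1]^n$-cell of $\square^*\laxcube{n}$ obtained by evaluating $\G^*\beta$ followed by the counit at the identity $\laxdisk{n}$-cell. That is, it is the image of $\beta\in(\G_*\square^*\laxcube{n})(\laxdisk{n})$ under the map $(\G_*Y)(\laxdisk{n})=\Map(\G^*\laxdisk{n},Y)\to Y_{1,\dots,1}$ given by restriction along $u$.

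\textbf{Unwinding $\alpha$.} By construction, $\alpha$ is adjunct to the transformation $\G^*\To\square^*\cubicalNerve$ given by precomposition with $\pi_{[\vec{a}]}$. Hence for $X=\laxcube{n}$, restricting along $u$ sends a map $f\colon\laxdisk{n}\to X$ to the cell $f\circ\pi_n\in\Map(\laxcube{n},X)=(\square^*\cubicalNerve X)_{1,\dots,1}$. This is exactly the computation already recorded in part (2) of the proof of \cref{prop:unit-is-iso}. Taking $f$ to be the top-cell inclusion $\laxdisk{n}\to\laxcube{n}$ yields the cell $\laxcube{n}\xto{\pi_n}\laxdisk{n}\to\laxcube{n}$. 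Under \cref{rem:yoneda-and-inclusion-of-grids} this cell is identified with an endomorphism of the representable $\laxcube{n}$, and by \cref{lem:endomorphism-equal-fold-map} it equals $\Phi_n$.

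\textbf{Main obstacle.} The hard part is the bookkeeping in the first step. It must be checked that the identification $\Fold\cubicalNerve\laxcube{n}\simeq\Fold\laxcube{n}$ is compatible with the Yoneda identification $\cubicalNerve Q\simeq Q$, so that the resulting $[1]^n$-cell of $\square^*\laxcube{n}$ really is the endomorphism of $\laxcube{n}$ itself and not a twisted version of it. It must also be checked that the counit of $\G^*\dashv\G_*$, evaluated at the point $u$, is literally the restriction map described above. I would handle both by writing each map as a natural transformation of mapping-space functors and applying Yoneda at each stage. The remaining input, the description of $\Phi_n$ through the collapse, is supplied by \cref{lem:endomorphism-equal-fold-map}.
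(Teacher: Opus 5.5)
Your proposal is correct and is essentially the paper's proof. You transpose the composite across $\square_!\dashv\square^*$ and $\G^*\dashv\G_*$, then use the definition of $\alpha$ (precomposition with the collapse maps $\pi_{[\vec{a}]}$) to see that the identity cell $[1]^n\to\G^*\laxdisk{n}$ is sent to the cell $\laxcube{n}\xto{\pi_n}\laxdisk{n}\to\laxcube{n}$, and conclude with \cref{lem:endomorphism-equal-fold-map}. The paper leaves implicit the bookkeeping you flag as the main obstacle (the counit of $\G^*\dashv\G_*$ evaluated at $u$, and compatibility with the identification $\cubicalNerve Q\simeq Q$ of \cref{rem:yoneda-and-inclusion-of-grids}), and it is routine.
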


    \begin{proof}
        The map  $\laxdisk{n}\to \Fold\laxcube{n}$ corresponds by adjunction to  $\G^*\laxdisk{n}\to \square^*\laxcube{n}$, which, by the definition of $\alpha$, sends a globular cell $\G[\vec{a}]\to \laxdisk{n}$ to the cubical cell
        \begin{equation*}
            \square[\vec{a}] \;\xtoo{\pi_{[\vec{a}]}}\;\G[\vec{a}] \;\too\; \laxdisk{n} \;\too\; \laxcube{n}.
        \end{equation*}
        In particular, as $[1]^n \to \G^*\laxdisk{n}$ picks the identity cell, the composition $[1]^n\to \G^*\laxdisk{n}\to \square^*\laxcube{n}$
        picks the cubical cell
        \begin{equation*}
            \laxcube{n} \;\xtoo{\pi_{n}}\;\laxdisk{n} \;\too\; \laxcube{n},
        \end{equation*}
        which is equal to $\Phi_{n}$ by~\cref{lem:endomorphism-equal-fold-map}.
    \end{proof}

    We now consider the same maps on the boundary.
    As $\G^*$ commutes with $\partial$, we get a map 
    \begin{equation*}
        \partial [1]^n \;\too\; \partial \G^*\laxdisk{n} \;\simeq\; \G^*\partial \laxdisk{n}.
    \end{equation*}
    Applying $\square_!$, and using~\cref{lem:square-bc-on-representables}, we get a map 
    \begin{equation*}
        \partial\laxcube{n} \;\too\; \Fold^\L \partial\laxdisk{n} \qin \Shv(\grid).
    \end{equation*}
    Similarly, as $\Fold$ commutes with $\partial$, we get a map 
    \begin{equation*}
        \partial \laxdisk{n} \;\too\; \partial \Fold \laxcube{n} \;\simeq\; \Fold\partial \laxcube{n},
    \end{equation*}
    which corresponds by adjunction to
    \begin{equation*}
        \Fold^\L \partial \laxdisk{n} \;\too\; \partial \laxcube{n} \qin \Shv(\grid).
    \end{equation*}
    
    \begin{lemma}
    \label{cor:boundary-fold-map-in-grids}
        The composition of the above maps
        \begin{equation*}
            \partial \laxcube{n} \;\too\; \Fold^\L \partial \laxdisk{n} \;\too\; \partial \laxcube{n} \qin \Shv(\grid[n-1])
        \end{equation*}
        is equal to the boundary of the fold map $\partial\Phi_{n}\colon \partial\laxcube{n}\to\partial\laxcube{n}$.
    \end{lemma}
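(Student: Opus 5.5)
The plan is to reduce the statement to \cref{lem:fold-map-in-grids} by checking that both maps in the composite are obtained by applying the boundary functor $\partial$ to the corresponding maps $\laxcube{n}\to \Fold^\L\laxdisk{n}\to\laxcube{n}$. Once that is done, the composite is $\partial$ of the composite in \cref{lem:fold-map-in-grids}, namely $\partial\Phi_n$. More precisely, I will produce a commutative diagram in $\Shv(\grid[n-1])$ whose top row is the composite in the statement, whose bottom row is $\partial$ applied to the composite of \cref{lem:fold-map-in-grids}, and whose vertical maps are the identity on $\partial\laxcube{n}$ at the two ends and a Beck--Chevalley map $\Fold^\L\partial\laxdisk{n}\to\partial\Fold^\L\laxdisk{n}$ in the middle.

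For the first map, recall that it is $\square_!$ applied to $\partial[1]^n\to\G^*\partial\laxdisk{n}$, followed by the identification of \cref{lem:square-bc-on-representables}. The map $\partial[1]^n\to\partial\G^*\laxdisk{n}$ is $\partial$ applied to $[1]^n\to\G^*\laxdisk{n}$. By naturality of the Beck--Chevalley transformation $\square_!\partial\to\partial\square_!$ (mate of $\partial\square^*\simeq\square^*\partial$), the square
\begin{equation*}
    \square_!\partial[1]^n\to\square_!\partial\G^*\laxdisk{n},\qquad \partial\square_![1]^n\to\partial\square_!\G^*\laxdisk{n}
\end{equation*}
commutes. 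Its left vertical map is an isomorphism by \cref{lem:square-bc-on-representables}, and its right vertical map is the middle comparison map above. The bottom row of this square is $\partial$ of the map $\laxcube{n}\to\Fold^\L\laxdisk{n}$.

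For the second map, I would argue dually. The map $\partial\laxdisk{n}\to\Fold\partial\laxcube{n}$ is $\partial$ of $\laxdisk{n}\to\Fold\laxcube{n}$, transported along the isomorphism $\partial\Fold\simeq\Fold\partial$ coming from \cref{lem:globe-commute-boundary}. Its adjoint $\Fold^\L\partial\laxdisk{n}\to\partial\laxcube{n}$ should therefore factor as the comparison $\Fold^\L\partial\laxdisk{n}\to\partial\Fold^\L\laxdisk{n}$ followed by $\partial$ of the counit-adjoint map $\Fold^\L\laxdisk{n}\to\laxcube{n}$. This is the standard compatibility of mates: the transformation $\Fold^\L\partial\to\partial\Fold^\L$ is the mate of $\partial\Fold\simeq\Fold\partial$, so the adjoint of $\partial f$ for $f\colon D\to\Fold Q$ is the adjoint of $f$ with $\partial$ applied, precomposed with the mate.

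The main obstacle is making sure that the two comparison maps $\Fold^\L\partial\to\partial\Fold^\L$ arising in the two steps coincide. In the first step it comes from $\square_!\partial\to\partial\square_!$ composed with $\G^*\partial\simeq\partial\G^*$. In the second step it is the mate of $\partial\G_*\square^*\simeq\G_*\square^*\partial$. I would resolve this by pasting mates: the mate of a horizontal composite of squares is the composite of the mates. The square for $\G$ is the leftmost square of \cref{lem:globe-commute-boundary}, and its mate is the isomorphism $\G^*\partial\simeq\partial\G^*$ used in the first step. With the two comparison maps identified, the diagram commutes, and \cref{lem:fold-map-in-grids} finishes the proof.
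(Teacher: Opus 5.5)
Your proposal is correct and is essentially the paper's own proof: factor the second map through the Beck--Chevalley comparison $\Fold^\L\partial\Rightarrow\partial\Fold^\L$, handle the first map by naturality of $\square_!\partial\Rightarrow\partial\square_!$ together with \cref{lem:square-bc-on-representables}, and conclude with \cref{lem:fold-map-in-grids}. Your mate-pasting step spells out what the paper compresses into ``unwinding the definitions''. One small point: the leftmost square of \cref{lem:globe-commute-boundary} is phrased with $j^*$ rather than restriction to $\deltaCat^n_\partial$, so matching its mate with the isomorphism used in the first step needs one more routine identification, for instance via the third square there; the paper leaves this implicit as well.
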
 

    \begin{proof}
        Consider the Beck--Chevalley comparison map $\beta \colon \Fold^\L\partial \To \partial\Fold^\L$, which we do not yet know is invertible. The map $\Fold^\L \partial\laxdisk{n}\to \partial\laxcube{n}$ factors through it
        \begin{equation*}
            \Fold^\L \partial \laxdisk{n} \;\xtoo{\beta}\; \partial \Fold^\L \laxdisk{n} \;\too\; \partial \laxcube{n}
        \end{equation*}
        where the second map is the boundary of $\Fold^\L\laxdisk{n}\to \laxcube{n}$.
        We claim that the composition
        \begin{equation*}
            \partial \laxcube{n} \;\too\; \Fold^\L \partial \laxdisk{n} \;\xtoo{\beta}\; \partial \Fold^\L \laxdisk{n}
        \end{equation*}
        is equal to the boundary of $\laxcube{n}\to\Fold^\L\laxdisk{n}$, after which the result follows from~\cref{lem:fold-map-in-grids}.
        To verify the claim, we unwind the definitions and reduce to the composition
        \begin{equation*}
            \square_!\partial[1]^n 
            \;\too\; 
            \square_!\partial \G^*\laxdisk{n} 
            \;\too\;
            \partial\square_! \G^*\laxdisk{n} 
        \end{equation*}
        which uses the Beck--Chevalley comparison map $\square_!\partial \To \partial\square_!$.
        By naturality, we obtain a commuting square
        \begin{equation*}
            \begin{tikzcd}
                {\square_!\partial[1]^n} & {\square_!\partial\G^*\laxdisk{n}} & \\
                {\partial\square_![1]^n} & {\partial\square_!\G^*\laxdisk{n}} & {}
                \arrow[from=1-1, to=1-2]
                \arrow["{\rotatebox{90}{$\sim$}}"', from=1-1, to=2-1]
                \arrow[from=1-2, to=2-2]
                \arrow[from=2-1, to=2-2]
            \end{tikzcd}
        \end{equation*} 
        in which the left vertical map is an isomorphism by~\cref{lem:square-bc-on-representables}, and the bottom horizontal map is identified with the boundary of $\laxcube{n} \to \Fold^\L \laxdisk{n}$.
    \end{proof}
    Applying $\iota$ to the composition in~\cref{cor:boundary-fold-map-in-grids}, and using the fact that it commutes with $\Fold^\L$, we get that the composition
    \begin{equation*}
        \iota \partial \laxcube{n} \;\too\; \Fold^\L \iota \partial \laxdisk{n} \;\too\; \iota \partial \laxcube{n} \qin \Shv(\grid)
    \end{equation*}
    is equal to $\iota\partial\Phi_n$.
    Combining this with~\cref{lem:fold-map-in-grids}, we get a commutative diagram
    \begin{equation*}\begin{tikzcd}
        {\iota\partial\laxcube{n}} & {\Fold^\L\iota\laxdiskb{n}} & {\iota\partial\laxcube{n}} \\
        {\laxcube{n}} & {\Fold^\L\laxdisk{n}} & {\laxcube{n}.}
        \arrow[from=1-1, to=1-2]
        \arrow["{\iota\partial\Phi_{n}}", curve={height=-12pt}, from=1-1, to=1-3]
        \arrow[from=1-1, to=2-1]
        \arrow[from=1-2, to=1-3]
        \arrow[from=1-2, to=2-2]
        \arrow[from=1-3, to=2-3]
        \arrow[from=2-1, to=2-2]
        \arrow["{\Phi_{n}}"', curve={height=12pt}, from=2-1, to=2-3]
        \arrow[from=2-2, to=2-3]
    \end{tikzcd}\end{equation*}
    Our goal is to prove that the right square is a pushout square.
    We will do so by proving that the outer rectangle and the left square are pushouts. 
    For the outer rectangle, we will reduce to the basic fold map $\Phi_2=\psi$.

    \begin{lemma} 
    \label{lem:pushout-basic-fold-map}
        The following is a pushout square in $\Seg(\grid[2])$:
        \begin{equation*}\begin{tikzcd}
            {\iota\partial\laxcube{2}} & {\iota\partial\laxcube{2}} \\
            {\laxcube{2}} & {\laxcube{2}.}
            \arrow[""{name=0, anchor=center, inner sep=0}, "{\iota\partial\psi}", from=1-1, to=1-2]
            \arrow[from=1-1, to=2-1]
            \arrow[from=1-2, to=2-2]
            \arrow["{\psi}", from=2-1, to=2-2]
            \arrow["\lrcorner"{anchor=center, pos=0.125, rotate=180}, draw=none, from=2-2, to=0]
        \end{tikzcd}\end{equation*}
    \end{lemma}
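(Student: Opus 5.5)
Since $\psi$ is the composite
\begin{equation*}
    \laxcube{2} \;\too\; \laxcube{2} \sqcup_{\laxcube{1}} \laxcube{2} \sqcup_{\laxcube{1}} \laxcube{2} \;\too\; \laxcube{2},
\end{equation*}
I would split the square into the corresponding two squares and show that each is a pushout in $\Seg(\grid[2])$. A map out of the square is a sheaf $X$ with cells $\laxcube{2}\to X$, and the claim is that such a cell is determined by its boundary together with its $\psi$-precomposite. So it suffices to show that
\begin{equation*}
    \Map(\laxcube{2},X)\;\too\;\Map(\iota\partial\laxcube{2},X)\times_{\Map(\iota\partial\laxcube{2},X)}\Map(\laxcube{2},X)
\end{equation*}
is an isomorphism, where the fiber product uses $\partial\psi$ on the bottom and $\psi^*$ on the right. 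Everything is computed in $\Seg(\grid[2])$ via the Segal condition: by saturation, $\laxcube{2}\sqcup_{\laxcube{1}}\laxcube{2}\sqcup_{\laxcube{1}}\laxcube{2}\simeq\square[1,3]$ is representable, and the first map is the spine/active map $\square[1,1]\to\square[1,3]$ given by $[1]\to[3]$, $0\mapsto0$, $1\mapsto3$.

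\textbf{Step 1: the composition square.} Write $c\colon\laxcube{2}\to\square[1,3]$ for the active map. I would show that
\begin{equation*}
    \begin{tikzcd}
        \iota\partial\laxcube{2} & \iota\partial\square[1,3] \\
        \laxcube{2} & \square[1,3]
        \arrow[from=1-1, to=1-2]
        \arrow[from=1-1, to=2-1]
        \arrow[from=1-2, to=2-2]
        \arrow["c", from=2-1, to=2-2]
    \end{tikzcd}
\end{equation*}
is a pushout. This is not immediate, since $\square[1,3]$ is not generated by its boundary plus one cell. So I would instead fold in the connections first. The second map $\gamma^+\sqcup\id\sqcup\gamma^-$ collapses the outer two squares, and the total composite $\psi$ factors as
\begin{equation*}
    \laxcube{2}\;\too\;\square[1,3]\;\too\;\square[1,3]\sqcup_{\square[1,1]\sqcup\square[1,1]}(\laxcube{1}\sqcup\laxcube{1})\;\simeq\;\laxcube{2}.
\end{equation*}
Here the middle pushout replaces the two outer squares by their images under $\gamma^\pm$. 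This collapse is itself a pushout along the connection maps, so the whole question reduces to a statement about $\Map(-,X)$ for a Segal sheaf $X$.

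\textbf{Step 2: direct mapping-space computation.} Concretely, $\Map(\laxcube{2},X)=X_{1,1}$, and by the Segal condition $X(\square[1,3])\simeq X_{1,1}\times_{X_{0,1}}X_{1,1}\times_{X_{0,1}}X_{1,1}$. The map $\psi^*\colon X_{1,1}\to X_{1,1}$ sends a square $\sigma$ to the composite $\gamma^{+*}(\sigma_\partial)\circ\sigma\circ\gamma^{-*}(\sigma_\partial)$, where the outer factors depend only on the boundary edges of $\sigma$. The map $\partial\psi^*$ on boundaries sends $(h_0,v_0,v_1,h_1)$ to $(\mathrm{id},\,h_0 v_1,\,v_0 h_1,\,\mathrm{id})$; this uses the degeneracy relations for $\gamma^\pm$ recorded in \cref{subsec:maps-of-lax-cubes}. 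The required isomorphism should then follow by writing $X_{1,1}$ fibered over the boundary and noting that, on fibers, $\psi^*$ is the identity. To see this, horizontal composition of a square with a connection square built from its own boundary edges behaves as an equivalence on fibers over fixed boundary data: composing with the connection squares is a bijection between squares with boundary $(h_0,v_0,v_1,h_1)$ and squares with boundary $(\mathrm{id},h_0v_1,v_0h_1,\mathrm{id})$. Given that, $X_{1,1}$ is the base change of $X_{1,1}\to X(\partial\laxcube{2})$ along $\partial\psi^*$, which is exactly the pullback condition.

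\textbf{Main obstacle.} I expect the hardest step to be justifying that fiberwise composition with connection squares is an equivalence. In a strict setting this is the standard fact that connections implement "foldings", with inverse given by composing with the other connections. Here it must hold in an arbitrary Segal sheaf $X$ on $\grid[2]$, not just a strict one. I would first try to realize the inverse via the relations $\gamma^+\sqcup\gamma^-=$ degeneracy: compose with opposite connection squares on a larger grid $\square[1,5]$ or $\square[3,3]$, and use the Segal condition together with the degeneracy identities to show that both composites are homotopic to the identity, natural in $X$. Since all the maps involved are maps of representables in $\grid[2]$, these homotopies can be checked as equalities of maps of strict lax grids, which come from the explicit combinatorics in \cite{Al-Agl-Brown-Steiner-2002-strict-globular-cubical}.
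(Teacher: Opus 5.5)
Your Step 2 reduction is the right one, and it is the one the paper uses. The square is a pushout iff $z\mapsto(\psi^*z,\delta^*z)$ is an equivalence $\Map(\laxcube{2},X)\to\Map(\laxcube{2},X)\times_{\Map(\iota\partial\laxcube{2},X)}\Map(\iota\partial\laxcube{2},X)$ for every $X\in\Seg(\grid[2])$. Equivalently, $\psi^*$ must carry the fibre over a boundary $y$ equivalently onto the fibre over $\partial\psi^*(y)$; note that $\psi^*$ is not ``the identity on fibres'', since it covers the nontrivial map $\partial\psi^*$.

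However, the proposal stops exactly where the content of the lemma begins: you never construct the inverse and instead defer it as the ``main obstacle''. Your opening plan of splitting $\psi$ into two pushout squares also fails outright, rather than merely being ``not immediate''. Take $X$ to be the cubical nerve of the strict $2$-category with one object, one $1$-cell and monoid of $2$-cells $\ZZ_{\ge 0}$. Maps $\square[1,3]\to X$ form $\ZZ_{\ge0}^3$, whereas the square with $c\colon\laxcube{2}\to\square[1,3]$, if it were a pushout, would give $\ZZ_{\ge0}$.

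The missing idea is \emph{which} pasting inverts $\psi^*$. Padding $x$ further in the direction in which $\psi$ composes (your $\square[1,5]$) cannot work. The edges of the result in that direction are composites containing $gf$ (resp.\ $kh$), so they can never equal $f$ for general boundary data; take $f=\mathrm{id}_a$ and $g\colon a\to b$ with no map $b\to a$. The inverse must paste in the perpendicular direction and must use the boundary data $y=(f,g,h,k)$, not just $x$.

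The paper takes $B(x,y)$ to be the vertical composite of three rows: first the degenerate square $\sigma_2^*f$ next to the connection $(\gamma^+)^*g$; then $x$; then $(\gamma^-)^*h$ next to the degenerate square $\sigma_2^*k$. This composite has boundary $y$. One gets $BA(z)\simeq z$ because the vertical composites of $(\gamma^+)^*h$ over $(\gamma^-)^*h$ and of $(\gamma^+)^*g$ over $(\gamma^-)^*g$ are the horizontal degeneracies $\sigma_1^*h$ and $\sigma_1^*g$, and all other cells are degenerate. The identity $AB(x,y)\simeq(x,y)$ uses the dual fact that horizontal composites of connections are vertical degeneracies.

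With this construction supplied, your outline becomes the paper's proof. Since the construction is defined on the whole fibre product, naturality in $y$ is automatic. Without it, the lemma is not yet proved.
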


    \begin{proof}
        Denote the inclusion of the boundary by $\delta\colon \iota\partial \laxcube{2}\to \laxcube{2}$.
        We need to prove that for every $X\in\Seg(\grid[2])$, the canonical map
        \begin{equation*}
            A\;\colon\;\Map_{\Seg(\grid[2])}(\laxcube{2}, X) \;\too\; \Map_{\Seg(\grid[2])}(\laxcube{2}, X) \times_{\Map_{\Seg(\grid[2])}(\iota \partial \laxcube{2}, X)} {\Map_{\Seg(\grid[2])}(\iota \partial \laxcube{2}, X)}
        \end{equation*}
        is an isomorphism.
        An element of the pullback on the right is given by a pair of maps
        \begin{equation*}
            x \;\colon\; \laxcube{2} \;\too\; X, \qquad
            y \;\colon\; \iota \partial \laxcube{2} \;\too\; X
        \end{equation*}
        together with an identification $\delta^*x\simeq (\iota\partial\psi)^*y$, which we keep implicit.
        The data of a map $\iota\partial \laxcube{2}\to X$ is a non-commuting square in $X$, i.e.\ a choice of four arrows with compatible sources and targets.
        Thus, we can denote the faces composing $y$ and $x$ as follows:
        \begin{equation*}
        y \;=\;
        \begin{tikzcd}
            a & b \\
            c & d
            \arrow["f", from=1-1, to=1-2]
            \arrow["h"', from=1-1, to=2-1]
            \arrow["g", from=1-2, to=2-2]
            \arrow["k"', from=2-1, to=2-2]
        \end{tikzcd}
        \hspace{30pt}
        x \;=\;
        \begin{tikzcd}[sep=2.25em]
            a & d \\
            a & d.
            \arrow["gf", from=1-1, to=1-2]
            \arrow[equals, from=1-1, to=2-1]
            \arrow["x"{description}, Rightarrow, from=1-2, to=2-1]
            \arrow[equals, from=1-2, to=2-2]
            \arrow["kh"', from=2-1, to=2-2]
        \end{tikzcd}\end{equation*}
        
        The above map sends $z\colon \laxcube{2}\to X$ to the pair $A(z)=(\psi^*z,\delta^*z)$, and $\psi^*z$ is given by composing connections to the left and right of $z$:
        \begin{equation*}
            \begin{tikzcd}[sep=2.25em]
                a & b && a & a & b & d \\
                c & d && a & c & d & {d.}
                \arrow["f", from=1-1, to=1-2]
                \arrow["h"', from=1-1, to=2-1]
                \arrow["z"{description}, Rightarrow, from=1-2, to=2-1]
                \arrow[""{name=0, anchor=center, inner sep=0}, "g", from=1-2, to=2-2]
                \arrow[equals, from=1-4, to=1-5]
                \arrow[""{name=1, anchor=center, inner sep=0}, equals, from=1-4, to=2-4]
                \arrow["f", from=1-5, to=1-6]
                \arrow["{({\gamma^{+})^*h}}"{description}, Rightarrow, from=1-5, to=2-4]
                \arrow["h"{description}, from=1-5, to=2-5]
                \arrow["g", from=1-6, to=1-7]
                \arrow["z"{description}, Rightarrow, from=1-6, to=2-5]
                \arrow["g"{description}, from=1-6, to=2-6]
                \arrow["{({\gamma^{-})^*g}}"{description}, Rightarrow, from=1-7, to=2-6]
                \arrow[equals, from=1-7, to=2-7]
                \arrow["k"', from=2-1, to=2-2]
                \arrow["h"', from=2-4, to=2-5]
                \arrow["k"', from=2-5, to=2-6]
                \arrow[equals, from=2-6, to=2-7]
                \arrow[between={0.4}{0.6}, maps to, from=0, to=1]
            \end{tikzcd}
        \end{equation*}
        We construct a map in the opposite direction
        \begin{equation*}
            B\colon \Map_{\Seg(\grid[2])}(\laxcube{2},X) \times_{\Map_{\Seg(\grid[2])}(\iota\partial\laxcube{2},X)}{\Map_{\Seg(\grid[2])}(\iota\partial\laxcube{2},X)}
            \;\too\; \Map_{\Seg(\grid[2])}(\laxcube{2},X)
        \end{equation*}
        by sending $(x,y)$ as above to the following composition:
        \begin{equation*}\begin{tikzcd}[sep=2.25em]
            a & c & c \\
            a & c & d \\
            a & c & d \\
            c & c & d.
            \arrow["f", from=1-1, to=1-2]
            \arrow[equals, from=1-1, to=2-1]
            \arrow[equals, from=1-2, to=1-3]
            \arrow["{\sigma_2^*f}"{description}, Rightarrow, from=1-2, to=2-1]
            \arrow[equals, from=1-2, to=2-2]
            \arrow["{(\gamma^{+})^*g}"{description}, Rightarrow, from=1-3, to=2-2]
            \arrow["g", from=1-3, to=2-3]
            \arrow["f"{description}, from=2-1, to=2-2]
            \arrow[equals, from=2-1, to=3-1]
            \arrow[""{name=0, anchor=center, inner sep=0}, "g"{description}, from=2-2, to=2-3]
            \arrow[equals, from=2-3, to=3-3]
            \arrow[""{name=1, anchor=center, inner sep=0}, "h"{description}, from=3-1, to=3-2]
            \arrow["h"', from=3-1, to=4-1]
            \arrow["k"{description}, from=3-2, to=3-3]
            \arrow["{(\gamma^{-})^*h}"{description}, Rightarrow, from=3-2, to=4-1]
            \arrow[equals, from=3-2, to=4-2]
            \arrow["{\sigma_2^*k}"{description}, Rightarrow, from=3-3, to=4-2]
            \arrow[equals, from=3-3, to=4-3]
            \arrow[equals, from=4-1, to=4-2]
            \arrow["k"', from=4-2, to=4-3]
            \arrow["x"{description}, between={0.1}{0.9}, Rightarrow, from=0, to=1]
        \end{tikzcd}\end{equation*}
        We claim that $A$ and $B$ are inverse to each other.
        Let $z\colon \laxcube{2}\to X$.
        Then $BA(z)$ is the map $\laxcube{2}\to X$ given by the following composition:
        \begin{equation*}
            \begin{tikzcd}[sep=2.25em]
                a & a & b & b \\
                a & a & b & d \\
                a & c & d & d \\
                c & c & d & {d.}
                \arrow[equals, from=1-1, to=1-2]
                \arrow[equals, from=1-1, to=2-1]
                \arrow["f", from=1-2, to=1-3]
                \arrow["{\sigma_{\{1,2\}}^*a}"{description}, Rightarrow, from=1-2, to=2-1]
                \arrow[equals, from=1-2, to=2-2]
                \arrow[equals, from=1-3, to=1-4]
                \arrow["{{\sigma_2^*f}}"{description}, Rightarrow, from=1-3, to=2-2]
                \arrow[equals, from=1-3, to=2-3]
                \arrow["{{({\gamma^{+})^*g}}}"{description}, Rightarrow, from=1-4, to=2-3]
                \arrow["g", from=1-4, to=2-4]
                \arrow[equals, from=2-1, to=2-2]
                \arrow[equals, from=2-1, to=3-1]
                \arrow["f"{description}, from=2-2, to=2-3]
                \arrow["{{({\gamma^{+})^*h}}}"{description}, Rightarrow, from=2-2, to=3-1]
                \arrow["h"{description}, from=2-2, to=3-2]
                \arrow["g"{description}, from=2-3, to=2-4]
                \arrow["z"{description}, Rightarrow, from=2-3, to=3-2]
                \arrow["g"{description}, from=2-3, to=3-3]
                \arrow["{{({\gamma^{-})^*g}}}"{description}, Rightarrow, from=2-4, to=3-3]
                \arrow[equals, from=2-4, to=3-4]
                \arrow["h"{description}, from=3-1, to=3-2]
                \arrow["h"', from=3-1, to=4-1]
                \arrow["k"{description}, from=3-2, to=3-3]
                \arrow["{{({\gamma^{-})^*h}}}"{description}, Rightarrow, from=3-2, to=4-1]
                \arrow[equals, from=3-2, to=4-2]
                \arrow[equals, from=3-3, to=3-4]
                \arrow["{{\sigma_2^*k}}"{description}, Rightarrow, from=3-3, to=4-2]
                \arrow[equals, from=3-3, to=4-3]
                \arrow["{\sigma_{\{1,2\}}^*d}"{description}, Rightarrow, from=3-4, to=4-3]
                \arrow[equals, from=3-4, to=4-4]
                \arrow[equals, from=4-1, to=4-2]
                \arrow["k"', from=4-2, to=4-3]
                \arrow[equals, from=4-3, to=4-4]
            \end{tikzcd}
        \end{equation*}
        The vertical compositions of connections yield horizontal degeneracies $\sigma_1^* h$ and $\sigma_1^*g$, respectively.
        It follows that the above composition is naturally isomorphic to $z$.
        
        The verification that $AB(x,y)\simeq (x,y)$ is similar, using instead that horizontal compositions of connections produce vertical degeneracies.
    \end{proof}

    \begin{lemma} 
    \label{lem:pushout-fold-map}
        For every $1\le i\le n-1$, the following is a pushout square in $\Seg(\grid)$:
        \begin{equation*}
            \begin{tikzcd}
                {\iota\partial\laxcube{n}} & {\iota\partial\laxcube{n}} \\
                {\laxcube{n}} & {\laxcube{n}.}
                \arrow[""{name=0, anchor=center, inner sep=0}, "{\iota\partial\psi_i}", from=1-1, to=1-2]
                \arrow[from=1-1, to=2-1]
                \arrow[from=1-2, to=2-2]
                \arrow["{\psi_i}", from=2-1, to=2-2]
                \arrow["\lrcorner"{anchor=center, pos=0.125, rotate=180}, draw=none, from=2-2, to=0]
            \end{tikzcd}
        \end{equation*}
    \end{lemma}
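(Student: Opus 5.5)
Since $\psi_i = \id\xlaxs\psi\xlaxs\id$ on $\laxcube{i-1}\xlaxs\laxcube{2}\xlaxs\laxcube{n-i-1}$, the natural plan is to deduce the statement from \cref{lem:pushout-basic-fold-map} by tensoring with the cube $\laxcube{i-1}\xlaxs(-)\xlaxs\laxcube{n-i-1}$ and controlling boundaries. By \cref{exm:gray-is-strict-on-rep}, $\square_!$ sends external products to Gray products, so $\laxcube{n}\simeq\laxcube{i-1}\xlax\laxcube{2}\xlax\laxcube{n-i-1}$ in $\Shv(\grid)$, and $\psi_i$ is $\id\xlax\psi\xlax\id$. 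Since the Gray product on $\Shv(\grid[])$ is presentably monoidal (\cref{cor:gray-product}), it preserves colimits in each variable. Applying $\laxcube{i-1}\xlax(-)\xlax\laxcube{n-i-1}$ to the pushout of \cref{lem:pushout-basic-fold-map} therefore gives a pushout square
\begin{equation*}
    \begin{tikzcd}
        {\laxcube{i-1}\xlax\iota\partial\laxcube{2}\xlax\laxcube{n-i-1}} & {\laxcube{i-1}\xlax\iota\partial\laxcube{2}\xlax\laxcube{n-i-1}} \\
        {\laxcube{n}} & {\laxcube{n}}
        \arrow["{\id\xlax\iota\partial\psi\xlax\id}", from=1-1, to=1-2]
        \arrow[from=1-1, to=2-1]
        \arrow[from=1-2, to=2-2]
        \arrow["{\psi_i}", from=2-1, to=2-2]
    \end{tikzcd}
\end{equation*}
in $\Shv(\grid)$. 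This holds after first working in $\Shv(\grid[])$ and noting that everything lies in the image of $\Shv(\grid[n])$, or directly via the functor $\Shv(\grid[i-1])\otimes\Shv(\grid[2])\otimes\Shv(\grid[n-i-1])\to\Shv(\grid[n])$.

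It remains to replace the top row by $\iota\partial\laxcube{n}$. I would describe $\iota\partial\laxcube{n}$ as an iterated pushout, analogous to \cref{cor:boundary-of-external-product}: transported along $\square_!$, which commutes with boundaries on representables by \cref{lem:square-bc-on-representables}, we have
\begin{equation*}
    \iota\partial(A\xlax B\xlax C)\simeq \colim\big(\text{the punctured cube formed by } \iota\partial A\to A,\ \iota\partial B\to B,\ \iota\partial C\to C\big)
\end{equation*}
for $A=\laxcube{i-1}$, $B=\laxcube{2}$, $C=\laxcube{n-i-1}$. Concretely, one applies $\square_!$ to the $\deltaCat^n$ version, where \cref{cor:boundary-of-external-product} applies. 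In this decomposition the map $\iota\partial\psi_i$ is the map induced on the punctured cube by $\psi$ and $\iota\partial\psi$ in the middle coordinate, and by identities in the other coordinates. Hence the square in the lemma is the colimit of squares indexed by the punctured cube. The summands containing $B$ in the middle slot are pushouts by the tensored version of \cref{lem:pushout-basic-fold-map}, applied with $\iota\partial A$ or $\iota\partial C$ in place of $A$ or $C$. The summands containing $\iota\partial B$ in the middle have identical top and bottom rows, namely $\iota\partial\psi$ tensored with identities, so they are trivially pushouts, with vertical maps identities.

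The cleanest way to organize this is as follows. Write $\laxcube{n}\simeq A\xlax B\xlax C$ and the boundary as the pushout
\begin{equation*}
    \iota\partial\laxcube{n}\simeq (A\xlax\iota\partial B\xlax C)\sqcup_{\partial'(A\xlax\iota\partial B\xlax C)}\partial'(A\xlax B\xlax C),
\end{equation*}
where $\partial'$ denotes the part coming from the boundaries of $A$ and $C$. Pushout pasting then shows that the original square is a pushout exactly when the square with top row $A\xlax\iota\partial B\xlax C$ is, which is the tensored square above. The extra piece $\partial'(A\xlax B\xlax C)$ maps to itself compatibly through $\id_{\partial' A\text{-}C}\xlax\psi$, and it glues on the same way at both ends because $\psi$ restricts on it as tensoring commands.

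The main obstacle is justifying the boundary decomposition of a Gray product in $\Shv(\grid)$, together with the compatibility of $\iota\partial\psi_i$ with it. \cref{rem:inclusion-of-grids} warns that the nice properties of $\iota$ are not yet available for grids. I would therefore work only with representables and $\square_!$ of boundaries of $[1]^n$, which is legitimate by \cref{lem:square-bc-on-representables}. I would also check the naturality in $\psi$, a non-simplicial map, by evaluating on the elementary subcubes.
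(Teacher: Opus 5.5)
Your argument is correct in substance, and its route differs from the paper's in a way that matters. For $i=1$ the paper decomposes only the top row. It writes the lemma's square as a colimit, over the span $\laxcube{2}\xlax\iota\partial\laxcube{n-2}\leftarrow\iota\partial\laxcube{2}\xlax\iota\partial\laxcube{n-2}\to\iota\partial\laxcube{2}\xlax\laxcube{n-2}$, of three squares that all keep the bottom row $\psi_1\colon\laxcube{n}\to\laxcube{n}$, and it asserts that each is a pushout.

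Only the square with top $\iota\partial\laxcube{2}\xlax\laxcube{n-2}$ is a pushout; it is your tensored square $E$. The other two have top corners that do not contain the faces $x_1=0,1$ of $\laxcube{n}$. Since $\psi_1$ sends those faces to degenerate cubes, they are unconstrained in the other two pushouts. Concretely, take $n=3$ and the strict functor $\laxcube{3}\to\laxdisk{2}$ that is constant on both $x_3$-faces, sends the direction-$3$ edges over $x_2=0$ and $x_2=1$ to the two parallel $1$-cells, and sends both $x_1$-faces to the $2$-cell. It gives a point of either relevant pullback of mapping spaces that is not of the form $\psi_1^*w$. The paper's justification ``Gray product of two degenerate squares'' fails because a pointwise Gray product of pushout squares need not be a pushout.

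Your order of operations avoids this: first handle the interior with $E$, then handle the boundary by pasting with a square lying entirely over $\iota\partial\laxcube{n}$. Stated compactly, the lemma's square is the image of the pushout of \cref{lem:pushout-basic-fold-map} under the pushout-product with $\iota\partial\laxcube{i-1}\to\laxcube{i-1}$ on the left and $\iota\partial\laxcube{n-i-1}\to\laxcube{n-i-1}$ on the right. That operation on arrows preserves colimits and sends isomorphisms to isomorphisms, so it preserves cobase changes.

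What you should repair is the sentence ``the square in the lemma is the colimit of squares indexed by the punctured cube''. In the squares you actually list, the top row at $(X,Y,Z)$ is $X\xlax\iota\partial B\xlax Z$ and the bottom row is $X\xlax Y\xlax Z$. The top rows are constant in the middle coordinate, so their colimit is $A\xlax\iota\partial B\xlax C$; the bottom rows assemble to $\iota\partial\laxcube{n}$.

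So that paragraph does not prove the lemma's square is a pushout. It proves that a comparison square $F$ is a pushout, where $F$ has top $\id\xlax\iota\partial\psi\xlax\id$ on $A\xlax\iota\partial B\xlax C$, bottom $\iota\partial\psi_i$ on $\iota\partial\laxcube{n}$, and the inclusions as vertical maps. That is exactly what your pasting step needs, since stacking $F$ on the lemma's square gives $E$. Cite it there in place of ``glues on the same way at both ends''.

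As for the compatibility you flag, the comparison map from the pushout-product domain to $\iota\partial(X\xlax Y)$ can be built naturally in $X$ and $Y$. It comes from the counits of $\iota\dashv\partial$ together with $\iota(U)\xlax Y\simeq\iota(U\xlax Y)$. Naturality in the non-simplicial map $\psi$ is therefore automatic. You then only need the isomorphism on cubes, which \cref{lem:square-bc-on-representables} and \cref{cor:boundary-of-external-product} provide.
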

    
    \begin{proof}
        We will show the case $i=1$, the rest are similar.
        Consider the following pushout square in $\Seg(\deltaCat^n)$ by~\cref{cor:boundary-of-external-product}, and its image in $\Seg(\grid)$ under $\square_!$ by~\cref{lem:square-bc-on-representables,exm:gray-is-strict-on-rep}:
        \begin{equation*}
            \begin{tikzcd}
                {\iota\partial[1]^2\boxtimes\iota\partial[1]^{n-2}} & {[1]^2\boxtimes\iota\partial[1]^{n-2}} && {\iota\partial\laxcube{2}\xlax\iota\partial\laxcube{n-2}} & {\laxcube{2}\xlax\iota\partial\laxcube{n-2}} \\
                {\iota\partial[1]^2\boxtimes[1]^{n-2}} & {\iota \partial[1]^{n}} && {\iota\partial \laxcube{2}\xlax\laxcube{n-2}} & {\iota\partial\laxcube{n}.}
                \arrow[""{name=0, anchor=center, inner sep=0}, from=1-1, to=1-2]
                \arrow[from=1-1, to=2-1]
                \arrow[""{name=1, anchor=center, inner sep=0}, from=1-2, to=2-2]
                \arrow[""{name=2, anchor=center, inner sep=0}, from=1-4, to=1-5]
                \arrow[""{name=3, anchor=center, inner sep=0}, from=1-4, to=2-4]
                \arrow[from=1-5, to=2-5]
                \arrow[from=2-1, to=2-2]
                \arrow[from=2-4, to=2-5]
                \arrow["{\square_!}", between={0.4}{0.6}, maps to, from=1, to=3]
                \arrow["\lrcorner"{anchor=center, pos=0.125, rotate=180}, draw=none, from=2-2, to=0]
                \arrow["\lrcorner"{anchor=center, pos=0.125, rotate=180}, draw=none, from=2-5, to=2]
            \end{tikzcd}
        \end{equation*}
        Recall that $\psi_1=\psi\xlaxs\id_{\laxcube{n-2}}=\psi\xlax\id_{\laxcube{n-2}}$.
        It suffices to prove that the required square is a pushout square after replacing $\iota\partial\psi_1$ with each of its restrictions
        \begin{equation*}
            \begin{tikzcd}
                {\iota\partial\laxcube{2}\xlax\iota\partial\laxcube{n-2}} & {\iota\partial\laxcube{2}\xlax\iota\partial\laxcube{n-2}} \\
                {\laxcube{n}} & {\laxcube{n},}
                \arrow[""{name=0, anchor=center, inner sep=0}, "{\iota\partial \psi\xlax \id}", from=1-1, to=1-2]
                \arrow[from=1-1, to=2-1]
                \arrow[from=1-2, to=2-2]
                \arrow["{\psi \xlax \id}", from=2-1, to=2-2]
                \arrow["\lrcorner"{anchor=center, pos=0.125, rotate=180}, draw=none, from=2-2, to=0]
            \end{tikzcd}
            \qquad
            \begin{tikzcd}
                {\iota\partial \laxcube{2}\xlax\laxcube{n-2}} & {\iota\partial \laxcube{2}\xlax\laxcube{n-2}} \\
                {\laxcube{n}} & {\laxcube{n},}
                \arrow[""{name=0, anchor=center, inner sep=0}, "{\iota\partial\psi \xlax \id}", from=1-1, to=1-2]
                \arrow[from=1-1, to=2-1]
                \arrow[from=1-2, to=2-2]
                \arrow["{\psi \xlax \id}", from=2-1, to=2-2]
                \arrow["\lrcorner"{anchor=center, pos=0.125, rotate=180}, draw=none, from=2-2, to=0]
            \end{tikzcd} 
        \end{equation*}
        \begin{equation*}
            \begin{tikzcd}
                {\laxcube{2}\xlax\iota\partial\laxcube{n-2}} & {\laxcube{2}\xlax\iota\partial\laxcube{n-2}} \\
                {\laxcube{n}} & {\laxcube{n}.}
                \arrow[""{name=0, anchor=center, inner sep=0}, "{\psi \xlax \id}", from=1-1, to=1-2]
                \arrow[from=1-1, to=2-1]
                \arrow[from=1-2, to=2-2]
                \arrow["{\psi \xlax \id}", from=2-1, to=2-2]
                \arrow["\lrcorner"{anchor=center, pos=0.125, rotate=180}, draw=none, from=2-2, to=0]
            \end{tikzcd}
        \end{equation*}
        The first two are pushout squares by~\cref{lem:pushout-basic-fold-map}, using the fact that $\xlax$ commutes with colimits in each variable.    
        The last square is the Gray product of two degenerate squares.
    \end{proof}

    \begin{proposition}
    \label{prop:reconstructing-cubes}
        The following is a pushout square in $\Seg(\grid)$:
        \begin{equation*}\begin{tikzcd}
            {\Fold^\L\iota\laxdiskb{n}} & {\iota\partial\laxcube{n}} \\
            {\Fold^\L\laxdisk{n}} & {\laxcube{n}.}
            \arrow[from=1-1, to=1-2]
            \arrow[from=1-1, to=2-1]
            \arrow[from=1-2, to=2-2]
            \arrow[from=2-1, to=2-2]
            \arrow["\lrcorner"{anchor=center, pos=0.125, rotate=180}, draw=none, from=2-2, to=1-1]
        \end{tikzcd}\end{equation*}
    \end{proposition}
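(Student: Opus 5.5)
The paper has already set up the relevant commutative diagram: a left square with corners $\iota\partial\laxcube{n}$, $\Fold^\L\iota\partial\laxdisk{n}$, $\laxcube{n}$, $\Fold^\L\laxdisk{n}$; a right square, which is the one in the statement; and an outer rectangle whose horizontal maps are $\iota\partial\Phi_n$ and $\Phi_n$. My plan is the standard pasting argument. First I show that the outer rectangle and the left square are pushouts. Then, because the horizontal composites on the top and bottom are idempotent-like retraction data, I deduce that the right square is a pushout.

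\textbf{Outer rectangle.} $\Phi_n$ is a composite of the maps $\psi_i$, and each $\iota\partial\psi_i$ sits in a pushout square by \cref{lem:pushout-fold-map}. Stacking these squares horizontally in the order prescribed by $\Phi_n=\psi_{n-1}(\psi_{n-2}\psi_{n-1})\cdots(\psi_1\cdots\psi_{n-1})$ gives a composite of pushout squares. The outer rectangle is therefore a pushout.

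\textbf{Left square.} $\Fold^\L=\square_!\G^*$ is a left adjoint and commutes with $\iota$. So I apply $\square_!$ to the pushout square of \cref{lem:n-uple-globe-pushout} in $\Shv(\deltaCat^n)$. Using \cref{lem:square-bc-on-representables} to identify $\square_!\iota\partial[1]^n\simeq\iota\partial\laxcube{n}$, and $\square_!\G^*\iota\partial\laxdisk{n}\simeq\Fold^\L\iota\partial\laxdisk{n}$ via the third square of \cref{lem:globe-commute-boundary}, this yields exactly the left square as a pushout. I must check that the maps agree with those defined before \cref{lem:fold-map-in-grids} and \cref{cor:boundary-fold-map-in-grids}. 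This holds by construction, since both sides were defined by applying $\square_!$ to $[1]^n\to\G^*\laxdisk{n}$ and its boundary.

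\textbf{Right square (main obstacle).} Pasting only tells us that the right square is a pushout \emph{after} precomposing with the left square. To conclude I use the retraction structure. The composite $\laxcube{n}\to\Fold^\L\laxdisk{n}\to\laxcube{n}$ is $\Phi_n$. The other composite, $\Fold^\L\laxdisk{n}\to\laxcube{n}\to\Fold^\L\laxdisk{n}$, should be the identity: by adjunction it corresponds to $\laxdisk{n}\to\Fold\laxcube{n}\to\Fold\Fold^\L\laxdisk{n}$. Because $[1]^n\to\G^*\laxdisk{n}$ picks the generating cell and $\pi_n$ retracts the top cell (\cref{lem:collapse-is-retract}), this reduces to the unit, and the same holds on boundaries. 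Granting this, the middle objects are retracts of the cubes, compatibly with the squares. The right square is then a retract of the outer rectangle in the arrow category of squares, and a retract of a pushout square is a pushout.

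If verifying that the composite is the identity turns out hard, I would fall back on the pasting lemma in the form: the left square is a pushout and its top map is an effective epimorphism onto a retract. With this, any cocone under the right square pulls back to one under the outer rectangle, and its induced map factors uniquely because $\Fold^\L\laxdisk{n}\to\laxcube{n}$ has a section.
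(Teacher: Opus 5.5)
Your arguments for the outer rectangle and for the left square match the paper's and are correct. The trouble is your third step, which rests on a misremembered pasting lemma. For pushouts the lemma says: in a horizontal composite of two squares, if the \emph{left} square is a pushout, then the right square is a pushout if and only if the outer rectangle is. In formulas, $E\sqcup_B C\simeq(D\sqcup_A B)\sqcup_B C\simeq D\sqcup_A C$. What fails in general is recovering the left square from the right square and the composite. You have transposed the pullback version, where it is the right square that must be cartesian. So the two facts you prove already give the proposition, and this is exactly how the paper concludes.

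As written, though, your replacement argument has a real gap. Write $u\colon\laxcube{n}\to\Fold^\L\laxdisk{n}$ and $r\colon\Fold^\L\laxdisk{n}\to\laxcube{n}$. The claim $u\circ r\simeq\id$ does not follow from \cref{lem:collapse-is-retract}. That lemma is a statement about strict categories, whereas $u\circ r$ is an endomorphism of $\Fold^\L\laxdisk{n}=\square_!\G^*\laxdisk{n}$, an object you do not yet control. Via the left pushout, proving the claim would require three things: $u\circ\Phi_n\simeq u$ (the connections making up $\Phi_n$ act trivially on the generating cell of $\square_!\G^*\laxdisk{n}$); the corresponding statement on $\Fold^\L\iota\partial\laxdisk{n}$; and compatibility of these homotopies. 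You would then still have to upgrade everything to a retraction in the category of squares. This is essentially a consequence of the main theorem rather than an input to it. Your fallback is also off: for $n\ge 2$, $r$ has no section, since a posteriori it is the top-cell inclusion $\laxdisk{n}\to\laxcube{n}$. In any case, no epimorphism or splitting hypothesis is needed. Delete the third step and invoke the pasting lemma.
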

    
    \begin{proof}
        Recall the commutative diagram from~\cref{lem:fold-map-in-grids,cor:boundary-fold-map-in-grids}:
        \begin{equation*}\begin{tikzcd}
            {\iota\partial\laxcube{n}} & {\Fold^\L\iota\laxdiskb{n}} & {\iota\partial\laxcube{n}} \\
            {\laxcube{n}} & {\Fold^\L\laxdisk{n}} & {\laxcube{n}.}
            \arrow[from=1-1, to=1-2]
            \arrow["{\iota\partial\Phi_{n}}", curve={height=-12pt}, from=1-1, to=1-3]
            \arrow[from=1-1, to=2-1]
            \arrow[from=1-2, to=1-3]
            \arrow[from=1-2, to=2-2]
            \arrow[from=1-3, to=2-3]
            \arrow[from=2-1, to=2-2]
            \arrow["{\Phi_{n}}"', curve={height=12pt}, from=2-1, to=2-3]
            \arrow[from=2-2, to=2-3]
        \end{tikzcd}\end{equation*}
        To prove that the right square is a pushout, it suffices to prove that the left square and the outer rectangle are pushouts.
        For the outer rectangle it follows from~\cref{lem:pushout-fold-map}, where we recall that
        \begin{equation*}
            \Phi_{n} \;=\; \psi_{n-1} (\psi_{n-2} \psi_{n-1}) \cdots (\psi_1 \psi_2 \cdots \psi_{n-1}).
        \end{equation*}
        The left square is obtained by applying $\square_!$ to the square 
        \begin{equation*}\begin{tikzcd}
            {\iota\partial[1]^{n}} & {\G^*\iota\laxdiskb{n}} \\
            {[1]^{n}} & {\G^*\laxdisk{n}}
            \arrow[""{name=0, anchor=center, inner sep=0}, from=1-1, to=1-2]
            \arrow[from=1-1, to=2-1]
            \arrow[from=1-2, to=2-2]
            \arrow[from=2-1, to=2-2]
            \arrow["\lrcorner"{anchor=center, pos=0.125, rotate=180}, draw=none, from=2-2, to=0]
        \end{tikzcd}
        \hspace{10pt}
        \qin \Seg(\deltaCat^{n})\end{equation*}
        which is a pushout square by~\cref{lem:n-uple-globe-pushout}.
    \end{proof}

\subsection{The main theorem}
    
    For $Y\in \Seg(\grid)$ and $Q\in \grid$, consider the composition
    \begin{equation*}
        \Map_{\Shv(\grid)}(Q, Y) \;\too\; \Map_{\Shv(\globe)}(\Fold Q, \Fold Y) \;\simeq\; \Map_{\Shv(\grid)}(Q, \cubicalNerve \Fold Y),
    \end{equation*}
    where the last isomorphism is by the definition of $\cubicalNerve$, using the notation from~\cref{rem:yoneda-and-inclusion-of-grids}.
    This corresponds to a natural transformation 
    \begin{equation*}
        \beta \;\colon\; \id_{\Seg(\grid)} \;\Too\; \cubicalNerve\Fold.
    \end{equation*}

    \begin{proposition}
    \label{prop:counit-is-iso}
        The above natural transformation is an isomorphism
        \begin{equation*}
           \beta \;\colon\; \id_{\Seg(\grid)} \;\isoToo\; \cubicalNerve\Fold.
        \end{equation*}
    \end{proposition}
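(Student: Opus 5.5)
We argue by induction on $n$, mirroring the proof of \cref{prop:unit-is-iso}. The case $n=0$ is trivial, since $\grid[0]=\globe[0]=\pt$ and all functors involved are the identity. Let $Y\in\Seg(\grid)$. Since $\laxcube{n}$ is a weak generator of $\Shv(\grid)$, it suffices to show that
\begin{equation*}
    \beta_Y \;\colon\; \Map_{\Shv(\grid)}(\laxcube{n},Y)\;\too\;\Map_{\Shv(\grid)}(\laxcube{n},\cubicalNerve\Fold Y)\;\simeq\;\Map_{\Shv(\globe)}(\laxcube{n},\Fold Y)
\end{equation*}
is an isomorphism. The key input is \cref{prop:reconstructing-cubes}. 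It expresses $\laxcube{n}$ as the pushout of $\Fold^\L\laxdisk{n}\leftarrow\Fold^\L\iota\partial\laxdisk{n}\to\iota\partial\laxcube{n}$ in $\Shv(\grid)$. The analogous square in $\Shv(\globe)$ is the left-hand pushout of \cref{cor:pushout-square-and-globe}, whose corners are identified via $\Fold$ using \cref{rem:yoneda-and-inclusion-of-grids} and $\Fold\Fold^\L\simeq\id$ on the relevant globular pieces. Applying $\Map(-,Y)$ and $\Map(-,\Fold Y)$ to these pushouts writes both sides as pullbacks. It then suffices to check that $\beta$ induces isomorphisms on the three corners and is compatible with the squares.

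\textbf{Step 1: the boundary corner $\iota\partial\laxcube{n}$.} Here $\Map(\iota\partial\laxcube{n},Y)\simeq\Map(\partial\laxcube{n},\partial Y)$ in $\Shv(\grid[n-1])$. Since $\cubicalNerve$ and $\Fold$ both commute with $\partial$, the map $\beta$ on this corner is identified with $\beta_{\partial Y}$, which is an isomorphism by the inductive hypothesis.

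\textbf{Step 2: the corners $\Fold^\L\laxdisk{n}$ and $\Fold^\L\iota\partial\laxdisk{n}$.} By adjunction,
\begin{equation*}
    \Map_{\Shv(\grid)}(\Fold^\L\laxdisk{n},Y)\;\simeq\;\Map_{\Shv(\globe)}(\laxdisk{n},\Fold Y).
\end{equation*}
On the other side, $\Map(\Fold^\L\laxdisk{n},\cubicalNerve\Fold Y)\simeq\Map(\laxdisk{n},\Fold\cubicalNerve\Fold Y)$. I would show that the map induced by $\beta$ equals post-composition with $\alpha_{\Fold Y}$, or its inverse. This should follow from a triangle identity $\Fold\beta\circ\alpha_{\Fold}=\id$, or an equivalent identity, combined with \cref{prop:unit-is-iso}, which says $\alpha$ is invertible. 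The identity itself I would check by unwinding the definitions of $\alpha$ and $\beta$ on representables. Since $\alpha$ is an isomorphism, $\beta$ is then an isomorphism on this corner, and the same argument handles $\Fold^\L\iota\partial\laxdisk{n}$.

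\textbf{Step 3: compatibility.} It remains to verify that the maps in Steps 1 and 2 are compatible with the two legs of the square. The legs are the counit $\iota\partial\to\id$ and the maps built from $\pi_n$ and the top-cell inclusion, exactly as in \cref{lem:fold-map-in-grids}, so this compatibility follows by naturality of $\beta$ in $Q$.

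\textbf{Main obstacle.} The hardest part is Step 2, namely pinning down $\beta$ on $\Fold^\L$-objects as a triangle identity relating $\alpha$ and $\beta$. Since $\beta$ was defined through the functor $\Fold$ on mapping spaces rather than as a unit or counit, the identification has to be done by hand on representables. A possible cleaner route is to first show that $\Fold$ is fully faithful on the objects $\Fold^\L\laxdisk{n}$, $\Fold^\L\iota\partial\laxdisk{n}$ and $\iota\partial\laxcube{n}$, then use the pushout to extend this to $\laxcube{n}$, and finally to all of $\Shv(\grid)$, since $\laxcube{n}$ is a generator.
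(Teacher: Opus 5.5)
Your skeleton matches the paper's: induction on $n$, testing against the generator $\laxcube{n}$, comparing the pushout of \cref{prop:reconstructing-cubes} with the left pushout of \cref{cor:pushout-square-and-globe}, and handling the boundary corner by induction. However, Step~2 has a genuine gap, and it comes from switching framings midway.

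\textbf{The gap.} In your first paragraph you compare $\Map(-,Y)$ on the grid pushout with $\Map(-,\Fold Y)$ on the globular pushout. In Step~2 you instead apply $\Map(\Fold^\L\laxdisk{n},-)$ to $\beta_Y$, and that needs $\Fold\beta_Y$ to be invertible.
\begin{itemize}
\item The ``triangle identity'' $\Fold\beta\circ\alpha_{\Fold}=\id$ you invoke does not typecheck. $\alpha$ and $\beta$ are not the unit and counit of one adjunction, and $\alpha_{\Fold Y}$ and $\Fold\beta_Y$ are both maps $\Fold Y\to\Fold\cubicalNerve\Fold Y$.
\item The intended statement $\Fold\beta_Y\simeq\alpha_{\Fold Y}$ does hold a posteriori, but it is not a formal unwinding. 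Take a globular cell $g\colon\G[\vec{a}]\to\Fold Y$ with underlying cubical cell $w\colon\square[\vec{a}]\to Y$. The statement asks that $\Fold w\circ\alpha_{\square[\vec{a}]}\simeq g\circ\pi_{[\vec{a}]}$ as maps $\square[\vec{a}]\to\Fold Y$ in $\Shv(\globe)$. Checking this means evaluating $g$ on globular cells $\pi_{[\vec{a}]}\circ\phi$ that are not restrictions of the identity cell along maps of $\deltaCat^n$. That is a fold-absorption argument in the spirit of \cref{lem:fold-map-in-grids}, and you leave it unproved, so the $\Fold^\L$-corners are not handled.
\item Likewise, ``$\Fold\Fold^\L\simeq\id$ on the relevant globular pieces'' is not available before \cref{thm:equivalence-n}.
\item Your fallback, full faithfulness of $\Fold$ at $\Fold^\L\laxdisk{n}$, would need the counit of $\Fold^\L\dashv\Fold$ to be invertible at $\Fold^\L\laxdisk{n}$. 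That is equivalent to $\Fold^\L\eta_{\laxdisk{n}}$ being invertible, which is again unknown.
\end{itemize}

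\textbf{The fix.} No comparison between $\alpha$ and $\beta$ is needed. By definition of $\beta$, the map $\Map(\laxcube{n},\beta_Y)$ is
\[
\Map_{\Shv(\grid)}(\laxcube{n},Y)\to\Map_{\Shv(\globe)}(\Fold\laxcube{n},\Fold Y)\simeq\Map_{\Shv(\grid)}(\Fold^\L\Fold\laxcube{n},Y),
\]
which is precomposition with the counit $\epsilon\colon\Fold^\L\Fold\laxcube{n}\to\laxcube{n}$. So it suffices to show that $\epsilon$ is invertible.

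Apply $\Fold^\L$ to the left pushout of \cref{cor:pushout-square-and-globe}, using $\laxcube{n}\simeq\Fold\laxcube{n}$ from \cref{rem:yoneda-and-inclusion-of-grids}. Map the result to the pushout of \cref{prop:reconstructing-cubes}. The corner maps are:
\begin{itemize}
\item on $\Fold^\L\laxdisk{n}$ and $\Fold^\L\iota\partial\laxdisk{n}$, identities;
\item on the boundary corner, $\Fold^\L\iota\partial\Fold\laxcube{n}\simeq\iota\Fold^\L\Fold\partial\laxcube{n}\to\iota\partial\laxcube{n}$, which is invertible by the inductive hypothesis;
\item on the last corner, $\epsilon$ itself.
\end{itemize}
The cube commutes because the map $\Fold^\L\laxdisk{n}\to\laxcube{n}$ in \cref{prop:reconstructing-cubes} was defined as the adjunct of $\laxdisk{n}\to\laxcube{n}\xrightarrow{\alpha}\Fold\laxcube{n}$. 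This, not naturality of $\beta$ in $Q\in\grid$, is the real content of your Step~3, since $\Fold^\L\laxdisk{n}$ is not a lax grid. Hence $\epsilon$ is a map of pushouts that is invertible on the other three corners, so it is invertible.

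In your first framing, this says the $\Fold^\L$-corner comparison is just the adjunction equivalence $\Map(\Fold^\L\laxdisk{n},Y)\simeq\Map(\laxdisk{n},\Fold Y)$, which is invertible for free.
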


    \cref{prop:unit-is-iso,prop:counit-is-iso} immediately imply
    
    \begin{theorem}
    \label{thm:equivalence-n}
        The functors
        \begin{equation*}
            \Fold \;\colon\; \Shv(\grid) \;\longadj\; \Shv(\globe ) \;\cocolon\; \cubicalNerve.
        \end{equation*}
        are inverse equivalences of categories.
    \end{theorem}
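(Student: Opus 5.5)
The theorem follows formally once both composites $\Fold\cubicalNerve$ and $\cubicalNerve\Fold$ are identified with the identity, and both identifications are already stated in the paper. First, \cref{cor:cubical-nerve-is-segal} shows that the cubical nerve lands in Segal sheaves. So $\cubicalNerve \colon \Shv(\globe)\to\Shv(\grid)$ and $\Fold=\G_*\square^* \colon \Shv(\grid)\to\Shv(\globe)$ are honest functors between the two categories in the statement.

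Next, \cref{prop:unit-is-iso} provides a natural isomorphism $\alpha\colon \id_{\Shv(\globe)}\isoTo \Fold\cubicalNerve$. Likewise, \cref{prop:counit-is-iso} provides a natural isomorphism $\beta\colon \id_{\Shv(\grid)}\isoTo \cubicalNerve\Fold$. Two functors whose composites in both orders are naturally isomorphic to the identities are inverse equivalences of categories. In this $\infty$-categorical setting, \emph{naturally isomorphic} means an invertible morphism in the functor category, and this criterion is the standard characterization of equivalences. No compatibility between $\alpha$ and $\beta$ (triangle identities) is needed. If one wants an adjoint equivalence, either isomorphism can be promoted to one by the usual argument.

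As a consistency check, I will also record that $\Fold^\L$ is then an inverse of $\Fold$, since a left adjoint of an equivalence is its inverse. Hence $\Fold^\L\simeq\cubicalNerve$, as anticipated at the start of the section. This remark is not needed for the statement itself.

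There is no real obstacle at this step. All the content sits in the two propositions. \cref{prop:unit-is-iso} relies on \cref{lem:pushout-fold-map} and the inductive pushout of \cref{lem:n-uple-globe-pushout}. \cref{prop:counit-is-iso} presumably reduces, via the weak generator $\laxcube{n}$, to the pushout of \cref{prop:reconstructing-cubes} together with an induction on $n$ through the boundary functor. The only point to verify here is that both propositions are stated for the same pair of functors, with $\cubicalNerve$ regarded as valued in $\Shv(\grid)$, which \cref{cor:cubical-nerve-is-segal} guarantees.
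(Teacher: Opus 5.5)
Your argument is correct and matches the paper's proof: the theorem follows immediately from the natural isomorphisms $\alpha$ of \cref{prop:unit-is-iso} and $\beta$ of \cref{prop:counit-is-iso}. One nuance is that the paper proves \cref{prop:counit-is-iso} by induction on $n$, using the theorem in dimension $n-1$, so the proposition and the theorem are established together dimension by dimension; this does not affect your deduction.
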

    
    \begin{proof}[Proof of~\cref{prop:counit-is-iso}]
        We prove the claim by induction, in particular allowing us to use \cref{thm:equivalence-n} under the induction hypothesis. 
        The case $n=0$ is trivial.
        Let $Y\in \Shv(\grid)$.
        It is enough to check that $\beta_Y\colon Y\to \cubicalNerve\Fold Y$ is an isomorphism after mapping from the weak generator $\laxcube{n}$.
        By the definition of $\beta$, we need to show that 
        \begin{equation*}
            \Map_{\Shv(\grid)}(\laxcube{n},Y) \;\too\; \Map_{\Shv(\globe)}(\Fold \laxcube{n},\Fold Y)
        \end{equation*}
        is an isomorphism, or equivalently, that the counit $\Fold^\L\Fold\laxcube{n}\to \laxcube{n}$ is an isomorphism.

        By induction, and by~\cref{thm:equivalence-n}, the counit $\Fold^\L\Fold\partial\laxcube{n}\to \partial \laxcube{n}$
        is an isomorphism.
        Thus, there are isomorphisms
        \begin{equation*}
            \Fold^\L \iota \partial \Fold \laxcube{n} \;\simeq\; \iota \Fold^\L \Fold \partial \laxcube{n} \;\isotoo\; \iota \partial \laxcube{n},
        \end{equation*}
        where we use the fact that $\Fold$ commutes with $\partial$ and $\Fold^\L$ commutes with $\iota$.
        The above composition fits into a commutative cube
        \begin{equation*}
            \begin{tikzcd}
                {\Fold^\L\iota\laxdiskb{n}} && {\Fold^\L\iota\partial\Fold\laxcube{n}} & \\
                & {\Fold^\L\iota\laxdiskb{n}} && {\iota\partial\laxcube{n}} \\
                {\Fold^\L\laxdisk{n}} && {\Fold^\L\Fold\laxcube{n}} \\
                \\
                & {\Fold^\L\laxdisk{n}} && {\laxcube{n}}
                \arrow[""{name=0, anchor=center, inner sep=0}, from=1-1, to=1-3]
                \arrow[equals, from=1-1, to=2-2]
                \arrow[from=1-1, to=3-1]
                \arrow["\begin{array}{c} \substack{\sim \\ \\} \end{array}"{marking, allow upside down}, from=1-3, to=2-4]
                \arrow[from=1-3, to=3-3]
                \arrow[from=2-4, to=5-4]
                \arrow[from=3-1, to=3-3]
                \arrow[equals, from=3-1, to=5-2]
                \arrow[from=3-3, to=5-4]
                \arrow[from=5-2, to=5-4]
                \arrow[""{name=1, anchor=center, inner sep=0}, from=2-2, to=2-4, crossing over]
                \arrow[from=2-2, to=5-2, crossing over]
                \arrow["\lrcorner"{anchor=center, pos=0.125, rotate=180}, draw=none, from=3-3, to=0]
                \arrow["\lrcorner"{anchor=center, pos=0.125, rotate=180}, draw=none, from=5-4, to=1]
            \end{tikzcd}
            \qin \Shv(\grid),
        \end{equation*}
        where the front face is the pushout square of~\cref{prop:reconstructing-cubes}, and the back face is the result of applying $\Fold^\L$ to the pushout square of~\cref{cor:pushout-square-and-globe}.
        It follows that the map between the pushouts, which is the counit $\Fold^\L\Fold \laxcube{n}\to \laxcube{n}$, is also an isomorphism.
    \end{proof}

    As both $\Fold$ and $\cubicalNerve$ commute with $\partial$, they induce functors on the limits
    \begin{equation*}
        \Fold \;\colon\; \Shv(\grid[]) \;\longadj\; \Shv(\globe[]) \;\cocolon\; \cubicalNerve.
    \end{equation*}
    Both sides are also equipped with a monoidal structure: $\Shv(\grid[])$ with the one from~\cref{cor:gray-product}, and $\Shv(\globe[])$ with the Gray product of flagged $\infty$-categories constructed in~\cite{Campion-2023-Gray}.
    
    \begin{theorem}
    \label{thm:equivalence-infty}
        The functors
        \begin{equation*}
            \Fold \;\colon\; \Shv(\grid[]) \;\longadj\; \Shv(\globe[]) \;\cocolon\; \cubicalNerve.
        \end{equation*}
        are inverse equivalences of monoidal categories.
    \end{theorem}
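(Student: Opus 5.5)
We split the statement into two parts: first that $\Fold$ and $\cubicalNerve$ are inverse equivalences of the underlying categories, and then that this equivalence is monoidal.

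\textbf{The underlying equivalence.} We obtain this by passing to the limit over $n$ in \cref{thm:equivalence-n}. The functors $\Fold$ and $\cubicalNerve$ commute with $\partial$: for $\Fold$ this is the Beck--Chevalley square of \cref{lem:globe-commute-boundary}, and for $\cubicalNerve$ it is the remark following \cref{cor:cubical-nerve-is-segal}. We also check that the unit $\alpha$ and the counit $\beta$ of \cref{prop:unit-is-iso,prop:counit-is-iso} are compatible with these commutations. This holds because both are defined levelwise by precomposition with collapse maps, or by functoriality, and restrict correctly along $\grid[n-1]\subseteq\grid$ and $\globe[n-1]\subseteq\globe$. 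Hence the levelwise equivalences assemble into an equivalence of towers. Taking limits along $\partial$, using $\Shv(\grid[])\simeq\lim_n\Shv(\grid)$ and $\Shv(\globe[])\simeq\lim_n\Shv(\globe)$ from \cref{cor:gpat-filtered-colimit-strong-segal}, gives inverse equivalences $\Shv(\grid[])\simeq\Shv(\globe[])$.

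\textbf{Monoidality.} Here we use the uniqueness in \cref{thm:campion-characterization}, in its flagged form from \cite{Campion-2023-Gray}. Transport the Gray product of \cref{def:Gray-product} along $\cubicalNerve^{-1}=\Fold$ to obtain a monoidal structure on $\Shv(\globe[])$. We check three things.
\begin{enumerate}
\item The transported structure is presentably monoidal, hence closed. This follows from \cref{cor:gray-product}, since Day convolution followed by Segalification preserves colimits in each variable.
\item The lax cubes are closed under the transported product. By \cref{rem:yoneda-and-inclusion-of-grids}, $\Fold$ sends the representable $\laxcube{n}\in\Shv(\grid[])$ to the lax cube $\laxcube{n}\in\Shv(\globe[])$, and \cref{exm:gray-is-strict-on-rep} gives $\laxcube{n}\xlax\laxcube{k}\simeq\laxcube{n+k}$.
\item On the full subcategory $\cubeCat$, the transported structure is the Gray monoidal structure. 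The representables $\grid[]\subseteq\Shv(\grid[])$ form a monoidal subcategory whose monoidal structure is the strict Gray product, because Day convolution restricts to the original product on representables. So the restriction to cubes is the strict Gray product on lax cubes. This agrees with Campion's, which is also defined from the strict Gray product on $\cubeCat$, a $1$-category of strict objects.
\end{enumerate}
Campion's uniqueness then identifies the transported structure with his Gray product, so $\Fold$ is monoidal.

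\textbf{Main obstacle.} The delicate point is the coherence in step 3. Uniqueness requires the restriction to $\cubeCat$ as a monoidal structure, not only on objects. We would argue that both monoidal structures on $\cubeCat$ are restricted from the strict Gray product on $\sCatoo$ through the fully faithful inclusion of strict objects. A secondary check is that the levelwise naturalities of $\alpha$ and $\beta$ with $\partial$ hold coherently rather than objectwise. We expect this to follow because all the functors involved come from strictly commuting squares of geometric patterns.
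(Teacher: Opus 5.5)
Your proposal is correct and follows the paper's proof. You get the underlying equivalence by passing to the limit over $n$ in \cref{thm:equivalence-n}, using that $\Fold$ and $\cubicalNerve$ commute with $\partial$, and you get monoidality from Campion's uniqueness of the presentably monoidal Gray product on flagged $\infty$-categories restricting to the strict Gray product on lax cubes; the paper argues the same way and leaves implicit the checks you spell out (presentability, closure of $\cubeCat$, and coherent agreement on $\cubeCat$ via the fully faithful inclusion of strict objects).
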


    \begin{proof}
        The fact that they are inverse equivalences of categories follows from~\cref{thm:equivalence-n}.
        The fact that the equivalence is monoidal follows from the uniqueness of the Gray product~\cite[Theorem~4.1, Example~4.3]{Campion-2023-Gray}:
        it is the unique presentably monoidal structure on flagged $\infty$-categories that agrees with the strict Gray product on the full subcategory generated by $\laxcube{n}$, $n<\infty$.
    \end{proof}

    Finally, we show that this equivalence restricts to univalent sheaves, thereby exhibiting $\Shv(\grid)\univ$ as a model for $n$-categories.

    \begin{corollary}
    \label{cor:equivalence-univalence}
        The functors $\Fold$ and $\cubicalNerve$ preserve univalent sheaves, and hence induce equivalences of categories
        \begin{equation*}
            \Fold \;\colon\; \Shv(\grid)\univ \;\longadj\; \Shv(\globe )\univ \;\cocolon\; \cubicalNerve,
        \end{equation*}
        \begin{equation*}
            \Fold \;\colon\; \Shv(\grid[])\univ \;\longadj\; \Shv(\globe[] )\univ \;\cocolon\; \cubicalNerve.
        \end{equation*}
    \end{corollary}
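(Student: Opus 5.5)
Since $\Fold$ and $\cubicalNerve$ are already inverse equivalences $\Shv(\grid)\simeq\Shv(\globe)$ by \cref{thm:equivalence-n}, it suffices to show that each functor preserves univalence. The two restricted functors are then mutually inverse equivalences between the full subcategories. The infinite case follows afterwards by passing to limits over $n$, since both functors commute with $\partial$ and both univalent categories are defined as limits along $\partial$.

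First, $\Fold$ preserves univalence. Let $Y\in\Shv(\grid)\univ$. By definition, $\square^*Y\in\Shv(\deltaCat^n)$ is a univalent $n$-uple Segal space. Then $\Fold Y=\G_*\square^*Y$ is univalent by \cref{cor:univalent-fold}.

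Next, $\cubicalNerve$ preserves univalence. Let $X\in\Shv(\globe)\univ$. We must show that $\square^*\cubicalNerve X$ is univalent, i.e.\ that each one-variable restriction
\begin{equation*}
    (\square^*\cubicalNerve X)_{a_1,\dots,\bullet,\dots,a_n}
    \;=\;
    \Map_{\Shv(\globe)}\bigl(\square[a_1,\dots,\bullet,\dots,a_n],X\bigr)
\end{equation*}
is a univalent Segal space. Univalence is the locality condition for the map $J\to\pt$. By adjunction, the above Segal space is local if and only if $X$ is local with respect to
\begin{equation*}
    \square_!\bigl(J\boxtimes[\vec a']\bigr)\;\too\;\square_!\bigl(\pt\boxtimes[\vec a']\bigr),
\end{equation*}
computed in $\Shv(\globe)$ via $\Fold$ and its inverse. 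By \cref{exm:gray-is-strict-on-rep}, $\square_!$ sends external products to Gray products, and $J\to\pt$ may be placed in any coordinate. So this map is the Gray product $J\xlax Q\to Q$ (or $Q\xlax J\to Q$, or with $J$ inserted in the middle) for lax grids $Q$.

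The main obstacle is the claim that univalent $\globe$-spaces are local with respect to $J\xlax Q\to Q$. Equivalently, Gray-tensoring with the walking isomorphism $J$ should be a univalent-equivalence. I would argue this as follows. Univalence localization is a left Bousfield localization $L\colon\Shv(\globe)\to\Shv(\globe)\univ$, and $LJ\simeq\pt$. Under the equivalence of \cref{thm:equivalence-infty} the Gray product is Campion's, and Campion's Gray product on $\infty$-categories is compatible with the univalent localization. Hence $L(J\xlax Q)\simeq L(LJ\xlax LQ)\simeq LQ$.

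If I want to avoid citing this compatibility, I would instead reduce, via the Segal condition and the fact that $\xlax$ commutes with colimits in each variable, to $Q=\laxcube{k}$. I would then induct on $k$ using \cref{cor:pushout-square-and-globe}. That pushout reduces the claim to the cases $\laxdisk{k}$ and $\iota\partial\laxdisk{k}$. For these I would use the suspension description of \cref{lem:inductive-globe} together with the inductive characterisation of univalence in \cref{lem:univalent-inductive}. Concretely, maps out of $J\xlax\laxdisk{k}$ involve invertible $1$-cells, together with data in $\hom$'s that are univalent by induction.

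Finally, for the infinite case: $\Shv(\grid[])\univ$ and $\Shv(\globe[])\univ$ are the limits of the finite univalent categories along $\partial$, and both functors commute with $\partial$. So the finite equivalences assemble into the claimed equivalence on the limits.
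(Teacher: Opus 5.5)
Your treatment of $\Fold$ is the same as the paper's, and your passage to $n=\infty$ through the limits along $\partial$ is fine. For $\cubicalNerve$ you take a genuinely different route.

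The paper argues directly. An arrow of $(\square^*\cubicalNerve X)_{\dots,\bullet,\dots}$ is a width-one lax grid in $X$. If it is invertible, all of its cells in the $k$-direction are invertible. Univalence of $X$ then makes it equivalent to a degenerate grid.

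You instead combine the adjunctions for $K\mapsto K\boxtimes[\vec a']$ and $\square_!\dashv\square^*$ with the fact that $\Fold$ inverts $\cubicalNerve$. This turns univalence of these Segal spaces into locality of $X$ against $Q\xlax J\xlax Q'\to Q\xlax Q'$ for lax grids $Q,Q'$. You then derive that locality from monoidality of the univalent reflection $L$. The reduction is correct, and it makes precise what the paper's sketch compresses.

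However, all the content now sits in the compatibility $L(A\xlax B)\simeq L(LA\xlax LB)$, which you assert but do not prove. It does follow from what the paper cites. The functors $(A,B)\mapsto L(A\xlax B)$ (flagged Gray product of \cref{thm:equivalence-infty}) and $(A,B)\mapsto LA\xlax LB$ (Campion's Gray product on $\Catoo$, \cref{thm:campion-characterization}) are cocontinuous in each variable. They agree naturally on lax grids, which are univalent and on which both products are the strict one. Lax grids are dense in $\Shv(\globe[])$, so the two functors agree everywhere.

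So your proof consumes Campion's \emph{univalent} Gray product as an input, whereas the paper's argument uses only univalence of $X$. In fact the logical dependence can run the other way. In the grid model, univalence is locality against the maps $\square[\vec a]\xlax J\xlax\square[\vec b]\to\square[\vec a]\xlax\square[\vec b]$. That class is visibly closed under Gray products with grids. Hence, given the corollary, compatibility of the univalent reflection with the Gray product on $\Shv(\globe[])$ follows, rather than being needed for it.

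Your fallback induction is not a proof as written. Nothing available describes $J\xlax\laxdisk{k}$: \cref{lem:inductive-globe} is a strict statement about $[1]\xlaxs\cC$. The case where $J$ sits in a middle coordinate is also not addressed. I would drop that paragraph and replace it with the density argument above.
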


    \begin{proof}
        Suppose that $Y\in \Shv(\grid)\univ$.
        By definition, $\square^* Y\in \Shv(\deltaCat^n)$ is univalent, so $\Fold Y=\G_*\square^* Y\in \Shv(\globe)$ is univalent by \cref{cor:univalent-fold}. 
        In the other direction, suppose that $X\in \Shv(\globe)\univ$.
        We want to show that the following Segal space is univalent:
        \begin{equation*}
            (\square^*\cubicalNerve X)_{a_1,\dots,a_{k-1},\bullet,b_1,\dots,b_{n-k}} \;\simeq\; \Map_{\Shv(\globe)}(\square[\vec{a}]\xlaxs[\bullet]\xlaxs\square[\vec{b}],X)\qin \Shv(\deltaCat).
        \end{equation*}
        An arrow in this Segal space is a lax grid in $X$ of width $1$ in the $k$-direction, and composition is given by gluing in this direction.
        If such a lax grid is invertible, then all of its cells in the $k$-direction are invertible.
        Since $X$ is univalent, this implies that an invertible lax grid is isomorphic to a degenerate one, as required.
    \end{proof}

\bibliographystyle{alpha}
\phantomsection\addcontentsline{toc}{section}{\refname}
\bibliography{references}

\end{document}